\theoremstyle{definition}
\newtheorem{theorem}{Theorem}[section]
\newtheorem{remark}[theorem]{Remark}
\newtheorem{lemma}[theorem]{Lemma}
\newtheorem{corollary}[theorem]{Corollary}
\newtheorem{definition}[theorem]{Definition}
\newtheorem{question}[theorem]{Question}
\newtheorem*{proof of claim}{Proof of Claim}
\newtheorem{observation}[theorem]{Observation}
\newcounter{claimcounter}
\newtheorem*{claim}{Claim \thetheorem.\theclaimcounter}
\newcommand{\Sat}{\mathsf{Sat}}
\DeclareMathOperator{\RFN}{\mathsf{RFN}}
\newcommand{\ATR}{\mathsf{ATR}}
\newcommand{\ATRo}{\mathsf{ATR}_0}
\newcommand{\ACAo}{\mathsf{ACA}_0}
\newcommand{\RCAo}{\mathsf{RCA}_0}
\newcommand{\CAo}{\mathsf{CA}_0}
\newcommand{\WKLo}{\mathsf{WKL}_0}
\DeclareMathOperator{\TI}{\mathsf{TI}}
\DeclareMathOperator{\ACA}{\mathsf{ACA}}
\DeclareMathOperator{\LPP}{\mathsf{LPP}}
\newcommand{\TLPP}{\mathsf{TLPP}}
\newcommand{\ALPP}{\mathsf{ALPP}}
\DeclareMathOperator{\dom}{dom}
\DeclareMathOperator{\HJ}{HJ}
\DeclareMathOperator{\WO}{WO}
\newcommand{\T}{\mathrm{T}}
\renewcommand{\a}{\mathrm{a}}
\newcommand{\It}{\mathsf{It}}
\newcommand{\Ram}{\mathsf{Ram}}
\newcommand{\Det}{\mathsf{Det}}
\DeclareMathOperator{\rel}{rel}
\newcommand{\N}{\mathbb{N}}
\renewcommand{\L}{\mathcal{L}}
\newcommand{\M}{\mathcal{M}}
\renewcommand{\phi}{\varphi}
\newcommand{\myhyphen}{\mathchar`-}
\newcommand{\even}{\mathrm{even}}
\newcommand{\odd}{\mathrm{odd}}
\DeclareMathOperator{\Con}{\mathsf{Con}}
\newcommand{\A}{\mathsf{A}}
\newcommand{\NN}{\mathcal{N}}
\newcommand{\HH}{\mathcal{H}}
\newcommand{\RP}{\mathrm{RP}}
\newcommand{\pRP}{\mathrm{pRP}}
\newcommand{\hyp}{\mathchar`-}
\newcommand{\qedclaim}{
  \renewcommand{\qedsymbol}{$\blacksquare$}
  \qed
  \renewcommand{\qedsymbol}{$\box$}
}
\author[1]{YUDAI SUZUKI}
\author[2]{KEITA YOKOYAMA}
\affil[1]{National Institute of Technology, Oyama College, Tochigi, Japan}
\affil[2]{Mathematical Institute, Tohoku University, Sendai, Japan}
\affil[1]{yudai.suzuki.q1@dc.tohoku.ac.jp}
\affil[2]{keita.yokoyama.c2@tohoku.ac.jp}
\title{On the $\Pi^1_2$ consequences of $\Pi^1_1\mathchar`-\mathsf{CA}_0$}
\date{\today}
\begin{document}

\maketitle

\begin{abstract}
  In this paper, we introduce a hierarchy dividing the set $\{\sigma \in \Pi^1_2 : \Pi^1_1\myhyphen\CAo \vdash \sigma\}$.
  Then, we give some characterizations of this set using weaker variants of some principles equivalent to $\Pi^1_1\myhyphen\CAo$: leftmost path principle, Ramsey's theorem for $\Sigma^0_n$ classes of $[\N]^{\N}$ and determinacy for $(\Sigma^0_1)_n$ classes of $\N^{\N}$.
\end{abstract}

\section{Introduction}
  Reverse mathematics is a program to classify theorems in various fields of mathematics according to their logical strength.
  In the most typical study of this area, one uses the $\textit{big five}$ of axiomatic systems of second-order arithmetic, $\RCAo$, $\WKLo$, $\ACAo$, $\ATRo$ and $\Pi^1_1\myhyphen\CAo$.
Undeniably, many theorems in the ``core-of-mathematics'' are shown to be equivalent to one of the big five (see, e.g., for \cite{Simpson, DzMu}).

Among the big five systems, the strongest system, $\Pi^1_1\myhyphen\CAo$, is considerably different from others; all other systems are axiomatized by $\Pi^{1}_{2}$ sentences while $\Pi^1_1\myhyphen\CAo$ is never implied from a true $\Pi^{1}_{2}$ sentence.
In the study of reverse mathematics, there are several mathematical theorems which are formalized by $\Pi^{1}_{2}$ sentences and known to be provable from $\Pi^1_1\myhyphen\CAo$ such as Kruskal's tree theorem, Nash-Williams' theorem, Menger's theorem, Fra\"iss\'e's conjecture and Caristi-Kirk fixed point theorem (see \cite{marcone_leftmost, pakhomov_solda_nash, Shafer_Menger, Montalban-Fraisse, Freund-Fraisse, F-DSTY} for the recent progress on these studies\footnote{It is shown that Nash-Williams' theorem is provable from $\ATRo$ in a recent paper by
Pakhomov and Sold{\`a} \cite{pakhomov_solda_nash}.}).
Then a natural question arises: \textit{how can we calibrate the strength of consequences of $\Pi^1_1\myhyphen\CAo$ which are of complexity $\Pi^{1}_{2}$?}

  In \cite{Townser_TLPP}, Towsner introduced the \textit{relative leftmost path principles} to give a new upper bound
  for theorems located between $\ATRo$ and $\Pi^1_1\myhyphen\CAo$.
  He focused on an equivalent of $\Pi^1_1\myhyphen\CAo$ called the leftmost path principle which states that
  any ill-founded tree has a leftmost path.
  His argument is based on the following idea. Since the leftmost path principle is a $\Pi^1_3$ statement,
  the actual leftmost path is not needed to prove a $\Pi^1_2$ sentence. Instead of the leftmost path, it is enough to use a path which behaves as a leftmost path in a certain range.
  Such a path is called a relative leftmost path.
  Indeed, the hierarchy of the relative leftmost path principles already capture many of the $\Pi^{1}_{2}$ consequences of $\Pi^1_1\myhyphen\CAo$ which are mentioned above. On the other hand, this hierarchy does not cover the whole $\Pi^{1}_{2}$ consequences of $\Pi^1_1\myhyphen\CAo$.

  In this paper, we extend the idea of \textit{relativization of $\Pi^1_3$ statements} to the level of $n$-th hyperjump.
  We define $\beta^1_0\RFN(n)$ as the assertion that
  for each set $X$, there is a coded $\omega$-model $\M$ of $\ACAo$ such that
  $X \in \M$ and $\M \models \exists Y(Y = \HJ^n(X))$. Then a witness $Y$
  works as a good approximation of the actual $n$-th hyperjump.
  We show that a $\Pi^1_2$ sentence provable from $\Pi^1_1\myhyphen\CAo$ is already provable from $\beta^1_0\RFN(n)$ for some $n$.

  Intuitively, a sentence $\sigma$ is provable from $\beta^1_0\RFN(n)$ means that
  there is a proof of $\sigma$ from $\Pi^1_1\myhyphen\CAo$ such that
  the use of the hyperjump operator in it is up to $n$-times.
  In this sense, Towsner's transfinite leftmost path principle $\TLPP$ is in the level of
  single use of the hyperjump operator.
  Specifically, we show that there is a variant of the relative leftmost path principle which is equivalent to $\beta^1_0\RFN(1)$, and
  a variant of $\beta^1_0\RFN(1)$ is enough to prove $\TLPP$.
  We also introduce an iterated form of the relative leftmost path principles which is in the level of $\beta^1_0\RFN(n)$.
  We note that the first author shows that both of Towsner's relative leftmost path principles and $\beta^1_0\RFN(1)$ (including some variants of it) 
  can be characterized as a variant of the $\omega$-model reflection of transfintie induction in a forthcoming paper \cite{suzuki2024relative}\footnote{A characterization of $\beta^1_0\RFN(1)$ via the $\omega$-model reflection of transfinite induction is independently given by Freund \cite{Freund-Fraisse}.}.
  In this sense,  our work can be seen as a natural extension of Towsner's work reaching the whole set of the $\Pi^1_2\hyp$consequences of $\Pi^1_1\hyp\CAo$.

  We apply the idea of relativization to certain descriptive set theoretic principles: Ramsey's theorem for $[\N]^{\N}$ (known as Galvin-Prikry's theorem) and the determinacy for $\N^{\N}$.
  It is known that both of Ramsey's theorem for $\Sigma^0_n$ classes and the determinacy for $(\Sigma^0_1)_n$ classes are equivalent to
  $\Pi^1_1\myhyphen\CAo$ for $n >1$.
  In this paper, we show that the $\Pi^1_2$ consequences of $\Pi^1_1\myhyphen\CAo$ coincides with the theories
  $\ACAo + \{\rel(\Sigma^0_n\myhyphen\Ram) : n \in \omega\}$ or $\ACAo + \{\rel((\Sigma^0_1)_n\myhyphen\Det) : n \in \omega\}$,
  where $\rel(\sigma)$ denotes the relativization of $\sigma$.

  \subsection*{Acknowledgements}
  The authors thank Anton Freund for helpful discussions and comments.
  The first author's work is supported by JST, the establishment of university
  fellowships towards the creation of science technology innovation,
  Grant Number JPMJFS2102.
  The second author's work is partially supported by JSPS KAKENHI grant numbers JP19K03601, JP21KK0045 and JP23K03193.

\section{Preliminaries}
This section introduces the basic notions of reverse mathematics. 
For the details, see also Simpson's textbook \cite{Simpson}.
As usual, we denote the set of natural numbers by $\omega$, and use $\N$ to represent the range of number variables in the language $\L_2$ of second-order arithmetic.


Reverse mathematics is a research program that classifies mathematical theorems according to their logical strength. To achieve this, we identify the logical strength of a theorem with the weakest axiom needed to prove it.  Most research in reverse mathematics is based on second-order arithmetic\footnote{Sometimes the phrase `second-order arithmetic' means a specific axiomatic system $\mathsf{Z}_2$. However, in this paper, we use this phrase as a general term for axiomatic systems of the language of second-order arithmetic.}.  Specifically, for a given formula $T$ in second-order arithmetic representing a mathematical theorem, we seek an axiomatic system $\Gamma$ and an axiom $A$ such that:
\begin{itemize}
  \item $\Gamma$ is strong enough to interpret $T$, but may not be strong enough to prove it,
  \item $A$ is equivalent to $T$ over $\Gamma$.
\end{itemize}
We begin with introducing some axiomatic systems of second-order arithmetic.
\begin{definition}
  Let $\theta(y,\vec{x},\vec{X})$%
  \footnote{Here, $\vec{x}$ and $\vec{X}$ are abbreviations of $x_1,\ldots,x_n$ and $X_1,\ldots,X_m$. When the numbers $n$ and $m$ of variables are not important, we use these notations.}
  be a formula with exactly displayed free variables.
  Then, $\theta$-comprehension axiom is the formula
  \begin{align*}
    \forall \vec{x},\vec{X} \exists Y \forall y( y \in Y \leftrightarrow \theta(y,\vec{x},\vec{X})).
  \end{align*}
  Intuitively, comprehension for $\theta$ states that the set $\{y : \theta(y)\}$ exists.

  Some $\L_2$-theories are characterized by comprehension schemas.
  \begin{itemize}
    \item $\RCAo$ consists of `$(0,1,+,\cdot,<)$ forms a discrete ordered semi-ring', induction for $\Sigma^0_1$ formulas and comprehension for $\Delta^0_1$ formulas, \textit{i.e.}, $\theta$-comprehension is applicable when $\theta$ is a $\Sigma^0_1$ formula which is equivalent to a $\Pi^{0}_{1}$ formula.
    \item $\ACAo$ consists of $\RCAo$ and comprehension for $\Sigma^1_0$ formulas.
    \item $\Pi^1_1\myhyphen\CAo$ consists of $\RCAo$ and comprehension for $\Pi^1_1$ formulas.
  \end{itemize}
\end{definition}

In $\RCAo$, the Turing jump operator and its iteration can be defined.
For a given (countable) ordinal $\alpha$, we write $X^{(\alpha)}$ to mean
the $\alpha$-times iteration of Turing jump at $X$.
For the details, see section VIII.1 in \cite{Simpson}.

\begin{definition}
  We define $\ACA_0'$ and $\ACA_0^+$ as follows.
  \begin{itemize}
    \item $\ACA_0'$ consists of $\RCAo$ and $\forall n \forall X \exists Y (Y = X^{(n)})$.
    \item $\ACA_0^+$ consists of $\RCAo$ and $\forall X \exists Y (Y = X^{(\omega)})$.
    \item $\ATRo$ consists of $\RCAo$ and the statement `for any ordinal $\alpha$, $Y = X^{(\alpha)}$ exists'.
  \end{itemize}
\end{definition}

It is known that $\ACAo < \ACA_0' < \ACA_0^+ <\ATRo$.

\subsection{Trees}
We introduce some notions on trees.
Then we give a characterization of $\Pi^1_1\myhyphen\CAo$ by trees.

\begin{definition}
  Let $X$ be a set.
  We write $X^{<\N}$ for the set of finite sequences of elements of $X$ and $X^{\N}$ for the set of infinite sequences of elements of $X$.
  More precisely, each $\sigma \in X^{<\N}$ is a function whose domain is $\{0,1,\ldots,n-1\}$ for some $n$ and $\sigma(i)$ is an element of $X$ for any $i <n$ . Similarly, each $f \in X^{\N}$ is a function whose domain is $\N$ and whose codomain is $X$.

  For each  $\sigma \in X^{<\N}$, define the length $|\sigma|$ by the number $n$ such that $\dom(\sigma) = \{0,1,\ldots,n-1\}$.
\end{definition}
Intuitively, we identify $\sigma \in X^{<\N}$ and the sequence $\langle \sigma(0),\ldots,\sigma(|\sigma|-1) \rangle$,
$f \in X^{\N}$ and the sequence $\langle f(n) \rangle_{n \in \N}$.
\begin{definition}
  Let $\sigma,\tau$ be finite sequences.
  \begin{itemize}
    \item Define the concatenation $\sigma \ast \tau$ by
    $\langle \sigma(0),\ldots,\sigma(|\sigma|-1),\tau(0),\ldots,\tau(|\tau|-1) \rangle$.
    \item We say $\sigma$ is an initial segment of $\tau$ (write $\sigma \preceq \tau$) if
    $|\sigma| \leq |\tau|$ and $\forall i < |\sigma| (\sigma(i) = \tau(i))$. We say $\sigma$ is a proper initial segment of $\tau$
    (write $\sigma \prec \tau$) if $\sigma \preceq \tau$ and $\sigma \neq \tau$.
    \item Let $f$ be an infinite sequence. We say $\sigma$ is an initial segment of $f$ (write $\sigma \prec f$) if
    $\forall i < |\sigma| (\sigma(i) = f(i))$.
  \end{itemize}
\end{definition}

\begin{definition}
  Let $T$ be a subset of $X^{<\N}$. We say $T$ is a tree on $X$ if
  $T$ is closed under taking initial segments.

  Let $T \subseteq X^{<\N}$ be a tree.
  A function $f \in X^{\N}$ is called a path of $T$ if $(\forall n)(f[n] \in T)$. Here, $f[n]$ denotes the initial segment of $f$ with length $n$.
  The set of all paths of $T$ is denoted by $[T]$.
  We say $T$ is ill-founded if $[T] \neq \varnothing$.
\end{definition}

\begin{lemma}
  (\cite{Simpson}, III.7.2)
  $\ACAo$ proves the following K\H{o}nig lemma.
  Let $T$ be an infinite tree. If $\forall \sigma \in T \exists m \forall i (\sigma \ast \langle i \rangle \in T \to i < m)$, then $T$ is ill-founded.
\end{lemma}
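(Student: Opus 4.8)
The plan is to prove this as the classical König's lemma for finitely branching trees, where the only work is to check that each step can be carried out with arithmetical comprehension and $\Sigma^0_1$ induction. The hypothesis says exactly that $T$ is finitely branching: each $\sigma \in T$ has at most $m$ immediate successors in $T$ for some $m = m(\sigma)$. Since showing $T$ is ill-founded amounts to producing an element of $[T]$, the strategy is to isolate the ``infinitely extendible'' nodes of $T$ and then build a path through them. Concretely, I would first form the set
\[
  S = \{\sigma \in T : \forall n\, \exists \tau \in T\,(\sigma \preceq \tau \wedge |\tau| \ge n)\}.
\]
The defining condition is arithmetical (of the form $\Pi^0_2$, since $\sigma \preceq \tau$, $|\tau|\ge n$ and $\tau\in T$ are all $\Delta^0_1$ relative to $T$), so $S$ exists by arithmetical comprehension, which is available in $\ACAo$.

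Next I would establish the two structural facts about $S$ needed to run a recursion. First, $\langle\rangle \in S$: if not, then all nodes of $T$ have length below some fixed $N$, and combining this length bound with finite branching one shows by induction on levels that each level of $T$ is finite, hence $T$ is finite, contradicting the hypothesis that $T$ is infinite. Second, and this is the key step, \emph{every} $\sigma \in S$ has an immediate successor $\sigma \ast \langle i\rangle \in S$. Here finite branching is used essentially: $\sigma$ has only finitely many immediate successors in $T$, say those with $i < m$, and every long extension of $\sigma$ passes through one of them; if none of these finitely many children were in $S$, then each child $\sigma \ast \langle i\rangle$ would admit a bound $n_i$ on the length of its extensions in $T$, and taking the maximum of the finitely many $n_i$ would bound the length of \emph{all} extensions of $\sigma$, contradicting $\sigma \in S$. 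Turning ``none of the children is in $S$'' into a single uniform length bound over the finitely many children is where one invokes the least-number/bounded-maximum principle, and this is the main obstacle to watch: one must make sure the search for, and maximization over, the finitely many bounds $n_i$ stays within the $\Sigma^0_1$ induction of $\ACAo$.

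Finally, with these two facts in hand I would construct the leftmost path through $S$ by primitive recursion relative to the set $S$: put $\sigma_0 = \langle\rangle$ and $\sigma_{n+1} = \sigma_n \ast \langle i_n\rangle$, where $i_n$ is the least $i$ with $\sigma_n \ast \langle i\rangle \in S$. The witness $i_n$ exists by the successor property, and since finite branching bounds the search, the minimization is a bounded search, so the graph of $n \mapsto \sigma_n$ is $\Delta^0_1$ in $S$ with provably total recursion; hence the function exists already in $\RCAo$ (with $S$ as a parameter). An easy induction shows $\sigma_n \in S \subseteq T$ and $\sigma_n \prec \sigma_{n+1}$ for all $n$, so $f = \bigcup_n \sigma_n$ is a well-defined element of $X^{\N}$ with $f[n] \in T$ for every $n$. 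Thus $f \in [T]$, witnessing that $T$ is ill-founded, and the entire argument has been carried out in $\ACAo$.
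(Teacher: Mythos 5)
Your proof is correct and is essentially the standard argument behind the cited Lemma III.7.2 of Simpson's book (the paper itself gives no proof, only the citation): form the arithmetically definable set $S$ of infinitely extendible nodes and recurse through it using finite branching. The one quibble is that uniformizing the bounds $n_i$ over the finitely many children is an instance of bounded collection for arithmetical (roughly $\Sigma^0_2$) formulas rather than of $\Sigma^0_1$ induction, but $\ACAo$ proves full arithmetical induction and hence this collection principle, so the argument goes through as written.
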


\begin{lemma}
  \cite[VI.1.1]{Simpson} The following are equivalent over $\RCAo$.
  \begin{enumerate}
    \item $\Pi^1_1\myhyphen\CAo$,
    \item for any sequence $\langle T_k \rangle_k$ of trees, there is a set $X = \{k : [T_k] \neq \varnothing\}$.
  \end{enumerate}
\end{lemma}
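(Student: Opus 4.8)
The plan is to base everything on the observation that ill-foundedness of a tree is a $\Sigma^1_1$ property, together with the converse fact that every $\Sigma^1_1$ predicate can be coded as ill-foundedness of a tree. The implication $(1)\Rightarrow(2)$ is then immediate: for a fixed sequence $\langle T_k\rangle_k$, the relation $[T_k]\neq\varnothing$ unfolds to $\exists f\,\forall n\,(f[n]\in T_k)$, which is $\Sigma^1_1$ with the sequence as a parameter, so $[T_k]=\varnothing$ is $\Pi^1_1$. I would apply $\Pi^1_1$-comprehension to obtain $\{k:[T_k]=\varnothing\}$ and then take its complement, which exists by $\Delta^0_1$-comprehension, to get $X=\{k:[T_k]\neq\varnothing\}$.

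For $(2)\Rightarrow(1)$ I would first extract $\ACAo$ from $(2)$, since the coding in the main step is carried out in $\ACAo$. Given a $\Sigma^0_1$ formula $\exists n\,\psi(n,k)$ with $\psi\in\Sigma^0_0$, I would let $T_k$ consist of those $\sigma$ all of whose entries are $0$ and which satisfy $\forall j<|\sigma|\,\neg\psi(j,k)$. Both conditions persist under passing to initial segments, so each $T_k$ is a tree; the defining predicate is $\Sigma^0_0$, so the sequence $\langle T_k\rangle_k$ exists already in $\RCAo$. A path of $T_k$ must be constantly $0$ and certifies $\forall j\,\neg\psi(j,k)$, whence $[T_k]\neq\varnothing$ iff $\neg\exists n\,\psi(n,k)$. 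Applying $(2)$ yields $\{k:\neg\exists n\,\psi(n,k)\}$, and its complement is the $\Sigma^0_1$-defined set, establishing $\Sigma^0_1$-comprehension and hence $\ACAo$.

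With $\ACAo$ in hand, the main step is to reduce $\Pi^1_1$-comprehension to $(2)$ via the Kleene normal form. I would use that, provably in $\ACAo$, every $\Sigma^1_1$ formula is equivalent to one of the shape $\exists f\,\forall n\,R(f[n],k)$ with $R$ a decidable predicate (arithmetic, uniformly in $k$, in the parameters). Given an arbitrary $\Pi^1_1$ formula $\phi(k)$, I would put its $\Sigma^1_1$ negation into this form and set $T_k=\{\sigma:\forall m\le|\sigma|\,R(\sigma[m],k)\}$. Each $T_k$ is a tree, the sequence $\langle T_k\rangle_k$ exists by arithmetic comprehension, and $[T_k]\neq\varnothing$ holds exactly when $\neg\phi(k)$ does. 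Feeding $\langle T_k\rangle_k$ into $(2)$ produces $\{k:\neg\phi(k)\}$, whose complement $\{k:\phi(k)\}$ is then the witness for $\Pi^1_1$-comprehension.

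The hard part will be the normal-form step: one has to verify inside $\ACAo$ that the second-order existential quantifier over an arbitrary arithmetic matrix can be contracted to the single ``ill-founded tree'' form. This is exactly the Kleene normal form theorem, and I expect its proof in $\ACAo$ to go by absorbing the arithmetic number quantifiers into the jump of the parameters and then Skolemizing, with $R$ merely checking that a guessed path remains consistent up to its current length. By contrast, the two explicit tree constructions and the two passages to complements are routine in $\RCAo$.
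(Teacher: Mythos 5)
Your proposal is correct, and it is essentially the standard argument: the paper itself states this lemma only as a citation to Simpson \cite[VI.1.1]{Simpson} without reproducing a proof, and your route (ill-foundedness is $\Sigma^1_1$ for the forward direction; bootstrapping $\ACAo$ from the tree statement and then invoking the Kleene normal form for $\Sigma^1_1$ formulas over $\ACAo$ for the converse) is precisely the one in Simpson's text. The only point worth stressing is the one you already flag: the normal form theorem is the step that genuinely requires $\ACAo$, so deriving $\ACAo$ from clause (2) first, as you do, is not optional but essential.
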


Following the above lemma, we give a new characterization of $\Pi^1_1\myhyphen\CAo$ by using the notion of leftmost path.
\begin{definition}
  ($\RCAo$)
  Let $f,g \in \N^{\N}$. We write $f <_l g$ if $\exists n (f[n] = g[n] \land f(n) < g(n))$ and $f \leq_l g$ if $f <_l g \lor f = g$.
  This $<_l$ forms a total order on $\N^{\N}$ and is called the lexicographical order.

  Let $T$ be a tree and $f \in [T]$. We say $f$ is the leftmost path of $T$ if $\forall g \in [T](f \leq_l g)$.
\end{definition}

In the remaining of this section, we show that the equivalence of the existence of a leftmost path and $\Pi^1_1\myhyphen\CAo$.
The key is that a sequence of trees can be coded by a tree.

\begin{definition}\label{oplus of trees}
  ($\RCAo$)
  Let $\sigma \in \N^{<\N}$. For each $l < |\sigma|$, define $n_l$ as the maximum $n$ such that $(n,l) < |\sigma|$. Then we define  $\sigma_l = \langle \sigma((0,l)), \ldots, \sigma((n_l,l)) \rangle$.
  Thus, each $\sigma \in \N^{<\N}$ is regarded as a sequence $\langle \sigma_l \rangle_{l < |\sigma|}$ of sequences.

  Conversely, for each sequence $\langle \sigma_l \rangle_{l < L}$ of sequences, define $\bigoplus_{l < L} \sigma_l$ by
  \begin{align*}
    (\bigoplus_{l < L} \sigma_l)(n,l) = \sigma_l(n).
  \end{align*}
  For each sequences $\langle f_l \rangle_{l \in \N}$ of functions, define $\bigoplus_{l} f_l$ by
  \begin{align*}
    (\bigoplus_{l < L} f_l)(n,l) = f_l(n).
  \end{align*}

  Finally, for each $\langle T_l \rangle_{l \in \N}$ of trees, define $\bigoplus_{l} T_l$ by
  \begin{align*}
    \{\bigoplus_{l < L} \sigma_l : \langle \sigma_l \rangle_{l < L} \in T_0 \times \cdots \times T_{L-1}\}.
  \end{align*}
\end{definition}
\begin{lemma}
  $(\RCAo)$ Let $\langle T_l \rangle_l$ be a sequence of trees.
  Then, the operator $\bigoplus_l$ is a bijection between $\prod_l [T_l]$ and $[\bigoplus_l T_l]$.
  Moreover, this operator preserves the lexicographical order in the sense that for any $\langle f_i \rangle_i, \langle g_i \rangle_i \in \prod_i [T_i]$,
  \begin{enumerate}
    \item $\forall i (f_i \leq_l g_i)  \to \bigoplus_i f_i \leq_l \bigoplus_i g_i $,
    \item $\bigoplus_i f_i$ is the leftmost path of $\bigoplus_i T_i$ if and only if each $f_i$ is the leftmost path of $T_i$.
  \end{enumerate}
\end{lemma}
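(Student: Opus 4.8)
The plan is to treat this as a coding lemma and prove each clause by directly unwinding the definitions of $\bigoplus$ and of the decomposition $\sigma \mapsto \langle \sigma_l\rangle$ from Definition \ref{oplus of trees}. Both operations are given by explicit $\Delta^0_1$ formulas in terms of the pairing function, so the maps and the relations involved are all arithmetically definable, and everything can be carried out in $\RCAo$ without appeal to comprehension beyond recursive comprehension. The one fact about the pairing I will use repeatedly is that $(\cdot,\cdot)$ is strictly monotone in each coordinate separately, so that for fixed $l$ we have $(p,l) < (n,l)$ iff $p < n$, and more generally the set of pairs $\{(n,l) : (n,l) < N\}$ coincides with the ``staircase'' region $\{(n,l) : l < L,\ n \le n_l\}$ implicit in the definition of the decomposition.

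First I would establish that $\sigma \mapsto \langle \sigma_l\rangle$ and $\langle \sigma_l\rangle \mapsto \bigoplus_l \sigma_l$ are mutually inverse on finite sequences. The crux is the region identity just mentioned: an initial segment $\sigma = h[N]$ has ``ragged'' support, but monotonicity of the pairing guarantees that its support is exactly the staircase region, so decomposing and then recomposing returns $\sigma$ with the correct length $N$. Granting this, injectivity and surjectivity of $\bigoplus_l$ on paths follow: injectivity because $\bigoplus_l f_l = \bigoplus_l g_l$ forces $f_l(n) = g_l(n)$ for all $(n,l)$, hence $f_l = g_l$; and for the claim that $\bigoplus_l$ maps $\prod_l [T_l]$ into $[\bigoplus_l T_l]$ and conversely, I would use that for $h = \bigoplus_l f_l$ each initial segment decomposes as $(h[N])_l = f_l[n_l+1]$, which lies in $T_l$ exactly when $f_l \in [T_l]$ (using closure of $T_l$ under initial segments). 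Applying this for all $N$ and $l$, and noting that the $n_l$ grow without bound as $N \to \infty$, gives $h \in [\bigoplus_l T_l]$ iff every $f_l \in [T_l]$.

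For the order-preservation clause (1), I would locate the least position $m$ at which $\bigoplus_i f_i$ and $\bigoplus_i g_i$ differ (if there is none they are equal and the conclusion is immediate). Writing $m = (n,l)$, all earlier positions agree; in particular $f_l(p) = g_l(p)$ for every $p < n$ by monotonicity, while $f_l(n) \neq g_l(n)$. Hence $n$ is precisely the first place where $f_l$ and $g_l$ disagree, so $f_l <_l g_l$ (we are given $f_l \le_l g_l$ and they now differ) forces $f_l(n) < g_l(n)$, and therefore $\bigoplus_i f_i <_l \bigoplus_i g_i$.

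Finally, clause (2) follows from (1) together with surjectivity. For the direction from the components to the sum, any path of $\bigoplus_i T_i$ has the form $\bigoplus_i h_i$ with $h_i \in [T_i]$, and leftmostness of each $f_i$ gives $f_i \le_l h_i$, so (1) yields $\bigoplus_i f_i \le_l \bigoplus_i h_i$. For the converse I would argue contrapositively: if some $f_j$ fails to be leftmost, witnessed by $g_j <_l f_j$ in $[T_j]$, I perturb a single coordinate, setting $h_i = f_i$ for $i \neq j$ and $h_j = g_j$; since the two sums now differ only in column $j$, the first overall difference sits at the first disagreement of $g_j$ and $f_j$, where $g_j$ is smaller, so $\bigoplus_i h_i <_l \bigoplus_i f_i$, contradicting leftmostness of the sum. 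I expect the only real care to be in the bookkeeping for the bijection---verifying the staircase-region identity and that initial segments recompose to the right length---since the order and leftmost clauses then reduce cleanly to comparisons within a single column.
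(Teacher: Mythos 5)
The paper states this lemma without proof, treating it as a routine unwinding of the definition of $\bigoplus_l$; your argument is correct and supplies exactly the expected details. The two points you isolate --- monotonicity of the pairing in each coordinate, so that the first disagreement between $\bigoplus_i f_i$ and $\bigoplus_i g_i$ is located at the first disagreement within a single column, and the staircase-region identity ensuring initial segments decompose and recompose correctly --- are precisely what makes the bookkeeping go through in $\RCAo$.
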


\begin{definition}
  Let $T$ be a tree.
  We say $T$ is pruned if $\forall \sigma \in T \exists n (\sigma\ast\langle n \rangle \in T)$.
\end{definition}
\begin{lemma}
  ($\RCAo$)
  Let $T$ be a pruned tree and $\sigma \in T$.
  Then there is a $T$-computable path $f$ such that $\sigma \prec f$.
  Moreover, we can take this $f$ to be the leftmost of all paths extending $\sigma$.
\end{lemma}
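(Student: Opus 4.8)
The plan is to build $f$ greedily, always extending by the least available child, and then to check two things separately: that this construction can be carried out within $\RCAo$, and that the resulting path is leftmost among all paths extending $\sigma$.

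First I would define the one-step extension map. For a node $\tau \in T$, set $h(\tau) = \tau \ast \langle m_\tau \rangle$, where $m_\tau = \mu m.(\tau \ast \langle m \rangle \in T)$ is the least $m$ with $\tau \ast \langle m \rangle \in T$. Since $T$ is pruned, such an $m$ exists, so this unbounded search terminates; hence $h$ is total on $T$ and $T$-recursive. Iterating $h$ from $\sigma$ yields nodes $\sigma = \tau_0 \prec \tau_1 \prec \cdots$ with $|\tau_k| = |\sigma| + k$, and I would let $f$ be their union.

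To see that $f$ exists as a set in $\RCAo$, I would observe that its graph is $\Delta^0_1$ with parameter $T$. Call a node $\tau$ \emph{greedy} if $\sigma \preceq \tau \in T$ and for every $j$ with $|\sigma| \le j < |\tau|$ one has $\tau[j] \ast \langle \tau(j) \rangle \in T$ together with $\forall m' < \tau(j)(\tau[j] \ast \langle m' \rangle \notin T)$; this is a $\Delta^0_0$ condition in $T$. Greedy nodes are uniquely determined at each length and form a chain, so $(i,m) \in f$ holds iff there is a greedy $\tau$ with $|\tau| > i$ and $\tau(i) = m$, and equivalently iff every greedy $\tau$ with $|\tau| > i$ satisfies $\tau(i) = m$; this gives both a $\Sigma^0_1$ and a $\Pi^0_1$ description. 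Totality, namely that for each length there is a greedy node so that $f$ is defined everywhere, follows by $\Sigma^0_1$ induction on the length, using prunedness at the inductive step. Thus $f$ is a well-defined $T$-computable element of $\N^\N$, and by construction $\sigma \prec f$ and $f[n] \in T$ for all $n$, so $f \in [T]$.

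Finally I would verify leftmostness. Let $g \in [T]$ with $\sigma \prec g$ and $g \neq f$, and let $n$ be least with $f(n) \neq g(n)$; since both extend $\sigma$ we have $n \ge |\sigma|$ and $f[n] = g[n]$. As $g$ is a path, $f[n] \ast \langle g(n) \rangle = g[n+1] \in T$, so by the minimality built into the greedy step $f(n) \le g(n)$, and since they differ $f(n) < g(n)$; hence $f <_l g$. Therefore $f \le_l g$ for every path extending $\sigma$, as required. I expect the only real subtlety to lie in the bookkeeping of the third paragraph, confirming that the greedy construction stays inside $\Delta^0_1$ comprehension and that totality is provable from $\Sigma^0_1$ induction; the leftmost verification is a short combinatorial argument, and the mathematical content of the lemma is entirely in the greedy idea.
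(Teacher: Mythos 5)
Your proof is correct and is exactly the argument the paper has in mind: the paper's own proof is just ``immediate from the definition,'' and your greedy construction (extend by the least child at each step, observe the graph is $\Delta^0_1$ in $T$ via the uniqueness of greedy nodes, prove totality by $\Sigma^0_1$ induction using prunedness, and verify leftmostness at the first point of disagreement) is the standard way to fill in those details within $\RCAo$.
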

\begin{proof}
  It is immediate from the definition.
\end{proof}

\begin{theorem}\label{LPP and Pi11CA}
  \cite[Theorem 6.5.]{marcone_leftmost}
  The following are equivalent over $\RCAo$.
  \begin{enumerate}
    \item $\Pi^1_1\myhyphen\CAo$,
    \item Every ill-founded tree has the leftmost path.
  \end{enumerate}
\end{theorem}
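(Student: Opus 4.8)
The plan is to prove the two implications separately, leaning on the sequence-of-trees characterization of $\Pi^1_1\myhyphen\CAo$ (the cited Lemma VI.1.1 above) together with the order-preserving properties of the $\bigoplus$ operator and the pruned-tree lemma.

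For $(1) \Rightarrow (2)$ I would assume $\Pi^1_1\myhyphen\CAo$ and let $T$ be ill-founded. For $\sigma \in \N^{<\N}$ write $T_\sigma = \{\tau : \sigma \ast \tau \in T\}$ for the subtree above $\sigma$; the family $\langle T_\sigma\rangle_\sigma$ is a sequence of trees uniformly definable from $T$ in $\RCAo$. Using clause (2) of Lemma VI.1.1 I would form the set $P = \{\sigma \in T : [T_\sigma] \neq \varnothing\}$ of \emph{extendible} nodes, i.e.\ those lying on some path of $T$. One checks that $P$ is a tree, that $P$ is pruned (any path witnessing $\sigma \in P$ also witnesses that $\sigma$ has an extendible immediate successor), that $\langle\rangle \in P$ since $T$ is ill-founded, and crucially that $[P] = [T]$. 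Applying the pruned-tree lemma with $\sigma = \langle\rangle$ then yields a $P$-computable path $f$ that is leftmost among all paths of $P$, and since $[P] = [T]$ this $f$ is the leftmost path of $T$.

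For $(2) \Rightarrow (1)$, by Lemma VI.1.1 it suffices, given a sequence $\langle T_k\rangle_k$ of trees, to produce the set $\{k : [T_k] \neq \varnothing\}$. The idea is to replace each $T_k$ by a tree that is \emph{always} ill-founded but whose leftmost path records whether $T_k$ had a path. Concretely I would set
\[
  S_k = \{\langle\rangle\} \cup \{\langle 0\rangle \ast \sigma : \sigma \in T_k\} \cup \{\langle 1\rangle \ast \langle \underbrace{0,\ldots,0}_{n}\rangle : n \in \N\}.
\]
Then $S_k$ is ill-founded because of its right branch $\langle 1,0,0,\ldots\rangle$; if $T_k$ is ill-founded its leftmost path starts with $0$ (any left branch beats the right branch in $<_l$), and otherwise its leftmost path is exactly the right branch, starting with $1$. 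The sequence $\langle S_k\rangle_k$ exists in $\RCAo$, so $\bigoplus_k S_k$ exists and is ill-founded. Applying hypothesis (2) gives a leftmost path, which by clause (2) of the $\bigoplus$-lemma has the form $\bigoplus_k f_k$ with each $f_k$ the leftmost path of $S_k$. Hence $\{k : [T_k] \neq \varnothing\} = \{k : f_k(0) = 0\}$ is obtained by $\Delta^0_1$ comprehension from the decoded coordinates $f_k(0) = (\bigoplus_k f_k)((0,k))$, completing the reduction.

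I expect the main obstacle to be the verification in $(1) \Rightarrow (2)$ that $[P] = [T]$ and that the pruned leftmost path of $P$ is genuinely leftmost in $T$, together with checking in $(2) \Rightarrow (1)$ that the entire coding — the definition of $S_k$, the formation of $\langle S_k\rangle_k$ and $\bigoplus_k S_k$, and the decoding of the first coordinate — is carried out within $\RCAo$ rather than tacitly invoking a stronger base theory. These are bookkeeping points rather than conceptual ones, but they are exactly where the formalization can slip.
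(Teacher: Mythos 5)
Your proposal is correct and follows essentially the same route as the paper: for $(1)\Rightarrow(2)$ you form the pruned tree of extendible nodes (the paper's $S=\{\sigma\in T:\exists f\in[T]\,(\sigma\prec f)\}$) and take its computable leftmost path, and for $(2)\Rightarrow(1)$ you prepend $\langle 0\rangle$ to each $T_k$, add an always-infinite branch starting with $1$, combine via $\bigoplus$, and read off $\{k:[T_k]\neq\varnothing\}$ from the first coordinates of the leftmost path. The only differences are cosmetic (the paper's dummy branch is $1^\infty$ rather than $\langle 1,0,0,\ldots\rangle$, and it obtains the extendibility set directly by $\Sigma^1_1$ comprehension rather than via the sequence-of-trees lemma).
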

We review the proof of this theorem with the notions defined above since we will refine the argument here in the later discussions.
\begin{proof}
  $(1 \to 2)$. Let $T$ be an ill-founded tree. Define $S = \{\sigma \in T: \exists f \in [T] (\sigma \prec f)\}$.
  Then $S$ is a pruned tree such that $[S] = [T]$. Since $S$ is pruned, $S$ has the leftmost path.
  Clearly this is also the leftmost path of $[T]$.

  $(2 \to 1)$ It is enough to show that the second clause implies for any sequence $\langle T_k \rangle_k$ of trees, there is a set $X = \{k : [T_k] \neq \varnothing\}$.

  Assume that each ill-founded tree has the leftmost path. Let $\langle T_k \rangle_k$ be a sequence of trees.
  For each $k$, define $S_k$ by $\{\langle 0 \rangle \ast \sigma : \sigma \in T_k \} \cup \{1^l : l \in \N\}$ where $1^l$ is the sequence with length $=l$ such that each element of it is $1$.
  Then, each $S_k$ has a path $1^{\infty} = \langle 1,1,\ldots,\rangle$.
  Thus, $\bigoplus_k S_k$ is also ill-founded.

  By the assumption, take the leftmost path $f$ of $\bigoplus_k S_k$ and put $X = \{k : f(0,k) = 0\}$.
  It is easy to check that $X = \{k : [T_k] \neq \varnothing\}$.
\end{proof}

\subsection{Coded $\omega$-models}
We introduce the definition of coded $\omega$-models in $\RCAo$ and give some basic properties.

\begin{definition}
  ($\RCAo$)
  For a set $A$ and a sequence $ X = \langle X_i \rangle_i$, we say $A$ is in $\langle X_i \rangle_i$ or $\langle X_i \rangle_i$ contains $A$ (write $A \in \langle X_i \rangle _i$) if $\exists i(A = X_i)$, and thus
  $X$ is identified with an $\mathcal{L}_2$-structure $(\N,\langle X_i \rangle_i, 0,1,+,\cdot,\in)$.
  In this sense, we call $X$ a coded $\omega$-model.
\end{definition}

\begin{remark}
  Let $\M$ be a coded $\omega$-model.
  For a sentence $\sigma$ with parameters from $\M$, the satisfaction $\M \models \sigma$ is defined by
  the existence of an evaluating function for $\sigma$ which represents Tarski's truth definition and ensures $\sigma$ is true.
  If we work in $\ACAo$, then $\M \models \sigma$ if and only if
  the relativization $\sigma^{\M}$ is true.
  In particular, if $\sigma$ is arithmeical, then $\M \models \sigma$ if and only if $\sigma$ is true.
\end{remark}

 \begin{definition}\label{def-preliminaries-reflection}
   Let $\sigma$ be an $\L_2$-sentence.
   Define the assertion $\omega$-model reflection of $\sigma$ by
   \begin{equation*}
     \forall X \exists \M : \text{$\omega$-model } (X \in \M \land \M \models \sigma).
   \end{equation*}
 \end{definition}

 \begin{lemma}\label{reflection and con}
   Let $\sigma$ be a $\Pi^1_2$ sentence. Then, over $\ACAo$,
   the $\omega$-model reflection of $\sigma$ implies $\sigma + \Con(\sigma)$ where $\Con(\sigma)$ is the $\Pi^0_1$ formula
   stating that `$\sigma$ is consistent'.
   Therefore, if $\Con(\sigma)$ is not provable from $\ACAo$, then the $\omega$-model reflection of $\sigma$ is strictly stronger than $\sigma$.
 \end{lemma}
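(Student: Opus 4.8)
The plan is to prove the two implications of the first sentence separately and then read off the strictness clause. Write $\sigma$ in prenex normal form as $\forall Z\, \exists W\, \theta(Z,W)$ with $\theta$ arithmetical, and recall from Definition~\ref{def-preliminaries-reflection} that the $\omega$-model reflection of $\sigma$ asserts that every set sits inside an $\omega$-model of $\sigma$.

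First I would show that the $\omega$-model reflection of $\sigma$ implies $\sigma$. Fix an arbitrary set $Z$ and apply reflection to $X = Z$: this yields an $\omega$-model $\M$ with $Z \in \M$ and $\M \models \sigma$. Instantiating the outer universal quantifier of $\sigma$ at the parameter $Z \in \M$ produces some $W \in \M$ with $\M \models \theta(Z,W)$. Since $\theta$ is arithmetical and $Z,W \in \M$, the preceding Remark (absoluteness of arithmetical statements between an $\omega$-model and the ambient structure) gives that $\theta(Z,W)$ genuinely holds, whence $\exists W\, \theta(Z,W)$. As $Z$ was arbitrary, $\sigma$ follows. This direction is routine; the only thing to record is that the matrix $\theta$ being arithmetical is exactly what turns the witness found inside $\M$ into a real witness.

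The substantial step is deriving $\Con(\sigma)$. Applying reflection (say to $X = \varnothing$) produces an $\omega$-model $\M$ with $\M \models \sigma$, and I would invoke the arithmetized soundness theorem: a sentence satisfied by a model is consistent. I expect the formalization of this in $\ACAo$ to be the main obstacle, because the \emph{full} satisfaction class of a coded $\omega$-model is not available in $\ACAo$ (constructing it amounts to an $\omega$-jump), so one cannot directly evaluate inside $\M$ the arbitrarily complex formulas occurring in a hypothetical refutation of $\sigma$. I would circumvent this via the subformula property: by Gentzen's Hauptsatz any derivation of a contradiction from $\sigma$ may be taken cut-free, and hence mentions only subformulas of $\sigma$. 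The satisfaction predicate of $\M$ restricted to the finitely many subformulas of $\sigma$ has bounded arithmetical complexity, so it exists by arithmetical comprehension and is precisely the data already packaged in $\M \models \sigma$. An induction along the cut-free derivation, using this restricted predicate, shows that every derived sequent is true in $\M$; since the empty sequent is not, no such derivation exists, i.e. $\Con(\sigma)$ holds.

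Finally, strictness follows formally. By the two implications, $\ACAo$ together with the $\omega$-model reflection of $\sigma$ proves $\sigma \wedge \Con(\sigma)$, and in particular proves the $\Pi^0_1$ sentence $\Con(\sigma)$. Conversely, $\sigma$ does not prove $\Con(\sigma)$ over $\ACAo$: if it did, then $\ACAo + \sigma \vdash \Con(\sigma)$, and in the intended situation where $\sigma$ extends $\ACAo$ this says that the theory $\sigma$ proves its own consistency, which contradicts G\"odel's second incompleteness theorem once we know $\sigma$ is consistent. The hypothesis $\ACAo \nvdash \Con(\sigma)$ is exactly what guarantees that $\Con(\sigma)$ is a nontrivial assertion (so that $\sigma$ is not refutable and the incompleteness theorem applies). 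Hence the reflection proves $\Con(\sigma)$ while $\sigma$ does not, so the $\omega$-model reflection of $\sigma$ is strictly stronger than $\sigma$ over $\ACAo$.
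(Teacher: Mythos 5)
Your proof of the first implication is exactly the paper's: fix a set $A$, reflect to obtain an $\omega$-model $\M$ with $A \in \M$ and $\M \models \sigma$, extract a witness $B \in \M$, and use absoluteness of the arithmetical matrix. For the consistency part the paper is much terser: it simply remarks that $\M$ is a ``(weak-)model'' of $\sigma$ and concludes $\Con(\sigma)$, implicitly invoking the soundness theorem for weak models --- the evaluating function packaged into the definition of $\M \models \sigma$ is precisely the restricted satisfaction class that makes $\M$ a weak model of $\{\sigma\}$. Your cut-elimination elaboration is a correct way of seeing why that appeal goes through in $\ACAo$: the obstacle you flag (no full satisfaction class for a coded $\omega$-model in $\ACAo$) is real, and the subformula property plus a partial, arithmetically definable satisfaction predicate is the standard fix; note that the induction along the cut-free derivation is then an arithmetical induction (in the codes of $\M$ and the derivation), which is what makes it available in $\ACAo$. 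So in substance your argument coincides with the paper's, with the soundness step unpacked rather than cited. One caution on the final clause: your appeal to G\"odel's second incompleteness theorem imports hypotheses that are not in the statement ($\sigma$ ``extends $\ACAo$'', $\sigma$ is known to be consistent). The intended use of the displayed hypothesis is more direct: $\ACAo$ plus the reflection of $\sigma$ proves the $\Pi^0_1$ sentence $\Con(\sigma)$, so as soon as $\Con(\sigma)$ is not provable from $\sigma$ over $\ACAo$ (which is how the hypothesis functions in the paper's later applications, e.g.\ in separating the $\beta^1_0\RFN(n)$), the two theories differ; no incompleteness theorem is needed here. The paper offers no argument for this clause at all, so this is a quibble about packaging rather than a gap in your proof.
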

 \begin{proof}
   Write $\sigma \equiv \forall X \exists Y \theta(X,Y)$ by an arithmetical formula $\theta$.
   Assume $\ACAo$ and the $\omega$-model reflection of $\sigma$.
   We show that $\sigma + \Con(\sigma)$ holds.
   Let  $A$ be a set. By the $\omega$-model reflection, take an $\omega$-model $\M$ such that $A \in \M \land \M \models \forall X \exists Y \theta(X,Y)$.
   Then, $\M \models \exists Y \theta(A,Y)$ and hence $\theta(A,B)$ holds for some $B \in \M$.
   In addition, since $\M$ is a (weak-)model of $\sigma$, $\Con(\sigma)$ holds.
 \end{proof}

 The following characterization of $\ACA_0^+$ is well-known.
 \begin{theorem}\label{ACAo+ adn reflection}
   Over $\RCAo$, the following are equivalent.
   \begin{enumerate}
     \item $\ACA_0^+$,
     \item the $\omega$-model reflection of $\ACAo$; any set is contained in an $\omega$-model of $\ACAo$.
   \end{enumerate}
 \end{theorem}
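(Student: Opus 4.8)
The plan is to prove both implications by passing through the \emph{arithmetic closure} of a set together with the standard characterization of arithmetic comprehension for $\omega$-models: a coded $\omega$-model $\M$ satisfies $\ACAo$ if and only if $\M \models \RCAo$ and $\M$ is closed under the Turing jump, i.e. $Z \in \M$ implies $Z' \in \M$. This is the $\omega$-model form of the equivalence $\ACAo \leftrightarrow \forall Z\exists Y(Y = Z')$ over $\RCAo$, and I will use it freely in both directions.

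For $(1 \to 2)$, assume $\ACA_0^+$ and fix a set $X$; then $X^{(\omega)}$ exists, and each finite jump $X^{(n)}$ is uniformly computable from $X^{(\omega)}$. For each pair $(e,n)$ I set $Y_{(e,n)} = \{m : \Phi_e^{X^{(n)}}(m)\downarrow = 1\}$ when $\Phi_e^{X^{(n)}}$ is total, and $Y_{(e,n)} = \varnothing$ otherwise. Totality of $\Phi_e^{X^{(n)}}$ is $\Pi^0_2$ in $X^{(n)}$, hence decidable from $X^{(n+2)} \le_T X^{(\omega)}$, so the double sequence $\langle Y_{(e,n)} \rangle_{(e,n)}$ is arithmetical in $X^{(\omega)}$ and exists by arithmetic comprehension. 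Let $\M = \langle Y_{(e,n)} \rangle_{(e,n)}$. This $\M$ enumerates exactly the sets arithmetical in $X$: it contains $X$ (take the functional reading its oracle, at level $n=0$); it is closed under $\le_T$ and $\oplus$, so $\M \models \RCAo$; and if $Z \le_T X^{(n)}$ then $Z' \le_T X^{(n+1)}$, so $\M$ is closed under the jump and therefore $\M \models \ACAo$. Hence $X$ lies in an $\omega$-model of $\ACAo$.

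For $(2 \to 1)$, the first step is to note that hypothesis (2) already proves $\ACAo$ over $\RCAo$. Given $X$, take a reflecting $\omega$-model $\M \ni X$ with $\M \models \ACAo$; then $\M$ contains some $X_j$ with $\M \models \text{``}X_j = X'\text{''}$. Since $X_j = X'$ amounts to $\forall e\,(e \in X_j \leftrightarrow \Phi_e^X(e)\downarrow)$, a quantified Boolean combination of $\Sigma^0_1$ conditions on $X \oplus X_j$, and $\omega$-models are correct about such statements already over $\RCAo$, the set $X_j$ is genuinely $X'$; thus $X'$ exists and the ambient theory is $\ACAo$. Now fix $X$ and a reflecting $\M = \langle X_i \rangle_i \models \ACAo$ with $X = X_{i_0}$. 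Working in the ambient $\ACAo$, define $g$ by primitive recursion: $g(0) = i_0$ and $g(n+1) = $ the least $j$ with $X_j = (X_{g(n)})'$. The function $g$ is total because $\M \models \forall Z \exists Y(Y = Z')$ furnishes, for $Z = X_{g(n)} \in \M$, an index $j$ that $\M$ believes names the jump, and arithmetic correctness of $\omega$-models makes $X_j$ the true jump $(X_{g(n)})'$. An induction on $n$ gives $X_{g(n)} = X^{(n)}$, so the set $Y = \{(m,n) : m \in X_{g(n)}\}$, which exists by arithmetic comprehension relative to $\M \oplus g$, has $0$-th column $X$ and $(n+1)$-th column the jump of its $n$-th column; by uniqueness of the iterated jump, $Y = X^{(\omega)}$. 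Hence $\ACA_0^+$ holds.

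The main obstacle is precisely the gap between $\ACAo$ and $\ACA_0'$ in the direction $(2 \to 1)$: a coded $\omega$-model of $\ACAo$ is guaranteed to be closed under the jump only one step at a time, and $\ACAo$ does \emph{not} prove $\forall n\exists Y(Y = X^{(n)})$, so one cannot simply read off the whole sequence $\langle X^{(n)} \rangle_n$ from inside $\M$. The resolution is to carry out the iteration \emph{externally}, in the ambient theory after first upgrading it to $\ACAo$, and to keep the recursion honest by invoking arithmetic absoluteness so that each jump the model merely believes in is the true jump. I expect the delicate points to be verifying this absoluteness at the right complexity for the single step $X_j = (X_{g(n)})'$ and confirming that $g$ is genuinely total; the remainder is bookkeeping about the arithmetic closure.
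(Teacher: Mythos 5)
Your proof is correct and is the standard argument for this well-known equivalence; the paper itself states the theorem without proof, citing it as folklore, and your two directions (building the arithmetical closure $\{Z : Z \le_{\T} X^{(n)} \text{ for some } n\}$ from $X^{(\omega)}$, and conversely first extracting $\ACAo$ from reflection via arithmetic absoluteness and then iterating the jump externally along the model's indices) are exactly the intended route. The one point worth keeping explicit, as you do, is that the recursion defining $g$ and its totality are justified by arithmetical induction and the least number principle, both available once the ambient theory has been upgraded to $\ACAo$.
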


\subsection{Hyperjump and $\beta$-model reflection}
In this subsection, we introduce the notion of hyperjump and $\beta$-models.
Hyperjump is a $\Sigma^1_1$ analogue of Turing jump.
As $\ACAo$ is characterized by Turing jumps, $\Pi^1_1\myhyphen\CAo$ is characterized by hyperjumps.

\begin{definition}\label{universal Sigma11}
  (Universal $\Sigma^1_1$ formula)
  Let $\Sat_{\Pi^0_0}$ be a universal $\Pi^0_0$ formula.
  We define the $\Sigma^1_1$ universal formula $\sigma^1_1$ by
  \begin{align*}
    \sigma^1_1(e,x,X) \equiv \exists f \forall y \Sat_{\Pi^0_0}(e,x,X,f[y]).
  \end{align*}.
\end{definition}

\begin{definition}
  ($\ACAo$)
  Let $\M$ be a coded $\omega$-model. We say $\M$ is a coded $\beta$-model if
  \begin{equation*}
    \forall X \in \M \forall e,x (\sigma^1_1(e,x,X) \leftrightarrow \M \models \sigma^1_1(e,x,X)).
  \end{equation*}
\end{definition}

Intuitively, a coded $\beta$-model is a $\Sigma^1_1$ absolute $\omega$-model. In fact, we can show the following.

\begin{lemma}
  Let $\theta(x,X)$ be a $\Sigma^1_1$ formula.
  Then, $\ACAo$ proves that for any $\beta$-model $\M$, $\forall X \in \M \forall x (\theta(x,X) \leftrightarrow \M \models \theta(x,X))$.
\end{lemma}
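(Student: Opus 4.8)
The plan is to reduce the statement for an arbitrary $\Sigma^1_1$ formula $\theta$ to the single instance that is already built into the definition of a $\beta$-model, namely absoluteness of the universal formula $\sigma^1_1$. Since $\sigma^1_1$ is universal, for the fixed formula $\theta(x,X)$ there is a standard natural number $e$ such that $\ACAo$ proves $\forall x \forall X(\theta(x,X) \leftrightarrow \sigma^1_1(e,x,X))$. Concretely, I would first rewrite $\theta$ in the Kleene normal form $\exists f \forall y\, \rho(x,X,f[y])$ with $\rho \in \Pi^0_0$ (provable in $\ACAo$), and then choose $e$ so that $\Sat_{\Pi^0_0}(e,x,X,s) \leftrightarrow \rho(x,X,s)$ holds for all arguments (provable in $\RCAo$, by universality of $\Sat_{\Pi^0_0}$). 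I fix such an $e$ at the outset and argue inside $\ACAo$.

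The core of the argument is then a chain of equivalences, carried out for an arbitrary $\beta$-model $\M$, a set $X \in \M$, and a number $x$:
\begin{align*}
  \theta(x,X) \;\leftrightarrow\; \sigma^1_1(e,x,X) \;\leftrightarrow\; \M \models \sigma^1_1(e,x,X) \;\leftrightarrow\; \M \models \theta(x,X).
\end{align*}
The first equivalence is the instance of the $\ACAo$-provable equivalence fixed above, read in the real world. The middle equivalence is exactly the defining clause of ``$\M$ is a $\beta$-model'', applied to the parameters $e,x,X$. The last equivalence is the same reduction as the first, but now read inside $\M$: it asserts $\M \models (\theta(x,X) \leftrightarrow \sigma^1_1(e,x,X))$. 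Composing the three links yields $\theta(x,X) \leftrightarrow \M \models \theta(x,X)$, which is the conclusion.

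The main obstacle is justifying the last link, i.e.\ that the chosen equivalence also holds inside $\M$. Since it is provable in $\ACAo$, it suffices to know that $\M$ itself is a model of $\ACAo$; the relativized equivalence then follows by relativizing the $\ACAo$-proof to $\M$ (using that $\rho$ and $\Sat_{\Pi^0_0}$ are arithmetical, hence absolute between $\M$ and the real world). Thus the remaining point is that every $\beta$-model already satisfies $\ACAo$, and I would derive this from the absoluteness property itself. For $X \in \M$, each of the statements ``$X \oplus Y$ exists'' (for $Y \in \M$), ``$\Phi^X_e$ exists'' (when total), and ``the Turing jump $X'$ exists'' can be written as a \emph{true} $\Sigma^1_1$ formula with set parameter $X$ whose existential witness is exactly the desired set; applying $\sigma^1_1$-absoluteness (through universality) places that witness in $\M$. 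Hence $\M$ is a Turing ideal closed under jump, and since an $\omega$-model automatically validates the induction scheme, $\M \models \ACAo$. This closure argument, extracting $\M \models \ACAo$ from mere $\sigma^1_1$-absoluteness, is the step needing the most care; the rest is bookkeeping with the universal formula and with arithmetical absoluteness for coded $\omega$-models.
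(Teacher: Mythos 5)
Your proposal is correct and is exactly the argument the paper has in mind: the paper's own proof is just ``immediate from the definition of $\beta$-models,'' i.e.\ reduce $\theta$ to an instance $\sigma^1_1(e,x,X)$ of the universal formula and invoke the defining absoluteness clause, which is precisely your three-link chain. Your extra care about the last link --- that the normal-form equivalence must also hold inside $\M$, hence that a coded $\beta$-model must satisfy $\ACAo$ (the standard fact, cf.\ Simpson VII.2.4/VII.2.7, which the paper uses elsewhere) --- is the right point to make explicit, and your sketch of deriving it from $\sigma^1_1$-absoluteness is the standard route.
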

\begin{proof}
Immediate from the definition of $\beta$-models.
\end{proof}

We next introduce the notion of hyperjump.
Intuitively, the hyperjump operator is an operator to make a $\Sigma^{1,X}_1$ complete set from a set $X$.
\begin{definition}
  ($\ACAo$)
  Let $X$ be a set. Define the hyperjump $\HJ(X)$ by $\HJ(X)  = \{(e,x) : \sigma^1_1(e,x,X)\}$.
  We write $\HJ^n(X)$ for the $n$-times iteration of the hyperjump.
 \end{definition}

\begin{lemma}
  Let $\varphi(x,y,X)$ be a $\Sigma^1_1$ formula with exactly displayed free variables.
  Then, $\ACAo$ proves
  \begin{equation*}
    \forall y,X,Y (Y = \HJ(X) \to \exists Z \leq_{\T} Y (Z = \{x : \varphi(x,y,X)\}) ).
  \end{equation*}
  That is, any $\Sigma^{1,X}_1$-definable set is computable from $\HJ(X)$.
  Hence, any $\Pi^{1,X}_1$-definable set is also computable from $\HJ(X)$.
\end{lemma}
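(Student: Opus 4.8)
The plan is to use the universality of $\sigma^1_1$ to reduce the set $\{x : \varphi(x,y,X)\}$ to a single membership query on $\HJ(X)$, so that the set becomes outright computable from $Y = \HJ(X)$. First I would collapse the two number variables $x,y$ into one by coding: viewing $\varphi((z)_0,(z)_1,X)$ as a $\Sigma^1_1$ formula in the single number variable $z$ and the set variable $X$, the universality of $\sigma^1_1$ furnishes a standard number $e$ --- obtained at the meta-level from the G\"odel number of $\varphi$ by the primitive recursive function implicit in the normal-form construction --- for which $\ACAo$ proves
\[
  \forall x,y,X\,(\varphi(x,y,X) \leftrightarrow \sigma^1_1(e,\langle x,y\rangle,X)).
\]
This is where essentially all the content sits: it packages both the Kleene normal form for $\Sigma^1_1$ formulas and the universality of $\Sat_{\Pi^0_0}$, and the point requiring care is that the equivalence is \emph{provable} in $\ACAo$ with a fixed numeral $e$, not merely true.

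With the index in hand the rest is bookkeeping. Working in $\ACAo$, fix $y,X,Y$ with $Y = \HJ(X)$. By the definition of the hyperjump, $(e,\langle x,y\rangle) \in Y \leftrightarrow \sigma^1_1(e,\langle x,y\rangle,X)$ for every $x$, and combining this with the displayed equivalence yields $\varphi(x,y,X) \leftrightarrow (e,\langle x,y\rangle) \in Y$. The right-hand side is a $\Delta^0_1$ (indeed quantifier-free in $Y$) condition on $x$ with number parameters $e,y$, so $\Delta^0_1$-comprehension produces the set $Z = \{x : (e,\langle x,y\rangle) \in Y\}$, which then satisfies $Z = \{x : \varphi(x,y,X)\}$. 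Moreover $Z \leq_{\T} Y$ via the evident Turing functional that, on input $x$, queries $Y$ at $(e,\langle x,y\rangle)$, the number parameters being absorbed into the functional; this proves the main assertion.

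Finally, for the closing ``Hence'', I would apply the statement just proved to the $\Sigma^1_1$ formula $\neg\psi$ whenever $\psi$ is $\Pi^1_1$, obtaining $\{x : \neg\psi(x,y,X)\} \leq_{\T} Y$; since Turing reducibility is closed under complementation over $\RCAo$, the complement $\{x : \psi(x,y,X)\}$ is likewise computable from $Y$. I expect no serious obstacle beyond the first step: provided the universality of $\sigma^1_1$ is available as an $\ACAo$-theorem (which is standard), the reduction and the comprehension are routine.
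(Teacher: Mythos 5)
Your proof is correct and is exactly the argument the paper has in mind: the paper's proof is just the single line ``Immediate from the definition of $\HJ$,'' and what you have written out (obtaining an index $e$ for $\varphi$ via the universal $\Sigma^1_1$ formula, reducing membership in $\{x:\varphi(x,y,X)\}$ to the query $(e,\langle x,y\rangle)\in Y$, and handling the $\Pi^1_1$ case by complementation) is precisely the unpacking of that remark. No discrepancy to report.
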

\begin{proof}
Immediate from the definition of $\HJ$.
\end{proof}

\begin{lemma}[\cite{Simpson} Lemma VII.2.9]
  Over $\ACAo$, for any set $X$, the existence of $\HJ(X)$ is equivalent to the existence of a coded $\beta$-model containing $X$.
\end{lemma}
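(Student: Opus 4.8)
\emph{Plan.} The statement is an equivalence, so I treat the two directions separately, working throughout in $\ACAo$. The direction \emph{from a $\beta$-model to the hyperjump} is the routine one, and I would dispose of it first. Suppose $\M=\langle M_i\rangle_i$ is a coded $\beta$-model with $X\in\M$. By the defining absoluteness clause, $\sigma^1_1(e,x,X)\leftrightarrow\M\models\sigma^1_1(e,x,X)$ for all $e,x$. The point is that the right-hand side is \emph{arithmetical in the code of $\M$}: unwinding satisfaction in a coded $\omega$-model, $\M\models\sigma^1_1(e,x,X)$ holds iff there is an index $i$ with $\forall y\,\Sat_{\Pi^0_0}(e,x,X,M_i[y])$, and since $M_i,X\in\M$ and the matrix is $\Pi^0_1$, its truth in $\M$ coincides with its actual truth. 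Thus $\{(e,x):\M\models\sigma^1_1(e,x,X)\}$ is defined by a $\Sigma^0_2$ formula with parameters the code of $\M$ and $X$, so it exists by arithmetical comprehension; by absoluteness it equals $\HJ(X)$. Hence $\HJ(X)$ exists.

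The direction \emph{from the hyperjump to a $\beta$-model} is the substantial one. Given $Y=\HJ(X)$, I want to produce a coded $\omega$-model $\M\ni X$, coded by a set recursive in $Y$ and arithmetically closed, satisfying the absoluteness clause. The naive first idea — to let $\M$ be a $Y$-recursive enumeration of the sets hyperarithmetic in $X$ — is \emph{not} correct as it stands: since $\sigma^1_1$ is universal, $\beta$-absoluteness for $\M$ amounts to demanding that every ill-founded $Z$-recursive tree on $\N$ (for $Z\in\M$) has a path in $\M$, and there are $X$-recursive ill-founded trees on $\N$ with no path hyperarithmetic in $X$. So the model must be enlarged to contain witnessing paths while staying recursive in $Y$.

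The mechanism that makes this possible, and the step I regard as the heart of the proof, is that $Y=\HJ(X)$ itself supplies such witnesses. For any $Z$ with $\HJ(Z)\leq_{\T}Y$ and any $Z$-recursive ill-founded tree $T$, the predicate ``$\sigma$ extends to a path of $T$'' is $\Sigma^{1,Z}_1$, hence recursive in $\HJ(Z)\leq_{\T}Y$ by the earlier lemma; so the leftmost path of $T$ is computable from $Y$. The plan is therefore to build $\M$ by a transfinite recursion carried out inside $Y$ — iterating the jump and adjoining leftmost paths of trees already present — along a linear order $L$ that $Y$ processes as a well-ordering, using $Y$ both to run the iteration (deciding at each level the $\Sigma^1_1$ facts needed to pass to the next) and to collect the result into a single $Y$-recursive sequence $\langle M_i\rangle_i$ with $X=M_0$.

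The obstacle is to secure $\beta$-closure — that \emph{every} true $\sigma^1_1(e,x,Z)$ with $Z\in\M$ is witnessed in $\M$ — without the construction demanding a second hyperjump: naive closure would force $\HJ(Y)=\HJ^2(X)$, which is not available from $Y$ alone. I would resolve this by taking $L$ to be a \emph{pseudo-well-ordering}, i.e.\ an $X$-recursive linear order that is ill-founded yet has no descending sequence hyperarithmetic in $X$. Running the hierarchy along $L$ overshoots every genuine $X$-recursive ordinal, so $\M$ contains all sets hyperarithmetic in $X$ and the witnessing leftmost paths as well; and the absence of an $X$-hyperarithmetic descending sequence in $L$ is exactly what pins the construction at a single use of the hyperjump, so that it neither halts too early nor escapes $Y$. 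Verifying that the resulting $\M$ satisfies the absoluteness clause — via a Gandy-basis/overspill argument together with the fact that $\Sigma^{1,Z}_1$-definable sets are recursive in $\HJ(Z)$ — is the final and most delicate step, and is where I expect the real work to lie.
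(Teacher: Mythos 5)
The paper offers no proof of this lemma: it is imported verbatim from Simpson (Lemma VII.2.9), so there is no in-paper argument to compare against, and your attempt must be judged on its own terms. Your easy direction is complete and correct: in a coded $\beta$-model $\M=\langle M_i\rangle_i$ the relation $\M\models\sigma^1_1(e,x,X)$ unwinds to $\exists i\,\forall y\,\Sat_{\Pi^0_0}(e,x,X,M_i[y])$, a $\Sigma^0_2$ condition in the code of $\M$ and $X$, so arithmetical comprehension plus the $\beta$-absoluteness clause yields $\HJ(X)$.

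The hard direction, however, has a genuine gap --- one you half-acknowledge. You correctly diagnose the obstacle (naive closure under hyperjump witnesses costs $\HJ^2(X)$) and you name the right classical device (an $X$-recursive ill-founded linear order with no hyperarithmetic descending sequence). But two things are wrong or missing. First, the construction as you literally describe it --- ``iterating the jump and adjoining leftmost paths of trees already present,'' with $Y$ ``deciding at each level the $\Sigma^1_1$ facts needed to pass to the next'' --- reintroduces the very obstacle you diagnosed: once a set $Z$ adjoined at an earlier level is no longer of hyperdegree $\leq$ that of $X$, deciding $\sigma^1_1(e,x,Z)$ and computing leftmost paths of $Z$-recursive trees requires $\HJ(Z)$, which need not be computable from $Y=\HJ(X)$; the pseudo-well-ordering does not help you \emph{run} such an iteration. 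The standard argument instead runs a purely arithmetical (Turing-jump) hierarchy $H$ along the pseudo-well-ordering $L$ --- whose existence is obtained by $\Sigma^1_1$-overspill from the fact that jump hierarchies exist along all genuine $X$-recursive well-orderings, all of this computable from $Y$ --- and only \emph{afterwards} proves that $\M=\{Z : Z\leq_{\T} H_b \text{ for some } b \text{ in the well-founded part of } L\}$ is a $\beta$-model. Second, that final verification, which you explicitly defer (``where I expect the real work to lie''), is the actual content of the lemma: one must show that if some true $\Sigma^1_1$ fact about a $Z\in\M$ had no witness in $\M$, then the set of levels of $H$ computing a witness would separate the well-founded part of $L$ from the rest in a way definable arithmetically from $H$, contradicting the choice of $L$ via $\Sigma^1_1$-boundedness (or by extracting a forbidden descending sequence). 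Without that argument the proposal is a correct plan, not a proof.
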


Combining the above two lemmas, we have that
\begin{theorem}\label{equivalence of HJ and Beta}
  The following assertions are equivalent over $\ACAo$.
  \begin{enumerate}
    \item $\Pi^1_1\myhyphen\CAo$,
    \item $\forall X \exists Y (Y = \HJ(X))$,
    \item $\beta$-model reflection : $\forall X \exists \M \text{ : $\beta$-model } (X \in \M)$.
  \end{enumerate}
\end{theorem}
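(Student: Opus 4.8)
The plan is to establish the three equivalences by assembling the two preceding lemmas, treating the immediate equivalence $(2) \leftrightarrow (3)$ separately from the cycle $(1) \to (2) \to (1)$.

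First I would dispose of $(2) \leftrightarrow (3)$, which is essentially a restatement of the preceding lemma ([Simpson VII.2.9]). That lemma asserts that, over $\ACAo$, for a fixed set $X$ the existence of $\HJ(X)$ is equivalent to the existence of a coded $\beta$-model containing $X$. Prefixing both sides with $\forall X$ turns the left-hand side into clause (2) and the right-hand side into the $\beta$-model reflection of clause (3), so no further work is required here.

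Next I would prove $(1) \to (2)$. By the definition of the hyperjump, $\HJ(X) = \{(e,x) : \sigma^1_1(e,x,X)\}$, and since $\sigma^1_1$ is a universal $\Sigma^1_1$ formula, the defining condition is $\Sigma^1_1$. Now $\Pi^1_1\myhyphen\CAo$ proves $\Sigma^1_1$-comprehension: the complement $\{(e,x) : \neg\sigma^1_1(e,x,X)\}$ is defined by a $\Pi^1_1$ formula, hence exists by $\Pi^1_1$-comprehension, and then $\HJ(X)$ exists as its complement (this last step being available already in $\RCAo$). Thus hyperjumps exist for every $X$. For the converse $(2) \to (1)$, I would invoke the first of the two lemmas, which states that any $\Pi^{1,X}_1$-definable set is computable from $\HJ(X)$. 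To verify $\Pi^1_1$-comprehension, let $\psi(n)$ be an arbitrary $\Pi^1_1$ formula; collecting its set parameters into a single set $X$ via the join, the set $\{n : \psi(n)\}$ is $\Pi^{1,X}_1$-definable. Assuming (2), the hyperjump $\HJ(X)$ exists, and by the lemma $\{n : \psi(n)\} \leq_{\T} \HJ(X)$, so it exists by $\Delta^0_1$-comprehension relative to $\HJ(X)$. This yields $\Pi^1_1$-comprehension and closes the cycle.

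All steps are routine once the two lemmas are in hand; the only point requiring slight care is $(2) \to (1)$, where I must ensure that every set parameter of the given $\Pi^1_1$ formula is absorbed into the single set $X$ whose hyperjump is formed, so that the relative-computability bound from the first lemma applies uniformly and produces an honest instance of $\Pi^1_1$-comprehension.
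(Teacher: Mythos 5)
Your proposal is correct and follows essentially the same route as the paper, which simply states the theorem as "combining the above two lemmas": clause $(2)\leftrightarrow(3)$ is the universally quantified form of Simpson's Lemma VII.2.9, $(2)\to(1)$ uses the lemma that every $\Pi^{1,X}_1$-definable set is computable from $\HJ(X)$, and $(1)\to(2)$ uses that $\HJ(X)$ is the complement of a $\Pi^1_1$-definable set. You have merely made explicit the details (including the parameter-absorption point) that the paper leaves to the reader.
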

We note that all of these assertions are $\Pi^1_3$ statements and any of them is never equivalent to a $\Pi^1_2$ statement.
In this paper, we introduce $\Pi^1_2$ variants of the second and third assertions to consider the structure of $\Pi^1_2$ consequences of $\Pi^1_1\myhyphen\CAo$.

At last, we see that a hyperjump in the ground model behaves a hyperjump in a coded $\omega$-model.
The following two lemmas are refinements of \cite[VII.1.12]{Simpson}.

\begin{lemma}\label{absoluteness of hyperjump}
  Let $\M = (\N^{\M},S^{M})$ and $\M' = (\N^{\M'},S^{\M'})$ be models of $\ACAo$ such that
  $\N^{\M} = \N^{\M'}$ and $S^{\M'} \subseteq S^{\M}$.

  Let $A,B \in \M'$ such that $\M \models B= \HJ(A)$. Then, $\M' \models B = \HJ(A)$.
  Moreover, if $\M'$ is a coded $\omega$-model in $\M$, then $\M \models (B = \HJ(A))^{\M'}$.
\end{lemma}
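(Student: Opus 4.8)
The plan is to unfold the definition of $B = \HJ(A)$, which reads $\forall e,x\,((e,x) \in B \leftrightarrow \sigma^1_1(e,x,A))$ with $\sigma^1_1(e,x,A) \equiv \exists f\,\forall y\,\Sat_{\Pi^0_0}(e,x,A,f[y])$, and then verify the biconditional in $\M'$ (resp. internally in $\M$) one direction at a time. Fix a pair $(e,x)$ and let $T^A_{e,x}$ be the $A$-computable tree with $\sigma^1_1(e,x,A) \leftrightarrow [T^A_{e,x}] \neq \varnothing$. Since $B \in \M'$ and the two models share the first-order part, the condition $(e,x) \in B$ means the same thing in $\M$ and $\M'$; likewise all arithmetic facts about $A, B, T^A_{e,x}$ are absolute. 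So the whole content is to transfer ill-/well-foundedness of $T^A_{e,x}$ between $\M$ and $\M'$.

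First I would prove $\M' \models B = \HJ(A)$. For the \emph{well-founded} case, if $(e,x) \notin B$ then $\M \models \neg\sigma^1_1(e,x,A)$, i.e. $T^A_{e,x}$ has no path in $\M$; since $S^{\M'} \subseteq S^{\M}$ it has none in $\M'$ either, so $\M' \models \neg\sigma^1_1(e,x,A)$. The \emph{ill-founded} case is the crux. If $(e,x) \in B$, then $T^A_{e,x}$ is ill-founded in $\M$, and using $B = \HJ(A)$ I would extract a witnessing path that already lives in $\M'$. Concretely, the set $I$ of nodes $s$ such that the subtree of $T^A_{e,x}$ above $s$ is ill-founded is uniformly $\Sigma^{1,A}_1$, hence $I \le_{\T} \HJ(A) = B$ by the lemma that $\Sigma^{1,A}_1$-definable sets are computable from $\HJ(A)$. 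One checks in $\ACAo$ that $I$ is a pruned tree (if the subtree above $s$ has a path, that path runs through some immediate successor, whose subtree is then ill-founded). Mirroring the proof of Theorem \ref{LPP and Pi11CA}, the pruned tree $I \le_{\T} B$ has a leftmost path $f \le_{\T} I \le_{\T} B$, and $f$ is a path of $T^A_{e,x}$. Since $B \in \M'$ and $\M' \models \ACAo$, closure under Turing reducibility gives $f \in \M'$, whence $\M' \models \sigma^1_1(e,x,A)$. Combining the two cases yields $\M' \models B = \HJ(A)$.

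For the \emph{moreover} part I would re-run exactly this argument inside $\M$. Now $\M \models \ACAo$ and $\M'$ is a coded $\omega$-model of $\ACAo$ in $\M$, so $\M$ can verify $(e,x)\in B$, can form the pruned tree $I \le_{\T} B$ and its leftmost path $f \le_{\T} B$, and—seeing that $\M'$ contains $B$ and is closed under Turing reducibility—can confirm $f \in \M'$; the well-founded direction again transfers downward and is checked directly in $\M$. By the Remark identifying $\M' \models {\cdot}$ with relativization when working in $\ACAo$, this gives $\M \models (B = \HJ(A))^{\M'}$.

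The main obstacle is precisely the ill-founded direction: producing an \emph{actual} witnessing path that lies in the smaller model $\M'$. Without the hypothesis $B = \HJ(A)$ one only has upward absoluteness of $\Sigma^1_1$ statements, so a path existing in $\M$ need not be reflected into $\M'$. The hyperjump is what lets us replace the existential set quantifier by an explicit $B$-computable object—via the ill-founded-node set $I \le_{\T} B$ and the pruned-tree path-extraction lemma—which then lands in $\M'$ by closure. All remaining ingredients (absoluteness of arithmetic facts, downward transfer of well-foundedness, and the internalization in $\M$) are routine.
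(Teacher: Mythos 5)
Your proof is correct and follows essentially the same route as the paper's: the paper proves a general claim that $\Sigma^1_1$ formulas with parameter $A$ transfer downward from $\M$ to $\M'$ by forming the pruned tree $T'$ of extendible nodes of the witness tree, noting $T' \leq_{\T} \HJ(A) = B$, and extracting a $B$-computable path that then lies in $\M'$ by closure under Turing reducibility --- which is exactly your tree $I$ and its leftmost path. The only difference is cosmetic (you specialize directly to the universal formula's trees $T^A_{e,x}$ rather than stating the claim for an arbitrary $\Sigma^1_1$ formula), and your explicit treatment of the downward well-founded direction and of the ``moreover'' clause matches the paper's intent.
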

\begin{proof}
  Take $\M,\M',A$ and $B$ be as above.
  We first show the following claim.
  \setcounter{claimcounter}{1} 
  \begin{claim}
    Let $\theta(X)$ be a $\Sigma^1_1$ formula having exactly one set variable.
    Then, $\M \models \theta(A)$ if and only if $\M' \models \theta(A)$.
  \end{claim}
  \addtocounter{claimcounter}{1}
  \begin{proof of claim}
    It is enough to show that $\M \models \theta(A)$ implies $\M' \models \theta(A)$.
    Write $\theta(X) \equiv \exists Z \varphi(X,Z)$ by an arithmetical formula $\varphi$.
    Assume $\M \models \exists Z \varphi(A,Z)$. We will show that $\M' \models \exists Z \leq_{\T} B \varphi(A,B)$.
    Now there is a $\Sigma^0_0$ formula $\theta$ such that both of $\M,\M'$ satisfies
    $\forall Z (\varphi(A,Z) \leftrightarrow \exists f \forall n \theta(A,Z[n],f[n]))$.
    Define trees $T,T' \subseteq (2 \times \N)^{<\N}$ by
    \begin{align*}
      T &= \{(s,\sigma) : \forall (t,\tau) \preceq (s,\sigma) \theta(A,t,\tau)\}, \\
      T' &= \{(s,\sigma) : \exists (Z,f) \in [T]  ((s,\sigma) \prec (Z,f)) \}.
    \end{align*}
    Then, $T$ is $\Delta^0_1$ in $A$ and hence $T'$ is $\Sigma^1_1$ in $A$.
    Therefore, $T' \leq_{\T} \HJ(A)$.
    Moreover, since $\exists Z \varphi(X,Z)$ holds, $T'$ is a nonempty pruned tree.
    Thus $T'$ has a $T'$-computable path $(Z,f)$. Now $Z \leq_{\T} T' \leq_{\T} \HJ(X)$, so this completes the proof.
    \qedclaim
  \end{proof of claim}
  Now $B = \{(e,x) \in \N^{\M} :  \M \models \sigma^1_1(e,x,A)\} = \{(e,x) \in \N^{\M'} : \M' \models \sigma^1_1(e,x,A)\}$.
  Therefore $\M' \models B = \HJ(A)$.
\end{proof}

\begin{lemma}\label{lemma closed under hyperjump then beta-submodel of pi11ca}
  Let $\M = (\N^{\M},S^{M})$ and $\M' = (\N^{\M'},S^{\M'})$ be such that $\M$ is a model of $\ACAo$,
  $\N^{\M} = \N^{\M'}$, $S^{\M'} \subseteq S^{\M}$ and $S^{\M'}$ is closed under Turing sum $\oplus$ and lower closed under Turing reduction.
  If for any $A \in \M'$, there exists $B \in \M'$ such that
  $\M \models B = \HJ(A)$, then $\M'$ is a model of $\Pi^1_1\myhyphen\CAo$ and a $\beta$-submodel of $\M$.
\end{lemma}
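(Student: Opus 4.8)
The plan is to proceed in three stages: first establish that $\M'$ is itself a model of $\ACAo$, then upgrade this to $\Pi^1_1\myhyphen\CAo$ using the hyperjump hypothesis together with the absoluteness of hyperjumps, and finally verify the $\beta$-submodel condition by a direct $\Sigma^1_1$-absoluteness argument. The crucial preliminary observation is that closure under hyperjump already forces closure under the ordinary Turing jump: given $A \in S^{\M'}$, pick $B \in S^{\M'}$ with $\M \models B = \HJ(A)$; since the Turing jump $A'$ is $\Sigma^1_1$ (indeed $\Sigma^0_1$) definable from $A$, the lemma on hyperjumps gives $\M \models A' \leq_{\T} B$, and lower closure of $S^{\M'}$ under Turing reduction then yields $A' \in S^{\M'}$.

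For the first stage, together with the assumed closure under $\oplus$ and lower Turing closure, the above shows that $S^{\M'}$ is a jump ideal inside $\M$. Since $\N^{\M} = \N^{\M'}$, any arithmetical formula with parameters from $S^{\M'}$ is absolute between $\M'$ and $\M$; hence for such a formula $\phi(n)$ the set $\{n : \phi(n)\}$ as computed in $\M$ is arithmetical in a finite $\oplus$-sum of the parameters, so it lies in $S^{\M'}$ by finitely many applications of the closure properties, giving arithmetical comprehension in $\M'$. Induction is inherited from $\M$ through the same arithmetical absoluteness. Thus $\M' \models \ACAo$.

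For the second stage, both $\M$ and $\M'$ are now models of $\ACAo$ sharing their first-order part with $S^{\M'} \subseteq S^{\M}$, so I can invoke Lemma \ref{absoluteness of hyperjump}: for each $A \in \M'$ the witness $B \in \M'$ with $\M \models B = \HJ(A)$ also satisfies $\M' \models B = \HJ(A)$. Hence $\M' \models \forall X \exists Y (Y = \HJ(X))$, and since $\M' \models \ACAo$ satisfies the relevant implication of Theorem \ref{equivalence of HJ and Beta}, we conclude $\M' \models \Pi^1_1\myhyphen\CAo$. For the third stage I prove $\Sigma^1_1$-absoluteness between $\M'$ and $\M$ straight from the shared hyperjumps. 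Fix $X \in S^{\M'}$ and the common $B$ with $\M \models B = \HJ(X)$ and $\M' \models B = \HJ(X)$; by the definition of $\HJ$ both models satisfy $(e,x) \in B \leftrightarrow \sigma^1_1(e,x,X)$ for all $e,x$. But $(e,x) \in B$ is an arithmetical statement with parameter $B \in S^{\M'}$, hence absolute between $\M'$ and $\M$, so $\M \models \sigma^1_1(e,x,X) \leftrightarrow \M' \models \sigma^1_1(e,x,X)$ for all $e,x$. This is precisely the assertion that $\M'$ is a coded $\beta$-model from the viewpoint of $\M$, i.e. a $\beta$-submodel of $\M$.

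The main obstacle I anticipate is the first stage: one must be careful that the closure conditions, the Turing reductions, and the arithmetical formulas are all interpreted \emph{relative to} $\M$ (whose first-order part may be nonstandard) rather than in the external universe, and that arithmetical absoluteness between $\M'$ and $\M$ genuinely holds because they share the structure $\N^{\M}$. Once this relativized ``jump ideal models $\ACAo$'' fact is in place, the remaining two stages are short, resting almost entirely on Lemma \ref{absoluteness of hyperjump}, Theorem \ref{equivalence of HJ and Beta}, and the universal $\Sigma^1_1$ formula $\sigma^1_1$.
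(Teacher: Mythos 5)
Your proof is correct and follows essentially the same route as the paper: the closure properties give $\M' \models \ACAo$, and Lemma~\ref{absoluteness of hyperjump} then gives $\M' \models \forall X \exists Y(Y=\HJ(X))$, hence $\M' \models \Pi^1_1\myhyphen\CAo$. Your third stage is a mild (and valid) shortcut rather than a different method: the paper establishes the $\beta$-submodel property by observing that any $\Sigma^1_1$ fact $\M \models \exists Y\,\theta(A,Y)$ has a witness $Y \leq_{\T} B$ with $B \in \M'$, whereas you deduce it directly from $B$ being the hyperjump of $A$ in both models together with the absoluteness of $(e,x)\in B$ --- both arguments rest on the same content of Lemma~\ref{absoluteness of hyperjump}.
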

\begin{proof}
  Since $S^{\M'}$ is closed under Turing sum and lower closed under Turing reduction, $\M'$ is a model of $\RCAo$.
  Moreover, by the definition of hyperjumps, it is easy to see that $\M'$ is closed under $\Pi^0_1$-comprehension.
  Therefore, $\M'$ is a model of $\ACAo$.

  By the previous lemma, $\M' \models \forall X \exists Y (Y = \HJ(X))$. Therefore, $\M'$ is a model of $\Pi^1_1\myhyphen\CAo$.

  To show $\M'$ is a $\beta$-submodel of $\M$, it is enough to show that for any $\Sigma^1_1$ sentence with parameters from $\M'$, if it is true in $\M$, then it is also true in $\M'$.
  Let $\theta(X,Y)$ be an arithmetical formula and $A \in \M'$.
  Assume $\M \models \exists Y \theta(A,Y)$. We show that
  $\M' \models \exists Y \theta(A,Y)$.
  Let $B \in \M$ such that $\M \models B = \HJ(A)$.
  Then, $\M \models \exists Y \leq_{\T} B \theta(A,Y)$ and $B \in \M'$.
  Thus, $\M' \models \exists Y \leq_{\T} B \theta(A,Y)$.
\end{proof}

\begin{lemma}\label{Hyperjump in the ground model is also hyperjump in a coded model}
  ($\ACAo$)
  Let $X,Y$ be sets such that $Y = \HJ(X)$.
  Then, for any coded $\omega$-model $\M$ of $\ACAo$, if $Y \in \M$, then $X \in \M \land \M \models Y = \HJ(X)$.
\end{lemma}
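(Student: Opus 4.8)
The plan is to establish the two conclusions separately, the second being the substantial one. For the first, that $X\in\M$, I would observe that the membership relation ``$x\in X$'' is $\Sigma^1_1$ (indeed $\Pi^0_0$) in $X$, so by the universality of $\sigma^1_1$ there is a fixed index $e_0$ with $\sigma^1_1(e_0,x,X)\leftrightarrow x\in X$ for all $x,X$. Hence $X=\{x:(e_0,x)\in\HJ(X)\}=\{x:(e_0,x)\in Y\}$, so $X\leq_{\T}Y$. Since $\M$ is a coded $\omega$-model of $\ACAo$, it is closed under relative computability, and as $Y\in\M$ this gives $X\in\M$.

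For the second conclusion, $\M\models Y=\HJ(X)$, I would reduce it to the absoluteness of $\sigma^1_1$ between the ground model and $\M$ at the parameter $X$: namely, that $\sigma^1_1(e,x,X)\leftrightarrow(\M\models\sigma^1_1(e,x,X))$ for all $e,x$. Indeed, in the ground model $(e,x)\in Y\leftrightarrow\sigma^1_1(e,x,X)$, and the membership $(e,x)\in Y$ is $\Pi^0_0$ hence absolute since $Y\in\M$; combining these with the claimed absoluteness yields $\M\models\forall e,x((e,x)\in Y\leftrightarrow\sigma^1_1(e,x,X))$, which is exactly $\M\models Y=\HJ(X)$. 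The forward direction of the absoluteness is easy: if $\M\models\sigma^1_1(e,x,X)$ there is $f\in\M$ with $\M\models\forall y\,\Sat_{\Pi^0_0}(e,x,X,f[y])$, and since the inner formula is arithmetical with parameters $f,X\in\M$, the Remark on arithmetic absoluteness in $\omega$-models makes it true in the ground model, witnessing $\sigma^1_1(e,x,X)$.

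The hard part is the converse direction, transferring a ground-model $\Sigma^1_1$ witness into $\M$. Suppose $\sigma^1_1(e,x,X)$ holds, equivalently that the $X$-arithmetical tree $T_{e,x,X}=\{\tau:\forall y\le|\tau|\,\Sat_{\Pi^0_0}(e,x,X,\tau[y])\}$ is ill-founded. The ground-model path need not belong to $\M$, so instead I would build a canonical path computable from $Y$. The predicate ``$\tau$ extends to a path of $T_{e,x,X}$'' is $\Sigma^1_1$ in $X$, hence by the earlier lemma that every $\Sigma^{1,X}_1$-definable set is computable from $\HJ(X)=Y$, it is decided by $Y$; using this oracle one defines the leftmost path $f$ by the usual recursion, where $f(n)$ is the least $i$ such that $f[n]\ast\langle i\rangle$ extends to a path. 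This $f$ exists by $\Delta^0_1(Y)$-comprehension and satisfies $f\leq_{\T}Y$, and an arithmetic induction on $n$ with the $\Delta^0_1(Y)$ invariant ``$f[n]$ extends to a path'' (whose base case is the ill-foundedness of $T_{e,x,X}$) shows that $f$ is a genuine path. Then $f\in\M$, and since $\forall y\,\Sat_{\Pi^0_0}(e,x,X,f[y])$ is arithmetical and true, arithmetic absoluteness yields $\M\models\forall y\,\Sat_{\Pi^0_0}(e,x,X,f[y])$, hence $\M\models\sigma^1_1(e,x,X)$.

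The main obstacle is exactly this last direction: a genuine $\Sigma^1_1$ witness in the ground model may fail to lie in $\M$, so one cannot argue by mere absoluteness of the existential quantifier. The resolution is to replace an arbitrary witness by the $Y$-computable leftmost path, which $\M$ is forced to contain because it is closed under relative computability and contains $Y$. This is the same mechanism already used in the Claim inside Lemma~\ref{absoluteness of hyperjump}, now deployed in the direction from the ground model into the coded model.
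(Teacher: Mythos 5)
Your proof is correct and follows essentially the same route as the paper's: $X \leq_{\T} Y$ via a fixed universal index gives $X \in \M$, and the $\Sigma^1_1$-absoluteness of $\M$ at the parameter $X$ is obtained by replacing an arbitrary ground-model witness with one computable from $Y = \HJ(X)$. The paper merely outsources that second step to Lemma~\ref{absoluteness of hyperjump} (whose proof extracts a witness computable from the pruned tree, itself $\Sigma^1_1$ in $X$ and hence computable from the hyperjump), whereas you inline the same mechanism via a $Y$-computable leftmost-path recursion.
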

\begin{proof}
  We reason in $\ACAo$. Take a pair of sets $X,Y$ such that $Y = \HJ(X)$.
  Let $\M$ be a coded $\omega$-model such that $Y \in \M$.

  By Lemma \ref{absoluteness of hyperjump}, we only need to show $X \in \M$.
  Take a number $e$ such that
  \begin{equation*}
    \forall Z (x \in Z \leftrightarrow \sigma^1_1(e,x,Z)).
  \end{equation*}
  Then, $X = \{x : (e,x) \in Y\}$. Since $X \leq_{\T} Y$, $X \in \M$.
\end{proof}

\begin{remark}
  In general, even if $(Y = \HJ(X))^{\M}$ holds for a coded $\omega$-model $\M$, $Y = \HJ(X)$ may not hold.
  This is immediate from some results  in the next section.
\end{remark}

\section{Approximation of $\Pi^1_1\myhyphen\CAo$}
In this section, we introduce $\Pi^1_2$ variants of the $\beta$-model reflection $\beta^1_0\RFN(n)$.
Then we prove that  the theories $\ACAo + \{\beta^1_0\RFN(n): n \in \omega\}$ and
$\{\sigma \in \Pi^1_2 : \Pi^1_1\myhyphen\CAo \vdash \sigma\}$ proves the same sentences.

As we have seen in Theorem \ref{equivalence of HJ and Beta},  $\Pi^1_1\myhyphen\CAo$ is characterized by the hyperjump operator or the $\beta$-model reflection.
This means that in $\Pi^1_1\myhyphen\CAo$, we can take any (standard) finite iteration of hyperjump or $\beta$-models. More precisely, the following holds.
\begin{observation}
  For any $n \in \omega, n >0$, the following assertions are equivalent over $\ACAo$.
  \begin{itemize}
    \item $\Pi^1_1\myhyphen\CAo$,
    \item $\forall X \exists Y (Y = \HJ^n(X))$,
    \item $\forall X \exists \M_0,\ldots,\M_{n-1} :\text{$\beta$-models } (X \in \M_0 \in \cdots \in \M_{n-1})$.
  \end{itemize}
\end{observation}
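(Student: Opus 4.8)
The plan is to reduce the general statement to the case $n=1$, which is exactly Theorem~\ref{equivalence of HJ and Beta}, and then iterate. Since the quantifier ``for any $n \in \omega$, $n>0$'' ranges over the \emph{standard} natural numbers (note $\omega$, not $\N$), each instance is a separate $\L_2$-sentence and the observation is really a schema; accordingly I would argue by induction \emph{in the metatheory} on the fixed standard $n$, unfolding the $n$-fold iteration of the hyperjump explicitly. I treat the two equivalences $1\leftrightarrow 2$ and $1\leftrightarrow 3$ separately.

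For $1\leftrightarrow 2$: the direction $1\to 2$ follows by finite iteration. Working in $\Pi^1_1\myhyphen\CAo$, Theorem~\ref{equivalence of HJ and Beta} gives $\forall X\exists Y(Y=\HJ(X))$, so for a fixed standard $n$ I can write out $\exists Y_1\cdots\exists Y_n\,\bigl(Y_1=\HJ(X)\wedge\bigwedge_{0<k<n} Y_{k+1}=\HJ(Y_k)\bigr)$ and satisfy each existential in turn, obtaining $Y_n=\HJ^n(X)$. For $2\to 1$ the key observation is the reduction $\HJ(X)\leq_{\T}\HJ^n(X)$: since $X\leq_{\T}\HJ^{n-1}(X)$, the set $\HJ(X)$, being $\Sigma^{1,X}_1$, is also $\Sigma^{1,\HJ^{n-1}(X)}_1$, hence computable from $\HJ(\HJ^{n-1}(X))=\HJ^n(X)$ by the lemma that every $\Sigma^{1,Z}_1$-definable set is computable from $\HJ(Z)$. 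Thus if $\HJ^n(X)$ exists for all $X$, then $\HJ(X)$ exists for all $X$ by $\RCAo$, and Theorem~\ref{equivalence of HJ and Beta} yields $\Pi^1_1\myhyphen\CAo$.

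For $1\leftrightarrow 3$: the direction $3\to 1$ is immediate, since a chain $X\in\M_0\in\cdots\in\M_{n-1}$ in particular provides a coded $\beta$-model $\M_0$ containing $X$; by Lemma VII.2.9 of \cite{Simpson} this gives $\exists Y(Y=\HJ(X))$ for every $X$, hence $\Pi^1_1\myhyphen\CAo$ via Theorem~\ref{equivalence of HJ and Beta}. For $1\to 3$ I would build the chain from the inside out by iterating $\beta$-model reflection on the codes: apply reflection to $X$ to get a $\beta$-model $\M_0\ni X$; then apply reflection to the single set coding $\M_0$ to get a $\beta$-model $\M_1$ with $\M_0\in\M_1$; and so on, stopping after the standard number $n$ of applications. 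The point to make precise here is that ``$\M_k\in\M_{k+1}$'' means the code of $\M_k$ (a single set) is one of the sets listed by $\M_{k+1}$, so that each step is a genuine instance of $\beta$-model reflection applied to a single set.

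The main obstacle is conceptual rather than technical: because each of the three assertions is $\Pi^1_3$ and the three are compared only over $\ACAo$, the iteration cannot be carried out inside the theory for a nonstandard ``$n$'', so it is essential that $n$ be a fixed standard natural number and that the induction live in the metatheory. The only genuinely new ingredient beyond Theorem~\ref{equivalence of HJ and Beta} is the Turing reduction $\HJ(X)\leq_{\T}\HJ^n(X)$ used in $2\to 1$, which rests on the monotonicity of $\Sigma^1_1$-definability under Turing reducibility of the oracle; everything else is bookkeeping of the nesting in clause $3$ together with a straightforward finite iteration.
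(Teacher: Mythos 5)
Your proposal is correct, and it is essentially the argument the authors intend: the paper states this as an unproved Observation immediately after Theorem~2.24 (\ref{equivalence of HJ and Beta}), and your proof is exactly the straightforward metatheoretic unfolding of that theorem for a fixed standard $n$ (finite iteration of the hyperjump and of $\beta$-model reflection for $1\to2$ and $1\to3$, and projection to the first iterate, respectively the innermost model, for the converses). Your explicit remarks that the induction must live in the metatheory and that $\HJ(X)\leq_{\T}\HJ^{n}(X)$ is needed for $2\to1$ are correct and fill in precisely what the paper leaves implicit.
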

From this observation, we can conjecture that theorems provable from $\Pi^1_1\myhyphen\CAo$ are classified by
the number of the hyperjump operator used to proved those theorems.
The following theorem ensures this conjecture.

\begin{theorem}\label{compactness for hyperjump}
  Let $\theta(X,Y,Z)$ be an arithmetical formula such that
  $\Pi^1_1\myhyphen\CAo \vdash \forall X \exists Y \forall Z \theta(X,Y,Z)$.
  Then, there exists $n \in \omega$ such that
  \begin{equation*}
    \ACAo \vdash \forall X,W( W = \HJ^n(X) \to \exists Y \leq_{\T} W \forall Z \theta(X,Y,Z))).
  \end{equation*}
\end{theorem}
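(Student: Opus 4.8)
The plan is to argue by contradiction using a compactness (ultraproduct) construction, exploiting the closure lemma (Lemma~\ref{lemma closed under hyperjump then beta-submodel of pi11ca}) to manufacture a model of $\Pi^1_1\myhyphen\CAo$ out of a family of approximate counterexamples. So suppose, toward a contradiction, that for \emph{every} $n \in \omega$ the displayed conclusion fails, i.e.\ $\ACAo \nvdash \forall X, W\,(W = \HJ^n(X) \to \exists Y \leq_{\T} W\,\forall Z\,\theta(X,Y,Z))$. By the completeness theorem, for each $n$ there is a model $\M_n \models \ACAo$ together with $X_n, W_n \in \M_n$ such that $\M_n \models W_n = \HJ^n(X_n)$ and $\M_n \models \forall Y \leq_{\T} W_n\,\exists Z\,\neg\theta(X_n,Y,Z)$. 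Since ``$W = \HJ^n(X)$'' (for standard $n$) asserts the existence of the whole chain $X_n = \HJ^0(X_n), \HJ^1(X_n), \dots, \HJ^n(X_n) = W_n$, all these iterates lie in $\M_n$, and by the monotonicity of the hyperjump under $\leq_{\T}$ we have $\HJ^0(X_n) \leq_{\T} \cdots \leq_{\T} \HJ^n(X_n) = W_n$ inside $\M_n$.

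Next I fix a nonprincipal ultrafilter $U$ on $\omega$ and form the two-sorted ultraproduct $\M = \prod_U \M_n$, which is again a model of $\ACAo$ because $\ACAo$ is a first-order theory in the two-sorted language of second-order arithmetic. Let $X = [(X_n)_n]$ and $W = [(W_n)_n]$. For each \emph{standard} $k$ the sequence $(\HJ^k(X_n))_{n \ge k}$ is defined on a cofinite, hence $U$-large, set, so it names an element $Y_k \in \M$; since ``$Y = \HJ(X)$'' is a single first-order formula of the two-sorted language, the fundamental theorem of ultraproducts yields $\M \models Y_k = \HJ^k(X)$, together with $\M \models Y_k \leq_{\T} Y_{k+1}$, $\M \models Y_k \leq_{\T} W$, and $\M \models \forall Y \leq_{\T} W\,\exists Z\,\neg\theta(X,Y,Z)$.

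Now I build the intended submodel. Let $\M'$ have the same first-order part as $\M$ and let its sets be exactly those $A \in \M$ with $A \leq_{\T} Y_k$ for some standard $k$. This collection is closed under $\oplus$ and downward under $\leq_{\T}$ (using $Y_k \leq_{\T} Y_{k+1}$), and for each $A \in \M'$, say $A \leq_{\T} Y_k$, the hyperjump $\HJ(A)$ exists in $\M$ and satisfies $\HJ(A) \leq_{\T} \HJ(Y_k) = Y_{k+1} \in \M'$; hence $\M'$ meets every hypothesis of Lemma~\ref{lemma closed under hyperjump then beta-submodel of pi11ca}, so $\M' \models \Pi^1_1\myhyphen\CAo$ and $\M'$ is a $\beta$-submodel of $\M$. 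Since $X = Y_0 \in \M'$ and $\Pi^1_1\myhyphen\CAo \vdash \forall X \exists Y \forall Z\,\theta(X,Y,Z)$, there is $Y^* \in \M'$ with $\M' \models \forall Z\,\theta(X,Y^*,Z)$. The formula $\forall Z\,\theta(X,Y^*,Z)$ is $\Pi^1_1$, so by $\beta$-absoluteness it also holds in $\M$. But $Y^* \in \M'$ forces $\M \models Y^* \leq_{\T} W$, whence $\M \models \exists Z\,\neg\theta(X,Y^*,Z)$, contradicting $\M \models \forall Z\,\theta(X,Y^*,Z)$. This contradiction shows that some standard $n$ must work.

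I expect the main obstacle to be the bookkeeping around the hyperjump iterates rather than the compactness skeleton: one must check that ``$W = \HJ^n(X)$'' really places all lower iterates $\HJ^k(X_n)$ in $\M_n$ and that the monotonicity principle ``$A \leq_{\T} D \wedge \exists C\,(C = \HJ(D)) \to \exists B \leq_{\T} C\,(B = \HJ(A))$'' is available in $\ACAo$, so that $\M'$ is genuinely closed under the hyperjump. The other delicate point is to confirm that the relation $Y = \HJ(X)$, although presented through the $\Sigma^1_1$ formula $\sigma^1_1$, is a single first-order formula of the two-sorted language, so that both the fundamental theorem of ultraproducts and the $\beta$-absoluteness furnished by Lemma~\ref{absoluteness of hyperjump} apply to it verbatim.
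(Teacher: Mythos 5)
Your proof is correct and follows essentially the same route as the paper: both argue by contradiction, produce a single model of $\ACAo$ containing an $\omega$-chain of hyperjump iterates of a counterexample $X$ together with the failure statements $\forall Y \leq_{\T} \HJ^k(X)\,\exists Z\,\lnot\theta$, cut down to the submodel of sets computable from some finite iterate, and invoke Lemma~\ref{lemma closed under hyperjump then beta-submodel of pi11ca} plus $\beta$-absoluteness of $\forall Z\,\theta$ to reach a contradiction. The only difference is packaging: you obtain the limit model as an ultraproduct of the countermodels, whereas the paper adds constants $C, D_0, D_1, \ldots$ and applies the compactness theorem directly; the bookkeeping points you flag (all lower iterates existing, monotonicity of $\HJ$ under $\leq_{\T}$) do go through in $\ACAo$ exactly as you anticipate.
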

\begin{proof}
  Assume $\theta(X,Y,Z)$ be an arithmetical formula such that
  $\Pi^1_1\myhyphen\CAo \vdash \forall X \exists Y \forall Z \theta(X,Y,Z)$.
  For the sake of contradiction, suppose that for any $n \in \omega$,
  $\ACAo + \exists X,W (W = \HJ^n(X) \land \forall Y \leq_{\T} W \exists Z \lnot \theta(X,Y,Z))$ is consistent.

  Let $C$ and $D$ be new constant set symbols.
  Let $\mathcal{L}_2(C,D)$ be the language extending $\mathcal{L}_2$ by adding $C,D$. Define an $\mathcal{L}_2(C,D)$ theory $T$ by
  \begin{align*}
    T = \ACAo &+ \{D_0 = C \land D_1 = \HJ(D_0) \land \cdots \land D_k = \HJ(D_{k-1}) : k \in \omega\} \\
    &+ \{\forall Y \leq_{\T} D_k \exists Z \lnot \theta(C,Y,Z) : k \in \omega\}.
  \end{align*}
  Then, by compactness theorem, $T$ has a model $\M = (\N^{\M},\widetilde{S},C,D)$.
  Define $S = \bigcup_{n \in \omega}\{X \in \widetilde{S} : \M \models X \leq_{\T} D_n\}$.

  By the definition, for any $X \in S$, there exists $Y \in S$ such that
  $\M \models Y = \HJ(X)$.
  Thus, by Lemma \ref{lemma closed under hyperjump then beta-submodel of pi11ca}, $(\N,S)$ is a model of $\Pi^1_1\myhyphen\CAo$ and a $\beta$-submodel of $(\N,\widetilde{S})$.
  We show that $(\N^{\M},S)$ is a model of  $\lnot \forall X \exists Y \forall Z \theta(X,Y,Z)$.
  In particular, $(\N^{\M},S) \models \forall Y \exists Z \lnot \theta(C,Y,Z)$.
  Let $Y \in S$. Then, $\M \models Y \leq_{\T} D_k$ for some $k$ and thus $\M \models \exists Z \lnot \theta(C,Y,Z)$. Hence $(\N,S) \models \exists Z \lnot \theta(C,Y,Z)$.
\end{proof}

\begin{remark}
  We note that this theorem generalizes a result by Montalban and Shore \cite[Theorem 6.8.]{montalban2018conservativity}.
  They proved that if
  \begin{align*}
    \Pi^1_1\myhyphen\CAo \vdash \forall X \exists Y \forall Z \theta(X,Y,Z),
  \end{align*}
  then there exists $n \in \omega$ such that
  \begin{equation*}
    \Pi^1_1\myhyphen\CAo \vdash \forall X  \exists Y \leq_{\T} \HJ^n(X) \forall Z \theta(X,Y,Z))).
  \end{equation*}
\end{remark}

We give a $\Pi^1_2$ variant of
the statement that says \textit{any set is contained in a coded $\beta$-model}.
For this purpose,
we consider $\beta$-submodels of a coded $\omega$-model of $\ACAo$ instead of the acutual $\beta$-models.

\begin{definition}
  ($\ACAo$)
Let $\M_0$ and $\M_1$ be coded $\omega$-models such that $\M_0 \in \M_1$.
We say $\M_0$ is a coded $\beta$-submodel of $\M_1$ if
\begin{align*}
  \forall X \in \M_0 \forall e,x (\M_0 \models \sigma^1_1(e,x,X) \leftrightarrow \M_1\models \sigma^1_1(e,x,X))
\end{align*}
holds. We write $\M_0 \in_{\beta} \M_1$ to mean $\M_0$ is a coded $\beta$-submodel of $\M_1$.
\end{definition}

\begin{definition}
  For each $n \in \omega$, define $\beta^1_0\RFN(n)$ as the following assertion:
  \begin{equation*}
    \forall X \exists \M_0,\ldots,\M_{n}: \text{coded $\omega$-models }
    (X \in \M_0 \in_\beta \cdots \in_\beta \M_n \land \M_n \models \ACAo).
  \end{equation*}
\end{definition}
\begin{remark}
  This form of $\omega$-model reflection is first introduced in \cite{pacheco2022determinacy} as their $\psi_0(1,n)$.
  They mainly considered any (nonstandard) length of sequences of $\beta$-submodels, but we mainly consider standard length of those.
\end{remark}

It is easy to see that the implications $\beta^1_0\RFN(0) \leftarrow \beta^1_0\RFN(1) \leftarrow \cdots $ holds over $\ACAo$.
More precisely, these implications are strict.
\begin{lemma}\label{beta(n+1) is strictly stronger than beta(n)}
  Let $n \in \omega$ such that $n > 0$. Then, over $\ACAo$,
  $\beta^1_0\RFN(n)$ proves the $\omega$-model reflection of $\beta^1_0\RFN(n-1)$.
  In particular, $\beta^1_0\RFN(n)$ is strictly stronger than  $\beta^1_0\RFN(n-1)$.
\end{lemma}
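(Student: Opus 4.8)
The plan is to let $\beta^1_0\RFN(n)$ itself supply the reflecting model, taking the \emph{penultimate} model $\mathcal{M}_{n-1}$ of the chain rather than the top model $\mathcal{M}_n$. So I would fix $X$, apply $\beta^1_0\RFN(n)$ to get coded $\omega$-models $X\in\mathcal{M}_0\in_{\beta}\cdots\in_{\beta}\mathcal{M}_n$ with $\mathcal{M}_n\models\ACAo$, and claim $\mathcal{M}_{n-1}$ witnesses the $\omega$-model reflection of $\beta^1_0\RFN(n-1)$ for $X$. Two preliminaries are routine: first, every $\mathcal{M}_i$ is a model of $\ACAo$, because $\mathcal{M}_i\in_{\beta}\mathcal{M}_{i+1}$ makes $\Sigma^1_1$ facts with parameters in $\mathcal{M}_i$ absolute between $\mathcal{M}_i$ and $\mathcal{M}_{i+1}$, and closure under $\oplus$, $\leq_{\T}$ and the Turing jump are each witnessed by existential (arithmetical, hence $\Sigma^1_1$) statements, so these closure properties descend from $\mathcal{M}_n$ along the chain; second, $X\in\mathcal{M}_{n-1}$, since $X\leq_{\T}\mathcal{M}_0\leq_{\T}\cdots\leq_{\T}\mathcal{M}_{n-2}\in\mathcal{M}_{n-1}$ and $\mathcal{M}_{n-1}\models\RCAo$.

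The heart of the matter is $\mathcal{M}_{n-1}\models\beta^1_0\RFN(n-1)$. Here I would first record the definitional unfolding that $\mathcal{M}_{n-1}\in_{\beta}\mathcal{M}_n$ is equivalent to $\mathcal{M}_n\models$ \emph{``$\mathcal{M}_{n-1}$ is a coded $\beta$-model''}: both assert that for all $W\in\mathcal{M}_{n-1}$ the formula $\sigma^1_1(e,x,W)$ receives the same truth value from $\mathcal{M}_{n-1}$ and from $\mathcal{M}_n$, using that $(\sigma^1_1)^{\mathcal{M}_n}$ coincides with $\mathcal{M}_n\models\sigma^1_1$ and that satisfaction of $\sigma^1_1$ in a coded $\omega$-model is arithmetical in its code. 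I would then invoke the standard fact that, provably in $\ACAo$, every coded $\beta$-model is a model of $\Pi^1_1\myhyphen\CAo$; applying this \emph{inside} $\mathcal{M}_n$ gives $\mathcal{M}_n\models$ ``$\mathcal{M}_{n-1}\models\Pi^1_1\myhyphen\CAo$'', and since $\Pi^1_1\myhyphen\CAo\vdash\beta^1_0\RFN(n-1)$ we obtain $\mathcal{M}_n\models$ ``$\mathcal{M}_{n-1}\models\beta^1_0\RFN(n-1)$''.

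The final step is a descent by absoluteness. Writing $\beta^1_0\RFN(n-1)$ as $\forall Z\,\exists\vec{\NN}\,\psi(Z,\vec{\NN})$ with $\psi$ arithmetical, the statement ``$\mathcal{M}_{n-1}\models\beta^1_0\RFN(n-1)$'' unfolds to $\forall i\,\exists\vec{j}\,\psi^{\mathcal{M}_{n-1}}$, in which all set quantifiers have collapsed to number quantifiers over indices into $\mathcal{M}_{n-1}$; it is therefore \emph{arithmetical} in the code of $\mathcal{M}_{n-1}$. Since $\mathcal{M}_n$ is an $\omega$-model, arithmetical truths about the genuine set $\mathcal{M}_{n-1}$ are absolute between $\mathcal{M}_n$ and the ground model, so $\mathcal{M}_{n-1}\models\beta^1_0\RFN(n-1)$ holds outright, and $\mathcal{M}_{n-1}$ is an $\omega$-model containing $X$ with $\mathcal{M}_{n-1}\models\ACAo+\beta^1_0\RFN(n-1)$, as required. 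It is worth stressing that this does not collapse the hierarchy: the statement ``$\mathcal{M}_{n-1}\models\Pi^1_1\myhyphen\CAo$'' is only $\Pi^1_1$ and need \emph{not} descend to the ground model, whereas its arithmetical consequence $\beta^1_0\RFN(n-1)$ does.

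For the concluding clause I would deduce strictness from Lemma~\ref{reflection and con}: as $\beta^1_0\RFN(n-1)$ is $\Pi^1_2$ and the reflecting model satisfies $\ACAo+\beta^1_0\RFN(n-1)$, the reflection yields $\Con(\ACAo+\beta^1_0\RFN(n-1))$, which by Gödel's second incompleteness theorem is not provable in the consistent recursively axiomatized theory $\ACAo+\beta^1_0\RFN(n-1)$ (itself a subtheory of $\Pi^1_1\myhyphen\CAo$); hence $\beta^1_0\RFN(n)$ is strictly stronger. The main obstacle I anticipate is the fact used in the second paragraph—that coded $\beta$-models satisfy $\Pi^1_1\myhyphen\CAo$ provably in $\ACAo$ (equivalently, are closed under the hyperjump operator)—together with the care needed to apply it relativized inside $\mathcal{M}_n$ and to verify that satisfaction of $\sigma^1_1$ and of the matrix $\psi$ in coded $\omega$-models is genuinely arithmetical in the codes, since it is precisely these points that license the absoluteness transfers while blocking the transfer of the non-arithmetical ``$\mathcal{M}_{n-1}\models\Pi^1_1\myhyphen\CAo$''.
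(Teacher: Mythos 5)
Your argument breaks at its central step: the claim that, provably in $\ACAo$, every coded $\beta$-model satisfies $\Pi^1_1\myhyphen\CAo$ (equivalently, is closed under the hyperjump) is false. What is true — and all the paper ever uses — is that every coded $\beta$-model satisfies $\ATRo$ (Lemma~\ref{beta-model satisfies atro}, Simpson VII.2.7). A $\beta$-model need not contain the hyperjump of its elements; for instance the minimum $\beta$-model satisfies ``there is no countable coded $\beta$-model'' and hence refutes $\Pi^1_1\myhyphen\CAo$. More bluntly: since $\Pi^1_1\myhyphen\CAo$ proves that every set lies in a coded $\beta$-model, your ``standard fact'' would let $\Pi^1_1\myhyphen\CAo$ prove its own consistency. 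This distinction is not incidental to the paper — it is exactly why a single $\beta$-submodel step only yields reflection of $\ATRo$ (the theorem following Lemma~\ref{beta-model satisfies atro}) and why the $\beta^1_0\RFN$-hierarchy does not collapse. Once this step is removed, your choice of $\M_{n-1}$ as the reflecting model cannot be repaired from the given data: the remaining chain $\M_0\in_\beta\cdots\in_\beta\M_{n-2}$ witnesses the required submodel tower only for sets of $\M_0$, not for an arbitrary $Z\in\M_{n-1}$, so there is no route to $\M_{n-1}\models\beta^1_0\RFN(n-1)$.

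The paper's proof instead takes the \emph{bottom} model $\M_0$ as the reflecting model. For each $Z\in\M_0$, the statement
\begin{equation*}
\exists \NN_0,\ldots,\NN_{n-1}\,\bigl(Z\in\NN_0\in_\beta\cdots\in_\beta\NN_{n-1}\ \land\ \NN_{n-1}\models\ACAo\bigr)
\end{equation*}
is $\Sigma^1_1$ in $Z$ and the code of the ambient model, holds in $\M_n$ with witnesses $(\M_0,\ldots,\M_{n-1})$, and therefore descends to $\M_0$ by $\Sigma^1_1$-absoluteness along the composed $\beta$-submodel chain; as $Z$ was arbitrary, $\M_0\models\beta^1_0\RFN(n-1)$. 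Your preliminary observations (each $\M_i\models\ACAo$ by pushing $\Sigma^1_1$ closure facts down the chain, the arithmetical-in-the-code character of satisfaction, and the concluding strictness argument via Lemma~\ref{reflection and con} and G\"odel's second incompleteness theorem) are all sound and consistent with the paper; only the hyperjump-closure claim, which carries the whole weight of your argument, is wrong.
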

\begin{proof}
  Assume $\ACAo + \beta^1_0\RFN(n+1)$.
  Take a sequence of $\omega$-models such that
  $\M_0 \in_\beta \M_1 \cdots \in_\beta \M_{n}  \land \M_n \models \ACAo$.
  We show that $\M_0 \models \beta^1_0\RFN(n)$.
  For any $X \in \M_0$, $\M_{n}$ satisfies
  \begin{align*}
    \exists \mathcal{N}_0,\cdots,\mathcal{N}_{n-1}
    (X \in \mathcal{N}_0 \in_\beta \cdots \in_\beta \mathcal{N}_{n-1} \land \mathcal{N}_{n-1} \models \ACAo)
  \end{align*}
  via taking $(\mathcal{N}_0,\ldots,\mathcal{N}_{n-1}) = (\M_0,\ldots,\M_{n-1})$.
  Since this is a $\Sigma^1_1$ formula and $\M_0$ is a $\beta$-submodel of $\M_{n}$, $\M_0$ also satisfies
  \begin{align*}
    \exists \mathcal{N}_0,\cdots,\mathcal{N}_{n-1}
    (X \in \mathcal{N}_0 \in_\beta \cdots \in_\beta \mathcal{N}_{n-1} \land \mathcal{N}_{n-1} \models \ACAo).
  \end{align*}
  Since $X$ is arbitrary, $\M_0 \models \beta^1_0\RFN(n)$.
\end{proof}

\begin{lemma}
  Over $\ACAo$, $\beta^1_0\RFN(n)$ is equivalent to the following assertion:
  \begin{equation*}
    \forall X \exists \M : \text{coded $\omega$-model }
    (X \in \M \land \M \models \ACAo + \exists Y(Y = \HJ^n(X))).
  \end{equation*}
\end{lemma}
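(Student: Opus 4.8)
The plan is to prove the two implications separately; I write $(\ast)$ for the displayed right-hand assertion.

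For the direction $(\ast) \Rightarrow \beta^1_0\RFN(n)$, given $X$ I would first invoke $(\ast)$ to get a coded $\omega$-model $\M$ with $X \in \M$ and $\M \models \ACAo + \exists Y(Y = \HJ^n(X))$. Reasoning inside $\M$, unwinding $Y = \HJ^n(X)$ gives the whole sequence $\HJ^0(X) = X, \HJ^1(X), \dots, \HJ^n(X)$, so $\HJ(\HJ^j(X)) = \HJ^{j+1}(X)$ exists for each $j < n$. Applying, inside $\M$, the equivalence between the existence of a hyperjump and of a coded $\beta$-model containing the relevant set (\cite{Simpson}, Lemma VII.2.9), I obtain coded $\beta$-models $\NN_0, \dots, \NN_{n-1} \in \M$ with $\HJ^j(X) \in \NN_j$ and, inspecting the construction, $\NN_j \leq_{\T} \HJ^{j+1}(X)$. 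Since $\NN_{j-1} \leq_{\T} \HJ^j(X) \in \NN_j$ and coded $\beta$-models satisfy $\ACAo$, the $\NN_j$ nest, so $X \in \NN_0 \in \NN_1 \in \dots \in \NN_{n-1}$. Setting $\M_j = \NN_j$ for $j < n$ and $\M_n = \M$, each statement ``$\NN_j$ is a coded $\beta$-model of $\M$'' is literally $\NN_j \in_\beta \M$, and for $j < n-1$, chaining $\Sigma^1_1$-absoluteness through $\M$ (using $\NN_j \subseteq \NN_{j+1} \subseteq \M$) yields $\NN_j \in_\beta \NN_{j+1}$; this produces the required chain.

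For the converse $\beta^1_0\RFN(n) \Rightarrow (\ast)$, given $X$ I take a chain $X \in \M_0 \in_\beta \dots \in_\beta \M_n$ with $\M_n \models \ACAo$ and claim $\M = \M_n$ witnesses $(\ast)$. The first step — and the one I expect to be hardest — is to upgrade the hypothesis ``$\M_n \models \ACAo$'' to ``$\M_i \models \ACAo$ for all $i$'', which the definition of $\beta^1_0\RFN(n)$ does not directly give. I would prove this by downward induction: if $\M_{i+1} \models \ACAo$, then for $W \in \M_i \subseteq \M_{i+1}$ and arithmetic $\phi$, the comprehension instance ``$\exists Z\,\forall k(k \in Z \leftrightarrow \phi(k,W))$'' is a $\Sigma^1_1$ statement with parameter in $\M_i$ that holds in $\M_{i+1}$; by the $\Sigma^1_1$-absoluteness of the $\beta$-submodel $\M_i$ in $\M_{i+1}$ (the proof of $\Sigma^1_1$-absoluteness for $\beta$-models applies verbatim, every $\Sigma^1_1$ formula being an instance of $\sigma^1_1$) it also holds in $\M_i$, so the defined set lies in $\M_i$. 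Hence each $\M_i \models \ACAo$; consequently $\M_0 \subseteq \dots \subseteq \M_n$, each $\M_i \in \M_n$, and chaining absoluteness gives $\M_i \in_\beta \M_n$ for every $i$.

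With the models now well-behaved, I would construct the iterated hyperjump of $X$ inside $\M_n$ by recursion along the chain: put $Y_0 = X$ and $Y_{j+1} = \{(e,x) : \M_j \models \sigma^1_1(e,x,Y_j)\}$ for $j < n$. An easy induction shows $Y_j \in \M_j$, since $Y_{j+1}$ is arithmetic in the code of $\M_j$ and $\M_j \in \M_{j+1}$ with $\M_{j+1} \models \ACAo$. Because $Y_j \in \M_j$ and $\M_j \in_\beta \M_n$, the $\Sigma^1_1$-absoluteness gives $\M_n \models \sigma^1_1(e,x,Y_j) \leftrightarrow \M_j \models \sigma^1_1(e,x,Y_j)$, whence $Y_{j+1} = \{(e,x) : \M_n \models \sigma^1_1(e,x,Y_j)\}$, i.e. $\M_n \models Y_{j+1} = \HJ(Y_j)$. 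Therefore $\M_n \models Y_n = \HJ^n(X)$, and together with $X \in \M_n$ and $\M_n \models \ACAo$ this gives $\M_n \models \ACAo + \exists Y(Y = \HJ^n(X))$. The one genuinely delicate point is thus the automatic $\ACAo$-closure of the intermediate $\beta$-submodels; everything else is bookkeeping with the $\Sigma^1_1$-absoluteness that $\in_\beta$ provides, and in writing this up I would isolate the passage from the universal $\sigma^1_1$ to arbitrary $\Sigma^1_1$ formulas as a reusable absoluteness lemma for $\beta$-submodels.
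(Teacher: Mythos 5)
Your proof is correct and follows essentially the same route as the paper, which disposes of this lemma in one line by citing the equivalence (over $\ACAo$) between the existence of $\HJ(X)$ and the existence of a coded $\beta$-model containing $X$, applied inside the coded $\omega$-models. What you have written is a careful unpacking of that ``immediate'' argument -- including the genuinely worthwhile observations that the intermediate models in a $\beta^1_0\RFN(n)$-chain automatically satisfy $\ACAo$ and that $\in_\beta$ chains compose -- so the extra detail is a faithful expansion rather than a different proof.
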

\begin{proof}
  Immediate from the fact the existence of $\HJ(X)$ is equivalent to the existence of a $\beta$-model containing $X$ over $\ACAo$.
\end{proof}
\begin{theorem}\label{Pi12part and hyperjumps}
  Let $\theta(X,Y)$ be an arithmetical formula such that $\forall X \exists Y \theta(X,Y)$ is provable from $\Pi^1_1\myhyphen\CAo$.
  Then, there exists $n \in \omega$ such that
  $\ACAo + \beta^1_0\RFN(n)$ proves $\forall X \exists Y \theta(X,Y)$.
\end{theorem}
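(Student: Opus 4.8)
The plan is to derive this theorem essentially as a corollary of Theorem~\ref{compactness for hyperjump}, using the reformulation of $\beta^1_0\RFN(n)$ (the lemma immediately preceding this statement) as the assertion that every set lies in a coded $\omega$-model of $\ACAo$ believing that $\HJ^n(X)$ exists. All the combinatorial content is already packaged into Theorem~\ref{compactness for hyperjump}; what remains is to relativize its conclusion inside such a model and transfer the resulting witness back to the ground model by arithmetical absoluteness.

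First I would put the hypothesis into the shape required by Theorem~\ref{compactness for hyperjump}. Since $\theta(X,Y)$ is arithmetical, I can introduce a dummy set variable $Z$ and read $\forall X \exists Y\,\theta(X,Y)$ as $\forall X \exists Y \forall Z\,\theta(X,Y)$, whose matrix is still arithmetical and which is provable from $\Pi^1_1\myhyphen\CAo$ by assumption. Theorem~\ref{compactness for hyperjump} then supplies a standard $n \in \omega$ with
\[
  \ACAo \vdash \forall X, W\,\bigl(W = \HJ^n(X) \to \exists Y \leq_{\T} W\,\theta(X,Y)\bigr),
\]
and this $n$ is the one claimed in the statement.

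Next I would reason in $\ACAo + \beta^1_0\RFN(n)$ and fix an arbitrary set $X$. By the preceding lemma, $\beta^1_0\RFN(n)$ yields a coded $\omega$-model $\M$ with $X \in \M$, $\M \models \ACAo$, and $\M \models \exists Y(Y = \HJ^n(X))$; fix $W \in \M$ with $\M \models W = \HJ^n(X)$. Because the displayed implication is a theorem of $\ACAo$ and $\M \models \ACAo$, the model $\M$ satisfies that sentence, and instantiating it at the internal pair $X, W$ produces some $Y \in \M$ with $\M \models \theta(X,Y)$. Finally, since $\theta$ is arithmetical and both $X$ and $Y$ lie in $\M$, the absoluteness of arithmetical sentences for coded $\omega$-models (available here because we work over $\ACAo$) gives that $\M \models \theta(X,Y)$ is equivalent to $\theta(X,Y)$ holding outright; hence $\exists Y\,\theta(X,Y)$, and as $X$ was arbitrary we obtain $\forall X \exists Y\,\theta(X,Y)$.

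There is no genuinely hard step, so the only points needing care are bookkeeping ones. First, the implication from Step~1 is applied entirely internally to $\M$, so that the internal copy of $\HJ^n(X)$ is exactly what its consequent consumes and no comparison between the model's hyperjump and the true hyperjump is ever required (which is fortunate, since the Remark at the end of the preliminaries warns that $(Y=\HJ^n(X))^{\M}$ need not imply $Y=\HJ^n(X)$). Second, the closing arithmetical-absoluteness transfer is what forces us to extract the witness $Y$ from \emph{within} $\M$ rather than merely from the ambient universe, and this is the one place where the coding of $\omega$-models genuinely enters.
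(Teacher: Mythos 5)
Your proposal is correct and follows essentially the same route as the paper: apply Theorem~\ref{compactness for hyperjump} to obtain a standard $n$ with $\ACAo \vdash \forall X,W\,(W=\HJ^n(X) \to \exists Y \leq_{\T} W\,\theta(X,Y))$, use the reformulation of $\beta^1_0\RFN(n)$ to get a coded $\omega$-model of $\ACAo$ believing $\HJ^n(X)$ exists, instantiate the implication inside that model, and pull the arithmetical witness back by absoluteness. Your explicit remark about padding $\theta$ with a dummy $\forall Z$ to match the syntactic form of Theorem~\ref{compactness for hyperjump} is a harmless bookkeeping step the paper leaves implicit.
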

\begin{proof}
  Let $\sigma_n$ be the following sentence.
  \begin{align*}
    \forall X \exists \M : \text{coded $\omega$-model }(X \in \M \land \M \models \ACAo + \exists Y(Y = \HJ^n(X))).
  \end{align*}
  Then, $\beta^1_0\RFN(n)$ is equivalent to $\sigma_n$ over $\ACAo$.

  We show that $\forall X \exists Y \theta(X,Y)$ is provable from
  the theory $\ACAo + \{\sigma_n : n \in \omega\}$.
  Since $\forall X \exists Y \theta(X,Y)$ is provable from $\Pi^1_1\myhyphen\CAo$,
  there exists $n \in \omega$ such that
  \begin{equation*}
    \ACAo \vdash \forall X,W (W= \HJ^n(X) \to \exists Y \leq_{\T} W \theta(X,Y))
  \end{equation*}
  by Thereom \ref{compactness for hyperjump}.
  Take such an $n$. Assume $\ACAo + \sigma_n$. Let $X$ be an arbitrary set. Then, there is a coded $\omega$-model $\M$ such that
  \begin{equation*}
    X \in \M \land \M \models \ACAo + \exists W(W = \HJ^n(X)).
  \end{equation*}
  Take such $\M$ and $W$.
  Then $\M \models \exists Y \leq_{\T} W \theta(X,Y)$ and hence $\M \models \exists Y \theta(X,Y)$.
  Since $\theta$ is arithmetical, we have $\exists Y \theta(X,Y)$.
  This completes the proof.
\end{proof}
Since each $\beta^1_0\RFN(n)$ is a $\Pi^1_2$ sentence provable from $\Pi^1_1\myhyphen\CAo$,
the theory $\ACAo + \{\beta^1_0\RFN(n) : n \in\omega\}$ identifies the family of all $\Pi^{1}_{2}$-consequences of $\Pi^1_1\myhyphen\CAo$, $\{\sigma \in \Pi^1_2: \Pi^1_1\myhyphen\CAo \vdash \sigma\}$.
In Sections 4--7, we see the relationship between the $\beta^1_0\RFN$-hierarchy and certain $\Pi^1_2$ theorems provable from $\Pi^1_1\myhyphen\CAo$.
We note that $\Pi^1_1\myhyphen\CAo$ may prove $\Pi^{1}_{2}$ theorems faster than $\ACAo + \{\beta^1_0\RFN(n) : n \in\omega\}$.
More precisely, $\Pi^1_1\myhyphen\CAo$ has iterated exponential speedup over $\ACAo + \{\beta^1_0\RFN(n) : n \in\omega\}$ when $\beta^1_0\RFN(n)$ is expressed by using the $n$-th numeral.
This can be seen as follows. $\Pi^1_1\myhyphen\CAo$ proves that $\beta^1_0\RFN(x)$ is inductive (\textit{i.e.}, $\forall x(\beta^1_0\RFN(x)\to \beta^1_0\RFN(x+1))$ holds). Then, by Solovay's shortening method (see 3.4--3.5 of \cite{incoll:pflen}), we get a poly$(n)$-size proof of $\beta^{1}_{0}\RFN(2_{n})$ (where $2_{0}=1$ and $2_{n+1}=2^{2_{n}}$, and note that $2_{x}$ can be expressed as a $\Sigma_{1}$ formula of size $O(\log n)$). Since $\beta^{1}_{0}\RFN(2_{n})$ implies the consistency of $\ACAo+\beta^{1}_{0}\RFN(m)$ for all $m<2_{n}$, the proof of $\beta^{1}_{0}\RFN(2_{n})$ from $\ACAo + \{\beta^1_0\RFN(n) : n \in\omega\}$ exactly needs the $2_{n}$-th axiom of $\beta^1_0\RFN$ whose size is bigger than $2_{n}$.
\nocite{book:hb-pfthy}

In the remaining of this section, we see some basic properties of $\beta^1_0\RFN(n)$.
We note that $\beta^1_0\RFN(0)$ is the same as the $\omega$-model reflection of $\ACAo$.
Thus, $\beta^1_0\RFN(0)$ is equivalent to $\ACA_0^+$ over $\RCAo$.
Therefore, the $\beta^1_0\RFN$-hierarchy is above $\ACA_0^+$. Moreover, it is easy to prove that even $\beta^1_0\RFN(1)$ is stronger than $\ATRo$.

\begin{lemma}[\cite{Simpson}, Theorem VII.2.7.]\label{beta-model satisfies atro}
  Over $\ACAo$, any coded $\beta$-model satisfies $\ATRo$.
\end{lemma}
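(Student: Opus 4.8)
The plan is to verify, for each arithmetical operator $\theta$ and each $W\in\M$ with $\M\models\WO(W)$, that $\M$ contains the $\theta$-hierarchy along $W$; since $\ATRo$ is $\ACAo$ together with this transfinite-recursion scheme, this suffices. First I would record two consequences of the defining absoluteness of $\beta$-models. Applying $\Sigma^1_1$-absoluteness to the arithmetical-comprehension instances, which are true $\Sigma^1_1$ existence statements in the ground model, shows $\M\models\ACAo$; and applying it to the $\Sigma^1_1$ formula $\lnot\WO(W)$ yields $\Pi^1_1$-absoluteness, so that $\M\models\WO(W)$ forces $W$ to be a genuine well-order. The key structural remark is that the statement ``the $\theta$-hierarchy along $W$ exists'', written $\exists H\,\mathrm{TR}(\theta,W,H)$ with $\mathrm{TR}$ arithmetical, is $\Sigma^1_1$; hence by $\Sigma^1_1$-absoluteness it is true in $\M$ as soon as it is true in the ground model. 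The entire problem is thus moved into the ambient $\ACAo$-world.

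The substance is then to prove $\exists H\,\mathrm{TR}(\theta,W,H)$ in the ground model for the genuine well-order $W$. Here I would exploit the hypothesis itself: because $\M$ is a coded $\beta$-model containing $W$, the lemma identifying the existence of $\HJ(X)$ with the existence of a coded $\beta$-model containing $X$ gives that $\HJ(W)$ exists. With $\HJ(W)$ available I would build $H$ by effective transfinite recursion along $W$: declare $\langle a,n\rangle\in H$ to hold iff there is a coded partial hierarchy along the initial segment of $W$ determined by $a$ that places $n$ in the column indexed by $a$. Genuine well-foundedness of $W$ makes partial hierarchies unique and coherent, so this is a sound definition, and it is $\Sigma^1_1$ in $W$; by the lemma that every $\Sigma^{1,W}_1$-definable set is computable from $\HJ(W)$, the set $H$ is $\leq_{\T}\HJ(W)$ and therefore exists. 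That $H$ satisfies $\mathrm{TR}(\theta,W,H)$ is then checked by transfinite induction along $W$, which is legitimate since, relative to the set $\HJ(W)$, the relevant predicates are arithmetical and $W$ is truly well-founded.

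Finally I would pull the conclusion back: the $\Sigma^1_1$ truth of $\exists H\,\mathrm{TR}(\theta,W,H)$ in the ground model transfers, via $\Sigma^1_1$-absoluteness and $W\in\M$, to $\M\models\exists H\,\mathrm{TR}(\theta,W,H)$, giving $\M$ a witnessing hierarchy; ranging over all $\theta$ and all $W$ with $\M\models\WO(W)$ yields $\M\models\ATRo$. I expect the main obstacle to be the middle step, producing the hierarchy in the ground model, since $\ACAo$ alone cannot perform transfinite recursion; the whole argument hinges on recognizing that the mere existence of the $\beta$-model $\M$ delivers $\HJ(W)$, and on carrying out the recursion carefully enough that $H$ is visibly $\Sigma^1_1(W)$, rather than merely $\Delta^1_1(W)$ under the assumption that it exists, which would be circular.
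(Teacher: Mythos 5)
The paper offers no proof of this lemma --- it is imported verbatim from Simpson (Theorem VII.2.7) --- so your argument can only be measured against the standard one, and it matches it: the proof is correct. All the essential moves are present and in the right order: $\M\models\WO(W)$ yields genuine well-foundedness because $\lnot\WO(W)$ is $\Sigma^1_1$ and would reflect into $\M$; the mere existence of the coded $\beta$-model containing $W$ and the parameters of $\theta$ (whose join lies in $\M$ once you have checked $\M\models\ACAo$) gives $\HJ(W\oplus\vec{Z})$ in the ground model via the paper's Lemma quoted from Simpson VII.2.9, which is exactly what is needed to carry out effective transfinite recursion along a genuine well-order inside $\ACAo$; and the resulting $\Sigma^1_1$ truth $\exists H\,\mathrm{TR}(\theta,W,H)$ transfers into $\M$ by $\beta$-absoluteness. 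You are also right to flag that $H$ must be given by the $\Sigma^1_1$ description through partial hierarchies rather than by a $\Delta^1_1$ description presupposing $H$; that is where a naive attempt would be circular. Two presentational points are worth making explicit if you write this out: the existence and uniqueness of partial hierarchies at every level and the verification that $H$ satisfies $\mathrm{TR}(\theta,W,H)$ are a single transfinite induction (with the restriction of $H$ below $a$ serving as the witness at limit stages), and the textbook route can alternatively assemble the hierarchy arithmetically in the code of $\M$ itself using uniqueness of partial hierarchies inside $\M$, bypassing the hyperjump --- but both variants end with the same $\Sigma^1_1$-absoluteness transfer, so the difference is cosmetic.
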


\begin{theorem}
  Over $\ACAo$, $\beta^1_0\RFN(1)$ implies the $\omega$-model reflection of $\ATRo$.
\end{theorem}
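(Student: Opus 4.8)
The plan is to exploit the hyperjump reformulation of $\beta^1_0\RFN(1)$ together with the fact that $\beta$-models satisfy $\ATRo$, and then to push the resulting statement from an auxiliary outer model down to the ground model by an absoluteness argument. Reasoning in $\ACAo + \beta^1_0\RFN(1)$, I would first fix an arbitrary set $X$. By the reformulation lemma (the equivalence of $\beta^1_0\RFN(n)$ with the assertion that every set lies in a coded $\omega$-model of $\ACAo + \exists Y(Y = \HJ^n(X))$), applied with $n=1$, I obtain a coded $\omega$-model $\M$ with $X \in \M$, $\M \models \ACAo$, and $\M \models \exists Y(Y = \HJ(X))$.

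Next I would reason inside $\M$. Since $\M \models \ACAo + \exists Y(Y = \HJ(X))$, the relativization to $\M$ of Simpson's Lemma VII.2.9 (equivalence of the existence of $\HJ(X)$ with the existence of a coded $\beta$-model containing $X$) produces a set $\NN$, coded inside $\M$, with $\M \models (\NN \text{ is a coded } \beta\text{-model} \land X \in \NN)$. Applying Lemma \ref{beta-model satisfies atro} inside $\M$ then gives $\M \models (\NN \models \ATRo)$. Note that $\NN$ is a genuine set of $\M$, hence of the ground model, and since $\M$ and the ground model share the same standard naturals, $\NN$ decodes to the same $\omega$-model in both; moreover $X \in \NN$ holds in the ground model because membership in a coded $\omega$-model is an arithmetical, hence absolute, condition.

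The main step is to transfer $\NN \models \ATRo$ from $\M$ to the ground model. The key observation is that this is an arithmetical statement in the single set parameter $\NN$: for a coded $\omega$-model the set quantifiers are interpreted as number quantifiers over the standard index set, so the satisfaction of any fixed $\L_2$-sentence is arithmetical in the code, and in the present formulation $\ATRo$ is just $\RCAo$ together with the single statement $\forall X \forall \alpha(\WO(\alpha) \to \exists Y(Y = X^{(\alpha)}))$, in which $Y = X^{(\alpha)}$ is an arithmetical condition; satisfaction of $\RCAo$ by an $\omega$-model is likewise arithmetical (induction being automatic and $\Delta^0_1$-comprehension amounting to closure under Turing reducibility). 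Thus $\NN \models \ATRo$ is arithmetical in $\NN$, and by the arithmetical absoluteness between a coded $\omega$-model and the ground model recorded in the Remark following the definition of coded $\omega$-models, $\M \models (\NN \models \ATRo)$ yields that $\NN \models \ATRo$ is true in the ground model. As $X$ was arbitrary, every set lies in a coded $\omega$-model of $\ATRo$, which is precisely the $\omega$-model reflection of $\ATRo$.

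I expect the absoluteness in the last paragraph to be the delicate point. It relies on formulating $\ATRo$ with a bounded number of set-quantifier alternations so that its satisfaction predicate has fixed arithmetical complexity, and on evaluating the internal $\WO(\alpha)$ consistently within the nested models. Both are handled by keeping every quantifier relativized to the appropriate model and invoking only the arithmetical absoluteness already available in the preliminaries, so no new metatheoretic machinery is needed beyond what has been set up.
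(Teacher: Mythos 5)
Your proof is correct and follows essentially the same route as the paper's: relativize the lemma that coded $\beta$-models satisfy $\ATRo$ to the outer $\omega$-model of $\ACAo$, and then pull the conclusion back to the ground model via the arithmetical absoluteness of satisfaction for coded $\omega$-models. The only difference is cosmetic: the paper starts from the chain formulation $X \in \M_0 \in_\beta \M_1$, so the inner model $\M_0$ already serves as the witness, whereas you start from the hyperjump formulation and first recover a coded $\beta$-model inside $\M$ via Simpson's Lemma VII.2.9 --- which is exactly the content of the stated equivalence between the two formulations of $\beta^1_0\RFN(1)$.
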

\begin{proof}
  We reason in $\ACAo + \beta^1_0\RFN(1)$.
  Let $X$ be a set. By $\beta^1_0\RFN(1)$, take coded $\omega$-models $\M_0$ and $\M_1$ such that
  \begin{align*}
    X \in \M_0 \in_{\beta} \M_1 \land \M_1 \models \ACAo.
  \end{align*}
  Then, $\M_1 \models [\M_0 \models \ATRo]$.
  Thus
  $\M_0 \models \ATRo$ actually holds.
\end{proof}

In the following section, we will show that there is a very large gap between $\ATRo$ and $\beta^1_0\RFN(1)$.

\section{Relativized leftmost path principle and $\beta^1_0\RFN$}
As we have noted in introduction, Towsner introduced a family of $\Pi^1_2$ statements called relative leftmost path principles to give an upper bound of some $\Pi^1_2$ theorems provable from $\Pi^1_1\myhyphen\CAo$ \cite{Townser_TLPP}.
Indeed, he proved that Kruskal's theorem for trees,  Nash-Williams' theorem in bqo theory and Menger's theorem for countable graphs are provable from one of the relative leftmost path principles.

In this section, we compare the relative leftmost path principles and the $\beta^1_0\RFN$-hierarchy.
More precisely, we introduce a variant of relative leftmost path principle which we call arithmetical relative leftmost path principle ($\ALPP$) and show that $\ALPP$ is equivalent to $\beta^1_0\RFN(1)$,
and we show that the strongest form of relative leftmost path principle, the transfinite relativized leftmost path principle $\TLPP$, is weaker than $\beta^1_0\RFN(2)$.
Moreover, we introduce the $n$-th iteration of $\ALPP$ denoted $\It^n(\ALPP)$, which is equivalent to $\beta^1_0\RFN(n)$.

To define $\ALPP$, we introduce the notion of arithmetical reducibility.
\begin{definition}[Arithmetical reducibility, $\RCAo$]
  Let $X,Y$ be sets.
  We say $X$ is arithmetically reducible to $Y$ (write $X \leq_{\T}^{\a} Y$) if
  \begin{equation*}
    \exists n \exists Z (Z = Y^{(n)} \land X \leq_{\T} Z).
  \end{equation*}

  Let $\varphi(X)$ be a formula.
  We write $\forall X \leq_{\T}^{\a} Y \varphi(X)$ to mean $\forall X (X \leq_{\T}^{\a} Y \to \varphi(X))$ and write $\exists X \leq_{\T}^{\a} Y \varphi(X)$ to mean $\exists X ( X \leq_{\T}^{\a} Y \land \varphi(X))$.
\end{definition}

\begin{remark}
The notation $\leq_{\T}^{\a}$ follows from the notation $\leq_{\mathrm{W}}^{\a}$ of arithmetical Weihrauch reducibility.
\end{remark}

\begin{remark}
  We note that arithmetical reducibility is an extension of the notion of arithmetical definability.
  For $A,B \subset \omega$, we say $A$ is arithmetically definable from $B$ if there is an arithmetical formula
  $\theta(x,X)$ with exactly displayed free variables such that $A = \{x : \theta(x,B)\}$.
  Thus, $A$ is arithmetically definable from $B$ is equivalent to the condition that for any $\omega$-model $\M$ of $\ACA_0$ containing $A$ and $B$,
  $\M \models A \leq^{\a}_{\T} B$.
\end{remark}
In contrast to this remark, for a non $\omega$-model $\M$ of $\ACAo$ and $A,B \in \M$,
$\M \models A \leq^{\a}_{\T} B$ does not mean that there is an arithmetical formula $\theta$ such that
$\M \models A = \{x: \theta(x,B)\}$.
However, the connection between arithmetical reducibility and arithmetical definability still holds in the following sense.
\begin{lemma}
  $(\ACAo)$ Let $\M$ be a coded $\omega$-model of $\ACAo$.
  Then, $\M$ is closed under arithmetical reduction. Hence, for any arithmetical formula $\theta$ and $Y \in \M$,
  \begin{align*}
    \exists X \leq^{\a}_{\T} Y \theta(X) &\to [\M \models \exists X \leq^{\a}_{\T} Y \theta(X)] \text{ and } \\
    [\M \models \forall X \leq^{\a}_{\T} Y \theta(X)] &\to \forall X \leq^{\a}_{\T} Y \theta(X).
  \end{align*}
\end{lemma}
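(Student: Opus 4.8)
The plan is to isolate a single structural fact---that $\M$ is closed under arithmetical reducibility---and then read off both displayed implications from the absoluteness of arithmetical statements in coded $\omega$-models recorded in the earlier Remark (for arithmetical $\sigma$ with parameters in $\M$, one has $\M \models \sigma$ iff $\sigma$ is true, since we work in $\ACAo$).

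First I would establish closure under a single Turing jump. Because $\M \models \ACAo$ and $\ACAo \vdash \forall Y \exists W (W = Y')$, for each $Y \in \M$ there is $W \in \M$ with $\M \models W = Y'$; as ``$W = Y'$'' is arithmetical, absoluteness gives $W = Y'$ outright, so $Y' \in \M$. Likewise $\M \models \RCAo$ yields that $\M$ is closed under $\leq_{\T}$: if $Z \in \M$ and $X \leq_{\T} Z$ then $X$ is $\Delta^{0,Z}_1$, hence $X \in \M$ by the same absoluteness argument.

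Next, closure under arithmetical reduction. The subtle point---and the main obstacle---is that in $X \leq^{\a}_{\T} Y$ the witnessing exponent $n$ ranges over all of $\N$ and may be nonstandard, so I cannot simply iterate single-jump closure a standard number of times. Instead I would argue by arithmetical induction inside the ambient theory $\ACAo$ on the formula $\psi(n) \equiv \forall Z (Z = Y^{(n)} \to Z \in \M)$. The base case is $Y^{(0)} = Y \in \M$; for the step, if $Y^{(n+1)}$ exists then so does $Y^{(n)}$ (both are extracted from the same coded iteration), the induction hypothesis places $Y^{(n)}$ in $\M$, and single-jump closure gives $Y^{(n+1)} = (Y^{(n)})' \in \M$. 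Note that $\M \models \ACAo$ alone would \emph{not} furnish iterated jumps internally---that is the content of $\ACA_0'$---but here I only need $Y^{(n)}$ at exponents where the iterate genuinely exists in the ambient model, and that existence is supplied by the hypothesis $X \leq^{\a}_{\T} Y$ while the induction is carried out in the ambient theory. Having $\psi(n)$ for all $n$, closure follows: if $X \leq^{\a}_{\T} Y$ with $Y \in \M$, fix $n$ and $Z = Y^{(n)}$ with $X \leq_{\T} Z$; then $Z \in \M$ and, by closure under $\leq_{\T}$, $X \in \M$.

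Finally I would derive the two implications. For the first, given a real witness $X \leq^{\a}_{\T} Y$ with $\theta(X)$, closure gives $X \in \M$; since ``$Z = Y^{(n)}$'' and ``$X \leq_{\T} Z$'' are arithmetical and true, absoluteness yields $\M \models X \leq^{\a}_{\T} Y$, and likewise $\M \models \theta(X)$, so $\M \models \exists X \leq^{\a}_{\T} Y \theta(X)$. The second implication is the same transfer run in the other direction: any real $X \leq^{\a}_{\T} Y$ lies in $\M$ and satisfies $\M \models X \leq^{\a}_{\T} Y$, so $\M \models \forall X \leq^{\a}_{\T} Y \theta(X)$ forces $\M \models \theta(X)$, whence $\theta(X)$ by absoluteness. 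Both directions thus reduce entirely to the closure fact together with arithmetical absoluteness in $\omega$-models.
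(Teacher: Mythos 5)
Your argument is correct in substance but routes around the paper's key observation, and in doing so picks up one imprecision worth fixing. The paper's proof is shorter: since $\M$ is a \emph{coded $\omega$-model}, every instance of induction relativized to $\M$ becomes an arithmetical statement about the code of $\M$ (set quantifiers over $\M$ turn into number quantifiers over column indices), so $\M$ satisfies full induction, in particular $\Sigma^1_1$ induction; combined with closure under the Turing jump this gives $\M \models \forall Y \forall n \exists Z(Z = Y^{(n)})$, i.e.\ $\M$ internally satisfies $\ACA_0'$, and then absoluteness of the arithmetical condition ``$Z = Y^{(n)}$'' pulls the internal witness out to the ambient model. Your side remark that ``$\M \models \ACAo$ alone would not furnish iterated jumps internally'' is therefore wrong for coded $\omega$-models (though right for arbitrary models of $\ACAo$); this is exactly the point the paper exploits. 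Your external induction is a legitimate alternative, but as written the induction formula $\psi(n) \equiv \forall Z(Z = Y^{(n)} \to Z \in \M)$ contains a universal set quantifier and is $\Pi^1_1$, so $\ACAo$ does not directly license induction on it. The repair is immediate and is really the same move the paper makes: replace $\psi(n)$ by the arithmetical formula $\exists i\,((\M)_i = Y^{(n)})$ (using uniqueness of the iterated jump and the fact that membership in a coded model is an arithmetical condition on the code), or induct on $k \le n$ over the columns extracted from the given code of $Y^{(n)}$. With that adjustment your base case, inductive step, and the final transfer of both displayed implications via arithmetical absoluteness all go through, and the derivation of the two implications from closure matches the paper's ``the latter part follows immediately from the former.''
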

\begin{proof}
  Let $\M$ be an $\omega$-model of $\ACAo$.
  Then, $\M$ satisfies $\Sigma^1_1$ induction and $\forall Y \exists Z (Z = \mathrm{TJ}(Y))$ where 
  $\mathrm{TJ}$ denotes the Turing jump operator.
  Thus $\M \models \forall Y \forall n \exists Z(Z = Y^{(n)})$.

  To show that $\M$ is closed under arithmetical reduction, take $Y \in \M$ and $X$ such that $X \leq_{\T}^{\a} Y$.
  Then, there exist $n,Z$ such that $Z = Y^{(n)} \land X \leq_{\T} Z$.
  We note that $Z \in \M$ because the condition $Z = Y^{(n)}$ is absolute for $\M$.
  Thus, $X \in \M$.

  The latter part immediately follows from the former part.
\end{proof}

\begin{lemma}\label{arithmetical red is delta11}
  Let $\varphi(X)$ be an arithmetical formula.
  Then, over $\mathsf{ACA}_0^+$, $\forall X \leq_{\T}^{\a} Y \varphi(X)$ is $\Delta^1_1$.
  Similarly, $\exists X \leq_{\T}^{\a} Y \varphi(X)$ is $\Delta^1_1$.
\end{lemma}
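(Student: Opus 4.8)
The plan is to exploit the defining axiom of $\ACA_0^+$, namely that the $\omega$-jump $Y^{(\omega)}$ exists and is provably unique for every set $Y$. The key point is that arithmetic reducibility to $Y$ is precisely ``computability from some column of $Y^{(\omega)}$'': writing $W = Y^{(\omega)}$ and $(W)_n$ for its $n$-th column, $\ACA_0^+$ proves $(W)_n = Y^{(n)}$, so that $X \leq_{\T}^{\a} Y \leftrightarrow \exists n\,(X \leq_{\T} (W)_n)$. The predicate ``$W = Y^{(\omega)}$'' is itself arithmetical (it unfolds to $(W)_0 = Y$ together with the $\forall n$ statement that $(W)_{n+1}$ is the Turing jump of $(W)_n$), and this is the only place where $\ACA_0^+$ is genuinely used.

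First I would rewrite the bounded universal quantifier. Using the equivalence above,
\[
  \forall X \leq_{\T}^{\a} Y\, \varphi(X) \;\longleftrightarrow\; \forall n\,\forall X\,\bigl(X \leq_{\T} (W)_n \to \varphi(X)\bigr).
\]
Since every $X \leq_{\T} (W)_n$ equals $\{x : \Phi_e^{(W)_n}(x) = 1\}$ for some index $e$ with $\Phi_e^{(W)_n}$ total, this is in turn equivalent to
\[
  \forall n\,\forall e\,\bigl( \Phi_e^{(W)_n} \text{ total} \to \varphi(\Phi_e^{(W)_n}) \bigr),
\]
where $\varphi(\Phi_e^{(W)_n})$ abbreviates the result of substituting the set computed by $\Phi_e$ from $(W)_n$ for the free set variable of $\varphi$. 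Call this formula $\psi(W)$. The crucial observation is that $\psi(W)$ is \emph{arithmetical} in $W$: the column $(W)_n$ is obtained from $W$ by a fixed arithmetical operation uniformly in $n$, totality of $\Phi_e^{(W)_n}$ is $\Pi^0_2$ in $W$ uniformly in $e,n$, and membership in the computed set is $\Sigma^0_1$ in $W$; substituting this into the fixed arithmetical $\varphi$ raises the complexity by only a fixed finite amount, and the outer quantifiers $\forall n\,\forall e$ are numerical. Hence $\psi$ sits at a fixed level of the arithmetical hierarchy.

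Finally I would conclude $\Delta^1_1$-ness by playing the existential and universal forms of ``$W$ is the $\omega$-jump of $Y$'' against each other. Because $\ACA_0^+$ proves both the existence and the uniqueness of $Y^{(\omega)}$, we obtain over $\ACA_0^+$
\[
  \forall X \leq_{\T}^{\a} Y\, \varphi(X) \;\leftrightarrow\; \exists W\,(W = Y^{(\omega)} \land \psi(W)) \;\leftrightarrow\; \forall W\,(W = Y^{(\omega)} \to \psi(W)).
\]
The middle formula is $\Sigma^1_1$ (an existential set quantifier over an arithmetical matrix) and the right-hand one is $\Pi^1_1$, so $\forall X \leq_{\T}^{\a} Y\,\varphi(X)$ is $\Delta^1_1$. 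The statement for $\exists X \leq_{\T}^{\a} Y\,\varphi(X)$ then follows immediately, either by running the same computation with existential number quantifiers inside $\psi$, or simply by observing that it is the negation of $\forall X \leq_{\T}^{\a} Y\, \lnot\varphi(X)$ and that $\Delta^1_1$ is closed under negation. The only genuinely delicate point is the uniform arithmeticity of the substitution $\varphi(\Phi_e^{(W)_n})$, i.e.\ verifying that plugging a uniformly $W$-computable set into a fixed arithmetical formula stays arithmetical of bounded complexity uniformly in $e,n$; everything else is bookkeeping once $Y^{(\omega)}$ is available.
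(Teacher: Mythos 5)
Your proposal is correct and follows essentially the same route as the paper: both rewrite $X \leq_{\T}^{\a} Y$ as computability from a column of the $\omega$-jump, observe that the resulting matrix is arithmetical in $W = Y^{(\omega)}$, and then use the $\ACA_0^+$-provable existence and uniqueness of $Y^{(\omega)}$ to express the statement in both a $\Sigma^1_1$ and a $\Pi^1_1$ form. The only difference is that you spell out the reduction of the bounded set quantifier $\forall X \leq_{\T} (W)_n$ to number quantification over indices $e$, which the paper leaves implicit.
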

\begin{proof}
  Within $\ACA_0^+$, $\forall X \leq_{\T}^{\a} Y \varphi(X)$ can be written as follows.
  \begin{align*}
    \forall X \leq_{\T}^{\a} Y \varphi(X) &\leftrightarrow \forall Z(Z = Y^{(\omega)} \to \forall n \forall X \leq_{\T} Z_n \varphi(X)) \\
    &\leftrightarrow \exists Z(Z = Y^{(\omega)} \land \forall n \forall X \leq_{\T} Z_n \varphi(X))
  \end{align*}
  where $Z_n$ denotes the $n$-th segment of $Z$ as introduced in Section 2. 
  Thus, in this case, $Z_n$ is $Y^{(n)}$.
  From the above equuivalences, $\forall X \leq_{\T}^{\a} Y \varphi(X)$ is $\Delta^1_1$.
  The case for $\exists X \leq_{\T}^{\a} Y \varphi(X)$ is the same.
\end{proof}
\begin{lemma}[Folklore]\label{arithmetical reduction and omega-model}
  Assume $\ACA_0^{+}$. Then, for any $X$, the coded model $\M = \{Y : Y \leq_{\T}^{\a} X\}$ exists.
  Moreover,
  for any coded $\omega$-model $\M'$ of $\ACAo$ containing $X$,
  \begin{align*}
    \forall Y (Y \in \M \to Y \in \M')
  \end{align*}
  holds. In this sense, $\M$ is the smallest coded $\omega$-model of $\ACAo$ containing $X$.
\end{lemma}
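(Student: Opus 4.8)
The plan is to build $\M$ explicitly from the $\omega$-th jump of $X$, verify that it is a coded $\omega$-model of $\ACAo$, and then establish minimality by running an induction in the ground model rather than inside $\M'$.

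First I would invoke $\ACA_0^+$ to fix $W = X^{(\omega)}$, so that its $n$-th column $W_n$ is exactly $X^{(n)}$ for every $n$. Since every $X^{(n)}$ now exists as a genuine set, the clause $Y \leq_{\T}^{\a} X$ unwinds to $\exists n\,(Y \leq_{\T} X^{(n)})$. To code $\M$ as a single set I would enumerate the partial functionals $\Phi_e^{X^{(n)}}$ over all pairs $(e,n)$, letting the $(e,n)$-th column be $\Phi_e^{X^{(n)}}$ when this is a total $\{0,1\}$-valued function and $\varnothing$ otherwise. The defining predicate of the resulting set is arithmetical in $W$ (totality of $\Phi_e^{W_n}$ is $\Pi^0_2$ in $W_n \leq_{\T} W$, and the value predicate is $\Sigma^0_1$ in $W_n$), so the code exists by arithmetical comprehension, available already in $\ACAo$. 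By construction its second-order part is $\{Y : \exists n\,(Y \leq_{\T} X^{(n)})\} = \{Y : Y \leq_{\T}^{\a} X\}$, which is the desired $\M$.

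Next I would check that $\M$ is a coded $\omega$-model of $\ACAo$ containing $X$. It contains $X = X^{(0)}$, and its second-order part is lower closed under $\leq_{\T}$, closed under $\oplus$ (if $Y \leq_{\T} X^{(n)}$ and $Y' \leq_{\T} X^{(m)}$ then $Y \oplus Y' \leq_{\T} X^{(\max(n,m))}$), and closed under the Turing jump (if $Y \leq_{\T} X^{(n)}$ then $Y' \leq_{\T} X^{(n+1)}$). By the standard criterion these closure properties make $\M$ a coded $\omega$-model of $\ACAo$; in fact the same computation shows $\M \models \ACA_0'$.

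Finally, for minimality, let $\M'$ be any coded $\omega$-model of $\ACAo$ with $X \in \M'$; I want $Y \in \M \to Y \in \M'$. The naive route — arguing by induction that $X^{(n)} \in \M'$ — cannot be run inside $\M'$, since the statement $\forall n\,\exists W(W = X^{(n)})$ is of $\ACA_0'$-strength and is not provable in $\ACAo$, so a model of $\ACAo$ need not contain $X^{(n)}$ for nonstandard $n$; this is the main obstacle. The fix is to perform the induction in the ground model, where $\ACA_0^+$ supplies $X^{(\omega)}$ as an actual set. Concretely, I would form $A = \{n : X^{(n)} \in \M'\}$, which exists by arithmetical comprehension because membership of $X^{(n)} = W_n$ in the coded model $\M'$ is arithmetical in $W$ and in the code of $\M'$. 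Then $0 \in A$ since $X \in \M'$, and $A$ is closed under successor because $\M' \models \ACAo$ is closed under the single Turing jump (which is absolute for $\omega$-models) and $(X^{(n)})' = X^{(n+1)}$; by ordinary number induction applied to the set $A$ we conclude $A = \N$, i.e. $X^{(n)} \in \M'$ for every $n$. Given $Y \in \M$, pick $n$ with $Y \leq_{\T} X^{(n)}$; then $X^{(n)} \in \M'$ and, $\M'$ being lower closed under $\leq_{\T}$, we obtain $Y \in \M'$, as required.
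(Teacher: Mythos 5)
Your proof is correct. The paper states this lemma as folklore and gives no proof of its own, so there is nothing to compare against; your argument is the standard one: code $\M$ from $W=X^{(\omega)}$ by enumerating the total $\{0,1\}$-valued functionals $\Phi_e^{W_n}$, verify closure under $\oplus$, downward $\leq_{\T}$, and the Turing jump to get $\M\models\ACAo$, and prove minimality by forming $A=\{n: X^{(n)}\in\M'\}$ via arithmetical comprehension in the ground model and applying set induction. You also correctly isolate the one delicate point, namely that the induction showing $X^{(n)}\in\M'$ for all $n$ must be run externally (using $\ACA_0^+$ in the ground model) rather than inside $\M'$, since $\M'\models\ACAo$ need not satisfy $\forall n\,\exists Z(Z=X^{(n)})$.
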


\begin{definition}[Relative leftmost path principle, See \cite{Townser_TLPP}]
  Let $k \in \omega$. We define $\Delta^0_k\LPP$ as the assertion that
  for any ill-founded tree $T$, there is a path $g \in [T]$ such that
  \begin{equation*}
    \forall f \in \Delta^0_k(T \oplus g) (f \in [T] \to g \leq_l f).
  \end{equation*}

  We define $\ALPP$ as the assertion that for any ill-founded tree $T$,
  there is a path $g \in [T]$ such that
  \begin{equation*}
    \forall f \leq^{\a}_{\T} (T\oplus g) (f \in [T] \to g \leq_l f).
  \end{equation*}
  We call a witness $g$ of $\ALPP$ an arithmetical leftmost path.

  Finally, over $\ATRo$, define $\TLPP$ as the following assertion:
  For any ill-founded tree $T$ and any well-order $\alpha$, there is a path $g \in [T]$ such that
  \begin{equation*}
    \forall f \leq_{\T} (T \oplus g)^{(\alpha)} (f \in [T] \to g \leq_l f).
  \end{equation*}
  We call such $g$ a $\Delta^0_{\alpha+1}$ leftmost path.
  \end{definition}

  \begin{remark}
    In the definition of $\Delta^0_k\LPP,\ALPP$ and $\TLPP$, we can replace $(T \oplus g)$ by $g$ if we work in $\RCAo$.
  \end{remark}

From now on, we see the relationship between $\beta^1_0\RFN(1),\beta^1_0\RFN(2)$ and $\ALPP,\TLPP$.
For this comparison, we prefer the base theory to be $\ACA_0^+$ rather than $\ACAo$ to simplify the discussion by Lemma \ref{arithmetical red is delta11} and \ref{arithmetical reduction and omega-model}.
However, the following lemma take the difference of the choice of the base theory away.
\begin{lemma}
  Over $\ACAo$, $\Delta^0_0\LPP$ implies $\ATRo$.
\end{lemma}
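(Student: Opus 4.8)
The plan is to verify $\ATRo$ in the jump-hierarchy form from the definition above: for every set $X$ and every well-order $\alpha$ I must produce $X^{(\alpha)}$. The idea is to present this set as (essentially) a leftmost path of an explicitly given binary tree. The point of using a binary tree is that, once it is infinite, it is provably ill-founded already in $\ACAo$ by König's lemma, so $\Delta^0_0\LPP$ can be applied to it without any circular appeal to the hierarchy we are trying to build.

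First I would fix an enumeration $\langle p_i\rangle_{i\in\N}$ of the relevant ``bit positions'' $(\beta,e)$ with $\beta$ a successor below $\alpha$ and $e\in\N$, chosen so that positions of smaller level $\beta$ come first; limit levels will not be encoded as free bits but recovered afterwards as the appropriate sums of the lower levels. I then define $T\subseteq 2^{<\N}$ to consist of those finite $0/1$-strings $s$ satisfying the single \emph{forcing constraint}: whenever the oracle assembled from the bits of $s$ below level $\beta$ yields a convergent computation $\Phi_e(e)\!\downarrow$ within $|s|$ steps, the bit at position $(\beta+1,e)$ of $s$ is already set to $1$. This $T$ is computable from $X\oplus\alpha$, infinite, and finitely branching, hence ill-founded in $\ACAo$; its paths are exactly the $0/1$-sequences that dominate the forced bits, and filling any node with $1$'s always yields a path.

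Next I would apply $\Delta^0_0\LPP$ to obtain $g\in[T]$ that is $\leq_l$-least among all $f\in\Delta^0_0(g)$ with $f\in[T]$, and prove by transfinite induction on $\beta\leq\alpha$ that $g$ agrees with the true hierarchy $\langle X^{(\gamma)}\rangle_{\gamma<\beta}$ on every position of level $<\beta$. In the successor step, a forced position left at $0$ is impossible because the corresponding node is excluded from $T$ (here the induction hypothesis guarantees that the lower levels of $g$ are exactly correct, so the forcing genuinely fires); and if some position $p=p_i$ of level $\beta$ carried a spurious $1$, I would form the competitor $g'$ that copies $g$ below $p$, puts $0$ at $p$, and puts $1$ at every later position. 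Then $g'<_l g$, $g'\in\Delta^0_0(g)$ (it is $g$ patched on a decidable set of coordinates), and $g'\in[T]$ — the all-$1$'s tail satisfies every forcing demand, while at $p$ the constraint does not require $1$ precisely because the lower levels of $g'$ coincide with the correct hierarchy and produce no convergent computation. This contradicts the weak leftmost property of $g$. Hence $g$ matches the hierarchy through level $\beta$, and at the end $g$ codes $X^{(\alpha)}$.

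The main obstacle is the tension between the superset encoding and the non-monotonicity of relativized halting: with $0/1$-approximations read as mere lower bounds for the hierarchy, a string with extra $1$'s at low levels could make the forcing fire incorrectly, so the forcing constraint is not reliable on arbitrary nodes. This is exactly what the transfinite induction is designed to neutralize, since its hypothesis supplies, at each level $\beta$, that the lower levels of $g$ are genuinely correct rather than mere supersets. The remaining care is the bookkeeping needed to keep the competitor $g'$ of complexity $\Delta^0_0(g)$ while still landing inside $[T]$, which is what dictates the ``copy, flip one bit, then fill with $1$'s'' shape of $g'$ and the choice to recover limit levels by definition rather than as free bits.
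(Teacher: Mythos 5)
The high-level strategy --- a binary tree of ``upper approximations'' to the jump hierarchy along $\alpha$, with the $\Sigma^0_1$ facts enforced by a decidable tree condition and the $\Pi^0_1$ facts extracted from leftmost-ness via a computable competitor --- is a sensible first attempt, and your competitor is indeed of the right complexity and lexicographically smaller than $g$. The paper gives no proof of this lemma (it only cites Towsner's Theorem 4.2), so I assess your argument on its own terms. There is a genuine gap, and it sits exactly at the point you yourself flag as ``the main obstacle.'' Your construction requires an enumeration $\langle p_i\rangle_{i\in\N}$ of the positions $(\beta,e)$ in which every position of a smaller level precedes every position of a larger level. No such $\omega$-enumeration exists once $\alpha$ has at least two successor levels: each level contributes infinitely many positions $(\beta,e)$ with $e\in\N$, so the positions of the first level alone already use up an initial segment of order type $\omega$ and the next level receives no finite index.

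Without that ordering the competitor argument collapses. To see that $g'\in[T]$ you must check the forcing constraint at every index $j$ with $g'(j)=0$, namely at the flipped position $i$ and at every $j<i$ with $g(j)=0$. The constraint at such a $j$, say of level $\gamma+1$, is evaluated against the oracle assembled from $g'$ at \emph{all} positions of level $\le\gamma$; in any actual enumeration infinitely many of these positions have index $>i$, where $g'$ carries the all-$1$'s padding rather than $g$'s (inductively correct) values. Since relativized halting is not monotone in the oracle, a computation that diverges on the true $X^{(\gamma)}$ may converge on this corrupted oracle, forcing a $1$ where $g'$ has a $0$; the same failure already occurs at the flipped position $i$ itself, whose level-$\le\beta$ oracle necessarily involves infinitely many padded coordinates. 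Your transfinite induction hypothesis is a statement about $g$, not about $g'$, so it cannot neutralize this. A repair needs an encoding in which membership of a node in $T$ at coordinate $j$ depends only on coordinates $<j$ (for instance by interleaving halting-time and use witnesses into the path), or a different route to $\ATRo$ altogether; this is precisely the nontrivial content hidden in the citation.
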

\begin{proof}
  See \cite{Townser_TLPP}, Theorem 4.2.
\end{proof}

Our strategy to prove $\beta^1_0\RFN(1)$ from $\ALPP$ is essentially the same as for the proof of Theorem \ref{LPP and Pi11CA};
to make a hyperjump, it is sufficient
to make the set $\{i : [T_i] \neq \varnothing\}$ from given sequence of trees $\langle T_i \rangle_i$.

\begin{lemma}
  There are a $\Sigma^0_1$ formula $\varphi(n,X)$ and a $\Pi^0_1$ formula $\psi(n,X)$ such that
  \begin{itemize}
    \item Over $\RCAo$, $\forall n,X (\varphi(n,X) \leftrightarrow \psi(n,X))$.
    \item Over $\RCAo$, for any $X$, $T = \{n: \varphi(n,X)\}$ is a tree such that $1^\infty \in [T]$.
    \item Over $\ACAo$, for any $X$, if $\{n: \varphi(n,X)\}$ has a leftmost path, then $\HJ(X)$ is computable from it. Moreover, this reduction is uniform.
  \end{itemize}
  We may assume that there is a Turing functional $\Phi_T(X)$ defined by $\Phi_T(X) = \{n: \varphi(n,X)\}$.
\end{lemma}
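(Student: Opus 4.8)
The plan is to reproduce the coding from the $(2\to 1)$ direction of Theorem~\ref{LPP and Pi11CA}: I build a single tree $T$, uniformly $\Delta^0_1$ in $X$, that packages the $\Sigma^1_1$-witnessing trees for $\HJ(X)$ in such a way that the first coordinate of each component of a leftmost path of $T$ decides membership in $\HJ(X)$. Concretely, for each pair $p=(e,x)$ I would take the tree
\[
  T_p = \{\sigma \in \N^{<\N} : \forall y \leq |\sigma|\,\Sat_{\Pi^0_0}(e,x,X,\sigma[y])\},
\]
so that, by the definition of the universal $\Sigma^1_1$ formula, $\sigma \in T_p$ is a bounded condition in $X$ while $[T_p]\neq\varnothing$ holds exactly when $\sigma^1_1(e,x,X)$, i.e. when $(e,x)\in\HJ(X)$ (both directions of this equivalence are immediate, since a path of $T_p$ is literally a witnessing $f$). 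Exactly as in the proof of Theorem~\ref{LPP and Pi11CA}, I then pad each $T_p$ to
\[
  S_p = \{\langle 0\rangle\ast\sigma : \sigma\in T_p\}\cup\{1^l : l\in\N\},
\]
so that $1^\infty\in[S_p]$ always, while $[S_p]$ contains a path beginning with $0$ if and only if $[T_p]\neq\varnothing$. Finally I set $T=\bigoplus_p S_p$ in the sense of Definition~\ref{oplus of trees}.

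Since $1^\infty=\bigoplus_p 1^\infty$, we obtain $1^\infty\in[T]$, so $T$ is ill-founded. Because membership in each $S_p$ is decidable in $X$ and the $\bigoplus$-decoding is primitive recursive with a bounded number of components, the relation $n\in T$ is $\Delta^0_1$ (indeed bounded) in $X$; writing it in $\Sigma^0_1$ form yields $\varphi(n,X)$ and in $\Pi^0_1$ form yields $\psi(n,X)$, and their equivalence is provable over $\RCAo$. The associated Turing functional $\Phi_T$ with $\Phi_T(X)=\{n:\varphi(n,X)\}$ then exists.

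For the reduction I would argue in $\ACAo$. Suppose $T=\{n:\varphi(n,X)\}$ has a leftmost path $f$. By the order-preserving property of $\bigoplus$ recorded after Definition~\ref{oplus of trees}, $f=\bigoplus_p g_p$ where each $g_p$ is the leftmost path of $S_p$. The key equivalence is $g_p(0)=0 \iff [T_p]\neq\varnothing$: if $g_p(0)=0$ then $g_p=\langle 0\rangle\ast h$ with $h\in[T_p]$; conversely, if $h\in[T_p]$ then $\langle 0\rangle\ast h\in[S_p]$ and $\langle 0\rangle\ast h<_l 1^\infty$, so leftmostness of $g_p$ forces $g_p(0)=0$. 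Hence $\HJ(X)=\{(e,x):f(0,(e,x))=0\}$, a set computable from $f$, and the reduction is visibly uniform since it does not depend on $X$ beyond the fixed index of the universal $\Sigma^1_1$ formula.

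I do not expect a deep obstacle, as this is a routine adaptation of the coding underlying Theorem~\ref{LPP and Pi11CA}. The two points demanding care are (i) choosing $T_p$ so that membership remains bounded in $X$ while ill-foundedness still captures $\sigma^1_1(e,x,X)$, and (ii) checking that $T=\bigoplus_p S_p$ stays $\Delta^0_1$ in $X$ rather than climbing the arithmetical hierarchy, so that the promised $\Sigma^0_1/\Pi^0_1$ presentation and the functional $\Phi_T$ genuinely exist already over $\RCAo$.
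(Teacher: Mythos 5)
Your proposal is correct and is essentially the paper's own construction: the paper likewise forms the trees $T_{e,x}$ of finite approximations to witnesses of the universal $\Sigma^1_1$ formula, pads each to $S_{e,x}$ with the all-ones branch, takes $\bigoplus_{e,x} S_{e,x}$, and reads off $\HJ(X)$ from the first coordinate of each component of the leftmost path. The two points you flag (boundedness of membership in $X$ and the $\Delta^0_1$-ness of the $\bigoplus$-coding) are exactly the routine checks the paper treats as immediate, so there is nothing to add.
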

\begin{proof}
  Recall that there is a $\Sigma^0_0$ formula $\theta$ such that over $\ACAo$,
  $\HJ(X) = \{(e,x) : \exists f \forall z \theta(e,x,X,f[z])\}$ for any $X$.

  We give a construction of $\Phi_T(X)$ over $\RCAo$.
  For given $X$, define a sequence $\langle T_{e,x} \rangle_{e,x}$ of trees by
  $\sigma \in T_{e,x} \leftrightarrow \theta(e,x,X,\sigma)$.
  Then put $S_{e,x} = \{ \langle 0 \rangle \ast \sigma : \sigma \in T_{e,x}\} \cup \{\sigma : \forall n < |\sigma| \sigma(n) = 1\}$.
  Finally, put $\Phi_T(X) = \bigoplus_{e,x} S_{e,x}$ where $\bigoplus$ is the operator defined in Definition \ref{oplus of trees}.

  We show that this construction satisfies desired conditions.
  It is easy to see that $\Phi_T(X)$ is uniformly computable from $X$ over $\RCAo$ and its has an infinite path $1^{\infty}$.
  Thus, it is enough to show that $\HJ(X)$ is uniformly computable from the leftmost path of $\Phi_T(X)$ over $\ACAo$.

  We reason in $\ACAo$. Let $f$ be the leftmost path of $\Phi_T(X)$.
  Then, for each $e$ and $x$, $f((0,(e,x))) = 0$ iff $T_{e,x}$ has a path.
  Thus, $\{(e,x) : f((0,(e,x))) = 0 \} = \{(e,x) : \exists f \forall z \theta(e,x,X,f[z],z)\} = \HJ(X)$.
\end{proof}

  We next see that we can modify the second condition as follows :
  \begin{itemize}
    \item Over $\RCAo$, for any $X$, $\Phi_T(X) = \{n : \varphi(n,X)\}$ is an ill-founded tree which has a path uniformly computable from $X$, and any path computes $X$.
  \end{itemize}
  For this condition, consider the following transformation.
  \begin{definition}
    Let $\sigma \in \N^{<\N}$. Put
    \begin{align*}
      &\sigma_{\even} = \langle \sigma(0),\sigma(2),\ldots,\sigma(l_e)\rangle, \\
      &\sigma_{\odd} = \langle \sigma(1),\sigma(3),\ldots,\sigma(l_o)\rangle,
    \end{align*}
    where $l_e$ is the maximum even number such that $l_e < |\sigma|$ and $l_o$ is the maximum odd number such that $l_o < |\sigma|$.
    Similarly, for a function $f \in \N^{\N}$, define $f_{\even} = \langle f(2n) \rangle_{n \in \N}$ and $f_{\odd} = \langle f(2n+1) \rangle_{n \in \N}$.

    Let $X$ be a set and $T$ be a tree. We identify $X$ and its characteristic function. Thus, $X$ is regarded as an infinite binary sequence. Define a tree $S_{T,X}$ by
    \begin{equation*}
      \sigma \in S_{T,X} \leftrightarrow \sigma_{\even} \in T \land \sigma_{\odd} \prec X.
    \end{equation*}
  \end{definition}
  \begin{lemma}
    Let $X$ be a set and $T$ be a tree. Then, over $\RCAo$,
    \begin{itemize}
      \item $S_{T,X}$ is uniformly computable from $X$ and $T$.
      \item For any path $f$ of $S_{T,X}$, $f_{\even}$ is a path of $T$ and $f_{\odd} = X$. Conversely, for $f \in [T]$ and $X$, the sequence $f \uplus X = \langle f(0),X(0),f(1),X(1),\ldots \rangle$ is a path of $S_{T,X}$.
      \item If $f$ and $g$ are paths of $T$ such that $f \leq_l g$, then $f \uplus X \leq_l g \uplus X$. In particular, if $f$ is the leftmost path of $T$, then the corresponding path $f \uplus X$ is the leftmost path of $S_{T,X}$.
    \end{itemize}
  \end{lemma}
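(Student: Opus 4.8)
The plan is to verify each of the three bullet points directly from the definition of $S_{T,X}$, since the lemma is entirely a matter of unwinding how the even/odd interleaving interacts with membership in $T$ and initial segments of $X$. First I would establish the computability clause: the defining condition $\sigma \in S_{T,X} \leftrightarrow \sigma_{\even} \in T \land \sigma_{\odd} \prec X$ is a conjunction of two conditions, each of which is decidable uniformly in $X$ and $T$ (membership in $T$ is decidable given $T$ as an oracle, and $\sigma_{\odd} \prec X$ is decidable given $X$), and the maps $\sigma \mapsto \sigma_{\even}$ and $\sigma \mapsto \sigma_{\odd}$ are primitive recursive. Hence $S_{T,X}$ is $\Delta^0_1$ in $X \oplus T$, giving a uniform computation in $\RCAo$.

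For the second clause I would argue in both directions. Given a path $f \in [S_{T,X}]$, for every $n$ we have $f[2n] \in S_{T,X}$, and reading off the even/odd components shows $f_{\even}[n] \in T$ and $f_{\odd}[n] \prec X$ for all $n$; letting $n$ range over all of $\N$ yields $f_{\even} \in [T]$ and $f_{\odd} = X$ (the latter since every finite initial segment of $f_{\odd}$ agrees with the corresponding segment of $X$). Conversely, given $f \in [T]$ I would check that the interleaving $f \uplus X$ satisfies the defining condition at every finite length: $(f \uplus X)_{\even} = f$ has all its initial segments in $T$ because $f \in [T]$, and $(f \uplus X)_{\odd} = X$ trivially satisfies $\sigma_{\odd} \prec X$ for every initial segment. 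This establishes the claimed bijective correspondence between $[T]$ and $[S_{T,X}]$.

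For the third clause, the order-preservation, I would use the fact that the interleaving inserts the fixed sequence $X$ into the odd coordinates of both $f \uplus X$ and $g \uplus X$, so the two interleaved sequences agree exactly where $f$ and $g$ agree (on even coordinates) and always agree on odd coordinates. Concretely, if $f \leq_l g$, then either $f = g$, in which case $f \uplus X = g \uplus X$, or there is a least $n$ with $f[n] = g[n]$ and $f(n) < g(n)$; then $f \uplus X$ and $g \uplus X$ first disagree at coordinate $2n$, where $(f \uplus X)(2n) = f(n) < g(n) = (g \uplus X)(2n)$, so $f \uplus X <_l g \uplus X$. The leftmost-path consequence then follows: if $f$ is leftmost in $T$, then for any path of $S_{T,X}$, which by the second clause has the form $h \uplus X$ for some $h \in [T]$, we have $f \leq_l h$ and hence $f \uplus X \leq_l h \uplus X$, so $f \uplus X$ is leftmost in $S_{T,X}$.

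None of these steps presents a genuine obstacle; the lemma is routine once one commits to the bookkeeping of even/odd indices. If anything deserves care, it is the bijection clause, where one must be precise that $f_{\odd} = X$ as functions (not merely that every initial segment of $f_{\odd}$ is an initial segment of $X$, though in $\RCAo$ these are equivalent for total functions) and that the interleaving $f \uplus X$ is genuinely the unique path with prescribed even and odd parts. I would present the argument at the level of detail above and omit the purely mechanical index computations.
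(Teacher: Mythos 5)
Your proof is correct and matches the paper's treatment: the paper simply states that the lemma is immediate from the definition of $S_{T,X}$, and your write-up is exactly the routine unwinding of that definition (decidability of the defining condition, reading off even/odd components of a path, and locating the first disagreement of the interleavings at an even coordinate). No gaps.
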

  \begin{proof}
    This is immediate from the definition of $S_{T,X}$.
  \end{proof}
  Therefore, by replacing $\Phi_T(X)$ with $S_{\Phi_T(X),X}$, we have
  \begin{lemma}\label{lem-of-Phi_T}
    There is a Turing functional $\Phi_T(\bullet)$ defined in $\RCAo$ such that
      \begin{itemize}
        \item Over $\RCAo$, for any $X$, $\Phi_T(X)$ is an ill-founded tree which has a path uniformly computable from $X$, and any path computes $X$.
        \item Over $\ACAo$, for any $X$, if $\Phi_T(X)$ has a leftmost path, then $\HJ(X)$ is computable from that path. Moreover, this reduction is uniform.
      \end{itemize}
  \end{lemma}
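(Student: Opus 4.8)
The plan is to carry out precisely the substitution anticipated in the sentence just above the statement. I would let $\Phi_T^{0}$ denote the functional produced by the first lemma of this construction---the one whose leftmost path computes $\HJ(X)$---and then \emph{define} the functional asserted by the statement to be $\Phi_T(X) := S_{\Phi_T^{0}(X),\,X}$. Since $\Phi_T^{0}(X)$ is uniformly $X$-computable over $\RCAo$ and, by the lemma on $S_{T,X}$, the tree $S_{T,X}$ is uniformly computable from $T$ and $X$, the composite $\Phi_T$ is again a Turing functional defined in $\RCAo$. The remaining work is to read off the two desired clauses from the two preceding lemmas.

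For the first bullet I would check ill-foundedness, $X$-computability of a witnessing path, and the fact that every path computes $X$. The tree $\Phi_T^{0}(X)$ has the path $1^\infty$, so by the second clause of the $S_{T,X}$ lemma the sequence $1^\infty \uplus X$ is a path of $\Phi_T(X)$ that is uniformly computable from $X$; this yields both ill-foundedness and the required uniformly $X$-computable path. Conversely, by the same clause every path $f$ of $\Phi_T(X)$ satisfies $f_{\odd}=X$, so $X \leq_{\T} f$. This is exactly the strengthened first condition recorded before the statement.

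For the second bullet I would reason in $\ACAo$. Suppose $\Phi_T(X)=S_{\Phi_T^{0}(X),X}$ has a leftmost path $g$. By the $S_{T,X}$ lemma, $g=g_{\even}\uplus X$ with $g_{\even}\in[\Phi_T^{0}(X)]$, and $g_{\even}\leq_{\T} g$. I claim $g_{\even}$ is the leftmost path of $\Phi_T^{0}(X)$: if some $h\in[\Phi_T^{0}(X)]$ had $h<_l g_{\even}$, then by the order-preservation clause together with injectivity of $f\mapsto f\uplus X$ we would get $h\uplus X <_l g_{\even}\uplus X=g$ with $h\uplus X\in[\Phi_T(X)]$, contradicting leftmost-ness of $g$. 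Hence, by the first lemma of this construction, $\HJ(X)$ is uniformly computable from $g_{\even}$, and therefore from $g$.

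The construction poses no genuine obstacle, since the two preceding lemmas carry the real content; the only point not stated verbatim there is the \emph{reverse} direction of the leftmost-path correspondence. The $S_{T,X}$ lemma is phrased to send the leftmost path of $T$ to that of $S_{T,X}$, whereas the argument needs that a leftmost path of $S_{T,X}$ restricts, via $g\mapsto g_{\even}$, to a leftmost path of $T$. As indicated above, this follows from the short contradiction argument using order-preservation and injectivity, and it is the one step I would write out in full; every other assertion is immediate from the cited lemmas.
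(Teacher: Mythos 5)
Your proposal is correct and is exactly the construction the paper intends: the paper gives no separate proof, simply asserting the lemma ``by replacing $\Phi_T(X)$ with $S_{\Phi_T(X),X}$,'' and your verification of the two bullets from the two preceding lemmas fills in precisely the intended details. The one step you single out --- that a leftmost path of $S_{T,X}$ has a leftmost path of $T$ as its even part, via order-preservation plus injectivity of $f\mapsto f\uplus X$ --- is indeed the only point not literally stated earlier, and your argument for it is sound.
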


\begin{theorem}
  Over $\ACAo$, $\ALPP$ is equivalent to $\beta^1_0\RFN(1)$.
\end{theorem}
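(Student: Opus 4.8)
The plan is to prove both implications through the Turing functional $\Phi_T$ of Lemma \ref{lem-of-Phi_T}, which converts ``computing $\HJ(X)$'' into ``finding a leftmost path of $\Phi_T(X)$'', together with the equivalent reformulation of $\beta^1_0\RFN(1)$ stating that every set $X$ lies in a coded $\omega$-model $\M$ with $\M \models \ACAo + \exists Y(Y = \HJ(X))$. Throughout, the work is carried by the single transfer principle: a path $g$ is arithmetically leftmost in the real world exactly when $g$ is leftmost inside the arithmetic-closure model $\{Z : Z \leq_{\T}^{\a} g\}$ generated by $g$; absoluteness of the arithmetical predicates ``$f \in [T]$'', ``$g \leq_l f$'', and ``$Z = (T\oplus g)^{(n)}$'' between the ground model and the coded $\omega$-models in play makes this transfer go through.

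For $\beta^1_0\RFN(1) \to \ALPP$, I would take an ill-founded tree $T$ and apply the reformulation of $\beta^1_0\RFN(1)$ to $X = T$, obtaining a coded $\omega$-model $\M \ni T$ with $\M \models \ACAo + \exists Y(Y = \HJ(T))$. Inside $\M$, the pruning construction in the proof of Theorem \ref{LPP and Pi11CA} yields the leftmost path $g$ of $T$ with $g \leq_{\T} \HJ(T)$, so $g \in \M$ and, $\M$ being an $\omega$-model, $g$ is a genuine path of $T$. To see $g$ is an arithmetical leftmost path, fix $f \leq_{\T}^{\a} (T \oplus g)$ with $f \in [T]$; since $T, g \in \M$ and $\M \models \ACAo$ we get $(T\oplus g)^{(n)} \in \M$ for every $n$, hence $f \in \M$. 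As $g$ is leftmost among the paths of $\M$ and ``$g \leq_l f$'' is arithmetical, absoluteness gives $g \leq_l f$ in reality, as required.

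For $\ALPP \to \beta^1_0\RFN(1)$, I would first note that an arithmetical leftmost path is in particular a $\Delta^0_0$-leftmost path, so $\ALPP$ implies $\Delta^0_0\LPP$, hence $\ATRo$, hence $\ACA_0^+$; this makes Lemmas \ref{arithmetical red is delta11} and \ref{arithmetical reduction and omega-model} available. Given $X$, apply $\ALPP$ to the ill-founded tree $T = \Phi_T(X)$ to extract an arithmetical leftmost path $g$. By Lemma \ref{lem-of-Phi_T}, every path of $\Phi_T(X)$ computes $X$ and $T \leq_{\T} X$, so $T \oplus g \equiv_{\T} g$ and $X \leq_{\T} g$. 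Form the smallest coded $\omega$-model $\M = \{Z : Z \leq_{\T}^{\a} g\}$ of $\ACAo$ containing $g$ (hence $X$ and $T$). The decisive step is to verify $\M \models$ ``$g$ is the leftmost path of $T$'': any $f \in \M$ satisfies $f \leq_{\T}^{\a} g$, thus $f \leq_{\T}^{\a} (T\oplus g)$, and when $f \in [T]$ the $\ALPP$-property of $g$ gives $g \leq_l f$, which transfers into $\M$ by absoluteness. Since $\M \models \ACAo$ and $\M \models \Phi_T(X) = T$, the second clause of Lemma \ref{lem-of-Phi_T} applied inside $\M$ delivers $\M \models \exists Y(Y = \HJ(X))$, which is precisely the reformulation of $\beta^1_0\RFN(1)$.

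The main obstacle I anticipate is the careful bookkeeping of absoluteness and of the base theory. One must check that the functional $\Phi_T$ computes the same tree in $\M$ as in reality, that ``$f \in [T]$'' and ``$g \leq_l f$'' are genuinely absolute for the coded $\omega$-models involved, and that finite Turing-jump iterations of $T \oplus g$ stay inside those models; one must also confirm that $\ALPP$ in fact supplies $\ACA_0^+$ so that $\{Z : Z \leq_{\T}^{\a} g\}$ exists as a coded $\omega$-model. Once these routine-but-delicate absoluteness facts are secured, both directions collapse to the leftmost-path transfer described above.
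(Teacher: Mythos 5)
Your proof is correct and follows essentially the same route as the paper's: the direction $\ALPP \to \beta^1_0\RFN(1)$ builds the model $\M = \{Z : Z \leq_{\T}^{\a} g\}$ from an arithmetical leftmost path $g$ of $\Phi_T(X)$ and invokes Lemma \ref{lem-of-Phi_T} inside $\M$, while the converse extracts the leftmost$^{\M}$ path from a coded $\omega$-model believing $\HJ(T)$ exists and transfers leftmostness out by closure of $\M$ under arithmetical reduction. The only gloss (shared with the paper's own write-up, and repaired in its proof of Theorem \ref{ItALPP and hyperjumps}) is that in the converse one should reflect on $T \oplus h$ for some actual path $h \in [T]$ so that $\M$ genuinely contains a path of $T$ and hence its internal pruning of $T$ is nonempty.
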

\begin{proof}
  First, we show that $\ALPP$ implies $\beta^1_0\RFN(1)$.
  Since $\Delta^0_0\LPP$ implies $\ATRo$, it is enough to show that
  $\ACA_0^+ + \ALPP$ proves $\beta^1_0\RFN(1)$.

  Let $X$ be a set. By $\ALPP$, take an arithmetical leftmost path $g$ of $\Phi_T(X)$.
  Let $\M = \{Y : Y \leq_{\T}^{\a} g \}$.
  Then $X,\Phi_T(X) \in \M$, $\M \models \ACAo$ and
  $g$ is the leftmost$^\M$ path of $\Phi_T(X)$.
  Thus, $\M$ satisfies $\HJ(X)$ exists.

  We next show that $\beta^1_0\RFN(1)$ implies $\ALPP$.
  Let $T$ be an ill-founded tree.
  By $\beta^1_0\RFN(1)$, take an $\omega$-model $\M$ such that
  $T \in \M$ and $\M \models \ACAo + \exists Y(Y = \HJ(T))$.

  Now, $\M$ satisfies [$T$ has a leftmost path] because its hyperjump exists. Take the leftmost$^\M$ path $g$ of $T$.
  We claim that $g$ is an arithmetical leftmost path of $T$.
  Let $f \leq_{\T}^{\a} g$ and  $f \in [T]$. Since $g \in \M$ and $\M \models \ACAo$, $f \in \M$.
  Since $g$ is the leftmost$^\M$ path, $g \leq_l f$.
\end{proof}

From now on, for theories $T$ and $T'$, we write $T < T'$ to mean $T'$ proves $T + \Con(T)$ over $\ACAo$.
It is shown in \cite{Townser_TLPP} that
$\ALPP < \TLPP$.
More precisely, a general form of relative leftmost path principle $\Sigma^0_{\alpha}\LPP$ for a well order $\alpha$ is introduced, and it is proved that
$\Sigma^0_{\alpha+5}\LPP$ proves the consistency of $\Sigma^0_{\alpha+1}\LPP$. Since $\ALPP$ is provable from $\Sigma^0_{\omega + 1}\LPP$, the consistency of $\ALPP$ is provable from $\TLPP$.
Thus we have
\begin{theorem}
  $\beta^1_0\RFN(1) < \TLPP$.
\end{theorem}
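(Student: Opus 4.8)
The plan is to establish $\beta^1_0\RFN(1) < \TLPP$ by chaining together the equivalence $\ALPP \equiv \beta^1_0\RFN(1)$ (just proved) with the consistency-strength separation $\ALPP < \TLPP$ available from Towsner's work. Since the theorem is phrased with the ordering $<$ meaning ``$\TLPP$ proves $\beta^1_0\RFN(1) + \Con(\beta^1_0\RFN(1))$ over $\ACAo$,'' and since $\ACAo$ proves $\ALPP \leftrightarrow \beta^1_0\RFN(1)$, it suffices to show that $\TLPP$ proves $\ALPP + \Con(\ALPP)$ over $\ACAo$.

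First I would record the two halves of $\ALPP < \TLPP$ that the excerpt attributes to \cite{Townser_TLPP}. The implication $\TLPP \vdash \ALPP$ comes from the fact that $\ALPP$ is provable from $\Sigma^0_{\omega+1}\LPP$ (a fixed level of the hierarchy $\Sigma^0_\alpha\LPP$), and that $\TLPP$, quantifying over all well-orders $\alpha$, subsumes every such level; thus $\TLPP \vdash \ALPP$, and in particular $\TLPP \vdash \beta^1_0\RFN(1)$. For the consistency part, the key ingredient is Towsner's result that $\Sigma^0_{\alpha+5}\LPP$ proves $\Con(\Sigma^0_{\alpha+1}\LPP)$. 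Instantiating this so that $\Sigma^0_{\alpha+1}\LPP$ sits at or above the level $\Sigma^0_{\omega+1}\LPP$ needed for $\ALPP$ (for instance taking $\alpha = \omega$, giving that $\Sigma^0_{\omega+5}\LPP$ proves $\Con(\Sigma^0_{\omega+1}\LPP)$, hence $\Con(\ALPP)$), and noting that $\TLPP$ proves each fixed-level $\Sigma^0_\beta\LPP$, we obtain $\TLPP \vdash \Con(\ALPP)$.

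Combining these over $\ACAo$: since $\ALPP$ and $\beta^1_0\RFN(1)$ are provably equivalent in $\ACAo$, the sentence $\Con(\ALPP)$ and $\Con(\beta^1_0\RFN(1))$ agree (equivalent theories over a common base have equivalent consistency statements, provably in $\ACAo$, as these are $\Pi^0_1$ assertions about derivability), so $\TLPP$ proves $\beta^1_0\RFN(1) + \Con(\beta^1_0\RFN(1))$ over $\ACAo$. This is exactly $\beta^1_0\RFN(1) < \TLPP$.

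The main obstacle is bookkeeping the ordinal levels so that the quoted inequality $\Sigma^0_{\alpha+5}\LPP \vdash \Con(\Sigma^0_{\alpha+1}\LPP)$ is applied at a level that dominates the strength of $\ALPP$, while simultaneously confirming that $\TLPP$—as an assertion about \emph{all} well-orders—indeed proves the relevant fixed-level principle $\Sigma^0_{\alpha+5}\LPP$ within $\ATRo$. The delicate point is that $\TLPP$ is stated over $\ATRo$ and quantifies $\alpha$ as an arbitrary well-order in the model, so one must check that for a \emph{standard} choice such as $\alpha=\omega$ the relevant instance is genuinely available; this is unproblematic because standard finite and $\omega$-length well-orders exist in every model of $\ATRo$. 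The transition from consistency of $\ALPP$ to consistency of $\beta^1_0\RFN(1)$ is routine once the $\ACAo$-provable equivalence is in hand, since provably equivalent axiomatizations yield provably equivalent $\Con$ predicates.
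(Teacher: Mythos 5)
Your proposal is correct and follows essentially the same route as the paper: the paper derives the theorem directly from Towsner's results that $\Sigma^0_{\alpha+5}\LPP$ proves $\Con(\Sigma^0_{\alpha+1}\LPP)$ and that $\ALPP$ follows from $\Sigma^0_{\omega+1}\LPP$, so that $\TLPP$ proves $\ALPP + \Con(\ALPP)$, and then transfers this to $\beta^1_0\RFN(1)$ via the $\ACAo$-provable equivalence $\ALPP \leftrightarrow \beta^1_0\RFN(1)$. Your additional bookkeeping about instantiating $\alpha=\omega$ and about $\Con$ of provably equivalent theories is consistent with (and slightly more explicit than) what the paper leaves implicit.
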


On the other hand, $\TLPP$ is strictly weaker than $\beta^1_0\RFN(2)$.
To prove this result, we introduce a variant of $\beta^1_0\RFN(n)$.
\begin{definition}
  Let $\sigma$ be a $\Pi^1_2$ formula.
  We define $\beta^1_0\RFN(n;\sigma)$ as the following assertion:
  \begin{align*}
    \forall X \exists \M_0,\ldots,\M_{n}: \text{coded $\omega$-models }
    (&X \in \M_0 \in_\beta \M_1 \cdots \in_\beta \M_{n} \, \land \\
    &\M_n \models \ACAo +\, \sigma).
  \end{align*}
\end{definition}
\begin{lemma}\label{ATRo proves beta_sat(TLPP)}
  Over $\ATRo$, any $\beta$-model satisfies $\TLPP$.
\end{lemma}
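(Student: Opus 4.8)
The plan is to fix a coded $\beta$-model $\M$ and, reasoning in $\ATRo$, verify $\M \models \TLPP$ instance by instance. Since $\M$ is a $\beta$-model, Lemma \ref{beta-model satisfies atro} gives $\M \models \ATRo$, so $\TLPP$ is meaningful inside $\M$ and all jumps iterated along well-orders of $\M$ exist in $\M$. Fix an ill-founded tree $T \in \M$ and a well-order $\alpha \in \M$; I must produce $g \in \M$ with $g \in [T]$ such that every $f \leq_{\T} (T \oplus g)^{(\alpha)}$ lying in $[T]$ satisfies $g \leq_l f$. The first observation is that this target is much weaker than producing the genuine leftmost path: the true leftmost path of $T$ is computable from $\HJ(T)$, which need not belong to $\M$ (e.g.\ for $\M = \HYP$), but $\TLPP$ only asks one to defeat competitors of bounded complexity. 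Moreover, since $(T \oplus g)^{(\alpha)} \in \M$ and $\M$ is closed under $\leq_{\T}$, every such competitor $f$ already lies in $\M$, and the whole leftmost condition is arithmetical in $(T \oplus g)^{(\alpha)}$, hence absolute between $\M$ and the ground model. Thus it suffices to build $g \in \M$ defeating the \emph{real} competitors $f \leq_{\T} (T \oplus g)^{(\alpha)}$.

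For the construction I would follow Towsner \cite{Townser_TLPP} and build $g$ as a leftmost path guided by an $\alpha$-level rank approximation of extendibility rather than by extendibility itself. The point is that $[T_\sigma] \neq \varnothing$ is genuinely $\Sigma^1_1$ and cannot be decided at any jump level, whereas for each budget $\beta \preceq \alpha$ an $\ATRo$ transfinite recursion inside $\M$ decides, by rank comparison, whether the subtree $T_\sigma$ is well-founded of rank below $\beta$, uniformly from the jump hierarchy up to $\sim\beta$. I would then define $g$ by always moving to the leftmost child that still \emph{looks extendible} at the current budget, backtracking out of a node once the budget certifies it to be well-founded. The resulting $g$ is computable from $(T)^{(\gamma)}$ for a well-order $\gamma \in \M$ tied to $\alpha$, so $g \in \M$.

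There are then two things to verify. For the leftmost property, a competitor $f \leq_{\T} (T \oplus g)^{(\alpha)}$ with $f \in [T]$ and $f <_l g$ would split from $g$ at some node $\sigma$ with $\sigma \ast \langle f(n)\rangle$ genuinely extendible (as $f$ passes through it) yet lying to the left of $g$'s choice; but an extendible node has infinite rank and so passes every budget threshold, whence $g$'s leftmost-looking rule could never have skipped it — a contradiction, provided the detection budget is aligned with the competitor bound $\alpha$. For totality, the backtracking search must produce a genuine infinite branch, i.e.\ it must not backtrack cofinally at any node.

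The hard part will be totality, and this is precisely where the $\beta$-model hypothesis is indispensable: if the same construction succeeded over plain $\ATRo$ it would show $\ATRo \vdash \TLPP$, which is false. To prove that the search never stalls, one argues that cofinal backtracking at a node would force either an infinite descending sequence of ordinal ranks or an incorrect well-foundedness verdict; the correctness of $\M$'s rank and well-foundedness computations — guaranteed by the $\Sigma^1_1$-absoluteness of the $\beta$-model against the ground model — is exactly what rules this out and validates the termination argument. I expect most of the effort to go into pinning down the rank-threshold bookkeeping so that simultaneously (i) $g$ stays computable at a level present in $\M$, (ii) the detection budget matches the competitor bound $\alpha$ so the leftmost comparison closes, and (iii) $\beta$-absoluteness can be invoked to certify that $g$ is a total path.
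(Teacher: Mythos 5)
There is a genuine gap here. Your opening reduction (the competitors $f \leq_{\T} (T\oplus g)^{(\alpha)}$ all lie in $\M$ and the leftmost condition is absolute, so it suffices to exhibit some $g\in\M$ beating the \emph{real} competitors) is correct, but the construction that is supposed to produce such a $g$ is only sketched, and the step you yourself identify as the crux --- totality of the backtracking search --- is deferred to unspecified ``rank-threshold bookkeeping.'' Worse, the budget idea does not do what you need: a well-founded subtree $T_\sigma$ can have rank far above $\alpha$, and an ill-founded $T_\sigma$ need not have any path of bounded complexity, so ``looks extendible at budget $\beta\preceq\alpha$'' neither implies nor is implied by genuine extendibility; entering a falsely extendible node is fatal to totality, and no alignment of the budget with $\alpha$ repairs this. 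Relatedly, your claim that the resulting $g$ is computable from $T^{(\gamma)}$ for some $\gamma$ tied to $\alpha$ is exactly the kind of uniform bound that would make $\TLPP$ provable in $\ATRo$; you notice this tension, but the proposed escape (``$\beta$-absoluteness certifies totality'') is never turned into an argument. There is also a circularity you do not address: the competitor bound is $(T\oplus g)^{(\alpha)}$, so $g$ must defeat paths computed from $g$ itself.

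The paper avoids all of this by never constructing $g$ at a controlled jump level. Working in the ground model (where $\ATRo$ holds), it forms $S=\{\sigma\in T:\exists f\in\M\,(\sigma\prec f\in[T])\}$, which is arithmetical in the code of $\M$ and hence exists; $\beta$-absoluteness shows $S$ is exactly the tree of genuinely extendible nodes, so $S$ is nonempty and pruned and its leftmost path is the true leftmost path of $T$. A fortiori the ($\Sigma^1_1$, in $T$ and $\alpha$) statement ``there exists $g\in[T]$ with $\forall f\leq_{\T} g^{(\alpha)}(f\in[T]\to g\leq_l f)$'' holds, and $\Sigma^1_1$-absoluteness of $\M$ transfers it into $\M$, producing a witness inside $\M$ for free, with no control over (and no need to control) its complexity. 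If you want to salvage your write-up, replace the entire construction by this reflection step; your absoluteness observations in the first paragraph are then all that survives, and they are doing the right kind of work.
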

\begin{proof}
  Let $\M$ be a $\beta$-model. Let $T,\alpha \in \M$ such that
  \begin{align*}
    \M \models \ &T \text{ is an ill-founded tree } \land \\
    &\alpha \text{ is a well order}.
  \end{align*}
  We show that $\M$ has a $\Delta^0_{\alpha}$ leftmost path.

  Let $S = \{ \sigma \in T: \exists f \in \M (\sigma \prec f \land f \in [T])\} $.
  Since $\M$ is a $\beta$-model, $S = \{\sigma \in T: \exists f (\sigma \prec f \in [T])\}$.
  Now, $S$ is a nonempty pruned tree and hence its leftmost path $g$ is computable from $S$.
  Moreover,  $g$ is the leftmost path of $T$.

  Now we have
    $\exists g \forall f \in [T] (g \leq_l f)$
  and hence
  \begin{equation*}
    \exists g \forall f \leq_{\T} g^{(\alpha)} (f \in [T] \to g \leq_l f).
  \end{equation*}
  The above condition is $\Sigma^1_1$. Therefore,
  \begin{equation*}
    \M \models \exists g \forall f \leq_{\T} g^{(\alpha)} (f \in [T] \to g \leq_l f).
  \end{equation*}
  This completes the proof.
\end{proof}

\begin{theorem}
  $\beta^1_0\RFN(1;\ATRo)$ proves the $\omega$-model reflection of $\TLPP$ over $\ACAo$.
  Therefore, $\TLPP < \beta^1_0\RFN(1;\ATRo)$.
\end{theorem}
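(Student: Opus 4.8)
The plan is to use $\beta^1_0\RFN(1;\ATRo)$ to produce, for each set $X$, a coded $\omega$-model of $\TLPP$ containing $X$, and then to read off the consistency statement via Lemma~\ref{reflection and con}. So I reason in $\ACAo + \beta^1_0\RFN(1;\ATRo)$ and fix a set $X$. By $\beta^1_0\RFN(1;\ATRo)$ I may take coded $\omega$-models $\M_0,\M_1$ with $X \in \M_0 \in_\beta \M_1$ and $\M_1 \models \ACAo + \ATRo$. It suffices to show that $\M_0 \models \TLPP$, since then $\M_0$ witnesses the $\omega$-model reflection of $\TLPP$ at $X$ and, $X$ being arbitrary, the reflection follows.

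The heart of the matter is that, viewed from inside $\M_1$, the model $\M_0$ is a coded $\beta$-model. Indeed, the relation $\M_0 \in_\beta \M_1$ asserts that for every $W \in \M_0$ and all $e,x$ the truth value of $\sigma^1_1(e,x,W)$ computed in $\M_0$ agrees with that computed in $\M_1$; since the interpretation of $\sigma^1_1$ inside $\M_1$ is by definition given by $\M_1 \models \sigma^1_1$, this is exactly the relativization to $\M_1$ of the assertion ``$\M_0$ is a coded $\beta$-model''. As $\M_1 \models \ATRo$, Lemma~\ref{ATRo proves beta_sat(TLPP)} holds inside $\M_1$, and therefore $\M_1 \models [\,\M_0 \models \TLPP\,]$. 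By the same reasoning applied to Lemma~\ref{beta-model satisfies atro} we also get $\M_1 \models [\,\M_0 \models \ATRo\,]$, so $\M_0$ is in particular a model of $\ACAo$.

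It then remains to transfer $\M_1 \models [\,\M_0 \models \TLPP\,]$ down to the ground model. Here I use that satisfaction of the fixed sentence $\TLPP$ in a coded $\omega$-model is arithmetical in the code: the relativization $\TLPP^{\M_0}$ replaces every set quantifier by a number quantifier ranging over the indices of $\M_0$, so it is an arithmetical formula whose only set parameter is $\M_0$ itself. Since $\M_0 \in \M_1$ and $\M_1 \models \ACAo$, an arithmetical formula with parameter $\M_0$ takes the same truth value in $\M_1$ and in the ground model. Hence $\M_0 \models \TLPP$ genuinely holds, completing the proof of the $\omega$-model reflection of $\TLPP$.

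For the final clause, $\TLPP$ is a $\Pi^1_2$ sentence: after the leading universal set quantifiers over $T$ and $\alpha$, the hypotheses ``$T$ ill-founded'' and ``$\alpha$ a well order'' together with the clause asserting the existence of the jump hierarchy $(T\oplus g)^{(\alpha)}$ leave the matrix at the $\Sigma^1_1$ level. Thus Lemma~\ref{reflection and con} applies, and the $\omega$-model reflection of $\TLPP$ yields $\TLPP + \Con(\TLPP)$; since the witness $\M_0$ constructed above is an $\omega$-model of $\ACAo + \TLPP$, we in fact obtain $\Con(\ACAo + \TLPP)$, which gives $\TLPP < \beta^1_0\RFN(1;\ATRo)$. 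I expect the delicate point to be the absoluteness step of the third paragraph: one must verify that the satisfaction predicate for $\M_0$ restricted to the single sentence $\TLPP$ is truly arithmetical in the code, and hence absolute between $\M_1$ and the ground model, rather than merely $\Sigma^1_1$, as this is precisely what legitimizes the downward transfer.
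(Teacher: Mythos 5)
Your proposal is correct and follows essentially the same route as the paper: instantiate $\beta^1_0\RFN(1;\ATRo)$ to get $X \in \M_0 \in_\beta \M_1$ with $\M_1 \models \ATRo$, observe that inside $\M_1$ the model $\M_0$ is a coded $\beta$-model so that Lemma~\ref{ATRo proves beta_sat(TLPP)} applied within $\M_1$ yields $\M_1 \models [\M_0 \models \TLPP]$, and transfer this down by arithmetical absoluteness of the satisfaction predicate for the fixed sentence; the paper's proof is just a compressed version of this, and your final appeal to Lemma~\ref{reflection and con} for the strictness claim matches the paper's convention for $<$. The only difference is that you spell out the relativization and downward-transfer steps that the paper leaves implicit.
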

\begin{proof}
  Assume $\ACAo + \beta^1_0\RFN(1;\ATRo)$. Let $X$ be a set.
  Take coded $\omega$-models $\M_0,\M_1$ such that
  \begin{align*}
    X \in \M_0 \in_{\beta} \M_1 \land \M_1 \models \ATRo.
  \end{align*}
  Then, since $\M_0$ is a coded $\beta$-submodel of $\M_1$, $\M_0 \models \TLPP$.
  This completes the proof.
\end{proof}

\begin{theorem}
  $\beta^1_0\RFN(2)$ proves the $\omega$-model reflection of $\beta^1_0\RFN(1;\ATRo)$.
  Therefore, $\beta^1_0\RFN(1;\ATRo) < \beta^1_0\RFN(2)$.
\end{theorem}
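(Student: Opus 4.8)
The plan is to mirror the transfer argument used in Lemma~\ref{beta(n+1) is strictly stronger than beta(n)}, the new ingredient being that a coded $\beta$-submodel of a model of $\ACAo$ automatically satisfies $\ATRo$ (this is exactly the phenomenon behind the earlier result that $\beta^1_0\RFN(1)$ reflects $\ATRo$). Consequently the hypothesis $\M_2 \models \ACAo$ supplied by $\beta^1_0\RFN(2)$ already suffices, and at no point will I need the top model to satisfy $\ATRo$.

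First I would reason in $\ACAo + \beta^1_0\RFN(2)$ and fix a set $X$. Applying $\beta^1_0\RFN(2)$ yields coded $\omega$-models $\M_0 \in_\beta \M_1 \in_\beta \M_2$ with $X \in \M_0$ and $\M_2 \models \ACAo$. I claim that $\M_0$ itself witnesses the $\omega$-model reflection of $\beta^1_0\RFN(1;\ATRo)$, that is, $\M_0 \models \beta^1_0\RFN(1;\ATRo)$. Fixing an arbitrary $Y \in \M_0$, the goal is
\[
  \M_0 \models \exists \NN_0,\NN_1\,(Y \in \NN_0 \in_\beta \NN_1 \land \NN_1 \models \ACAo + \ATRo).
\]
To establish this I would first verify the same statement inside $\M_2$ using the literal witnesses $(\NN_0,\NN_1) = (\M_0,\M_1)$. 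Indeed $Y \in \M_0$; the relation $\M_0 \in_\beta \M_1$ is arithmetical in the codes of the models (as $\M \models \sigma^1_1$ is arithmetical in the code) and hence absolute to $\M_2$; and, crucially, since $\M_1 \in_\beta \M_2$ the model $\M_1$ is a coded $\beta$-model from the viewpoint of $\M_2$, so Lemma~\ref{beta-model satisfies atro} applied \emph{inside} $\M_2$ gives $\M_2 \models (\M_1 \models \ATRo)$, and in particular $\M_2 \models (\M_1 \models \ACAo)$. Thus $\M_2 \models \exists \NN_0,\NN_1\,(Y \in \NN_0 \in_\beta \NN_1 \land \NN_1 \models \ACAo + \ATRo)$.

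Now observe that this displayed formula, with parameter $Y$, is $\Sigma^1_1$: the relation $\in_\beta$ is arithmetical in the codes, and each clause $\NN_1 \models \ACAo + \ATRo$ is $\Sigma^1_1$ (existence of an evaluating function making all axioms true), so prefixing the two set quantifiers $\exists \NN_0,\NN_1$ keeps it $\Sigma^1_1$. Finally, along the chain $\M_0 \in_\beta \M_1 \in_\beta \M_2$ the $\beta$-submodel relation composes, giving $\M_0 \in_\beta \M_2$; since $\M_0$ is then a coded $\beta$-submodel of $\M_2$ and the formula is $\Sigma^1_1$ with parameter $Y \in \M_0$, its truth transfers downward, so $\M_0 \models \exists \NN_0,\NN_1(\ldots)$ (with some witnesses of $\M_0$, not necessarily $\M_0,\M_1$). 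As $Y$ and then $X$ were arbitrary, this establishes the $\omega$-model reflection of $\beta^1_0\RFN(1;\ATRo)$.

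For the concluding inequality, I would note that $\beta^1_0\RFN(1;\ATRo)$ is a $\Pi^1_2$ sentence (of the form $\forall X \exists \NN_0,\NN_1\,(\Sigma^1_1)$), so Lemma~\ref{reflection and con} converts its $\omega$-model reflection into $\beta^1_0\RFN(1;\ATRo) + \Con(\beta^1_0\RFN(1;\ATRo))$ over $\ACAo$; by the definition of $<$ this is precisely $\beta^1_0\RFN(1;\ATRo) < \beta^1_0\RFN(2)$. The main obstacle, and the step deserving the most care, is the verification that the middle model $\M_1$ satisfies $\ATRo$ although only $\M_2 \models \ACAo$ is assumed; this is exactly where the $\beta$-submodel relation (rather than mere membership) does the work, through Lemma~\ref{beta-model satisfies atro} relativized to $\M_2$. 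A secondary bookkeeping point is keeping the entire witnessing statement $\Sigma^1_1$, since that is what legitimizes the downward transfer along $\M_0 \in_\beta \M_2$.
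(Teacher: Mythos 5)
Your proposal is correct and follows essentially the same route as the paper: extract $\M_0 \in_\beta \M_1 \in_\beta \M_2$, use Lemma \ref{beta-model satisfies atro} inside $\M_2$ to get $\M_2 \models [\M_1 \models \ATRo]$, and transfer the resulting $\Sigma^1_1$ witness statement down to $\M_0$ along the (composed) $\beta$-submodel relation. Your extra remarks --- that $\in_\beta$ is arithmetical in the codes, that the $\beta$-submodel relation composes, and the appeal to Lemma \ref{reflection and con} for the strict inequality --- merely make explicit what the paper leaves implicit.
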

\begin{proof}
  Let $X$ be a set.
  By $\beta^1_0\RFN(2)$, take coded $\omega$-models $\M_0,\M_1,\M_2$ such that
  \begin{equation*}
    X \in \M_0 \in_\beta \M_1 \in_\beta \M_2 \land \M_2 \models \ACAo.
  \end{equation*}
  Then $\M_2 \models [\M_1 \models \ATRo]$ by Lemma \ref{beta-model satisfies atro}.
  Thus, for any $Y \in \M_0$,
  \begin{equation*}
    \M_2 \models \exists \mathcal{N}_0,\mathcal{N}_1 [Y \in \mathcal{N}_0 \in_\beta \mathcal{N}_1 \land \mathcal{N}_1 \models \ATRo].
  \end{equation*}
  Since $\M_0$ is a $\beta$-submodel of $\M_2$, we have
  \begin{equation*}
    \M_0 \models \forall Y \exists \mathcal{N}_0,\mathcal{N}_1 [Y \in \mathcal{N}_0 \in_\beta \mathcal{N}_1 \land \mathcal{N}_1 \models \ATRo].
  \end{equation*}
  Hence, $X \in \M_0$ and $\M_0 \models \beta^1_0\RFN(1;\ATRo)$.
  This completes the proof.
\end{proof}

Next, let us consider iterated versions of $\ALPP$.
\begin{definition}
  Let $n \in \omega$.
  We define $\Omega(n+1)$ as the following assertion: for any $X$ there are $f_0,\ldots,f_n$ such that
  \begin{align*}
    &f_0 \in [\Phi_T(X)] \land \bigwedge_{i < n} f_{i + 1} \in [\Phi_T(f_i)]  \, \land \\
    &\forall g \leq_{\T}^{\a} f_n [(g \in [\Phi_T(X)] \to f_0 \leq_l g) \land \bigwedge_{i < n} (g \in [\Phi_T(f_i)] \to f_{i+1} \leq_l g)].
  \end{align*}

  We also define $\It^{n+1} (\ALPP)$ as follows.
  Define $\Theta(T,f,g)$ be the following formula:
  \begin{align*}
    \Theta(T,f,g) \equiv \text{$T$ is a tree and $g \in [T]$ and } T,g \leq_{\T}^{\a} f.
  \end{align*}
  For each $m < n$, define $\psi^n_m$ by
\begin{align*}
  &\psi^n_0(T_0,\ldots,T_n,f_0,\ldots,f_n) \equiv \forall g \leq^{\a}_{\T} f_0 \oplus \cdots \oplus  f_{n} \bigwedge_{i \leq n} (g \in [T_i] \to f_i \leq_l g), \\
  &\psi^n_{m+1}(T_0,\ldots,T_{n-m-1},f_0,\ldots,f_{n-m-1}) \equiv \\
  &\forall T_{n-m},g_{n-m}(
  \Theta(T_{n-m},g_{n-m},f_{n-m-1})  \to \exists f_{n-m} \in [T_{n-m}] \psi^n_m(T_0,\ldots,T_{n-m},f_0,\ldots,f_{n-m})
  )
\end{align*}
Then,   define $\It^{n+1}(\ALPP)$ as follows.
\begin{align*}
  \It^{n+1}(\ALPP) \equiv \forall T_0 \text{ : ill-founded tree } \exists f_0 \in [T_0] \psi^n_{n}(T_0,f_0).
\end{align*}
\end{definition}
For example, $\It^{1}(\ALPP)$ is equivalent to $\ALPP$ and $\It^{2}(\ALPP)$
states that
for any ill-founded tree $T_0$, there exists a path $f_0 \in [T_0]$ such that
\begin{align*}
  \forall T_1,g_1(\Theta(T_1,g_1,f_0) \to \exists f_1 \in [T_1]\forall g \leq_{\T}^{\a}f_0 \oplus f_1\bigwedge_{i \leq 1}( g \in [T_i] \to f_{i} \leq_l g)).
\end{align*}

For simplicity, we write $(T,g) \in \Theta(f)$ instead of $\Theta(T,f,g)$. Then,
$\It^{n+1}(\ALPP)$ states that
for any ill-founded tree $T_0$, there exists a path $f_0 \in [T_0]$ such that
\begin{align*}
  \forall (T_1,g_1) \in \Theta(f_0) \exists f_1 \in [T_1] \cdots
  \forall (T_n,g_n) \in \Theta(f_{n-1}) \exists f_n \in [T_n]\\
  \forall g \leq_{\T}^{\a} f_0 \oplus \cdots \oplus f_n \bigwedge_{i \leq n } (g \in [T_i] \to f_i \leq_l g).
\end{align*}

\begin{theorem}\label{ItALPP and hyperjumps}
  For $n \in \omega$, the following assertions are equivalent over $\ACA_0^+$.
  \begin{enumerate}
    \item $\beta^1_0\RFN(n+1)$,
    \item $\Omega(n+1)$,
    \item $\It^{n+1}(\ALPP)$.
   \end{enumerate}
\end{theorem}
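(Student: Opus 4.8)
The plan is to prove the theorem by establishing the cycle of implications $(1)\Rightarrow(2)\Rightarrow(3)\Rightarrow(1)$, reasoning throughout over $\ACA_0^+$. The intuition is that $\beta^1_0\RFN(n+1)$ provides a tower of $\beta$-submodels $\M_0 \in_\beta \cdots \in_\beta \M_{n+1}$ which internalizes the existence of $\HJ^{n+1}$, and each application of the functional $\Phi_T$ from Lemma~\ref{lem-of-Phi_T} converts ``the leftmost path exists'' into ``the hyperjump exists.'' Thus a chain of $n+1$ iterated arithmetical leftmost paths $f_0, f_1, \ldots, f_n$ should encode $\HJ^{n+1}(X)$, matching the tower of models.

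\textbf{First I would prove $(1)\Rightarrow(2)$.} Assume $\beta^1_0\RFN(n+1)$, and fix $X$. By the equivalent formulation (the lemma preceding Theorem~\ref{Pi12part and hyperjumps}), obtain a coded $\omega$-model $\M \models \ACAo + \exists Y(Y = \HJ^{n+1}(X))$ with $X \in \M$. Inside $\M$, since $\HJ^{n+1}(X)$ exists, $\M$ satisfies $\Pi^1_1\myhyphen\CAo$-strength facts about all the relevant trees: working with $T_0 = \Phi_T(X)$, then $T_1 = \Phi_T(f_0)$ for the leftmost$^\M$ path $f_0$ of $T_0$, and iterating, I obtain $f_0, \ldots, f_n$ with $f_{i+1} \in [\Phi_T(f_i)]^\M$ and each $f_{i+1}$ the leftmost$^\M$ path. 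Because $\HJ^{n+1}(X)$ exists in $\M$, each $f_i$ lies in $\M$ and $f_n$ computes (in $\M$) enough to verify the leftmost conditions. The key point is that $g \leq_{\T}^{\a} f_n$ keeps $g$ inside $\M$ (by the closure lemma for $\omega$-models of $\ACAo$ under arithmetical reduction), so the leftmost$^\M$ conditions transfer to genuine leftmost conditions against all such $g$. This yields $\Omega(n+1)$.

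\textbf{Next, $(2)\Rightarrow(3)$.} Here I would show that the flat statement $\Omega(n+1)$, which provides a \emph{fixed} chain $f_0, \ldots, f_n$ built from the canonical trees $\Phi_T(f_i)$, suffices to defeat the adversarial quantifiers $\forall (T_{i+1}, g_{i+1}) \in \Theta(f_i)$ in $\It^{n+1}(\ALPP)$. The main obstacle lies precisely here: in $\It^{n+1}(\ALPP)$ the adversary chooses \emph{arbitrary} trees $T_{i+1}$ arithmetically reducible to $f_i$ together with a witnessing path $g_{i+1}$, whereas $\Omega(n+1)$ only controls the specific trees $\Phi_T(f_i)$. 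The resolution is that $\Phi_T(f_i)$ is a \emph{universal} object: by Lemma~\ref{lem-of-Phi_T}, the leftmost path of $\Phi_T(f_i)$ computes $\HJ(f_i)$, and since any $(T_{i+1}, g_{i+1}) \in \Theta(f_i)$ has $T_{i+1} \leq_{\T}^{\a} f_i$ with $[T_{i+1}] \neq \varnothing$ witnessed by $g_{i+1}$, the leftmost path of $T_{i+1}$ is arithmetically reducible to $\HJ(f_i)$, hence to $f_{i+1}$. So the single chain from $\Omega(n+1)$ dominates every adversarial choice simultaneously, and I would set $f_{i+1}$ in the $\It$-game to be the leftmost path of the adversary's $T_{i+1}$ computed relative to the $\Omega$-witness.

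\textbf{Finally, $(3)\Rightarrow(1)$.} Assume $\It^{n+1}(\ALPP)$ and fix $X$; apply the principle to $T_0 = \Phi_T(X)$ to get $f_0 \in [\Phi_T(X)]$ satisfying $\psi^n_n$. I would then iteratively instantiate the adversary with the canonical trees $T_{i+1} = \Phi_T(f_i)$ (together with the $X$- or $f_i$-computable path guaranteed by Lemma~\ref{lem-of-Phi_T}, so $(T_{i+1}, g_{i+1}) \in \Theta(f_i)$ indeed holds), extracting $f_0, \ldots, f_n$ such that $f_{i+1}$ is the arithmetical leftmost path of $\Phi_T(f_i)$. By Lemma~\ref{lem-of-Phi_T} each such leftmost path computes $\HJ(f_i)$, so $\HJ^{i+1}(X) \leq_{\T}^{\a} f_i$ by induction, whence $\HJ^{n+1}(X) \leq_{\T}^{\a} f_n$. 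Taking $\M = \{Y : Y \leq_{\T}^{\a} f_n\}$ via Lemma~\ref{arithmetical reduction and omega-model} gives a coded $\omega$-model of $\ACAo$ containing $X$ in which $\HJ^{n+1}(X)$ exists, which is exactly the equivalent of $\beta^1_0\RFN(n+1)$. The verification that the arithmetical-leftmost conditions of $\psi^n_n$ genuinely pin down the leftmost paths (rather than merely some path) is routine given that $g \leq_{\T}^{\a} f_0 \oplus \cdots \oplus f_n$ ranges over all arithmetical reducts, exactly as in the base case $\It^1(\ALPP) \equiv \ALPP$.
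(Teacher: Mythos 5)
Your cycle of implications is reversed relative to the paper's: you prove $(1)\Rightarrow(2)\Rightarrow(3)\Rightarrow(1)$, whereas the paper proves $(1)\Rightarrow(3)\Rightarrow(2)\Rightarrow(1)$. The paper's ordering is more economical because $(3)\Rightarrow(2)$ is immediate ($\Omega(n+1)$ is just $\It^{n+1}(\ALPP)$ with the adversary forced to play the canonical trees $\Phi_T(f_i)$ and their $f_i$-computable paths), so the only real work is $(1)\Rightarrow(3)$, done with the tower $\M_0\in_\beta\cdots\in_\beta\M_{n+1}$, and $(2)\Rightarrow(1)$, done by forming $\M=\{Y:Y\leq_{\T}^{\a}f_n\}$. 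In your ordering the hard step is $(2)\Rightarrow(3)$, which in effect forces you to re-derive $(2)\Rightarrow(1)$ and then $(1)\Rightarrow(3)$ inside a single step; your sketch of it does contain the right ingredients (the $\Omega$-chain generates a model in which iterated hyperjumps exist, and every adversarial tree stays arithmetically below the chain), but you should make explicit that you apply $\Omega(n+1)$ to $T_0\oplus h$ for a path $h$ of the given $T_0$, that the responses are the leftmost paths \emph{as computed in} $\M=\{Y:Y\leq_{\T}^{\a}f_n\}$, and that an induction shows each adversarial $T_{i+1}$ and each response stay Turing-below $f_{i+1}$, so that the final quantifier $\forall g\leq_{\T}^{\a}f'_0\oplus\cdots\oplus f'_n$ never leaves $\M$.

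The one point you must repair is the recurring conflation of genuine hyperjumps and leftmost paths with their $\M$-relative versions. In $(3)\Rightarrow(1)$ you assert ``$\HJ^{i+1}(X)\leq_{\T}^{\a}f_i$ by induction, whence $\HJ^{n+1}(X)\leq_{\T}^{\a}f_n$,'' and in $(2)\Rightarrow(3)$ you write that ``the leftmost path of $T_{i+1}$ is arithmetically reducible to $\HJ(f_i)$, hence to $f_{i+1}$.'' Read literally these are false and even inconsistent with the whole point of the paper: over $\ACA_0^+$, the principle $\It^{n+1}(\ALPP)$ is a $\Pi^1_2$ consequence of $\Pi^1_1\hyp\CAo$ and therefore cannot prove that the genuine $\HJ(X)$ exists for every $X$; an arithmetical leftmost path of $\Phi_T(f_i)$ computes only the set that $\M$ \emph{believes} to be $\HJ(f_i)$, which need not be the true hyperjump (cf.\ the Remark at the end of Section 2). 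The paper is careful to say ``$f_0$ computes $\HJ(X)$ \emph{in} $\M$''; with that relativization inserted throughout, your argument goes through and your final conclusions (``$\M\models\exists Y(Y=\HJ^{n+1}(X))$'') are exactly the right ones.
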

\begin{proof}
  ($1 \to 3$)
  Let $T_0$ be an ill-founded tree and $h$ be a path of $T_0$.
  By $\beta^1_0\RFN(n+1)$ take a sequence of $\omega$-models such that
  \begin{equation*}
    T_0,h \in \M_0 \in_\beta \cdots \in_\beta \M_{n+1} \land \M_{n+1} \models \ACAo.
  \end{equation*}
  Now $\M_1$ satisfies ``$T_0$ has a leftmost path".
  Take the leftmost$^{\M_1}$ path $f_0$.
  Then, $\M_{n+1}$ also satisfies ``$f_0$ is the leftmost path of $T_0$".
  Let $T_1,g_1 \leq_{\T}^{\a} f_0$ such that $T_1$ is a tree and $g_1 \in [T_1]$.
  Then, $\M_2$ satisfies ``$T_1$ has a leftmost path".
  Let $f_1$ be the leftmost$^{\M_2}$ path.
  Then, $\M_{n+1}$ satisfies ``$f_1$ is the leftmost path of $T_1$".
  Continuing this argument, we have that
  \begin{align*}
    &\forall (T_1,g_1) \in \Theta(f_0)\exists f_1 \in [T_1] \cdots \forall (T_n,g_n) \in \Theta(f_{n-1})\exists f_n \in [T_n]\\
    &\bigwedge_{i \leq n} (\M_{n+1} \models ``f_i \text{ is the leftmost path of } T_i").
  \end{align*}
  Since $\M_{n+1}$ is an $\omega$-model of $\ACAo$, it is closed under arithmetical reduction. This completes the proof.

  ($3 \to 2$) This is trivial.

  ($2 \to 1$) Let $X$ be a set. We show that there is an $\omega$-model $\M$ such that
  $\M \models \ACAo + \exists Y (Y = \HJ^{n+1}(X))$.
  Let $f_0,\ldots,f_{n}$ be witnesses of $\Omega(n)$ for $X$.
  Then, $f_0 \leq_{\T} \cdots \leq_{\T} f_n$.
  By $\ACA_0^+$, take the smallest $\omega$-model $\M$ of $\ACAo$ containing $f_{n}$.
  Then, $\M$ satisfies ``$f_0$ is the leftmost path of $\Phi_T(X)$" and hence $f_0$ computes $\HJ(X)$ in $\M$.
  Similarly, $f_i$ is the leftmost path of $\Phi_T(f_{i-1})$ in $\M$ and hence it computes $\HJ(f_i)$ in $\M$.
  Thus, $\M \models \exists Y (Y = \HJ^{n+1}(X))$.
\end{proof}

As shown in \cite{suzuki_yokoyama_fp}, $\Delta^0_n\LPP$s and variants of $\beta$-model reflection called $\Delta^0_n\beta$-model refections
are deeply related.
In the following, we see the relationship of variants of leftmost path principles and variants of $\beta$-models reflection.
\begin{remark}
  We can show that
  \begin{itemize}
    \item for any $n \in \omega$, $\ACAo$ proves that any $\beta$-model satisfies $\Delta^0_n\LPP$,
    \item $\ACA_0'$ proves that for any $n \in \N$, any $\beta$-model satisfies $\Delta^0_n\LPP$,
    \item $\ACA_0^+$ proves that any $\beta$-model satisfies $\ALPP$.
  \end{itemize}
  by the same proof of Lemma \ref{ATRo proves beta_sat(TLPP)}.
\end{remark}
In contrast to this result, even $\ACA_0^+$ does not prove that
any $\beta$-model satisfies $\TLPP$.
To prove this, we can use the same proof as in the following lemma.
\begin{lemma}\label{characterization of TLPP by reflection}
  $\TLPP$ is equivalent to
  \begin{align*}
    \forall \alpha : \WO \forall X \exists \M
    (&\alpha,X \in \M, \\
    &\M \models \ACAo + \exists Y(Y= \HJ(X)) ), \\
    &\text{$\M$ is closed under $\alpha$-jump}).
  \end{align*}
\end{lemma}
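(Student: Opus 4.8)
The plan is to prove both implications by reusing the coding functional $\Phi_T$ of Lemma \ref{lem-of-Phi_T} together with the slogan that a $\Delta^0_{\alpha+1}$ leftmost path is precisely a path that \emph{looks} leftmost to every coded $\omega$-model of $\ACAo$ closed under $\alpha$-jump. Throughout I work over $\ATRo$, which is the natural base theory here since $\TLPP$ is stated over $\ATRo$ and the displayed assertion quantifies over arbitrary well-orders $\alpha$ and requires closure under $\alpha$-jump; this guarantees that all the transfinite iterates $(\,\cdot\,)^{(\alpha\cdot n)}$ and the coded $\omega$-models built from them exist.

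For the reverse direction (the displayed reflection statement implies $\TLPP$), fix an ill-founded tree $T$ and a well-order $\alpha$, and fix a genuine path $p\in[T]$ (which exists by ill-foundedness). Apply the reflection assertion to $X=T\oplus p$ and the well-order $\alpha$ to get a coded $\omega$-model $\M$ with $T,p,\alpha\in\M$, $\M\models\ACAo+\exists Y(Y=\HJ(T\oplus p))$, and $\M$ closed under $\alpha$-jump. Since the hyperjump is monotone and $T\leq_{\T}T\oplus p$, we have $\M\models\exists Y(Y=\HJ(T))$, and since $p\in\M$ we have $\M\models T$ is ill-founded. Inside $\M$ the hyperjump of $T$ allows one to form the pruned subtree of extendible nodes and read off its leftmost path $g\in\M$, so $\M\models g\in[T]\wedge\forall f\in[T](g\leq_l f)$. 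Now $(T\oplus g)^{(\alpha)}\in\M$ by $\alpha$-jump closure, hence every $f\leq_{\T}(T\oplus g)^{(\alpha)}$ lies in $\M$; as $\M$ is an $\omega$-model the relations ``$f\in[T]$'' and ``$g\leq_l f$'' are absolute, so $f\in[T]$ forces $g\leq_l f$. Thus $g$ is a $\Delta^0_{\alpha+1}$ leftmost path of $T$.

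For the forward direction ($\TLPP$ implies the reflection statement), fix $\alpha$ and $X$. Apply $\TLPP$ to the tree $\Phi_T(X)$ and to the well-order $\alpha\cdot\omega$, obtaining $g\in[\Phi_T(X)]$ that is $\leq_l$-least among all $f\leq_{\T}(\Phi_T(X)\oplus g)^{(\alpha\cdot\omega)}$. Put
\[
  \M=\{\,Y:\exists n\,(Y\leq_{\T}(X\oplus\alpha\oplus g)^{(\alpha\cdot n)})\,\}.
\]
This is a coded $\omega$-model of $\ACAo$ available in $\ATRo$, it contains $X$ and $\alpha$, and it is closed under $\alpha$-jump because $\bigl((\,\cdot\,)^{(\alpha\cdot n)}\bigr)^{(\alpha)}\equiv_{\T}(\,\cdot\,)^{(\alpha\cdot(n+1))}$. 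Every path of $\Phi_T(X)$ in $\M$ is $\leq_{\T}(X\oplus\alpha\oplus g)^{(\alpha\cdot n)}$ for some finite $n$; using $X\leq_{\T}g$ (any path of $\Phi_T(X)$ computes $X$) and $\alpha\leq_{\T}(\Phi_T(X)\oplus g)^{(\alpha\cdot\omega)}$, each such set is itself $\leq_{\T}(\Phi_T(X)\oplus g)^{(\alpha\cdot\omega)}$, so $g$ is $\leq_l$-below it by the choice of $g$. Hence $\M\models g$ is the leftmost path of $\Phi_T(X)$, and the $\ACAo$-provable second clause of Lemma \ref{lem-of-Phi_T} yields $\M\models\HJ(X)\leq_{\T}g$, i.e.\ $\M\models\exists Y(Y=\HJ(X))$.

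The main obstacle is the forward direction. The reflection statement demands a model closed under $\alpha$-jump, and such a model contains sets up to every finite iterate $(\,\cdot\,)^{(\alpha\cdot n)}$, so a path that is merely $\Delta^0_{\alpha+1}$-leftmost need not be leftmost inside $\M$. The resolution is to feed $\TLPP$ the limit ordinal $\alpha\cdot\omega$ instead of $\alpha$, so that a single transfinite leftmost path dominates all the paths that the $\alpha$-jump closure can manufacture; this is exactly the point where the transfinite strength of $\TLPP$ is consumed. A secondary care point, handled above by carrying the auxiliary path $p$, is ensuring that $\M$ genuinely recognizes $T$ as ill-founded before its internal leftmost path is extracted, together with the routine absoluteness of $\leq_l$ and ``$f\in[T]$'' for $\omega$-models.
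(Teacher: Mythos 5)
Your overall strategy coincides with the paper's: the reverse direction extracts the leftmost path inside the reflected model and uses $\alpha$-jump closure to certify it as $\Delta^0_{\alpha+1}$-leftmost (you are in fact more explicit than the paper here, by passing $T\oplus p$ to the reflection so that $\M$ provably sees $T$ as ill-founded), and the forward direction runs $\TLPP$ at the ordinal $\alpha\cdot\omega$ on the tree produced by $\Phi_T$ and builds $\M$ from the finite $\alpha$-multiples of the jump hierarchy over the resulting path. That is exactly the paper's proof, including the key idea of passing from $\alpha$ to $\alpha\cdot\omega$.

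There is, however, one step in your forward direction that does not go through as written. You apply $\TLPP$ to $\Phi_T(X)$, so $g$ is only guaranteed to be $\leq_l$-minimal among paths computable from $(\Phi_T(X)\oplus g)^{(\alpha\cdot\omega)}$, yet the paths of $\Phi_T(X)$ lying in your model $\M$ are the ones computable from $(X\oplus\alpha\oplus g)^{(\alpha\cdot n)}$. To bridge these you invoke ``$\alpha\leq_{\T}(\Phi_T(X)\oplus g)^{(\alpha\cdot\omega)}$'', but that premise only yields $(X\oplus\alpha\oplus g)^{(\alpha\cdot n)}\leq_{\T}\bigl((\Phi_T(X)\oplus g)^{(\alpha\cdot\omega)}\bigr)^{(\alpha\cdot n)}$, which sits strictly \emph{above} $(\Phi_T(X)\oplus g)^{(\alpha\cdot\omega)}$; the well-order $\alpha$ need not be computable from $g$ or from any low level of its jump hierarchy, so the containment you assert is a non sequitur. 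The repair is exactly the move the paper makes: apply $\TLPP$ to $\Phi_T(\alpha\oplus X)$ rather than $\Phi_T(X)$, so that $g$ computes $\alpha\oplus X$ outright and $(X\oplus\alpha\oplus g)^{(\alpha\cdot n)}\equiv_{\T}g^{(\alpha\cdot n)}\leq_{\T}g^{(\alpha\cdot\omega)}\equiv_{\T}(\Phi_T(\alpha\oplus X)\oplus g)^{(\alpha\cdot\omega)}$ falls within the scope of $g$'s relative leftmostness. With that one-line change the rest of your argument is correct.
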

\begin{proof}
  First, assume $\TLPP$.
  Take a well order $\alpha$. Then, $\alpha \cdot \omega$ is also a well-ordering
  \footnote{Here, $\alpha \cdot \omega$ denotes the order type of lexicographical ordering of $\N \times \alpha$.}.
  We note that $\alpha \cdot \omega$ is closed under $+\alpha$ in the following sense:
  Recall that  $|L|$ denotes the field $\{i : (i,i) \in L\}$ of $L$ for a linear order $L$. For each $(n, i) \in |\alpha \cdot \omega|$,
  there is a certain embedding from $\alpha$ to the interval $[(n,i),(n+2,0)]$ defined by $f(j) = (n+1,j)$.
  Since $\ATR_0$ holds, there is an embedding $g$ from $\alpha$ to an initial segment of $[(n,i),(n+2,0)]$.
  Then, $\sup g$ is regarded as $(n,i) + \alpha$.

  Take an arbitrary $X$. We show that
  \begin{align*}
  \exists \M
  (&\alpha,X \in \M, \\
  &\M \models \ACAo + \exists Y(Y= \HJ(X)) ), \\
  &\text{$\M$ is closed under $\alpha$-jump}).
  \end{align*}
  Recall that there is a total Turing functional $\Phi_T$ which outputs an ill-founded tree for any input
  (see Lemma \ref{lem-of-Phi_T}).
  By $\TLPP$, take a $\Delta^0_{\alpha\cdot\omega}$-leftmost path $f$ of $\Phi_T(\alpha \oplus X)$.
  Define
  \begin{equation*}
    \M = \{A : A \leq_{\T} f^{(n,i)} \text{ for some } (n,i) \in |\alpha \cdot \omega|\}.
  \end{equation*}
  Since $\M \models[\text{$f$ is a leftmost path of $\Phi_T(\alpha \oplus X)$}]$, $\M \models \exists Y(Y = \HJ(X))$.
  Moreover, by construction, $\M \models \ACAo$ and closed under $\alpha$-jump.

  The converse is trivial because the leftmost path in $\M$ is an $\Delta^0_{\alpha}$-leftmost path in the ground model.
\end{proof}
Recall that a coded $\omega$-model of $\ACAo$ satisfies $\ACA_0^+$ if it is closed under taking the $\omega$-jump of a set.
Therefore, by the same proof as above, we have
\begin{lemma}
  Over $\ACAo$, $\Delta^0_{\omega^2}\LPP$ proves $\beta^1_0\RFN(1;\ACA_0^+)$.
\end{lemma}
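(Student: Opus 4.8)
The plan is to run the construction from the proof of Lemma~\ref{characterization of TLPP by reflection} with the ambient well-order fixed to be $\omega$, so that the relevant bound on the jump hierarchy becomes $\omega\cdot\omega=\omega^2$. As a preliminary remark, $\Delta^0_{\omega^2}\LPP$ implies $\Delta^0_0\LPP$ and hence $\ATRo$, so in what follows I may freely form iterated Turing jumps $f^{(\beta)}$ along recursive well-orders $\beta<\omega^2$ together with the associated coded $\omega$-models.

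Given a set $X$, I would apply $\Delta^0_{\omega^2}\LPP$ to the ill-founded tree $\Phi_T(X)$ of Lemma~\ref{lem-of-Phi_T} to obtain a $\Delta^0_{\omega^2}$-leftmost path $f$; that is, $f\in[\Phi_T(X)]$ and $f\leq_l g$ for every path $g\in[\Phi_T(X)]$ with $g\leq_{\T}f^{(\beta)}$ for some $\beta<\omega^2$. I would then set
\[
  \M_1=\{A: A\leq_{\T}f^{(\beta)}\text{ for some }\beta<\omega^2\},
\]
which is a coded $\omega$-model once $f^{(\omega^2)}$ is formed in $\ATRo$. Since the ordinals below $\omega^2$ are closed under successor, $\M_1\models\ACAo$; and since they are closed under $+\omega$ (for $\beta=\omega\cdot m+k<\omega^2$ we have $\beta+\omega=\omega\cdot(m+1)<\omega^2$), the model $\M_1$ is closed under the $\omega$-jump, so $\M_1\models\ACA_0^+$. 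By the choice of $f$, the $\M_1$-paths of $\Phi_T(X)$ are exactly the $g\in[\Phi_T(X)]$ with $g\leq_{\T}f^{(\beta)}$, $\beta<\omega^2$, and $f$ is $\leq_l$-least among them; hence $f$ is the leftmost path of $\Phi_T(X)$ inside $\M_1$, and Lemma~\ref{lem-of-Phi_T} gives $\M_1\models\exists Y(Y=\HJ(X))$. Note also $X\in\M_1$.

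It remains to supply the bottom model of the $\beta$-submodel chain. Reasoning inside $\M_1$, which models $\ACAo+\exists Y(Y=\HJ(X))$, I would invoke the internal form of the equivalence between the existence of $\HJ(X)$ and the existence of a coded $\beta$-model containing $X$ (\cite{Simpson}, Lemma~VII.2.9) to obtain a coded $\beta$-model $\M_0\in\M_1$ with $X\in\M_0$. The standard $\Sigma^1_1$-absoluteness of satisfaction between a coded $\omega$-model $\M_0$ and the surrounding $\omega$-model $\M_1$ turns ``$\M_1\models[\M_0$ is a coded $\beta$-model$]$'' into the genuine relation $\M_0\in_\beta\M_1$. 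Thus $X\in\M_0\in_\beta\M_1$ and $\M_1\models\ACA_0^+$, which is precisely $\beta^1_0\RFN(1;\ACA_0^+)$.

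I expect the crux to be conceptual rather than technical, namely the ordinal bookkeeping that pins down the exponent: forcing $\M_1$ to satisfy $\ACA_0^+$ means forcing its jump hierarchy to be closed under $+\omega$, and $\omega^2$ is the least recursive ordinal above $\omega$ with this closure property, which is exactly why $\Delta^0_{\omega^2}\LPP$ is the correct hypothesis. The remaining ingredients — that $\M_1$ is a coded $\omega$-model, the computation of $\HJ(X)$ from the $\M_1$-leftmost path via Lemma~\ref{lem-of-Phi_T}, and the $\Sigma^1_1$-absoluteness used to extract $\M_0$ — are routine and parallel the corresponding steps in the proof of Lemma~\ref{characterization of TLPP by reflection}.
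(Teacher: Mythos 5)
Your proposal is correct and follows essentially the same route as the paper, which proves this lemma precisely by rerunning the construction of Lemma~\ref{characterization of TLPP by reflection} with $\alpha=\omega$ (so that the jump hierarchy is indexed by $\omega\cdot\omega=\omega^2$ and the resulting model is closed under the $\omega$-jump, hence satisfies $\ACA_0^+$). Your explicit extraction of the bottom model $\M_0$ via the internalized equivalence between $\exists Y(Y=\HJ(X))$ and the existence of a coded $\beta$-model containing $X$ is a step the paper leaves implicit, but it is the intended one.
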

\begin{theorem}
  Over $\ACAo$, $\TLPP$ is strictly stronger than $\beta^1_0\RFN(1;\ACA_0^+)$.
\end{theorem}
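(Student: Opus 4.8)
The plan is to establish the two halves of $\beta^1_0\RFN(1;\ACA_0^+) < \TLPP$ in exactly the way the relation $\beta^1_0\RFN(1) < \TLPP$ was handled above: by definition of $<$ it suffices to prove, over $\ACAo$, that $\TLPP$ proves $\beta^1_0\RFN(1;\ACA_0^+)$ and that $\TLPP$ proves $\Con(\beta^1_0\RFN(1;\ACA_0^+))$.

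The first half is immediate from the preceding lemma. Instantiating the well order $\alpha$ in $\TLPP$ by $\omega^2$ produces, for every ill-founded tree $T$, a path $g \in [T]$ with $\forall f \leq_\T (T\oplus g)^{(\omega^2)}(f \in [T] \to g \leq_l f)$; this $g$ is in particular a $\Delta^0_{\omega^2}$ leftmost path, so $\TLPP$ proves $\Delta^0_{\omega^2}\LPP$. By the lemma immediately above, $\Delta^0_{\omega^2}\LPP$ proves $\beta^1_0\RFN(1;\ACA_0^+)$ over $\ACAo$, and hence so does $\TLPP$.

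For the consistency half I would reuse Towsner's consistency estimate $\Sigma^0_{\alpha+5}\LPP \vdash \Con(\Sigma^0_{\alpha+1}\LPP)$ (\cite{Townser_TLPP}), now instantiated at $\alpha = \omega^2$ rather than at $\alpha = \omega$ as in the proof of $\beta^1_0\RFN(1) < \TLPP$. Since $\TLPP$ proves $\Sigma^0_\beta\LPP$ for every standard well order $\beta$, in particular $\TLPP$ proves $\Sigma^0_{\omega^2+5}\LPP$, whence $\TLPP \vdash \Con(\Sigma^0_{\omega^2+1}\LPP)$. Now $\Sigma^0_{\omega^2+1}\LPP$ proves $\Delta^0_{\omega^2}\LPP$, since the $\Sigma^0_{\omega^2+1}$ leftmost condition demands leftmostness among a larger comparison class than the $\Delta^0_{\omega^2}$ one, and $\Delta^0_{\omega^2}\LPP$ proves $\beta^1_0\RFN(1;\ACA_0^+)$ by the preceding lemma; thus $\beta^1_0\RFN(1;\ACA_0^+)$ is a theorem of $\ACAo + \Sigma^0_{\omega^2+1}\LPP$. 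Since the consistency of a theory entails the consistency of each of its consequences, $\Con(\Sigma^0_{\omega^2+1}\LPP)$ yields $\Con(\beta^1_0\RFN(1;\ACA_0^+))$, and this transfer is formalizable over $\ACAo$. Therefore $\TLPP \vdash \Con(\beta^1_0\RFN(1;\ACA_0^+))$.

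Putting the two halves together, $\TLPP$ proves $\beta^1_0\RFN(1;\ACA_0^+) + \Con(\beta^1_0\RFN(1;\ACA_0^+))$ over $\ACAo$, which is precisely $\beta^1_0\RFN(1;\ACA_0^+) < \TLPP$. I expect the only delicate points to be the ordinal bookkeeping — checking that a $\TLPP$-path at level $\omega^2$ really witnesses $\Delta^0_{\omega^2}\LPP$ and that $\TLPP$ dominates $\Sigma^0_{\omega^2+5}\LPP$ — together with a careful formalization inside $\ACAo$ of the elementary fact that consistency passes from a theory to its consequences, so that the implication $\Con(\Sigma^0_{\omega^2+1}\LPP) \to \Con(\beta^1_0\RFN(1;\ACA_0^+))$ is available internally.
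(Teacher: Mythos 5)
Your proposal is correct and follows essentially the same route as the paper: the paper's one-line proof ("immediate from the fact that $\TLPP$ is strictly stronger than $\Delta^0_{\omega^2}\LPP$") is exactly your combination of the preceding lemma ($\Delta^0_{\omega^2}\LPP \vdash \beta^1_0\RFN(1;\ACA_0^+)$) with Towsner's consistency estimate instantiated at $\alpha = \omega^2$. You have merely unpacked the details that the paper leaves implicit, including the transfer of consistency to consequences, which is the intended argument.
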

\begin{proof}
  It is immediate from the fact that $\TLPP$ is strictly stronger than $\Delta^0_{\omega^2}\LPP$.
\end{proof}
\begin{corollary}
  $\ACA_0^+$ does not prove that any $\beta$-model satisfies $\TLPP$.
\end{corollary}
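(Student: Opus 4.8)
The plan is to argue by contradiction, converting a hypothetical proof that $\ACA_0^+$ shows ``every $\beta$-model satisfies $\TLPP$'' into a self-consistency statement ruled out by Gödel's second incompleteness theorem. So I would assume toward a contradiction that $\ACA_0^+$ proves that every coded $\beta$-model satisfies $\TLPP$, and then extract from this the same kind of reflection phenomenon that the theorem on $\beta^1_0\RFN(1;\ATRo)$ exploited.

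The first main step is to reprise the proof that $\beta^1_0\RFN(1;\ATRo)$ yields the $\omega$-model reflection of $\TLPP$, but with $\ACA_0^+$ in place of $\ATRo$ and with the hypothesis in place of Lemma \ref{ATRo proves beta_sat(TLPP)}. Concretely, reasoning over $\ACAo + \beta^1_0\RFN(1;\ACA_0^+)$ and given a set $X$, I would take coded $\omega$-models $\M_0 \in_\beta \M_1$ with $X \in \M_0$ and $\M_1 \models \ACA_0^+$. Since $\M_0 \in_\beta \M_1$, the model $\M_1$ regards $\M_0$ as a coded $\beta$-model, so the assumed statement relativized to $\M_1$ gives $\M_1 \models [\M_0 \models \TLPP]$, whence $\M_0 \models \TLPP$. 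As $X$ is arbitrary, $\beta^1_0\RFN(1;\ACA_0^+)$ proves the $\omega$-model reflection of $\TLPP$; because $\TLPP$ is $\Pi^1_2$, Lemma \ref{reflection and con} then yields $\ACAo + \beta^1_0\RFN(1;\ACA_0^+) \vdash \TLPP$ (indeed $\vdash \TLPP + \Con(\TLPP)$).

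The second step feeds this back into the strictness proved in the preceding theorem. By that theorem $\beta^1_0\RFN(1;\ACA_0^+) < \TLPP$, so over $\ACAo$ the theory $\TLPP$ proves $\Con(\ACAo + \beta^1_0\RFN(1;\ACA_0^+))$. Combining the two, $\ACAo + \beta^1_0\RFN(1;\ACA_0^+)$ proves $\TLPP$ and hence proves $\Con(\ACAo + \beta^1_0\RFN(1;\ACA_0^+))$. Since this theory is consistent — being a subtheory of $\Pi^1_1\myhyphen\CAo$, which has $\omega$-models — this contradicts Gödel's second incompleteness theorem, which finishes the proof.

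The main obstacle is not the incompleteness bookkeeping but the transfer step in the second paragraph: one must check that $\M_0 \in_\beta \M_1$ together with $\M_1 \models \ACA_0^+$ genuinely makes $\M_0$ a $\beta$-model in the sense internal to $\M_1$, and that the verdict ``$\M_0 \models \TLPP$'' passes between $\M_1$'s internal computation and the actual one. Both points are precisely what was verified in the proof of the $\beta^1_0\RFN(1;\ATRo)$ theorem, so invoking ``the same proof'' is legitimate; the only new ingredient is substituting the hypothesis about $\ACA_0^+$ for the role played there by Lemma \ref{ATRo proves beta_sat(TLPP)}.
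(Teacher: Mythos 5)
Your proof is correct and matches the paper's argument in all essentials: both assume the statement for contradiction, use $\beta^1_0\RFN(1;\ACA_0^+)$ to produce $\M_0 \in_\beta \M_1$ with $\M_1 \models \ACA_0^+$, apply the hypothesis inside $\M_1$ to get $\M_1 \models [\M_0 \models \TLPP]$, and close the circle via the already-established relationship between $\TLPP$ and $\beta^1_0\RFN(1;\ACA_0^+)$ to violate G\"odel's second incompleteness theorem. The only (immaterial) difference is bookkeeping: the paper extracts $\Con(\TLPP)$ directly from $\M_1$ and concludes $\ACAo+\TLPP \vdash \Con(\TLPP)$, whereas you pass through the $\omega$-model reflection of $\TLPP$ and conclude $\ACAo+\beta^1_0\RFN(1;\ACA_0^+) \vdash \Con(\ACAo+\beta^1_0\RFN(1;\ACA_0^+))$.
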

\begin{proof}
  For the sake of contradiction, assume $\ACA_0^+$ proves that any $\beta$-model satisfies $\TLPP$.

  We work in $\ACAo + \beta^1_0\RFN(1;\ACA_0^+)$ and show that $\Con(\TLPP)$ holds.
  Take coded $\omega$-models $\M_0,\M_1$ such that
  $\M_0 \in_{\beta} \M_1$ and $\M_1 \models \ACA_0^+$.
  Then, $\M_1 \models [\M_0 \models \TLPP]$ by assumption and hence $\M_1 \models \Con(\TLPP)$.
  Since $\Con(\TLPP)$ is arithmetical, $\Con(\TLPP)$ holds.
  However, this is impossible because $\TLPP$ implies $\beta^1_0\RFN(1;\ACA_0^+)$ over $\ACAo$.
\end{proof}

\begin{theorem}
  Over $\ACAo$,
$\beta^1_0\RFN(n;\ALPP)$ implies the $\omega$-model reflection of
$\beta^1_0\RFN(n+1)$.
\end{theorem}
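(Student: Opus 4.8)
The plan is to imitate the proof of Lemma \ref{beta(n+1) is strictly stronger than beta(n)}, where a witnessing chain living at the top of a $\beta$-submodel tower is pushed back down to the bottom model by $\Sigma^1_1$-reflection. The new ingredient is that the extra strength $\ALPP$ sitting at the top of the tower supplies the two additional $\beta$-submodels needed to climb from a chain of length $n+1$ to one of length $n+2$, which is exactly the jump from $\beta^1_0\RFN(n)$-data to a $\beta^1_0\RFN(n+1)$-witness.

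First I would fix a set $X$ and, using $\beta^1_0\RFN(n;\ALPP)$, obtain coded $\omega$-models $X \in \M_0 \in_\beta \M_1 \in_\beta \cdots \in_\beta \M_n$ with $\M_n \models \ACAo + \ALPP$. The claim is that $\M_0$ itself is the $\omega$-model reflecting $\beta^1_0\RFN(n+1)$ for $X$. Two preliminary observations are needed. By downward induction using Lemma \ref{beta-model satisfies atro}: each $\M_i \in_\beta \M_{i+1}$ makes $\M_i$ a coded $\beta$-model inside $\M_{i+1}\models\ACAo$, hence $\M_{i+1}\models[\M_i\models\ATRo]$, and since satisfaction of a coded $\omega$-model is absolute between $\omega$-models, every $\M_i$ genuinely satisfies $\ACAo$; in particular $\M_0\models\ACAo$ and $\M_i\subseteq\M_{i+1}$ for each $i$. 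Consequently $\in_\beta$ is transitive along the tower, giving $\M_0\in_\beta\M_n$, and moreover $\M_n$ internally verifies the sub-chain $\M_0\in_\beta\cdots\in_\beta\M_{n-1}$, since all the relevant $\sigma^1_1$-absoluteness conditions are arithmetic in the codes and hence absolute for the $\omega$-model $\M_n$.

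The heart of the argument is to show, for each fixed $Y\in\M_0$, that $\M_n$ satisfies the $\Sigma^1_1$-statement (with parameter $Y$) asserting the existence of a length-$(n+2)$ chain $\mathcal{N}_0\in_\beta\cdots\in_\beta\mathcal{N}_{n+1}$ with $Y\in\mathcal{N}_0$ and $\mathcal{N}_{n+1}\models\ACAo$. Working inside $\M_n$, I already have the length-$n$ chain $Y\in\M_0\in_\beta\cdots\in_\beta\M_{n-1}$ of coded $\beta$-models. Since $\M_n\models\ALPP$, which is equivalent to $\beta^1_0\RFN(1)$ over $\ACAo$, I apply $\beta^1_0\RFN(1)$ inside $\M_n$ to the code of $\M_{n-1}$ and obtain $\mathcal{Q}_0\in_\beta\mathcal{Q}_1$ with $\M_{n-1}\in\mathcal{Q}_0$ and $\mathcal{Q}_1\models\ACAo$. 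A short computation with $\sigma^1_1$-absoluteness (both $\M_{n-1}$ and $\mathcal{Q}_0$ are coded $\beta$-models inside $\M_n$ and $\M_{n-1}\subseteq\mathcal{Q}_0$) shows $\M_{n-1}\in_\beta\mathcal{Q}_0$, so concatenation yields the desired chain $Y\in\M_0\in_\beta\cdots\in_\beta\M_{n-1}\in_\beta\mathcal{Q}_0\in_\beta\mathcal{Q}_1$ of length $n+2$ inside $\M_n$.

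Finally, since the existence of such a chain is $\Sigma^1_1$ with the single parameter $Y\in\M_0$ and $\M_0\in_\beta\M_n$, this statement reflects down to $\M_0$; as $Y\in\M_0$ was arbitrary, $\M_0\models\beta^1_0\RFN(n+1)$, which together with $X\in\M_0$ gives the $\omega$-model reflection of $\beta^1_0\RFN(n+1)$. I expect the main obstacle to be the two-level bookkeeping of satisfaction: checking that the externally holding $\in_\beta$ relations are precisely those $\M_n$ sees internally, and that the freshly produced pair $\mathcal{Q}_0,\mathcal{Q}_1$ glues onto the existing tower as genuine $\beta$-submodels, i.e. the step $\M_{n-1}\in_\beta\mathcal{Q}_0$. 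Once these absoluteness transfers are pinned down, the remainder is the routine $\Sigma^1_1$-reflection already used in Lemma \ref{beta(n+1) is strictly stronger than beta(n)}.
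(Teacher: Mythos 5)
Your proposal follows the paper's proof essentially verbatim: use $\ALPP\leftrightarrow\beta^1_0\RFN(1)$ inside the top model $\M_n$, apply $\beta^1_0\RFN(1)$ there to (the code of) $\M_{n-1}$ to obtain two further models, splice them onto the tower, and push the resulting $\Sigma^1_1$ statement down to $\M_0$ via $\M_0\in_\beta\M_n$. The one justification to repair is your parenthetical claim that $\mathcal{Q}_0$ is a coded $\beta$-model inside $\M_n$ (it is only a $\beta$-submodel of $\mathcal{Q}_1$); the correct and sufficient reason for $\M_{n-1}\in_\beta\mathcal{Q}_0$ --- exactly as in the paper --- is that $\mathcal{Q}_0$ is an $\omega$-submodel of $\M_n$ while $\M_{n-1}$ is a $\beta$-submodel of $\M_n$, so $\Sigma^1_1$ witnesses in $\mathcal{Q}_0$ lift to $\M_n$ and then reflect into $\M_{n-1}$, and witnesses in $\M_{n-1}$ already lie in $\mathcal{Q}_0$.
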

\begin{proof}
  We reason in $\ACAo + \beta^1_0\RFN(n;\ALPP)$.
  Take an arbitrary set $A$.
  By $\beta^1_0\RFN(n;\ALPP)$, take coded $\omega$-models $\M_0,\ldots,\M_n$ such that
  \begin{align*}
    A \in \M_0 \in_\beta \cdots \in_\beta \M_{n-1} \in_\beta \M_n \land \M_n \models \ACAo + \ALPP.
  \end{align*}
  Since $\ALPP$ and $\beta^1_0\RFN(1)$ is equivalent over $\ACAo$, $\M_n \models \beta^1_0\RFN(1)$.

  We show that $\M_0 \models \beta^1_0\RFN(n+1)$.
  By $\beta^1_0\RFN(1)$ in $\M_n$, there are coded $\omega$-model $\NN_0,\NN_1 \in \M_n$ such that
  \begin{align*}
    \M_n \models [\M_{n-1} \in \NN_0 \in_\beta \NN_1 \land \NN_1 \models \ACAo].
  \end{align*}
  Since $\M_{n-1}$ is a $\beta$-submodel of $\M_n$ and $\NN_0$ is an $\omega$-submodel of $\M_n$, for any $\Pi^1_1$ sentence $\sigma$ with parameters from $\M_{n-1}$,
  \begin{align*}
    [\M_{n-1} \models \sigma] &\to [\M_n \models \sigma] \\
    &\to [\NN_0 \models \sigma].
  \end{align*}
  Thus, $\M_{n-1}$ is a $\beta$-submodel of $\NN_0$.
  Therefore, for any $X \in \M_0$,
  \begin{align*}
    \M_n \models [X \in \M_0 \in_{\beta} \cdots \in_{\beta} \M_{n-1} \in_\beta \NN_0 \in_\beta \NN_1 \land \NN_1 \models \ACAo].
  \end{align*}
  Therefore, for any $X \in \M_0$,
  \begin{align*}
    \M_n \models \exists \HH_0,\ldots,\HH_{n+1}
    [X \in \HH_0 \in_{\beta} \cdots \in_{\beta} \HH_{n-1} \in_\beta \HH_n \in_\beta \HH_{n+1} \land \HH_{n+1} \models \ACAo].
  \end{align*}
  Since $\M_0$ is a $\beta$-submodel of $\M_n$, for any $X \in \M_0$,
  \begin{align*}
    \M_0 \models \exists \HH_0,\ldots,\HH_{n+1}
    [X \in \HH_0 \in_{\beta} \cdots \in_{\beta} \HH_{n-1} \in_\beta \HH_n \in_\beta \HH_{n+1} \land \HH_{n+1} \models \ACAo].
  \end{align*}
  Thus, $\M_0 \models \beta^1_0\RFN(n+1)$.
\end{proof}
%

The point of the above proof is that $\ALPP$ implies $\beta^1_0\RFN(1)$ and hence $\M_n$ satisfies $\beta^1_0\RFN(1)$.
Since $\TLPP$ implies $\beta^1_0\RFN(1;\ACA_0^+)$,
we can also show the following theorem by the same way as the above proof.
\begin{theorem}
  Over $\ACAo$,
$\beta^1_0\RFN(n;\TLPP)$ implies the $\omega$-model reflection of
$\beta^1_0\RFN(n+1;\ACA_0^+)$.
\end{theorem}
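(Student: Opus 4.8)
The plan is to run the proof of the immediately preceding theorem (on $\beta^1_0\RFN(n;\ALPP)$) essentially verbatim, substituting the input hypothesis $\ALPP$ by $\TLPP$ and invoking the implication $\TLPP \to \beta^1_0\RFN(1;\ACA_0^+)$ (over $\ACAo$, via the fact that $\TLPP$ yields $\Delta^0_{\omega^2}\LPP$ and the lemma that $\Delta^0_{\omega^2}\LPP$ proves $\beta^1_0\RFN(1;\ACA_0^+)$) in place of $\ALPP \to \beta^1_0\RFN(1)$. Concretely, I would reason in $\ACAo + \beta^1_0\RFN(n;\TLPP)$, fix an arbitrary set $A$, and use $\beta^1_0\RFN(n;\TLPP)$ to obtain coded $\omega$-models with $A \in \M_0 \in_\beta \cdots \in_\beta \M_{n-1} \in_\beta \M_n$ and $\M_n \models \ACAo + \TLPP$. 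By the cited implication, $\M_n \models \beta^1_0\RFN(1;\ACA_0^+)$. The goal is then to show $\M_0 \models \beta^1_0\RFN(n+1;\ACA_0^+)$; since $A \in \M_0$ and $A$ is arbitrary, this is exactly the asserted $\omega$-model reflection.

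The heart of the argument would be to apply $\beta^1_0\RFN(1;\ACA_0^+)$ \emph{inside} $\M_n$ to the set $\M_{n-1}$, producing coded $\omega$-models $\NN_0,\NN_1 \in \M_n$ with $\M_n \models [\M_{n-1} \in \NN_0 \in_\beta \NN_1 \land \NN_1 \models \ACA_0^+]$. As in the preceding proof, the crucial move is to upgrade $\M_{n-1} \in \NN_0$ to $\M_{n-1} \in_\beta \NN_0$: from the inclusions $\M_{n-1} \subseteq \NN_0 \subseteq \M_n$ together with $\M_{n-1} \in_\beta \M_n$, one obtains $\Sigma^1_1$-absoluteness between $\M_{n-1}$ and $\NN_0$ by sandwiching (upward $\Sigma^1_1$-absoluteness carries a witness from $\M_{n-1}$ into $\NN_0$ and on into $\M_n$, while the $\beta$-correctness of $\M_{n-1}$ in $\M_n$ brings a witness back). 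Splicing $\NN_0,\NN_1$ onto the original chain then gives, for every $X \in \M_0$,
\[
  \M_n \models [X \in \M_0 \in_\beta \cdots \in_\beta \M_{n-1} \in_\beta \NN_0 \in_\beta \NN_1 \land \NN_1 \models \ACA_0^+],
\]
hence $\M_n \models \exists \HH_0,\ldots,\HH_{n+1}(X \in \HH_0 \in_\beta \cdots \in_\beta \HH_{n+1} \land \HH_{n+1} \models \ACA_0^+)$.

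To finish, I would observe that this last existential is $\Sigma^1_1$ with $X$ as its only set parameter (quantification over the coded $\omega$-models $\HH_i$ is a single set quantifier, while the $\in_\beta$ clauses and $\HH_{n+1} \models \ACA_0^+$ are arithmetic in the codes), and that $\M_0 \in_\beta \M_n$ by transitivity of $\in_\beta$ along the chain. Hence the statement reflects down to $\M_0$, giving $\M_0 \models \beta^1_0\RFN(n+1;\ACA_0^+)$ for every $X \in \M_0$, as required. The only genuinely new feature compared with the preceding theorem is that the topmost model now satisfies $\ACA_0^+$ rather than merely $\ACAo$ — precisely what the stronger hypothesis $\TLPP$ buys through $\beta^1_0\RFN(1;\ACA_0^+)$ — so that the conclusion is $\beta^1_0\RFN(n+1;\ACA_0^+)$; every other step transfers unchanged. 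I expect the main obstacle to be the same as in the preceding proof, namely verifying $\M_{n-1} \in_\beta \NN_0$ (that $\NN_0$ is a genuine $\beta$-extension of $\M_{n-1}$, not just an $\omega$-extension); once the sandwiching over $\M_{n-1} \subseteq \NN_0 \subseteq \M_n$ is carried out, the $\Sigma^1_1$-reflection down to $\M_0$ is routine.
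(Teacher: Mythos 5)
Your proposal is correct and follows exactly the route the paper intends: the paper gives no separate proof for this theorem but states that it follows ``by the same way as the above proof'' once $\ALPP \to \beta^1_0\RFN(1)$ is replaced by $\TLPP \to \beta^1_0\RFN(1;\ACA_0^+)$, which is precisely your substitution. Your treatment of the key step (upgrading $\M_{n-1} \in \NN_0$ to $\M_{n-1} \in_\beta \NN_0$ by sandwiching between $\M_{n-1}$ and $\M_n$) and the final $\Sigma^1_1$ reflection down to $\M_0$ match the argument of the preceding theorem verbatim.
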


\begin{theorem}
  Over $\ACAo$, $\beta^1_0\RFN(n+1)$ proves the $\omega$-model reflection of
  $\beta^1_0\RFN(n;\forall k. \Delta^0_k\LPP)$.
  Here, $\forall k. \Delta^0_k\LPP$ is the assertion that
  \begin{align*}
    \forall k \forall T : \text{ill-founded tree } \exists g \in [T]
    \forall h \leq_{\T} g^{(k)} (h \in [T] \to g \leq_l h).
  \end{align*}
\end{theorem}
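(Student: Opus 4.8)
The plan is to imitate the proof that $\beta^1_0\RFN(n;\ALPP)$ yields the $\omega$-model reflection of $\beta^1_0\RFN(n+1)$: reason in $\ACAo+\beta^1_0\RFN(n+1)$, fix a set $X$, exhibit a coded $\omega$-model $\M_0\ni X$ with $\M_0\models\beta^1_0\RFN(n;\forall k.\Delta^0_k\LPP)$, and obtain the internal chains by a single $\Sigma^1_1$ reflection. I would use the hyperjump form of $\beta^1_0\RFN(n+1)$, namely $\forall X\exists\M(X\in\M\land\M\models\ACAo+\exists W(W=\HJ^{n+1}(X)))$: inside such a top model $\M_{n+1}$ the witness $W=\HJ^{n+1}(X)$ generates a nested chain of coded $\beta$-models $X\in\M_0\in_\beta\cdots\in_\beta\M_{n}$ ($n+1$ models, each a genuine $\beta$-model of $\M_{n+1}$, hence $\M_i\in_\beta\M_{n+1}$). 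The model $\M_0$ will be the reflecting model.

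For each $Y\in\M_0$, the candidate witnessing chain is $Y\in\M_0\in_\beta\cdots\in_\beta\M_n$, i.e.\ I take $\NN_i=\M_i$ for $i\le n$. Since ``$\exists\NN_0,\dots,\NN_n(\dots)$'' is $\Sigma^1_1$ with the parameter $Y\in\M_0$, and $\M_0\in_\beta\M_{n+1}$, it is enough to check this chain exists inside $\M_{n+1}$ and then reflect the $\Sigma^1_1$ statement down to $\M_0$; letting $Y$ range over $\M_0$ gives $\M_0\models\beta^1_0\RFN(n;\forall k.\Delta^0_k\LPP)$, and $X\in\M_0$ finishes the reflection. Thus everything reduces to verifying, inside $\M_{n+1}$, that the top model $\M_n$ satisfies $\ACAo+\forall k.\Delta^0_k\LPP$. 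The first conjunct is immediate: $\M_n\in_\beta\M_{n+1}$ means $\M_{n+1}\models[\M_n\text{ is a coded }\beta\text{-model}]$, whence $\M_{n+1}\models[\M_n\models\ATRo]$ by Lemma \ref{beta-model satisfies atro}.

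The substantive conjunct is $\M_{n+1}\models[\M_n\models\forall k.\Delta^0_k\LPP]$, for which I would invoke the Remark that $\ACA_0'$ proves every coded $\beta$-model satisfies $\forall k.\Delta^0_k\LPP$, applied inside $\M_{n+1}$ to the $\beta$-model $\M_n$. This is exactly where the difficulty lies: that Remark forms the $k$-th jumps of the (true, $\M_{n+1}$-internal) leftmost paths witnessing $\forall k.\Delta^0_k\LPP$, and for nonstandard $k$ this needs $\M_{n+1}\models\ACA_0'$, whereas $\beta^1_0\RFN(n+1)$ bare gives only $\ACAo$. Note that the leftmost path of a tree $T\in\M_n$ is computable from $\HJ(T)\in\M_{n+1}$ but its $k$-th jump is not, and the code of $\M_n$ lies outside $\M_n$, so $\M_n\models\ATRo$ does not supply the missing jumps. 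To close the gap I would take the top model to satisfy $\ACA_0'$ from the outset: replace the model $\M$ produced by the hyperjump form with its arithmetical closure $\{Z:Z\leq_{\T}^{\a}W\}$, which models $\ACA_0'$ and, by the absoluteness of the hyperjump (Lemma \ref{absoluteness of hyperjump}), still satisfies $W=\HJ^{n+1}(X)$; this is the model I would use as $\M_{n+1}$.

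With $\M_{n+1}\models\ACA_0'$ secured, the Remark applies to $\M_n$ \emph{without dropping a level}, so $\M_{n+1}\models[\M_n\models\ACAo+\forall k.\Delta^0_k\LPP]$; the chain $\M_0\in_\beta\cdots\in_\beta\M_n$ then has the required $n+1$ models with the correct top, and the reflection of the second paragraph goes through. I expect the genuinely delicate step to be precisely the claim that one may take $\M_{n+1}\models\ACA_0'$ while keeping $\M_n$ a $\beta$-submodel — equivalently, that passing to the arithmetical closure of $\HJ^{n+1}(X)$ preserves the hyperjump — since enlarging a model can in principle reveal new $\Sigma^1_1$ witnesses and so disturb both ``$W=\HJ^{n+1}(X)$'' and the $\in_\beta$ relations. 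Controlling this is the only point where the argument goes beyond the template of the earlier theorems, and it is where I would concentrate the technical work, leaning on the fact that $\HJ(V)$ already computes leftmost witnesses for all true $\Sigma^1_1(V)$ facts, so that its own arithmetical closure reveals nothing new.
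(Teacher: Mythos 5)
Your skeleton is the same as the paper's: take the chain $A\in\M_0\in_\beta\cdots\in_\beta\M_{n+1}$, verify inside $\M_{n+1}$ that $\M_n\models\ACAo+\forall k.\Delta^0_k\LPP$ via the Remark that $\ACA_0'$ proves every coded $\beta$-model satisfies $\forall k.\Delta^0_k\LPP$, and then push the resulting $\Sigma^1_1$ statement down to $\M_0$ using $\M_0\in_\beta\M_{n+1}$. The one place you diverge is in treating ``$\M_{n+1}\models\ACAo$ but I need $\M_{n+1}\models\ACA_0'$'' as a genuine obstacle requiring a detour through the hyperjump form of $\beta^1_0\RFN(n+1)$ and the arithmetical closure of $\HJ^{n+1}(X)$. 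This obstacle is illusory: every \emph{coded $\omega$-model} of $\ACAo$ automatically satisfies $\ACA_0'$ (indeed is closed under arithmetical reduction), since its first-order part is that of the ground model and arithmetical induction in the ground $\ACAo$ shows it is closed under finite iterates of the Turing jump --- this is exactly the unnamed lemma in Section 4 on closure under arithmetical reduction, and the paper's proof simply invokes ``$\M_{n+1}\models\ACA_0'$'' on this basis. So the detour buys you nothing; worse, it is where your write-up is least secure: you leave the preservation of $W=\HJ^{n+1}(X)$ under the closure operation as ``where I would concentrate the technical work,'' and your stated worry about \emph{enlarging} the model revealing new $\Sigma^1_1$ witnesses is misdirected, since the arithmetical closure is a \emph{submodel} of $\M$ and downward preservation of the hyperjump is exactly Lemma~\ref{absoluteness of hyperjump}. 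If you delete the detour and apply the Remark directly to $\M_n$ inside $\M_{n+1}$ (which recognizes $\M_n$ as a coded $\beta$-model because $\M_n\in_\beta\M_{n+1}$ and satisfaction in a coded $\omega$-model is absolute), your argument collapses to the paper's.
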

\begin{proof}
  We reason in $\ACAo + \beta^1_0\RFN(n+1)$.
  Take an arbitrary set $A$.
  By $\beta^1_0\RFN(n+1)$, take coded $\omega$-models $\M_0,\ldots,\M_{n+1}$ such that
  \begin{align*}
    A \in \M_0 \in_\beta \cdots \in_\beta \M_n \in_\beta \M_{n+1} \land \M_{n+1} \models \ACAo.
  \end{align*}
  We show that $\M_0 \models \beta^1_0\RFN(n;\forall k.\Delta^0_k\LPP)$.
  Since $\M_{n+1} \models \ACA_0'$, $\M_{n+1} \models [\M_n \models \forall k.\Delta^0_k\LPP]$.
  Therefore, for any $X \in \M_0$,
  \begin{align*}
    \M_{n+1} \models [X \in \M_0 \in_\beta \cdots \in_\beta \M_n \land \M_n \models \forall k.\Delta^0_k\LPP].
  \end{align*}
  Hence, for any $X \in \M_0$,
  \begin{align*}
    \M_{n+1} \models \exists \HH_0 \cdots \HH_n [X \in \HH_0 \in_\beta \cdots \in_\beta \HH_n \land \HH_n \models \forall k.\Delta^0_k\LPP].
  \end{align*}
  Since $\M_0$ is a $\beta$-submodel of $\M_{n+1}$, for any $X \in \M_0$,
  \begin{align*}
    \M_{0} \models \exists \HH_0 \cdots \HH_n [X \in \HH_0 \in_\beta \cdots \in_\beta \HH_n \land \HH_n \models \forall k.\Delta^0_k\LPP].
  \end{align*}
  Therefore, $\M_0 \models \beta^1_0\RFN(n,\forall k. \Delta^0_k\LPP)$.
\end{proof}

Recall that any coded $\beta$-model satisfies $\ALPP$ over $\ACA_0^+$.
Thus, by the same proof as above, we have
\begin{theorem}
  Over $\ACAo$, $\beta^1_0\RFN(n+1;\ACA_0^+)$ proves the $\omega$-model reflection of $\beta^1_0\RFN(n;\ALPP)$.
\end{theorem}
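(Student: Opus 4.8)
The plan is to run the argument of the preceding theorem (the one deriving $\beta^1_0\RFN(n;\forall k.\Delta^0_k\LPP)$ from $\beta^1_0\RFN(n+1)$) essentially verbatim, replacing the pair $(\ACA_0',\ \forall k.\Delta^0_k\LPP)$ by the pair $(\ACA_0^+,\ \ALPP)$ and invoking the recalled fact that $\ACA_0^+$ proves every coded $\beta$-model satisfies $\ALPP$. Concretely, I would reason in $\ACAo + \beta^1_0\RFN(n+1;\ACA_0^+)$, fix an arbitrary set $A$, and apply the hypothesis to obtain coded $\omega$-models
\[
  A \in \M_0 \in_\beta \cdots \in_\beta \M_n \in_\beta \M_{n+1} \ \land\ \M_{n+1} \models \ACAo + \ACA_0^+ .
\]
The goal is to show that the bottom model $\M_0$ already satisfies $\beta^1_0\RFN(n;\ALPP)$, since $\M_0$ is then the $\omega$-model witnessing the required reflection for $A$.

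The key step is to move the $\ALPP$-clause into $\M_{n+1}$. Because $\M_n \in_\beta \M_{n+1}$, from the internal viewpoint of $\M_{n+1}$ the model $\M_n$ is a genuine coded $\beta$-model: the $\beta$-submodel condition is precisely the statement that $\sigma^1_1$-truth computed inside $\M_n$ agrees with the ``real'' $\sigma^1_1$-truth of $\M_{n+1}$. Hence, applying inside $\M_{n+1}$ the recalled fact that over $\ACA_0^+$ every coded $\beta$-model satisfies $\ALPP$ (together with Lemma \ref{beta-model satisfies atro} to secure $\M_n \models \ACAo$), I obtain
\[
  \M_{n+1} \models [\M_n \models \ACAo + \ALPP].
\]
This is the exact analogue of the line ``$\M_{n+1} \models [\M_n \models \forall k.\Delta^0_k\LPP]$'' in the previous proof, and it is the only place where strengthening the top theory from $\ACA_0'$ to $\ACA_0^+$ is used.

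Finally I would push the whole chain down to $\M_0$ by $\Sigma^1_1$-absoluteness. For every $X \in \M_0$, the facts $X \in \M_0 \in_\beta \cdots \in_\beta \M_n$ are arithmetic in the codes (all of which lie in $\M_{n+1}$) and hence recognized by the $\omega$-model $\M_{n+1}$; combined with the previous display this gives
\[
  \M_{n+1} \models \exists \HH_0,\ldots,\HH_n\,[X \in \HH_0 \in_\beta \cdots \in_\beta \HH_n \ \land\ \HH_n \models \ACAo + \ALPP].
\]
Since $\ALPP$ and $\ACAo$ relativized to a coded $\omega$-model become arithmetic in its code, the matrix here is arithmetic, so the displayed statement is $\Sigma^1_1$ in the parameter $X \in \M_0$. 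As $\in_\beta$ is transitive we have $\M_0 \in_\beta \M_{n+1}$, so this $\Sigma^1_1$ truth descends to $\M_0$; and $X$ being arbitrary, $\M_0 \models \beta^1_0\RFN(n;\ALPP)$. I expect the only real obstacle to be bookkeeping: verifying that ``$\M_n$ is internally a coded $\beta$-model of $\M_{n+1}$'' is correctly extracted from $\in_\beta$, and that the inner statement of $\beta^1_0\RFN(n;\ALPP)$ is genuinely $\Sigma^1_1$ so that the $\beta$-submodel relation preserves it. No idea beyond the preceding theorem is needed.
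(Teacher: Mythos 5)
Your proposal is correct and follows exactly the route the paper takes: the paper's proof is literally ``by the same proof as above,'' i.e.\ the argument for $\beta^1_0\RFN(n+1)\Rightarrow$ reflection of $\beta^1_0\RFN(n;\forall k.\Delta^0_k\LPP)$, with the single substitution that $\M_{n+1}\models\ACA_0^+$ is used to conclude $\M_{n+1}\models[\M_n\models\ALPP]$ from the fact that coded $\beta$-models satisfy $\ALPP$ over $\ACA_0^+$. You correctly identified both the key substitution and the $\Sigma^1_1$-absoluteness step that pushes the statement down to $\M_0$.
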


Summarizing previous theorems, we have a hierarchy between $\beta^1_0\RFN(n+1)$ and $\beta^1_0\RFN(n+2)$ as follows.
\begin{corollary}\label{cor:summary}
  Over $\ACAo$, the following holds.
  \begin{align*}
     \beta^1_0\RFN(n+1) &< \beta^1_0\RFN(n;\ALPP) < \beta^1_0\RFN(n+1;\ACA_0^+) \\
     &< \beta^1_0\RFN(n;\TLPP) < \beta^1_0\RFN(n+1;\ATRo)  \\
     &< \beta^1_0\RFN(n+1;\forall k. \Delta^0_k\LPP) < \beta^1_0\RFN(n+2)
  \end{align*}
\end{corollary}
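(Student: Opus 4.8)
The plan is to read Corollary~\ref{cor:summary} as a chain of six strict inequalities and to reduce every one of them to a single mechanism. Recall that $T < T'$ abbreviates ``$T'$ proves $T + \Con(T)$ over $\ACAo$'', and that each theory in the chain is $\ACAo$ augmented by a single $\Pi^1_2$ sentence of the form $\beta^1_0\RFN(m)$ or $\beta^1_0\RFN(m;\sigma)$. Hence, by Lemma~\ref{reflection and con}, for each consecutive pair it suffices to show that the stronger theory proves the $\omega$-model reflection of the weaker one: the reflection of a $\Pi^1_2$ sentence $T$ yields $T + \Con(T)$, which is exactly $T < T'$. Thus the corollary is obtained by collecting the reflection statements proved earlier in the section and invoking Lemma~\ref{reflection and con} at each link.

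First I would dispatch the four links that are quoted verbatim. The bottom link $\beta^1_0\RFN(n+1) < \beta^1_0\RFN(n;\ALPP)$ is the theorem that $\beta^1_0\RFN(n;\ALPP)$ proves the $\omega$-model reflection of $\beta^1_0\RFN(n+1)$; the link $\beta^1_0\RFN(n;\ALPP) < \beta^1_0\RFN(n+1;\ACA_0^+)$ is the theorem that $\beta^1_0\RFN(n+1;\ACA_0^+)$ reflects $\beta^1_0\RFN(n;\ALPP)$; the link $\beta^1_0\RFN(n+1;\ACA_0^+) < \beta^1_0\RFN(n;\TLPP)$ is the theorem that $\beta^1_0\RFN(n;\TLPP)$ reflects $\beta^1_0\RFN(n+1;\ACA_0^+)$; and the top link $\beta^1_0\RFN(n+1;\forall k.\Delta^0_k\LPP) < \beta^1_0\RFN(n+2)$ is the theorem that $\beta^1_0\RFN(n+1)$ reflects $\beta^1_0\RFN(n;\forall k.\Delta^0_k\LPP)$, read with $n$ replaced by $n+1$. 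Each is closed by Lemma~\ref{reflection and con}.

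Next I would treat $\beta^1_0\RFN(n;\TLPP) < \beta^1_0\RFN(n+1;\ATRo)$, which is not stated explicitly but follows by the same promotion argument that gives the $\ACA_0^+$-to-$\ALPP$ link. Working in $\ACAo + \beta^1_0\RFN(n+1;\ATRo)$ and fixing $A$, take a tower $A \in \M_0 \in_\beta \cdots \in_\beta \M_{n+1}$ with $\M_{n+1} \models \ATRo$. Since $\M_n \in_\beta \M_{n+1}$, the model $\M_n$ is a coded $\beta$-model inside $\M_{n+1}$, so Lemma~\ref{ATRo proves beta_sat(TLPP)} gives $\M_{n+1} \models [\M_n \models \TLPP]$ and hence $\M_n \models \TLPP$. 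For each $X \in \M_0$ the truncated tower $X \in \M_0 \in_\beta \cdots \in_\beta \M_n$ then witnesses in $\M_{n+1}$ the $\Sigma^1_1$ statement ``there is a chain of $n+1$ coded $\beta$-submodels containing $X$ whose top models $\ACAo + \TLPP$''; reflecting this along $\M_0 \in_\beta \M_{n+1}$ gives $\M_0 \models \beta^1_0\RFN(n;\TLPP)$, which, together with $A \in \M_0$, is the required reflection.

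The hard part will be the remaining, constant-index link $\beta^1_0\RFN(n+1;\ATRo) < \beta^1_0\RFN(n+1;\forall k.\Delta^0_k\LPP)$, the only one that leaves the number of $\beta$-layers fixed, so that neither pattern above applies directly. Monotonicity in the top theory is cheap: since $\Delta^0_0\LPP$ already implies $\ATRo$ over $\ACAo$, every $\forall k.\Delta^0_k\LPP$-topped tower is $\ATRo$-topped, whence $\beta^1_0\RFN(n+1;\forall k.\Delta^0_k\LPP)$ proves $\beta^1_0\RFN(n+1;\ATRo)$ outright; the genuine content is the $\omega$-model reflection. Following the promotion template, I fix $A$, take $A \in \M_0 \in_\beta \cdots \in_\beta \M_{n+1}$ with $\M_{n+1} \models \forall k.\Delta^0_k\LPP$, and try to produce, for each $X \in \M_0$ and inside $\M_{n+1}$, a length-$(n+2)$ tower topped by a model of $\ATRo$. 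The $\beta$-submodels $\M_0, \ldots, \M_n$ — each a model of $\ATRo$ by Lemma~\ref{beta-model satisfies atro} — give only $n+1$ of the needed $n+2$ models, and $\M_{n+1}$, though it models $\ATRo$, is not coded in itself. The easy way to supply the missing top would be a coded $\beta$-model $\NN$ with $\M_n \in_\beta \NN$ (for which $\NN \models \ATRo$ and the $\beta$-submodel relation would be automatic), but $\forall k.\Delta^0_k\LPP$ sits well below $\beta^1_0\RFN(1)$ and cannot produce $\beta$-models at all. So the missing layer must be a non-$\beta$ coded $\omega$-model $\NN \models \ATRo$ that still has $\M_n$ as a coded $\beta$-submodel, and constructing such an $\NN$ inside $\M_{n+1}$ is the crux: I would extract it from a $\forall k.\Delta^0_k\LPP$-leftmost path of a tree over $\M_n$ whose branches code arithmetical transfinite recursion hierarchies, reading off $\NN \models \ATRo$ from the relativized implication $\Delta^0_0\LPP \Rightarrow \ATRo$ and the conservativity $\M_n \in_\beta \NN$ from the minimality intrinsic to leftmost paths. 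Once $\NN$ is available the reflection closes as in the previous link, and Lemma~\ref{reflection and con} converts it into the strict inequality.
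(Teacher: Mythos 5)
Your overall strategy is exactly the paper's: read each link $T<T'$ as ``$T'$ proves the $\omega$-model reflection of $T$'' and close with Lemma~\ref{reflection and con}; links $1$, $2$, $3$, $6$ are indeed verbatim the four preceding theorems, and your promotion argument for $\beta^1_0\RFN(n;\TLPP)<\beta^1_0\RFN(n+1;\ATRo)$ is the correct generalization of the paper's $n=0$ case (Lemma~\ref{ATRo proves beta_sat(TLPP)} applied inside $\M_{n+1}$, then $\Sigma^1_1$-absoluteness down to $\M_0$). You have in fact spelled out more than the paper, which only says ``summarizing previous theorems'' and never states links $4$ and $5$ explicitly.

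The gap is in the link you yourself flag as the crux, $\beta^1_0\RFN(n+1;\ATRo)<\beta^1_0\RFN(n+1;\forall k.\Delta^0_k\LPP)$, and your proposed repair does not close it. You correctly reduce the problem to producing, inside $\M_{n+1}\models\forall k.\Delta^0_k\LPP$, a coded $\omega$-model $\NN$ with $\M_n\in\NN$ and $\NN\models\ATRo$ --- note that once such an $\NN$ exists, $\M_n\in_\beta\NN$ is \emph{automatic} (any $\Sigma^1_1$ fact true in $\NN$ with parameters from $\M_n$ propagates up to $\M_{n+1}$ and then down to $\M_n$ by $\M_n\in_\beta\M_{n+1}$), so your appeal to ``minimality intrinsic to leftmost paths'' for that step is misattributed, though harmless. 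The real missing ingredient is the claim that $\ACAo+\forall k.\Delta^0_k\LPP$ proves the $\omega$-model reflection of $\ATRo$, and your sketch (``a $\forall k.\Delta^0_k\LPP$-leftmost path of a tree whose branches code arithmetical transfinite recursion hierarchies'') does not deliver it: a $\Delta^0_k$-leftmost path $g$ naturally yields only the model $\{Y: Y\leq_{\T} g^{(j)},\ j<k\}$, which is not even jump-closed, and by the pattern of Lemma~\ref{characterization of TLPP by reflection} obtaining a coded model closed under $\alpha$-jump for the well-orders $\alpha$ it contains --- which is what $\ATRo$ demands --- required leftmostness at level $\alpha\cdot\omega$, i.e.\ transfinite levels, not the finite levels that $\forall k.\Delta^0_k\LPP$ provides. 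What is actually needed is an external input, e.g.\ Towsner's result that $\Sigma^0_{\alpha+5}\LPP$ proves the existence of coded $\omega$-models of $\Sigma^0_{\alpha+1}\LPP$ (hence of $\ATRo$, since $\Delta^0_0\LPP\to\ATRo$), or the characterization $\Delta^0_k\LPP\leftrightarrow\RFN(\Pi^1_{k+1}\hyp\TI)$ from \cite{suzuki2024relative} together with the fact that these $\TI$-theories prove $\ATRo$. Without citing one of these, this inequality remains unproved in your write-up (as, to be fair, it does in the paper's own one-line justification).
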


\begin{remark}
  In a forthcoming paper \cite{suzuki2024relative}, the first author introduced a characterization of $\ALPP,\TLPP,\Delta^0_k\LPP$ and $\beta^1_0\RFN(n)$ in terms of the $\omega$-model reflection of transfinite inductions as follows.
  Over $\RCAo$, we have
  \begin{itemize}
    \item $\ALPP \leftrightarrow \RFN(\Pi^1_{\infty}\hyp\TI)$\footnote{This equivalence is independently proved by Freund \cite{Freund-Fraisse}.},
    \item $\TLPP \leftrightarrow \beta^1_0\RFN(1;\Pi^1_1\hyp\TI)$,
    \item $\Delta^0_k\LPP \leftrightarrow \RFN(\Pi^1_{k+1}\hyp\TI)$ for $k > 1$,
    \item $\beta^1_0\RFN(n+1) \leftrightarrow \beta^1_0\RFN(n;\Pi^1_{\infty}\hyp\TI)$.
  \end{itemize}
  Here, $\RFN(T)$ denotes the $\omega$-model reflection of a theory $T$.
  These characterization may be helpful to understand the structure of the hierarchy in Corollary \ref{cor:summary}.
\end{remark}

\begin{remark}
  We note that the gap between each pair of the hierarchy in Corollary \ref{cor:summary} is relatively large.
  The proofs for theses inequalities actually show that the stronger one implies not only the omega-model reflection of the lower one, but also the omega-model reflection of [the lower one and transfinite induction for $\Pi^1_1$ formulas ($\Pi^1_1\myhyphen\TI$)].
  Since, for any $\Pi^1_2$ sentence $\sigma$,
  the omega-model reflection of [$\sigma + \Pi^1_1\myhyphen\TI]$ proves any time iteration of the omega-model reflection of $\sigma$ as in \cite[Section 5]{suzuki_yokoyama_fp}, we conclude that any time iterations of the omega-model reflection of the lower one is weaker than the stronger one.
\end{remark}

In the remaining of this section, we give another characterization of $\ALPP$.
\begin{definition}
  Let $\M$ be a coded $\omega$-model of $\ACAo$.
  We say $\M$ is an $\A\beta$-models if for any $\Pi^0_2$ formula $\theta(X)$,
  \begin{equation*}
    \exists X \leq_{\T}^{\a} \M \theta(X) \leftrightarrow \M \models \exists X \theta(X).
  \end{equation*}
\end{definition}
\begin{lemma}
  Let $\varphi(X)$ be an arithmetical formula with exactly displayed free variable.
  Then, $\ACAo$ proves that for any $\A\beta$-model $\M$,
  \begin{equation*}
    \exists X \leq_{\T}^{\a} \M \theta(X) \leftrightarrow \M \models \exists X \theta(X).
  \end{equation*}
\end{lemma}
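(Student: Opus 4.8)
The plan is to reduce the general arithmetical case to the $\Pi^0_2$ case that is built into the definition of an $\A\beta$-model, by folding the arithmetical complexity of $\varphi$ into a witness consisting of a finite tower of Turing jumps. Throughout we reason in $\ACAo$, and we read the target equivalence as $\exists X \leq_{\T}^{\a} \M\, \varphi(X) \leftrightarrow \M \models \exists X\, \varphi(X)$.

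First I would invoke the $\ACAo$-provable normal form for arithmetical formulas (formalized Post theorem): since $\varphi(X)$ is arithmetical, there are a standard $k \in \omega$ and a $\Pi^0_1$ or $\Sigma^0_1$ formula $\varphi'$, in either case within $\Pi^0_2$, such that $\ACAo \vdash \forall X(\varphi(X) \leftrightarrow \varphi'(X^{(k)}))$; when $\varphi$ is itself $\Sigma^0_1$ or $\Pi^0_1$ one may take $k=0$. Next I would turn this into a genuinely $\Pi^0_2$ statement by making the jumps part of the data: let $\psi(W)$ assert that $W = \bigoplus_{i \le k} Y_i$ for some $Y_0,\ldots,Y_k$ with $Y_{i+1} = \TJ(Y_i)$ for each $i < k$ and $\varphi'(Y_k)$. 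Each condition $Y_{i+1} = \TJ(Y_i)$ is uniformly $\Pi^0_2$, a finite conjunction of $\Pi^0_2$ conditions is $\Pi^0_2$, and $\varphi'$ is $\Pi^0_2$; hence $\psi$ is $\Pi^0_2$, so the definition of an $\A\beta$-model applies and gives $\exists W \leq_{\T}^{\a} \M\, \psi(W) \leftrightarrow \M \models \exists W\, \psi(W)$.

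The core of the argument is then to show that $\psi$ transfers both sides of the desired equivalence from $\varphi$. On the ground-model side I would prove $\exists X \leq_{\T}^{\a} \M\, \varphi(X) \leftrightarrow \exists W \leq_{\T}^{\a} \M\, \psi(W)$: given $X \leq_{\T}^{\a} \M$ with $\varphi(X)$, the tower $W = \bigoplus_{i \le k} X^{(i)}$ satisfies $W \leq_{\T} X^{(k)}$, and since $\leq_{\T}^{\a} \M$ is closed under taking finite jumps we get $X^{(k)} \leq_{\T}^{\a} \M$, whence $W \leq_{\T}^{\a} \M$ and $\psi(W)$ holds; conversely, from $\psi(W)$ with $W \leq_{\T}^{\a} \M$ I extract $Y_0 \leq_{\T} W \leq_{\T}^{\a} \M$, note $Y_k = Y_0^{(k)}$, and conclude $\varphi'(Y_0^{(k)})$, i.e. $\varphi(Y_0)$. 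On the model side I would prove $\M \models \exists X\, \varphi(X) \leftrightarrow \M \models \exists W\, \psi(W)$ by running the identical translation inside $\M$, which is legitimate because $\M \models \ACAo$ is closed under $\TJ$ (and under arithmetical reduction, by the earlier lemma) and proves both the normal form and the tower construction. Chaining the three equivalences yields $\exists X \leq_{\T}^{\a} \M\, \varphi(X) \leftrightarrow \M \models \exists X\, \varphi(X)$.

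The main obstacle I anticipate is bookkeeping rather than conceptual: I must verify that $Y_{i+1} = \TJ(Y_i)$ is correctly $\Pi^0_2$ so that $\psi$ does not exceed $\Pi^0_2$, and that the witness translation $X \mapsto W$ and its inverse preserve $\leq_{\T}^{\a} \M$ in both directions. The closure of $\leq_{\T}^{\a} \M$ under finite jumps (so that the tower $W$ remains arithmetically reducible to $\M$) and the closure of the $\omega$-model $\M \models \ACAo$ under $\TJ$ (so that the internalized translation is available) are exactly the two facts that make the reduction go through; once they are in place the equivalence follows formally.
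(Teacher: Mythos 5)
Your proposal is correct and takes essentially the same route as the paper: both reduce to the $\Pi^0_2$ clause in the definition of an $\A\beta$-model by rewriting the arithmetical $\varphi(X)$ as a $\Pi^0_2$ condition on a witness that packages the finite jump tower up to $X^{(k)}$ (the paper uses $\exists f\leq_{\T}X^{(k)}\,\theta(X,f)$ with $\theta\in\Pi^0_2$ where you use $W=\bigoplus_{i\le k}Y_i$), together with closure of $\leq_{\T}^{\a}\M$ and of $\M\models\ACAo$ under finitely many jumps. If anything, your write-up is the more complete one, since the paper only spells out the left-to-right direction.
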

\begin{proof}
  Let $\varphi(X)$ be an arithmetical formula. Then there exists a number $k$ and a $\Pi^0_2$ formula $\theta(X,f)$ such that
  $\ACAo$ proves
  \begin{align*}
    \forall X (\varphi(X) \leftrightarrow \exists f\leq_{\T} X^{(k)} \theta(X,f)).
  \end{align*}
  We work in $\ACAo$. Assume $\M$ is an $\A\beta$-model and $\exists X \leq_{\T}^{\a} \M \varphi(X)$.
  Since $\phi$ is arithmetical, $\phi(X)$ actually holds. Thus, there exists $f \leq_{\T} X^{(k)}$ such that
  $\theta(X,f)$. Since $\M$ is a model of $\ACAo$, $\M \models \exists f \leq_{\T} X^{(k)} \theta(X,f)$.
  Hence $\M \models \varphi(X)$.
\end{proof}

\begin{definition}
  Define the $\A\beta$-model reflection as the assertion that
  for any set $X$ there is an $\A\beta$-model containing $X$.
\end{definition}

\begin{theorem}\label{beta-rfn and Abeta}
  Over $\ACAo$, $\beta^1_0\RFN(1)$ is equivalent to $\A\beta$-model reflection.
\end{theorem}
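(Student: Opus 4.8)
The plan is to prove the two implications separately, leaning on the already established equivalence of $\ALPP$ and $\beta^1_0\RFN(1)$, on the reformulation of $\beta^1_0\RFN(1)$ as ``every set $X$ lies in a coded $\omega$-model $\M \models \ACAo + \exists Y(Y = \HJ(X))$'', and on the absoluteness lemmas of Section~2. First I would record that $\A\beta$-model reflection implies $\ACA_0^+$: an $\A\beta$-model is in particular a coded $\omega$-model of $\ACAo$, so $\A\beta$-model reflection yields the $\omega$-model reflection of $\ACAo$, which is $\ACA_0^+$ by Theorem~\ref{ACAo+ adn reflection}; likewise $\beta^1_0\RFN(1)$ implies $\ACA_0^+$. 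Hence I may argue over $\ACA_0^+$, where $\leq_{\T}^{\a}$ is $\Delta^1_1$ (Lemma~\ref{arithmetical red is delta11}) and the smallest coded $\omega$-model $\{Y : Y \leq_{\T}^{\a} Z\}$ of $\ACAo$ containing a given $Z$ exists (Lemma~\ref{arithmetical reduction and omega-model}).

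For the direction $\beta^1_0\RFN(1) \to \A\beta$-model reflection I would build the witnessing model from an arithmetical leftmost path, exactly as in the proof that $\ALPP$ implies $\beta^1_0\RFN(1)$. Given a set $A$, apply $\ALPP$ to the tree $\Phi_T(A)$ of Lemma~\ref{lem-of-Phi_T}, obtain an arithmetical leftmost path $g$, and set $\M = \{Y : Y \leq_{\T}^{\a} g\}$. Then $A,\Phi_T(A) \in \M$, $\M \models \ACAo$, and $g$ is the leftmost$^{\M}$ path of $\Phi_T(A)$, so $\M$ internally possesses $\HJ(A)$. The $\A\beta$ condition is then checked for each $\Pi^0_2$ formula $\theta$: the implication $\M \models \exists X\,\theta(X) \to \exists X \leq_{\T}^{\a} \M\,\theta(X)$ is immediate, since a witness inside $\M$ is arithmetical in $g$; for the converse, a witness arithmetical in the code of $\M$ makes $\exists X\,\theta(X)$ true, so a $\Sigma^1_1$-basis argument carried out inside $\M$ using the hyperjump $\HJ(A)$ that $\M$ possesses produces a witness arithmetical in $g$, i.e.\ a witness in $\M$. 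Thus $\M$ is an $\A\beta$-model containing $A$.

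For the converse $\A\beta$-model reflection $\to \beta^1_0\RFN(1)$ I would prove the hyperjump form. Given $X$, take an $\A\beta$-model $\M \ni X$ with code $M$. Applying the $\A\beta$ equivalence to the $\Pi^0_1$ (hence $\Pi^0_2$) formulas $\theta_{e,x}(f) \equiv$ ``$f$ is a path of the tree defining $\sigma^1_1(e,x,X)$'', the internal truth value of $\sigma^1_1(e,x,X)$ in $\M$ is governed by the existence of a path arithmetical in $M$. Using the functional $\Phi_T$ and, again, a $\Sigma^1_1$-basis argument, I would show that $M$ already computes (arithmetically) the hyperjump $\HJ(X)$, so that $\HJ(X)$ belongs to the smallest $\omega$-model $\M' = \{Y : Y \leq_{\T}^{\a} M\}$, which contains $X$. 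By Lemma~\ref{Hyperjump in the ground model is also hyperjump in a coded model}, $\M' \models \exists W(W = \HJ(X))$, which is the hyperjump form of $\beta^1_0\RFN(1)$.

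The hard part in both directions is the nontrivial half of the $\A\beta$ equivalence, namely matching ``witness arithmetical in the code $M$'' with ``witness internal to $\M$'' (equivalently, with the internal hyperjump). The delicate point is that the code of a smallest model $\{Y : Y \leq_{\T}^{\a} g\}$ computes only the finite jumps of $g$ and a priori sits at the level of $g^{(\omega)}$, so one must ensure that the arithmetical leftmost path $g$ carries the entire internal hyperjump and that no $\Sigma^1_1$ witness escapes to the $g^{(\omega)}$-level; this is exactly where the leftmost-path property and the order-preservation of the $\bigoplus$-coding of Definition~\ref{oplus of trees} do the work, possibly after replacing $\Phi_T(A)$ by a tree that simultaneously searches for witnesses of all $\Pi^0_2$ existentials. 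Dually, in the converse one must see that the $\A\beta$ equivalence pins down the code's hyperjump rather than merely under-approximating it. I expect these reflection steps to be the only genuinely subtle part; the remainder is bookkeeping with the absoluteness of $\leq_{\T}^{\a}$ and the hyperjump-recognition lemmas.
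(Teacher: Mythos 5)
Your overall two-implication architecture is reasonable, but both directions contain genuine gaps, and the second contains a claim that is actually false.

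In the forward direction you propose $\M = \{Y : Y \leq_{\T}^{\a} g\}$, for $g$ an arithmetical leftmost path of $\Phi_T(A)$, as the $\A\beta$-model. The problematic half of the $\A\beta$ condition is that a witness arithmetical in the \emph{code} of $\M$ --- which sits at the level of $g^{(\omega)}$ and above --- must yield a witness inside $\M$, i.e.\ one arithmetical in $g$. Your justification, a ``$\Sigma^1_1$-basis argument carried out inside $\M$ using the hyperjump $\HJ(A)$ that $\M$ possesses,'' is circular: a basis argument bounds witnesses of existentials that $\M$ \emph{already satisfies}, whereas the whole point is to show that the externally witnessed existential is satisfied in $\M$ at all; moreover $\M$ possesses only an internal $\HJ(A)$ for the single parameter $A$, which says nothing about $\Sigma^1_1$ facts with other parameters from $\M$. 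The paper avoids this entirely by using the two-model form of $\beta^1_0\RFN(1)$: take $X \in \M_0 \in_\beta \M_1$ with $\M_1 \models \ACAo$. Since the code of $\M_0$ is an element of $\M_1$ and $\M_1 \models \ACA_0'$, any witness $\leq_{\T}^{\a} \M_0$ already lies in $\M_1$, and the $\beta$-submodel relation reflects the $\Sigma^1_1$ statement down into $\M_0$. Your candidate model plays the role of $\M_1$, not of $\M_0$; there is no ambient model above it from which to reflect.

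In the converse direction you assert that the code $M$ of an $\A\beta$-model containing $X$ arithmetically computes the genuine hyperjump $\HJ(X)$. This cannot be right: it would make $\A\beta$-model reflection prove $\forall X \exists Y(Y = \HJ(X))$, i.e.\ $\Pi^1_1\hyp\CAo$, whereas $\A\beta$-model reflection is a true $\Pi^1_2$ sentence and $\Pi^1_1\hyp\CAo$ follows from no such sentence. The $\A\beta$ property only ties $\M$'s internal $\Sigma^1_1$ truth to witnesses $\leq_{\T}^{\a} M$; a true $\Sigma^1_1$ fact all of whose witnesses lie above $M^{(\omega)}$ is invisible to both sides, so $M$ computes only the internal hyperjump $\HJ(X)^{\M}$, and Lemma~\ref{Hyperjump in the ground model is also hyperjump in a coded model} (whose hypothesis is that $Y = \HJ(X)$ holds in the ground model) does not apply. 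The repair is the paper's one-liner: put $\M' = \{Y : Y \leq_{\T}^{\a} M\}$ and check directly that $\M \in_\beta \M'$ --- if $\M' \models \exists Y\, \theta(Y)$ then the witness is $\leq_{\T}^{\a} M$, so the $\A\beta$ property gives $\M \models \exists Y\, \theta(Y)$ --- which yields $X \in \M \in_\beta \M'$ and hence $\beta^1_0\RFN(1)$ in its original form, with no mention of the real hyperjump.
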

\begin{proof}
  First, assume $\beta^1_0\RFN(1)$.
  Let $X$ be a set.
  By $\beta^1_0\RFN(1)$, take a coded $\omega$-models $\M_0,\M_1$ such that
  $X \in \M_0 \in_\beta \M_1 \land \M_1 \models \ACAo$.

  We claim that $\M_0$ is an $\A\beta$-model.
  Let $\theta(Y)$ be an arithmetical formula.
  Assume $\exists Z \leq_{\T}^{\a} \M_0 \theta(Z)$.
  Then, there exists $n \in \N$ such that $\exists Z \leq_{\T} (\M_0)^{(n)} \theta(Z)$.
  Since $\M_0 \in \M_1$ and $\M_1 \models \ACA_0'$, $\M_1 \models \exists Z \theta(Z)$.
  Since $\M_0$ is a $\beta$-submodel of $\M_1$, $\M_0 \models \exists Z \theta(Z)$.

  We next assume $\A\beta$-model reflection.
  Then, $\ACA_0^+$ holds.
  Let $X$ be a set.
  By $\A\beta$-model reflection, take an $\A\beta$-model $\M_0$ such that
  $X \in \M_0$. By $\ACA_0^+$, take $\M_1 = \{Y : Y \leq_{\T}^{\a} \M_0$\}.

  Now $X \in \M_0 \in \M_1 \land \M_1 \models \ACAo$. Thus, it is enough to show that
  $\M_0$ is a $\beta$-submodel of $\M_1$.
  Let $\theta(Y)$ be an arithmetical formula.
  Assume $\M_1 \models \exists Y \theta(Y)$.
  Then, $\exists Y \leq_{\T}^{\a} \M_0 \theta(Y)$ and hence $\M_0 \models \exists Y \theta(Y)$.
  This completes the proof.
\end{proof}

\begin{corollary}
  ($\ACAo$.)
  $\ALPP$ is equivalent to $\A\beta$-model reflection.
\end{corollary}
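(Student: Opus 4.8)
The plan is to derive this corollary purely by composing two equivalences that have already been established over $\ACAo$, so essentially no new argument is required. First I would recall the earlier theorem asserting that $\ALPP$ is equivalent to $\beta^1_0\RFN(1)$ over $\ACAo$; this is the substantive half, since both directions there involve genuine model-theoretic constructions (passing from an arithmetical leftmost path of $\Phi_T(X)$ to a small coded $\omega$-model, and conversely extracting an arithmetical leftmost path from a model computing $\HJ(T)$). Second, I would invoke Theorem \ref{beta-rfn and Abeta}, which states that $\beta^1_0\RFN(1)$ is equivalent to $\A\beta$-model reflection over $\ACAo$.

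Since provable equivalence over a fixed base theory is transitive, chaining these two biconditionals immediately yields that $\ALPP$ is equivalent to $\A\beta$-model reflection over $\ACAo$. Concretely, from $\ALPP$ we pass to $\beta^1_0\RFN(1)$ and then to $\A\beta$-model reflection, and we reverse the two steps for the converse. There is no obstacle to overcome here: the corollary is a formal consequence of the preceding two theorems, and the only thing worth flagging is that the intermediate role of $\beta^1_0\RFN(1)$ is what makes the statement transparent, as it is the common pivot through which both the leftmost-path formulation and the $\A\beta$-reflection formulation were characterized.

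\begin{proof}
  This is immediate from the earlier theorem stating that, over $\ACAo$, $\ALPP$ is equivalent to $\beta^1_0\RFN(1)$, together with Theorem \ref{beta-rfn and Abeta}, which states that $\beta^1_0\RFN(1)$ is equivalent to $\A\beta$-model reflection over $\ACAo$. Composing these two equivalences yields the claim.
\end{proof}
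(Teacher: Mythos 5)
Your proposal is correct and matches the paper's own proof exactly: both simply chain the equivalence $\ALPP \leftrightarrow \beta^1_0\RFN(1)$ with Theorem \ref{beta-rfn and Abeta} over $\ACAo$. Nothing further is needed.
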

\begin{proof}
  Since $\ALPP$ is equivalent to $\beta^1_0\RFN(1)$ and $\beta^1_0\RFN(1)$ is equivalent to $\A\beta$-model reflection,
  $\ALPP$ is equivalent to $\A\beta$-model reflection.
\end{proof}

For each $n > 0$, define $\A\beta\RFN(n)$ as the assertion that
  \begin{align*}
    \forall X \exists \M_0 \cdots \M_{n-1}(X \in_{\beta} \cdots \in_{\beta} \M_{n-1} \land \M_{n-1} \text{ is an $\A\beta$-model}).
  \end{align*}
Then, by the same argument as in Theorem \ref{beta-rfn and Abeta}, we have
\begin{theorem}
  Over $\ACAo$, $\beta^1_0\RFN(n)$ is equivalent to $\A\beta\RFN(n-1)$ for any $n >0$.
\end{theorem}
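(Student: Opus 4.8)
The plan is to reuse the proof of Theorem~\ref{beta-rfn and Abeta} almost verbatim, but applied at the \emph{top} of a $\beta$-submodel tower rather than to a single pair: a coded $\A\beta$-model sitting at the top of a chain plays exactly the role of a pair $\M \in_\beta \M'$ with $\M' \models \ACAo$, while the lower $\beta$-submodel relations are carried along untouched. First I would record two facts used throughout. A coded $\beta$-submodel of a coded $\omega$-model of $\ACAo$ is itself a model of $\ACAo$: for $X$ in the submodel, each of the sentences ``$\exists Z(Z=X')$'', ``$\exists Z(Z = X \oplus Y)$'' and the like is $\Sigma^1_1$ with an arithmetical matrix, hence reflects downward along $\in_\beta$, so every $\M_i$ in a tower witnessing $\beta^1_0\RFN(n)$ is a model of $\ACAo$. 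Also, either side of the equivalence supplies $\ACA_0^+$ (a bottom model is an $\omega$-model of $\ACAo$ containing the given set), so by Lemma~\ref{arithmetical reduction and omega-model} the arithmetical-reduction closure $\{Y : Y \leq_\T^\a Z\}$ of any $Z$ exists and is the least coded $\omega$-model of $\ACAo$ containing $Z$.

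For the implication from $\beta^1_0\RFN(n)$ to the $\A\beta$-reflection side, fix $X$ and take a witnessing tower $X \in \M_0 \in_\beta \cdots \in_\beta \M_n$ with $\M_n \models \ACAo$. Applying the forward half of Theorem~\ref{beta-rfn and Abeta} to the top pair $\M_{n-1} \in_\beta \M_n$ shows that $\M_{n-1}$ is an $\A\beta$-model: given an arithmetical $\theta$ and a witness $Z \leq_\T^\a \M_{n-1}$, some standard-finite jump $(\M_{n-1})^{(m)}$ computes $Z$ and lies in $\M_n$, so $\M_n \models \exists Z\,\theta(Z)$; this is $\Sigma^1_1$, and $\M_{n-1} \in_\beta \M_n$ forces $\M_{n-1} \models \exists Z\,\theta(Z)$. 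The truncated tower, whose $\in_\beta$-relations are unchanged and whose top model is now an $\A\beta$-model, witnesses the $\A\beta$-reflection side. For the converse, fix $X$ and take a witnessing tower ending in an $\A\beta$-model $\M$ on top; set $\M^+ = \{Y : Y \leq_\T^\a \M\}$, a coded $\omega$-model of $\ACAo$ by the remark above. The converse half of Theorem~\ref{beta-rfn and Abeta} then gives $\M \in_\beta \M^+$, since any $\Sigma^1_1$ sentence with parameters in $\M$ that holds in $\M^+$ has a witness arithmetically reducible to $\M$ and hence already holds in the $\A\beta$-model $\M$. Splicing $\M^+$ on top of the unchanged lower chain produces a tower witnessing $\beta^1_0\RFN(n)$.

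I expect the only genuinely delicate point to be the index bookkeeping rather than any new mathematical content: one must check that a single $\A\beta$-model accounts for exactly one extra $\in_\beta$-step capped by an $\ACAo$-model, so that the collapse (forward) and the expansion (converse) are mutually inverse on tower lengths, and that deleting or adjoining the single top model leaves the $\beta$-submodel relations among the remaining models intact. Everything else is a direct transcription of Theorem~\ref{beta-rfn and Abeta} carried out inside $\ACAo$ with the $\ACA_0^+$ furnished by the hypotheses, and the two preliminary facts above remove the need for any further closure assumptions on the intermediate models.
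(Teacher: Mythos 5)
Your proposal is correct and is exactly the argument the paper intends (the paper's own ``proof'' is the phrase ``by the same argument as in Theorem~\ref{beta-rfn and Abeta}''): you truncate the top $\ACAo$-model of a $\beta^1_0\RFN(n)$-tower to exhibit the new top as an $\A\beta$-model, and conversely adjoin $\M^{+}=\{Y : Y \leq_{\T}^{\a} \M\}$ to recover the missing $\in_\beta$-step, with the two auxiliary facts (downward persistence of $\ACAo$ along $\in_\beta$, and availability of $\ACA_0^+$ on both sides) filled in correctly. Your index bookkeeping is in fact more careful than the statement you were given: with the paper's definition of $\A\beta\RFN(m)$ as a chain of $m$ models with $m-1$ many $\in_\beta$-steps topped by an $\A\beta$-model, what you prove is $\beta^1_0\RFN(n) \leftrightarrow \A\beta\RFN(n)$, which is what must be meant --- the printed $\A\beta\RFN(n-1)$ is an off-by-one slip, since taken literally it would (by your own two-way construction) identify $\beta^1_0\RFN(n)$ with $\beta^1_0\RFN(n-1)$, contradicting Lemma~\ref{beta(n+1) is strictly stronger than beta(n)}.
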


\section{The arithmetical relativization}
As we have noted,
the $\beta^1_0\RFN$-hierarchy reveals that
how many times the hyperjump operator is used to prove a $\Pi^1_2$ sentence.
In addition, we can classify $\Pi^1_3$ sentences provable from $\Pi^1_1\myhyphen\CAo$ by comparing
their $\Pi^1_2$ approximations and the $\beta^1_0\RFN$-hierarchy.
In this section, we summarize the idea of $\Pi^1_2$ approximations of $\Pi^1_3$ sentences.

\begin{definition}
  Let $\sigma \equiv \forall X (\theta(X) \to \exists Y \forall Z \eta(X,Y,Z)$ be a $\Pi^1_3$ sentence.
  Define $\rel(\sigma)$ by $\forall X(\theta(X) \to \exists Y \forall Z \leq_{\T}^{\a} (X \oplus Y) \eta(X,Y,Z)$.
  We call $\rel(\sigma)$ the arithmetical relativization of $\sigma$.
\end{definition}
For example, $\ALPP$ and $\A\beta\RFN(n)$ are instances of $\rel(\sigma)$.
Indeed, if we take $\theta(X)$ to be `$X$ is an ill-founded tree'
and $\eta(X,Y,Z)$ to be $(Y \in [X]) \land (Z \in [X] \to Y \leq_l X)$, then
$\ALPP$ is equivalent to $\rel(\forall X(\theta(X) \to \exists Y \forall Z \eta(X,Y,Z)))$.

\begin{remark}
  In the case of $\ALPP$, the following $\rel'(\sigma)$ is equivalent to $\rel(\sigma)$.
  \begin{equation*}
    \rel'(\sigma) \equiv \forall X (\theta(X) \to  \exists Y \forall Z \leq_{\T}^{\a} Y \eta(X,Y,Z)).
  \end{equation*}
  In general, $\rel(\sigma)$ and $\rel'(\sigma)$ are not equivalent. The equivalence of $\rel(\sigma)$ and $\rel'(\sigma)$ is related to the notion of \textit{cylinder} in Weihrauch reduction.
\end{remark}

We note that if $\sigma$ is a $\Pi^1_3$ sentence provable from $\Pi^1_1\myhyphen\CAo$,
then $\rel(\sigma)$ is a $\Pi^1_2$ sentence provable from $\Pi^1_1\myhyphen\CAo$.
Thus, $\rel(\sigma)$ is provable from $\ACAo + \beta^1_0\RFN(n)$ for some $n$.
Let us write $n_{\sigma}$ for the smallest $n$ such that $\ACAo + \beta^1_0\RFN(n) \vdash \rel(\sigma)$ for such a $\sigma$.
Then, we can classify $\Pi^1_3$ sentences provable from $\Pi^1_1\myhyphen\CAo$ according to $n_{\sigma}$.

We then give a lemma which is useful to evaluate $n_{\sigma}$.
\begin{definition}
  Let $\varphi \equiv \forall X \exists Y \theta$ and $\psi \equiv \forall X \exists Y \eta$.
  For a theory $T$, we say $T$ proves \textit{the lightface implication of $\varphi$ to $\psi$} if
  \begin{equation*}
    T \vdash \forall X(\exists Y \theta \to \exists Z \eta).
  \end{equation*}
\end{definition}

\begin{lemma}
  Let $\varphi \equiv \forall X \exists Y \forall Z \theta(X,Y,Z)$ and $\psi \equiv \forall X \exists Y \forall Z \eta(X,Y,Z)$ be $\Pi^1_3$ sentences such that
  $\ACAo$ proves the lightface implication of $\varphi$ to $\psi$.
  Then, $\ACA_0^+$ proves $\rel(\varphi) \to \rel(\psi)$.
\end{lemma}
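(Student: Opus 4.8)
The plan is to pass through the smallest $\omega$-model of $\ACAo$ generated by a witness for $\rel(\varphi)$ and to run the lightface implication inside it. First I would work in $\ACA_0^+$ and assume $\rel(\varphi)$. Fix a set $X$ and, using $\rel(\varphi)$, choose $Y$ with $\forall Z \leq_{\T}^{\a}(X\oplus Y)\,\theta(X,Y,Z)$. By Lemma \ref{arithmetical reduction and omega-model} the set $\M=\{W:W\leq_{\T}^{\a}(X\oplus Y)\}$ exists and is the smallest coded $\omega$-model of $\ACAo$ containing $X\oplus Y$; note $Y\in\M$.

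The first key step is the translation between the arithmetical-reducibility bound and satisfaction in $\M$. Since $\M$ is an $\omega$-model of $\ACAo$ and $\theta$ is arithmetical, for every $Z\in\M$ we have $\theta(X,Y,Z)\leftrightarrow(\M\models\theta(X,Y,Z))$; and membership $Z\in\M$ is exactly $Z\leq_{\T}^{\a}(X\oplus Y)$. Hence the hypothesis $\forall Z\leq_{\T}^{\a}(X\oplus Y)\,\theta(X,Y,Z)$ is equivalent to $\M\models\forall Z\,\theta(X,Y,Z)$, and as $Y\in\M$ we get $\M\models\exists Y\forall Z\,\theta(X,Y,Z)$.

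Next I would invoke the lightface implication. By hypothesis $\ACAo\vdash\forall X(\exists Y\forall Z\,\theta(X,Y,Z)\to\exists W\forall V\,\eta(X,W,V))$, and since $\M\models\ACAo$ this implication holds in $\M$. Applying it to $X$ yields $\M\models\exists W\forall V\,\eta(X,W,V)$; fix a witness $Y'\in\M$, so $\M\models\forall V\,\eta(X,Y',V)$, that is, $\eta(X,Y',Z')$ holds for every $Z'\in\M$.

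Finally I would check that this $Y'$ already witnesses $\rel(\psi)$ at $X$. The only subtlety, and the step I expect to be the main obstacle, is that $Y'$ is produced inside the model $\M$ built from $Y$, whereas $\rel(\psi)$ asks for the bound $Z'\leq_{\T}^{\a}(X\oplus Y')$ rather than $Z'\leq_{\T}^{\a}(X\oplus Y)$. This is resolved by closure of a coded $\omega$-model of $\ACAo$ under arithmetical reduction: since $X,Y'\in\M$ we have $X\oplus Y'\in\M$, and any $Z'\leq_{\T}^{\a}(X\oplus Y')$ is then in $\M$ as well. Consequently $\eta(X,Y',Z')$ holds for all $Z'\leq_{\T}^{\a}(X\oplus Y')$, so $Y'$ witnesses $\rel(\psi)$ for $X$. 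As $X$ was arbitrary, this gives $\rel(\psi)$, completing the argument.
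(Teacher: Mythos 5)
Your proposal is correct and follows essentially the same route as the paper's proof: pass to the smallest coded $\omega$-model of $\ACAo$ containing $X\oplus Y$, use absoluteness of arithmetical formulas and the identification of that model with $\{Z : Z\leq_{\T}^{\a} X\oplus Y\}$ to internalize the hypothesis, run the lightface implication inside the model, and use closure under arithmetical reduction to export the witness. The paper's version is just a terser rendering of the same argument, so nothing further is needed.
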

\begin{proof}
  We reason in $\ACA_0^+ + \forall X \exists Y \forall Z \leq_{\T}^{\a} X \oplus Y \theta(X,Y,Z)$.
  Let $X,Y$ be such that $\forall Z \leq_{\T}^{\a} X \oplus Y \theta(X,Y,Z)$.
  Take the smallest $\omega$-model $\M$ of $\ACA_0$ containing $X \oplus Y$.
  Then, $\M \models \exists U \forall Z \theta(X,U,Z)$ via  $U = Y$.
  Since $\M$ is a model of $\ACAo$, $\M \models \exists V \forall Z \eta(X,V,Z)$. Take such a $V$.
  Thus, $\exists W \forall Z \leq_{\T}^{\a} X \oplus W \eta(X,W,Z)$ via  $W = V$.
\end{proof}
For example, if $\ACAo$ proves $\forall X(\exists Y \forall Z \theta(X,Y,Z) \to \exists W(W = \HJ(X)))$,
then $\ACA_0^+$ proves $\rel(\forall X \exists Y \forall Z\theta) \to \beta^1_0\RFN(1)$.
Conversely, if $\ACAo$ proves
$\forall X(\exists W(W = \HJ(X)) \to \exists Y \forall Z \theta(X,Y,Z))$, then
$\ACA_0^+$ (and hence even $\RCAo$) proves $\beta^1_0\RFN(1) \to \rel(\forall X \exists Y \forall Z\theta)$.

\section{Pseudo Ramsey's theorem and $\beta\RFN$}
In this section, we introduce $\Pi^1_2$ variants of Ramsey's theorem for $[\N]^{\N}$.
Our variants may be seen as an instance of the arithmetical relativization of usual Ramsey's theorem.
In this section, we see the relationship
between them and  the $\beta^1_0\RFN$-hierarchy.

Let $[X]^{\N}$ denotes the class of all infinite subsets of $X$.
Then, Ramsey's theorem for $[\N]^{\N}$, also known as Galvin-Prikry's theorem, claims that certain subclasses of $[\N]^{\N}$ has Ramsey's property in the sense that
a class $\mathcal{A} \subseteq [\N]^{\N}$ has Ramsey's property if there is an infinite set $H \subseteq \N$ such that
$[H]^{\N}$ is a subset of $\mathcal{A}$ or its complement.

We first recall how to formalize Ramsey's theorem in second-order arithmetic and some existing results.
For the details, see also \cite{Simpson}.
\begin{definition}
  Let $[\N]^{\N}$ denote the class of all strictly increasing sequence of natural numbers.
  Let $\varphi(f,X)$ be a formula for $f \in [\N]^{\N}$ and $X \subseteq \N$. We say $\varphi$ has Ramsey's property if
  \begin{equation*}
    \forall X \exists h \in [\N]^{\N}\,\RP_{\varphi}(h,X),
  \end{equation*}
  where $\RP_{\varphi}(h,X)$ is
  \begin{equation*}
    \forall g \in [\N]^{\N} \varphi(h \circ g,X) \lor \forall g \in [\N]^{\N} \lnot \varphi(h \circ g,X).
  \end{equation*}
\end{definition}
In this definition, we identify $f \in [\N]^{\N}$ and its range, $\varphi(f,X)$ and the class $\{f \in [\N]^{\N} : \varphi(f,X)\}$.
In particular, $h \in [\N]^{\N}$ corresponds to an infinite subset $H$ of $\N$, $h \circ g$ corresponds to an infinite subset of $H$.

\begin{definition}
  Let $\Gamma$ be a class of formulas.
  Then, $\Gamma$-Ramsey's theorem (write $\Gamma\myhyphen\Ram)$ is the assertion that any $\varphi \in \Gamma$ has Ramsey's property.
\end{definition}

\begin{theorem}
  \cite[VI.6.4]{Simpson}
  Over $\RCAo$, $\Pi^1_1\myhyphen\CAo$ is equivalent to $\Sigma^1_0\myhyphen\Ram$.
\end{theorem}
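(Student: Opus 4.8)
The plan is to prove the two implications separately, leaning on the hyperjump and tree characterizations established above rather than arguing directly about $[\N]^{\N}$.

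For $(1)\to(2)$, I would run the Galvin--Prikry argument inside $\Pi^1_1\myhyphen\CAo$. Fix an arithmetical $\varphi(f,X)$ and the parameter $X$. Following the usual machinery, call a class $\mathcal{A}\subseteq[\N]^{\N}$ \emph{completely Ramsey} if for every pair $(s,A)$ consisting of a finite sequence $s$ and an infinite set $A$ with $s<A$ there is an infinite $B\subseteq A$ with $[s,B]\subseteq\mathcal{A}$ or $[s,B]\cap\mathcal{A}=\varnothing$. First I would prove, by a Nash--Williams style fusion argument, that every $\Sigma^0_1$ class is completely Ramsey; then that the completely Ramsey classes are closed under complementation and under countable unions, i.e.\ form a $\sigma$-algebra containing the open classes. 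The closure under unions is where comprehension enters: the diagonal/fusion step needs to form a sequence of refining infinite sets, and for classes arithmetical in $X$ the objects required are arithmetic in $\HJ(X)$, which $\Pi^1_1\myhyphen\CAo$ supplies by Theorem~\ref{equivalence of HJ and Beta}. Since $\varphi$ has finite quantifier depth, finitely many iterations of the complement/union steps place $\{f:\varphi(f,X)\}$ among the completely Ramsey classes, and instantiating $(s,A)=(\varnothing,\N)$ produces an $h$ witnessing $\RP_{\varphi}(h,X)$.

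For $(2)\to(1)$, I would first note that $\Sigma^1_0\myhyphen\Ram$ proves $\ACAo$ (a homogeneous set for a suitable open coloring computes the Turing jump of the parameter), so by Theorem~\ref{equivalence of HJ and Beta} it suffices to derive $\forall X\exists Y(Y=\HJ(X))$ over $\ACAo$, equivalently, by the tree characterization \cite[VI.1.1]{Simpson}, to produce $\{k:[T_k]\neq\varnothing\}$ from the uniformly $X$-computable sequence of trees $\langle T_k\rangle_k$ whose ill-founded members code $\HJ(X)$. The heart of the reversal is to encode this family into a single arithmetical class $\mathcal{A}\subseteq[\N]^{\N}$ so that on any Ramsey-homogeneous $H$ the constant color is arithmetically readable from $H$ and equals the characteristic value of $\{k:[T_k]\neq\varnothing\}$; $\ACAo$ then extracts the set from $H$ and hence $\HJ(X)$.

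I expect this encoding to be the main obstacle. The naive attempt ``$X$ decodes to a path of $T_k$'' fails, since a single path is far too sparse to contain any $[H]^{\N}$, so Ramsey homogeneity would always report the empty side irrespective of ill-foundedness. The coloring must instead reflect a \emph{robust} feature of $T_k$ that is preserved under passing to arbitrary infinite subsets, and designing an arithmetical (indeed $\Sigma^0_2$) class with this property while keeping the homogeneous color computable from $H$ is the delicate combinatorial core, carried out as in \cite[VI.6.4]{Simpson}.
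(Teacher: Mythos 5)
The paper itself offers no proof of this theorem --- it is quoted from Simpson --- but both halves appear, in relativized form, in Section 6: the forward direction rests on the cited Lemma (Simpson VI.6.2) that a tower of $k$ coded $\beta$-models yields a homogeneous set for any $\Sigma^0_k$ class with parameters in the bottom model (this is how Theorem \ref{betarfn to ram} is proved), and the reversal is worked out explicitly via a majorization coloring in the lemmas leading up to Theorem \ref{Ram and hyperjumps}. Your forward direction takes a genuinely different route, running the completely-Ramsey fusion argument directly inside $\Pi^1_1\myhyphen\CAo$ instead of passing through $\beta$-model towers; that route is viable, though one correction: for a formula of quantifier depth $n$ the fusion witnesses are only guaranteed to be arithmetical in $\HJ^n(X)$, not in $\HJ(X)$, since each level of the induction costs one more hyperjump. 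This is harmless for the theorem as stated because $\Pi^1_1\myhyphen\CAo$ supplies all finite iterates, but it matters for the finer calibration that is the point of this paper.

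The genuine gap is in the reversal. You correctly diagnose that the coloring must detect ill-foundedness of $T_k$ through a feature that survives passage to arbitrary infinite subsets, and correctly reject the naive path-decoding attempt --- but you then defer the construction of the coloring to the very theorem being proved, and that construction is the entire content of this direction. The missing idea is \emph{majorization}: say $f\in[\N]^{\N}$ majorizes a tree $T$ if some path of $T$ is pointwise bounded by $f$. By K\H{o}nig's lemma this is equivalent (over $\ACAo$) to an arithmetical condition on $f\oplus T$, it holds for a suitable increasing function exactly when $[T]\neq\varnothing$, and it is monotone under pointwise domination, which is what makes it robust under the Ramsey operation. The coloring is then $\Phi_{\ast}(f,A)\equiv\forall n<f(0)\,\bigl(f_{+2}\text{ majorizes }T(n,A)\to f_{+1}\text{ majorizes }T(n,A)\bigr)$, where $T(n,A)$ is the tree whose ill-foundedness defines $n\in\HJ(A)$ and $f_{+p}(x)=f(x+p)$. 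A counting argument (Lemma \ref{oneside for n} in the $k=1$ case, essentially Simpson VI.6.1) shows that a homogeneous $h$ must land on the positive side, and then $F(h,A)=\{n:\exists p\,(h_{+p}\text{ majorizes }T(n,A))\}$ equals $\HJ(A)$ and is arithmetically definable from $h\oplus A$, so $\ACAo$ extracts it. Without this device, or an equivalent one, your reversal is not a proof.
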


We give a refined analysis for this theorem by using the $\beta^1_0\RFN$-hierarchy.
We start with giving $\Pi^1_2$ variants of Ramsey's property which we call pseudo-Ramsey's property.

\begin{definition}\label{definition-of-rel-ram}
  Let $\varphi(f,X)$ be a formula for $f \in [\N]^{\N}$ with exactly displayed free variables.
  We say $\varphi(f,X)$ has pseudo-Ramsey's property if
  \begin{align*}
    \forall X \exists h \in [\N]^{\N}(\pRP_{\varphi}(h,X))
  \end{align*}
  where $\pRP$ is
  \begin{align*}
    & \forall g \in [\N]^{\N} (g \leq^{\a}_{\T} f \oplus X \to  \varphi(f \circ g)) \\
   \lor \,\,  &\forall g \in [\N]^{\N} (g \leq_{\T}^{\a} f \oplus X \to  \lnot \varphi(f \circ g)).
  \end{align*}
\end{definition}

  The condition $\varphi$ has pseudo-Ramsey's property is the same as $\rel(\forall X \exists h \in [\N]^{\N}\RP_{\varphi})$ holds.
  Therefore, we write $\rel(\Gamma\myhyphen\Ram)$ for the set of sentences
  $\{\forall X \exists h \in [\N]^{\N}(\pRP_{\varphi}(h,X)) : \varphi \in \Gamma\}$.

We note that
for each $n >0$, there exists a formula $\varphi \in \Sigma^0_n$ such that
$\Sigma^0_n\myhyphen\Ram$ is axiomatizable by $\forall X \exists h \in [\N]^{\N}\, \RP_{\varphi}(h,X)$ over $\ACAo$, and hence
$\rel(\Sigma^0_n\myhyphen\Ram)$ is axiomatizable by  $\rel(\forall X \exists h \in [\N]^{\N}\,  \RP_{\varphi}(h,X))$ over $\ACAo$.
Since $\Sigma^0_n\myhyphen\Ram$ is provable from $\Pi^1_1\myhyphen\CAo$,  $\rel(\Sigma^0_n\myhyphen\Ram)$
is provable from $\beta^1_0\RFN(p(n))$ for some $p(n)$.
We show that $p(n) =n$. It follows from the following lemma.
\begin{lemma}
  (\cite{Simpson}, VI.6.2.)
  ($\ACAo$)
  Let $\M_0 ,\ldots, \M_{k-1}$ be coded $\beta$ models such that
  $\M_0 \in \cdots \in \M_{k-1}$.
  Then, for any $\Sigma^0_k$ formula $\varphi(f)$ with parameters from $\M_0$, $\varphi(f)$ has Ramsey's property via some $h \in \M_{k-1}$.
\end{lemma}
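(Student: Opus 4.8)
The plan is to induct on $k$, viewing the tower $\M_0 \in \cdots \in \M_{k-1}$ as a reservoir of $k$ successive hyperjumps and running the combinatorial core of the Galvin--Prikry theorem (the accept/reject dichotomy) one arithmetical quantifier at a time. Two observations organize everything. First, for fixed $h$ the formula $\RP_\varphi(h)$ is a disjunction of two $\Pi^1_1$ formulas, hence $\Pi^1_1$; since each $\M_j$ is a coded $\beta$-model it is $\Sigma^1_1$-absolute, so $\RP_\varphi(h)$ holds in reality iff it holds in any $\M_j$ containing $h$. Thus it is enough to produce $h$ in the top model with $\M_{k-1} \models \RP_\varphi(h)$, and genuine Ramsey homogeneity comes for free. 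Second, each $\M_j$ recognizes every lower $\M_i$ ($i<j$) as a $\beta$-model (the $\beta$-model clause is $\Sigma^1_1$ with arithmetical kernel, so $\Sigma^1_1$-absoluteness applies), so by Simpson's lemma on $\HJ$ and $\beta$-models applied inside each $\M_{i+1}$, together with Lemma~\ref{Hyperjump in the ground model is also hyperjump in a coded model}, the real hyperjump of any $A \in \M_0$ lies in $\M_1$ and, inductively, $\HJ^{j}(A) \in \M_j$.

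For the base case $k=1$ the cross-sections are clopen, hence Ramsey inside $\M_0$ via $\M_0 \models \ATRo$ (Lemma~\ref{beta-model satisfies atro}) and the clopen Ramsey theorem, while the outer open layer has a $\Pi^1_1$ accept relation that $\M_0$ decides correctly by $\Sigma^1_1$-absoluteness; assembling the homogeneous set from this data inside $\M_0$ is intended to give $h \in \M_0$ with $\M_0 \models \RP_\varphi(h)$, which transfers outward by absoluteness. The engine for the general case is the Galvin--Prikry \emph{accept/reject} analysis of $\{f : \varphi(f)\}$ relative to a condition $(s,P)$: the key point is that at the relevant level the accept relation $[s,P] \subseteq \{f : \varphi(f)\}$ is $\Pi^1_1$ over parameters already present in the model doing the deciding, so that model computes it correctly, and the homogeneous set is then assembled from this data inside that model.

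For the inductive step write $\varphi(f) \equiv \exists m\,\psi(m,f)$ with $\psi \in \Pi^0_{k-1}$. Each cross-section $\{f : \psi(m,f)\}$ is $\Pi^0_{k-1}$ with parameters in $\M_0$; applying the induction hypothesis to the $\Sigma^0_{k-1}$ formula $\lnot\psi(m,\cdot)$ (whose homogeneous sets, by the symmetry of Ramsey's property, are homogeneous for $\psi(m,\cdot)$ as well) and to the shorter tower $\M_0 \in \cdots \in \M_{k-2}$ yields, uniformly in $m$, a sequence of cross-section witnesses in $\M_{k-2}$. It then remains to fuse this sequence into a single $h$ homogeneous for the outer open layer $\exists m\,\psi(m,\cdot)$. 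This outer layer is an open Galvin--Prikry problem whose parameter is the $\M_{k-2}$-sequence of witnesses; its accept relation is $\Pi^1_1$ over that parameter and is therefore decided correctly by the top model $\M_{k-1}$, which moreover houses the needed hyperjump since $\M_{k-2} \in \M_{k-1}$. Carrying out the fusion inside $\M_{k-1}$ produces $h \in \M_{k-1}$ with $\M_{k-1} \models \RP_\varphi(h)$, and absoluteness of $\M_{k-1}$ upgrades this to $\RP_\varphi(h)$ in the real world.

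The main obstacle is the complexity bookkeeping of the inductive step: one must verify that peeling off a single existential quantifier and fusing the resulting $\Pi^0_{k-1}$ cross-sections costs \emph{exactly} one step up the tower, i.e. that every accept relation encountered is no worse than $\Pi^1_1$ over parameters sitting in the intended model, so that the corresponding $\beta$-model both decides it and contains the resulting witnesses; in particular the open layer already consumes the hyperjump of its parameter, and matching this hyperjump content against the index of the designated model is the crux of the calibration. A secondary delicate point is the uniformity of the induction hypothesis---obtaining the cross-section witnesses as one sequence in $\M_{k-2}$ rather than individually---and the check that the fused set genuinely witnesses the full $\Sigma^0_k$ Ramsey property and not merely finite approximations to it; both are handled by the $\Sigma^1_1$-correctness of the models, which is precisely what allows satisfaction in the tower to be read off as truth.
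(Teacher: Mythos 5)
The paper gives no proof of this lemma---it is quoted directly from Simpson VI.6.2---so the only question is whether your reconstruction of that textbook argument is sound. Your global framing is correct: $\RP_{\varphi}(h,X)$ is a disjunction of two $\Pi^1_1$ formulas and hence absolute for coded $\beta$-models, so it suffices to produce $h\in\M_{k-1}$ that some model of the tower believes homogeneous; each $\M_{i+1}$ is a $\beta$-model of $\ACAo$ containing the code of $\M_i$, so $\HJ^{j}(A)\in\M_j$ for $A\in\M_0$; and the base case works because $\M_0\models\ATRo$ proves the open Ramsey theorem and the witness transfers outward by $\Pi^1_1$-absoluteness.

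The inductive step, however, has a genuine gap. You apply the induction hypothesis in the form of the lemma as stated---a \emph{single} homogeneous $h_m\in\M_{k-2}$ for each cross-section $\lnot\psi(m,\cdot)$---and then propose to fuse the sequence $\langle h_m\rangle_m$ by an open Ramsey argument in $\M_{k-1}$. This cannot work: homogeneity of $h_m$ constrains only the infinite subsets of $\rng(h_m)$, the sets $\rng(h_m)$ for different $m$ need not share an infinite common subset, and a diagonal intersection is not a subset of any of them, so the data $\langle h_m\rangle_m$ says nothing about the set $h$ you are trying to build. The induction must instead be carried on the \emph{localized} (completely Ramsey) statement---inside every condition $[s,P]$, with $P$ drawn from a suitable model, there is a homogeneous refinement $[s,Q]$ with $Q$ in the next model---precisely because the fusion repeatedly shrinks sets produced at earlier stages of the fusion itself, which do not lie in $\M_0$. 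Your second paragraph names this accept/reject machinery, but your third paragraph does not actually use it; and the ``complexity bookkeeping'' you defer (from which model the condition $P$ may be taken, into which model the refinement lands, and why each quantifier costs exactly one level of the tower) is not a secondary verification but the entire content of the proof. As written, the argument does not go through.
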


\begin{theorem}\label{betarfn to ram}
  $\rel(\Sigma^0_k\Ram)$ is provable from $\beta^1_0\RFN(k)$.
\end{theorem}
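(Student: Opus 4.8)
The plan is to reflect the preceding lemma (Simpson VI.6.2) inside the top model supplied by $\beta^1_0\RFN(k)$, and then to pull the resulting Ramsey witness down to a pseudo-Ramsey witness in the ground model. We reason in $\ACAo + \beta^1_0\RFN(k)$. Fix a $\Sigma^0_k$ formula $\varphi(f,X)$ and a set $X$; the goal is to produce $h \in [\N]^{\N}$ with $\pRP_{\varphi}(h,X)$. First I would apply $\beta^1_0\RFN(k)$ to $X$ to obtain coded $\omega$-models
\[
  X \in \M_0 \in_\beta \M_1 \in_\beta \cdots \in_\beta \M_k, \qquad \M_k \models \ACAo.
\]

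Second, I would argue that, as seen inside $\M_k$, the models $\M_0,\ldots,\M_{k-1}$ form a chain $\M_0 \in \cdots \in \M_{k-1}$ of coded $\beta$-models, so that Lemma VI.6.2 applies internally. For this, note first that each $\M_i$ with $i<k$ is coded inside $\M_k$: since membership of coded $\omega$-models is realised by a code being one of the reals of the larger model, the chain $\M_0 \in \M_1 \in \cdots \in \M_k$ transfers the code of each $\M_i$ up to a real of $\M_k$. Next, the relation $\in_\beta$ is transitive, since for $X \in \M_i$ the $\Sigma^1_1$-truth of $X$ agrees across $\M_i,\M_{i+1},\ldots$; hence $\M_i \in_\beta \M_k$ for every $i<k$. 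Unwinding the definition of coded $\beta$-model inside $\M_k$, the external assertion $\M_i \in_\beta \M_k$ is exactly $\M_k \models$ ``$\M_i$ is a coded $\beta$-model''. Thus $\M_k$ sees $\M_0 \in \cdots \in \M_{k-1}$ as $k$ coded $\beta$-models. Since $X \in \M_0$, Lemma VI.6.2 inside $\M_k$ yields $h \in \M_{k-1}$ with $\M_k \models \RP_{\varphi}(h,X)$; say $\M_k \models \forall g \in [\N]^{\N}\, \varphi(h\circ g, X)$, the other disjunct being symmetric.

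Finally I would transfer this down. Since $\M_k$ is a coded $\omega$-model of $\ACAo$, it is closed under arithmetical reduction, so any $g \in [\N]^{\N}$ with $g \leq_{\T}^{\a} h \oplus X$ lies in $\M_k$, and so does $h \circ g$. Because $\varphi$ is arithmetical and $h\circ g, X$ are parameters from $\M_k$, the truth of $\M_k \models \varphi(h\circ g, X)$ is equivalent to the truth of $\varphi(h\circ g, X)$ in the ground model. Combining, $\varphi(h \circ g, X)$ holds for every $g \leq_{\T}^{\a} h \oplus X$, which is precisely the first disjunct of $\pRP_{\varphi}(h,X)$; the symmetric case gives the second disjunct. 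As $\varphi$ and $X$ were arbitrary, this establishes $\rel(\Sigma^0_k\myhyphen\Ram)$.

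The main obstacle is the second step: carefully matching the external $\beta$-submodel chain produced by $\beta^1_0\RFN(k)$ with an internal chain of honest coded $\beta$-models inside $\M_k$. The crux is that the top model $\M_k$ need only satisfy $\ACAo$, while the intermediate models $\M_0,\ldots,\M_{k-1}$ must be $\beta$-models \emph{relative to} $\M_k$; this is exactly what transitivity of $\in_\beta$ together with the internal reading of ``coded $\beta$-model'' delivers, and it is what makes the count $k+1$ in $\beta^1_0\RFN(k)$ line up with the $k$ many $\beta$-models required by Lemma VI.6.2.
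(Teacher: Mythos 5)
Your proposal is correct and follows essentially the same route as the paper: obtain the chain $X \in \M_0 \in_\beta \cdots \in_\beta \M_k$ from $\beta^1_0\RFN(k)$, apply Simpson's Lemma VI.6.2 inside $\M_k$ (viewing $\M_0,\ldots,\M_{k-1}$ as coded $\beta$-models there) to get $h \in \M_{k-1}$ with $\M_k \models \RP_\varphi(h,X)$, and then use closure of $\M_k$ under arithmetical reduction plus absoluteness of arithmetical formulas to conclude $\pRP_\varphi(h,X)$. Your write-up merely makes explicit the transitivity of $\in_\beta$ and the internal reading of ``coded $\beta$-model'', which the paper leaves implicit.
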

\begin{proof}
  Assume $\beta^1_0\RFN(k)$.
  Let $\varphi(f,X)$ be a $\Sigma^0_k$ formula for $f \in [\N]^{\N}$ with exactly displayed set variables.
  By $\beta^1_0\RFN(k)$, take coded $\omega$-models $\M_0,\ldots,\M_k$ such that
  $X \in \M_0 \in_\beta \cdots \in_\beta \M_{k-1} \in_\beta \M_k$ and $\M_k \models \ACAo$.
  Now, by the previous lemma, there exists $f \in \M_{k-1}$ such that
  \begin{equation*}
    \M_k \models \forall g \varphi(f \circ g) \lor \forall g \lnot \varphi(f \circ g).
  \end{equation*}
  Since $\M_k$ is a model of $\ACAo$, $\varphi$ has pseudo-Ramsey's property via $f$.
\end{proof}

In the remaining of this section, we show that $\ACA_0^+ + \{\rel(\Sigma^0_n\Ram) : n \in \omega\}$ proves
$\beta^1_0\RFN(n)$ for all $n$.
In \cite{Tanaka-Ramsey}, it is shown that
over $\ACAo$, $(\text{lightface-}\Delta^{0,X}_2)\Ram \to \exists Y(Y = \HJ(X))$.
Here $\text{lightface-}\Delta^{0,X}_2$ is the set of $\Delta^0_2$ formulas having no set parameters other than $X$.
Moreover, it is commented  without proofs that this result can be extended to the transfinite level: $\ATRo \vdash (\text{lightface-}\Delta^{1}_1\Ram) \to \forall \alpha \text{: recursive well order }\exists Y (Y = \HJ^{\alpha}(\varnothing)))$.
In this section, we give an explicit proof for finite level.
\begin{definition}
  Let $T$ be a tree.
  We say $f \in [\N]^{\N}$ majorizes $T$ if $\exists g \in [T] \forall n (g(n) \leq f(n))$.
\end{definition}

\begin{definition}
  Let $f$ be a function and $n \in \N$.
  Define $f_{+n}$ by $f_{+ n}(x) = f(x + n)$.
\end{definition}

\begin{lemma}
  ($\ACAo$)
  Let $T$ be a tree and  $f \in [\N]^{\N}$.
  Then, $f$ majorizes $T$ if and only if $\forall n > 0 \exists \tau \in T (|\tau| = n \land \forall m < n (\tau(m) \leq f(m)))$.
  In particular, $f$ majorizes $T$ is an arithmetical condition.
\end{lemma}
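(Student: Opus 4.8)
The plan is to establish the equivalence by proving each direction separately, placing essentially all the content in the right-to-left implication, which I would derive from the bounded form of König's lemma recorded above (Simpson III.7.2). The left-to-right direction and the complexity remark are then immediate bookkeeping.

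For the easy direction, suppose $f$ majorizes $T$, witnessed by some $g \in [T]$ with $g(n) \le f(n)$ for all $n$. Then for each $n > 0$ the initial segment $\tau = g[n]$ belongs to $T$ (since $g$ is a path and $T$ is closed under initial segments), has length $n$, and satisfies $\tau(m) = g(m) \le f(m)$ for every $m < n$. This yields the right-hand side at once.

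For the converse I would introduce the $f$-bounded subtree
\[
  T_f = \{\tau \in T : \forall m < |\tau|\,(\tau(m) \le f(m))\}.
\]
Its defining condition is $\Delta^0_0$ in $T$ and $f$, so $T_f$ exists already in $\RCAo$, and it is a tree because any initial segment of a member of $T_f$ again lies in $T$ and still obeys the bound. The hypothesis that $T$ contains, for each length $n$, a node bounded by $f$ below $n$ says precisely that $T_f$ has a node of every length, whence (distinct lengths giving distinct nodes) $T_f$ is infinite. Crucially, $T_f$ is finitely branching with an explicit bound: for any $\sigma \in T_f$, whenever $\sigma \ast \langle i \rangle \in T_f$ we must have $i \le f(|\sigma|)$, so taking $m = f(|\sigma|) + 1$ verifies the branching hypothesis $\forall \sigma \in T_f\,\exists m\,\forall i(\sigma \ast \langle i \rangle \in T_f \to i < m)$. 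Applying the bounded König lemma, $T_f$ is ill-founded, so it has a path $g \in [T_f] \subseteq [T]$; reading off $g(n) \le f(n)$ from $g[n+1] \in T_f$ for each $n$ shows that $g$ witnesses that $f$ majorizes $T$.

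The final clause follows formally from the equivalence: the right-hand side is a $\Pi^0_2$ formula in the parameters $T$ and $f$, so the a priori $\Sigma^1_1$ statement ``$f$ majorizes $T$'' is provably equivalent over $\ACAo$ to an arithmetical formula and is therefore arithmetical. I expect no genuine obstacle here — the only step requiring care is checking that the two nontrivial hypotheses of the bounded König lemma (infinitude and the explicit branching bound) hold for $T_f$ inside $\ACAo$, and both reduce directly to the given assumption and to the definition of $T_f$; the rest is routine verification.
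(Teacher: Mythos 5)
Your proof is correct and follows exactly the route the paper intends: the paper's own proof simply says ``It is easy by K\"onig's lemma'' and defers to Simpson's Lemma V.9.6, and your argument --- passing to the $f$-bounded subtree $T_f$, checking infinitude and the explicit branching bound, and applying the bounded K\"onig lemma stated earlier in the paper --- is precisely the standard filling-in of that reference. The easy direction and the observation that the right-hand side is arithmetical are also handled correctly.
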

\begin{proof}
  It is easy by K\"{o}nig's lemma. For the details,
  see the proof of Lemma V.9.6. in \cite{Simpson}.
\end{proof}

\begin{definition}
  ($\ACAo$)
  Let $\theta(n,A)$ be the $\Sigma^1_1$ formula defining $\HJ(A)$.
  Write $\theta(n,A) \equiv \exists f \in \N^{\N} \forall y R(n,f[y],A[y])$  by a $\Sigma^0_0$ formula $R$.

  For each $n \in \N$ and $A \subseteq \N$, define a tree $T(n,A) = \{\sigma : \forall y < |\sigma| R(n,\sigma[y],A[y])\}$.
  For each $f \in [\N]^{\N}$ and $A \subseteq \N$, define $F(f,A) = \{n : \exists p (f_{+ p} \text{ majorizes } T(n,A))$.
\end{definition}

\begin{definition}\label{def of phi-star}
    Define $\Phi_{\ast}(f,A)$ by
    \begin{equation*}
      \forall n < f(0) (f_{+2} \text{ majorizes } T(n,A) \to f_{+1} \text{ majorizes } T(n,A) ).
    \end{equation*}
\end{definition}

\begin{lemma}\label{lightface ram to hj}
  (Essentially \cite{Simpson} VI.6.1)
  $\ACAo$ proves that for any $A \subseteq \N$ and $h  \in [\N]^{\N}$,
  if $\forall g(\Phi_{\ast}(h \circ g,A))$, then $F(h,A) = \HJ(A)$.
\end{lemma}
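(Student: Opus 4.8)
The plan is to prove the two inclusions $F(h,A) \subseteq \HJ(A)$ and $\HJ(A) \subseteq F(h,A)$ separately, noting that only the second uses the homogeneity hypothesis $\forall g\,\Phi_{\ast}(h\circ g, A)$. First I would record the basic translation: since $\theta(n,A)$ is precisely the assertion that $T(n,A)$ has an infinite path, we have $\HJ(A) = \{n : T(n,A) \text{ is ill-founded}\}$. For the easy inclusion, if $n \in F(h,A)$ then some shift $h_{+p}$ majorizes $T(n,A)$; the majorizing witness is a path, so $T(n,A)$ is ill-founded and $n \in \HJ(A)$. This direction needs nothing about $h$.

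The substance is the inclusion $\HJ(A) \subseteq F(h,A)$. Fix $n$ with $T(n,A)$ ill-founded, let $i_0$ be least with $h(i_0) > n$ (it exists since $h$ is strictly increasing and hence unbounded), and set $p = i_0 + 1$. The key principle extracted from the hypothesis is a prepend property: if $\langle s_0 < s_1 < \cdots\rangle$ is strictly increasing with $s_0 > i_0$ and $\langle h(s_0), h(s_1), \dots\rangle$ majorizes $T(n,A)$, then for every $b$ with $i_0 < b < s_0$ the sequence $\langle h(b), h(s_0), h(s_1), \dots\rangle$ also majorizes $T(n,A)$. This follows by instantiating $\Phi_{\ast}(h\circ g, A)$ at $g = \langle i_0, b, s_0, s_1, \dots\rangle$: then $(h\circ g)(0) = h(i_0) > n$, the hypothesis $(h\circ g)_{+2} = \langle h(s_0), h(s_1), \dots\rangle$ majorizes $T(n,A)$ holds, and since $n < (h\circ g)(0)$ the conclusion of $\Phi_{\ast}$ gives that $(h\circ g)_{+1} = \langle h(b), h(s_0), \dots\rangle$ majorizes $T(n,A)$.

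Next I would feed this with a cheap starting point. Since $T(n,A)$ is ill-founded it has a path $\gamma$, and because $h$ is unbounded I can recursively choose a strictly increasing $g$ with $g(0)$ as large as desired and $h(g(m)) \ge \gamma(m)$ for all $m$; then $\langle h(g(0)), h(g(1)), \dots\rangle$ dominates $\gamma$ pointwise and hence majorizes $T(n,A)$. The final step combines this with the prepend property and the finitary characterization of majorization (the earlier lemma: $f$ majorizes $T$ iff for every $k$ there is $\tau \in T$ of length $k$ with $\tau(m) \le f(m)$ for all $m < k$). To show $h_{+p}$ majorizes $T(n,A)$ it suffices to verify this depth-$k$ condition for each fixed $k$. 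Given $k$, start from a dominating sparse sequence with $s_0 > p + k$ and apply the prepend property finitely many times to fill in the consecutive indices $s_0 - 1, s_0 - 2, \dots, i_0 + 1$; the resulting sequence majorizes $T(n,A)$ and agrees with $h_{+p}$ on its first $s_0 - p > k$ coordinates, so the depth-$k$ majorization condition for $h_{+p}$ holds. As $k$ is arbitrary, $h_{+p}$ majorizes $T(n,A)$, i.e.\ $n \in F(h,A)$.

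I expect the main obstacle to be the organization of this last step inside $\ACAo$: the prepend property densifies only the \emph{front} of a majorizing sequence, never its sparse tail, so no single application yields a genuine shift. The resolution is that depth-$k$ majorization depends only on the first $k$ values, so one may argue one depth at a time, each time using a fresh sparse starting sequence whose first index exceeds $p+k$ together with a bounded number of prepends. Care is needed to make the depth-by-depth argument uniform enough to yield the universal statement over all $k$ in $\ACAo$; phrasing the finitely-many-prepends step as an arithmetical induction on the number of prepended indices (and noting that ``$f$ majorizes $T$'' is arithmetical) makes this available.
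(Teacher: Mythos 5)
Your proof is correct, and it establishes both inclusions; the interest lies in how you organize the hard inclusion $\HJ(A)\subseteq F(h,A)$. The paper argues by contradiction: assuming $h_{+(n+2)}$ fails to majorize $T(n,A)$ at some depth $k$, it builds a single hybrid function $f$ agreeing with a shift of $h$ on its first $k+1$ values and with a sparse majorizing subsequence of $h$ beyond, shows $f_{+(k+1)}$ majorizes while $f_{+1}$ does not, and then uses explicit reparametrizations $g_{k'}$ with $f_{+(k'+1)}=(h\circ g_{k'})_{+2}$ to run a downward arithmetical induction that contradicts the assumed failure. Your ``prepend property'' is exactly the paper's inductive step --- an instance of $\Phi_{\ast}(h\circ g,A)$ for a suitably chosen $g$ --- but you deploy it directly, one depth at a time, via the finitary (K\H{o}nig's lemma) characterization of majorization; this dispenses with the hybrid function and the contradiction, at the mild cost of needing a fresh sparse starting sequence for each depth $k$. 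Your version buys two things. First, the depth-by-depth framing makes the appeal to induction transparent: it is a bounded induction on the number of prepended indices applied to an arithmetical predicate, which is available in $\ACAo$. Second, by taking $g(0)=i_0$ with $h(i_0)>n$ you satisfy the guard $n<(h\circ g)(0)$ of $\Phi_{\ast}$ as literally defined, for \emph{every} $n$; the paper instead takes $g_{k'}(0)=0$ and verifies $n<(h\circ g_{k'})_{+2}(0)$, which fits the definition only under the reading that the bound in $\Phi_{\ast}$ is on $f(2)$ rather than $f(0)$, whereas your instantiation is robust to either reading. The only slip is a harmless off-by-one: the filled-in sequence agrees with $h_{+p}$ on its first $s_0-p+1$ coordinates, not $s_0-p$, and you only need the first $k<s_0-p$ of them anyway.
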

\begin{proof}
  We first remark that for each $n$ and $A$, $T(n,A)$ has a path if and only if $n \in \HJ(A)$.

  Take $h \in [\N]^{\N}$ such that $\forall g(\Phi_{\ast}(h \circ g,A))$.
  We show that $F(h,A) \subseteq \HJ(A)$.
  If $n \in F(h,A)$ then $f_{+p}$ majorizes $T(n,A)$ for some $p$, and hence $[T(n,A)] \neq \varnothing$. Thus, $n \in \HJ(A)$.

  Conversely, assume $n \in \HJ(A)$ and show that $n \in F(h,A)$. Now, $T(n,A)$ has a path. We claim that $h_{+(n+2)}$ majorizes $T(n,A)$.
  Let $g_0 \in [T(n,A)]$ and define $g(n) = g_0(0) + \cdots + g_0(n)$. Then $g \in [\N]^{\N}$ and it majorizes $T(n,A)$.
  Assume $h_{+(n+2)}$ does not majorize $T(n,A)$. Take $k>0$ such that $\forall \tau \in T(n,A)( |\tau| = k \to \exists m < k (\tau(m) > h_{+(n+2)}(m))$ by the previous lemma.
  Define $f$ by
  \begin{align*}
    f(x) = \begin{cases}
      h_{+(n+1)}(x) \text{ if } x < k + 1 \\
      h_{+(n+k +2)}(g (x)) \text{ otherwise.}
    \end{cases}
  \end{align*}
      \setcounter{claimcounter}{1} 
      \begin{claim}
        Now we have
        \begin{enumerate}
          \item $f_{+ (k+1)}$ majorizes $T(n,A)$ but
          \item $f_{+1}$ does not majorize $T(n,A)$.
        \end{enumerate}
      \end{claim}
      \addtocounter{claimcounter}{1}
      \begin{proof of claim}
        (1.) For any $x \in \N$,
          $f_{+(k+1)}(x) = f(x + k + 1)
          = h_{+(n+1)}(g(x))$.
          Since  $h \in [\N]^{\N}$, we have
          $h_{+(n+1)}(g(x)) \geq g(x) \geq g_0(x)$.
          Hence $g_0(x)$ ensures that $f_{+(k+1)}$ majorizes $T(n,A)$.

          (2.) For a contradiction, assume $f_{+1}$ majorizes $T(n,A)$.
          Then there exists $\tau_k \in T(n,A)$ such that
          \begin{equation*}
            |\tau_k| = k \land \forall m < k(\tau_k(m) \leq f_{+1}(m)).
          \end{equation*}
          Now, for any $m < k$,
          \begin{align*}
            f_{+1}(m) &= f(m+1) \\
            &=h_{+(n+1)}(m+1) \\
            &=h(m+n + 2) = h_{+(n+2)}(m).
          \end{align*}
          Therefore, $\forall m < k(\tau_k(m) \leq h_{+(n+2)}(m))$.
          However, this contradicts the definition of $k$.
        \qedclaim
      \end{proof of claim}
      For each $k' \leq k$, define  $g_{k'} \in [\N]^{\N}$ such that
      \begin{align*}
        g_{k'}(0) &= 0, \\
        g_{k'}(1) &= 1, \\
        g_{k'}(x + 2) &= \begin{cases}
          x + k' + n + 2 \text{ if } x + k' + 1 < k +1 \\
          g(x + k' + 1) + (n + k + 2) \text{ otherwise}.
      \end{cases}
      \end{align*}
      Then, for any $k' \leq k$, $g_{k'} \in [\N]^{\N}$ and $f_{+(k'+1)} = (h \circ g_{k'})_{+2}$. Indeed, for any $x$,
      \begin{align*}
        f_{+(k'+1)}(x) &= f(x + k' + 1) \\
        &= \begin{cases}
          h(x + k' + n + 2) \text{ if } x + k' + 1 < k + 1 \\
          h(g(x + k' + 1) + (n + k + 2)) \text{ otherwise}
        \end{cases}
      \end{align*}
      and
      \begin{align*}
        (h \circ g_{k'})_{+2}(x) &= h(g_{k'}(x + 2)) \\
        &= \begin{cases}
          h(x + k' + n + 2) \text{ if } x + k' + 1 < k + 1 \\
          h(g(x + k' + 1) + (n + k + 2)) \text{ otherwise}.
      \end{cases}
      \end{align*}

      Now we have
      \begin{enumerate}
        \item $f_{+(k+1)}$ majorizes $T(n,A)$,
        \item for any $k' \leq k$, if $f_{+(k'+1)}$ majorizes $T(n,A)$ then $f_{+k'}$ majorizes $T(n,A)$.
      \end{enumerate}
      The second condition is proved as follows. We see $n < n+1 \leq h(n+1) = f(0) < f_{k'+1}(0) = (h \circ g_{k'})_{+2}(0)$.
      Therefore, by the assumption on $h$, if $f_{k' + 1} = (h \circ g_{k'})_{+2}$ majorizes $T(n,A)$, then
      $(h \circ g_{k'})_{+1} = f_{k'}$ also majorizes $T(n,A)$.

      By these conditions and arithmetical induction, we have $f_{+1}$ majorizes $T(n,A)$. However, this is a contradiction.
\end{proof}

  \begin{definition}
    ($\ACAo$) For $f \in [\N]^\N$ and $A$,
    define $F^n(f,A)$ by
    \begin{align*}
      F^0(f,A) &= A, \\
      F^{n+1}(f,A) &= F(f,F^{n}(f,A)).
    \end{align*}
  \end{definition}

  \begin{definition}
    Define an arithmetical formula $\Phi_n(f,A)$ by
    \begin{align*}
      \Phi_n(f,A) \equiv \bigwedge_{i < n} \Phi_{\ast}(f,F^i(f,A)).
    \end{align*}
  \end{definition}

\begin{lemma}\label{oneside for n}
  Let $k \in \omega$. Then $\ACAo$ proves that
  for any $h \in [\N]^{\N}$ and $A$, if $\RP_{\Phi_k}(h,A)$ holds, then
  then $(\forall g \Phi_k(h \circ g,A))$ holds.
\end{lemma}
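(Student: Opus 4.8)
The hypothesis $\RP_{\Phi_k}(h,A)$ unfolds to the disjunction
\[
\forall g\in[\N]^{\N}\,\Phi_k(h\circ g,A)\ \lor\ \forall g\in[\N]^{\N}\,\lnot\Phi_k(h\circ g,A),
\]
so the plan is to refute the right-hand disjunct: it suffices to produce a single $g_0\in[\N]^{\N}$ with $\Phi_k(h\circ g_0,A)$, which then forces the left-hand disjunct and hence the desired conclusion. Recall that $\Phi_k(f,A)$ is the finite conjunction $\bigwedge_{i<k}\Phi_{\ast}(f,F^i(f,A))$, and that each conjunct $\Phi_{\ast}(f,B)$ constrains only the indices $n<f(0)$. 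I would fix $g_0(0)=0$, so that for $f=h\circ g_0$ the relevant range is the fixed finite interval $n<N:=h(0)$, and for $n<N$ with $T(n,B)$ well-founded the implication in $\Phi_{\ast}$ is vacuous, since no function majorizes a well-founded tree.

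The engine of the whole argument is the single-base fact that, for any $h\in[\N]^{\N}$ and any set $B$, some subsequence makes $\Phi_{\ast}$ true; this I would prove without using the Ramsey hypothesis. Keeping the first value equal to $h(0)$, I would run an arithmetical recursion of length at most $N$. At each stage, given the current $f$, let $n$ be the least index below $N$ that is \emph{bad}, meaning $f_{+2}$ majorizes $T(n,B)$ while $f_{+1}$ does not; this is an arithmetical condition because majorization is. Since $f_{+2}$ majorizes $T(n,B)$, the subtree of $T(n,B)$ consisting of nodes bounded by $f_{+2}$ is infinite and finitely branching, so $\ACAo$ (K\"onig's lemma) yields a path, which I take to be its leftmost branch $q$. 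I then pass to a further subsequence of $h$, again with first value $h(0)$, whose $(+1)$-shift dominates every path extracted so far. Such thinning makes $f_{+1}$ majorize $T(n',B)$ at every previously treated index $n'$, so the successive bad indices are pairwise distinct members of $\{0,\dots,N-1\}$; hence the recursion halts within $N$ steps at an $f$ with no bad index, that is, with $\Phi_{\ast}(f,B)$.

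To secure $\Phi_k(h\circ g_0,A)$ I must make $\Phi_{\ast}(f,F^i(f,A))$ hold for all $i<k$ simultaneously. Were the base sets $F^i$ fixed, the recursion above would extend verbatim: there are only finitely many pairs $(i,n)$ with $i<k$ and $n<N$, each repair permanently removes one such pair, and distinctness bounds the length of the recursion by $kN$. The step I expect to be the main obstacle is that $F^i(h\circ g_0,A)$ depends on the \emph{entire} function $h\circ g_0$, because the tree $T(m,F^i(\,\cdot\,,A))$ reads arbitrarily long initial segments of $F^i$; consequently, thinning to repair one level perturbs the derived sets, and with them the bad indices, at the other levels, so the naive recursion need not terminate.

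My plan to overcome this is to interleave the level-by-level repairs into one recursion and to invoke the Ramsey homogeneity granted by $\RP_{\Phi_k}(h,A)$: after passing to a homogeneous subsequence the level and index of any surviving defect are frozen, so the distinctness bookkeeping again forces termination. This is precisely where I expect to reuse the shifting construction underlying Lemma~\ref{lightface ram to hj}, now propagating majorization simultaneously through the nested approximations $F^0(f,A),\dots,F^{k-1}(f,A)$ to the iterated hyperjump rather than through a single $F(f,A)$. Once such a $g_0$ is obtained, $\Phi_k(h\circ g_0,A)$ holds, contradicting the negative disjunct of $\RP_{\Phi_k}(h,A)$ and therefore establishing $\forall g\,\Phi_k(h\circ g,A)$.
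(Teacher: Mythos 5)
Your overall strategy---refute the negative disjunct of $\RP_{\Phi_k}(h,A)$ by exhibiting one subsequence on which $\Phi_k$ holds---is sound, and your single-level ``engine'' for a fixed base set $B$ is essentially correct. But there is a genuine gap exactly where you flag ``the main obstacle'': for $k\ge 2$ you never resolve the dependence of the base sets $F^i(h\circ g_0,A)$ on $g_0$, and the proposed fix is not an argument. The hypothesis $\RP_{\Phi_k}(h,A)$ provides no further ``homogeneous subsequence'' to pass to---$h$ itself is the homogeneous object---and in the negative case it only asserts that \emph{every} subsequence has \emph{some} defect $(i,m)$; nothing in the hypothesis ``freezes'' which defect a given subsequence carries, so the claimed distinctness bookkeeping has no basis and the recursion is not shown to terminate.

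The missing idea, which also rescues your own recursion, is that $F(\,\cdot\,,B)$ is \emph{shift-invariant}: since $F(f,B)=\{m:\exists p\,(f_{+p}\text{ majorizes }T(m,B))\}$ and majorization is preserved when the shift increases, one gets $F^i(h_{+(n+2)},A)=F^i(h,A)$ for all $i,n$ (this is the Claim inside the paper's proof). If you restrict your thinnings to the shift-like subsequences $g_n(0)=0$, $g_n(x+1)=n+x+1$---which is in fact all your repair step ever produces if you take $f'_{+1}=f_{+2}$ starting from $f_0=h$---then every base set occurring in $\Phi_k(h\circ g_n,A)$ equals the fixed set $F^i(h,A)$, your obstacle disappears, and the $k\cdot h(0)$-step bookkeeping closes the construction. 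The paper runs the same computation as a pigeonhole contradiction rather than a construction: assuming $\forall g\,\lnot\Phi_k(h\circ g,A)$, each $n$ yields a least $m(n)<h(0)$ and a level $i<k$ such that $h_{+(n+2)}$ majorizes $T(m(n),F^i(h,A))$ while $h_{+(n+1)}$ does not; monotonicity of majorization under shifts shows each pair $(i,m)$ can witness at most one $n$, so at most $k\cdot h(0)$ values of $n$ are possible, contradicting totality of $m(n)$. Either route works, but only once the shift-invariance of the $F^i$ is established, and your write-up is missing precisely that step.
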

\begin{proof}
  Take $h$ and $A$ as above.
  For the sake of contradiction, assume $(\forall g \lnot \Phi_k(h \circ g,A))$.

  For each $n$ define $g_n$ by $g_n(0) = 0, g_n(x+1) = n+x+1$.
  Then, since $\lnot \Phi(h \circ g, A)$ holds, there exists $m < h(g(0)) = h(0)$ such that
  \begin{align*}
    \bigvee_{i < k} (h \circ g_n)_{+2} \text{ majorizes } T(m,F^i((h \circ g_n)_{+2},A)) \text{, but } (h \circ g_n)_{+1} \text{ does not. }
  \end{align*}
  We note that $(h \circ g_n)_{+1}(x) = (h \circ g_n)(x + 1) = h(g_n(x+1)) = h(n + x + 1) = h_{+(n+1)}(x)$ and
  $(h \circ g_n)_{+2}(x) = h_{+(n+2)}(x)$.
  Hence, for each $n$, there exists $m < h(0)$ such that
  \begin{align*}
    \bigvee_{i < k} h_{+(n+2)} \text{ majorizes } T(m,F^i(h_{+(n+2)},A)) \text{ but } h_{+(n+1)} \text{ does not. }
  \end{align*}
  \setcounter{claimcounter}{1} 
  \begin{claim}
    For each $i < k$ and $n$, $F^i(h_{+(n+2)},A) = F^i(h,A)$.
  \end{claim}
  \addtocounter{claimcounter}{1}
  \begin{proof of claim}
    We will show by (meta-)induction on $i$.

    The base step is immediate. By definition, $F^0(h_{+(n+2)},A) = A = F^0(h,A)$.

    For the induction step, assume $F^i(h_{+(n+2)},A) = F^i(h,A)$.
    Then
    \begin{align*}
      F^{i+1}(h_{+(n+2)},A) &= F(h_{+(n+2)},F^{i}(h_{+(n+2)},A)) \\
      &= F(h_{+(n+2)},F^i(h,A)) \\
      &= \{x : \exists p ((h_{+(n+2)})_{+p} \text{ majorizes } T(x,F^i(h,A))) \} \\
      &= \{x : \exists p (h_{+p} \text{ majorizes } T(x,F^i(h,A))) \} = F(h,F^i(h,A)).
    \end{align*}
    This completes the proof.
    \qedclaim
  \end{proof of claim}
  Now, for each $n$, there exists $m < h(0)$ such that
  \begin{align*}
    \bigvee_{i < k } h_{+(n+2)} \text{ majorizes } T(m,F^i(h,A)) \text{ but } h_{+(n+1)} \text{ does not. }
  \end{align*}
  Put $\theta(i,n) \equiv h_{+(n+2)} \text{ majorizes } T(m,F^i(h,A)) \text{ but } h_{+(n+1)} \text{ does not }$.

  For each $n$, define $m(n)$ be the smallest $m < h(0)$ satisfying the above condition.
  \begin{claim}
  For each $m$, there are at most $k$-many $n$ such that $m(n) = m$.
  \end{claim}
  \addtocounter{claimcounter}{1}
  \begin{proof of claim}
    Fix an $m$. For the sake of contradiction, let $n_0,\ldots,n_k$ be
    such that  $n_0 < \cdots < n_k$ and $m(n_j) = m$ for all $j < k$.
    Then, for each $j < k$, there exists an $i_j < k$ such that $\theta(i_j,n_j)$ holds.

      We claim that if $j \neq j'$ then $i_j \neq i_{j'}$. Let $j < j' \leq k$.
      If $i_j = i_{j'}$, then
      \begin{align*}
         &h_{+(n_j+2)} \text{ majorizes } T(m,F^{i_j}(h,A)) \text{ but } h_{+(n_j+1)} \text{ does not } \ \land \\
         &h_{+(n_{j'}+2)} \text{ majorizes } T(m,F^{i_j}(h,A)) \text{ but } h_{+(n_{j'}+1)} \text{ does not }.
      \end{align*}
      However, since $n_j < n_{j'}$ and $h_{+(n_j+2)}$ majorizes  $T(m,F^{i_j}(h,A))$,
      $h_{+(n_{j'}+1)}$ should majorize $T(m,F^{i_j}(h,A))$. This is a contradiction.

    Now $\{i_j : j \leq k\}$ should be a $k+1$-element subset of $\{0,\ldots,k-1\}$. This is a contradiction.
  \qedclaim
  \end{proof of claim}
  Now, $m(n)$ is a total function such that $\{m(n) : n \in \N\}$ is bounded by $h(0)$.
  However, for any $y \in \{m(n) : n \in \N\}$, there are at most $k$-many $n$ such that $m(n) = y$. This is a contradiction.
\end{proof}

\begin{lemma}\label{lightface ram to n jump}
  ($\ACAo$)
  For any $h \in [\N]^{\N}$ and $A$, if $\forall g \Phi_n(h \circ g,A)$ then
  $F^i(h,A) = \HJ^i(A)$ for any $i \leq n$.
\end{lemma}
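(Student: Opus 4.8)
The plan is to argue by external induction on the standard natural number $n$, using Lemma \ref{lightface ram to hj} as the single-step engine. The base case $n=0$ is immediate: the hypothesis $\forall g\,\Phi_0(h\circ g,A)$ is the empty conjunction, hence vacuously true, and the only claim for $i\le 0$ is $F^0(h,A)=A=\HJ^0(A)$ by definition. For the inductive step I would assume the statement for $n$ (for all $h$ and $A$) and deduce it for $n+1$.

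First I would isolate a closure property of the hypothesis. Writing $H_m(h,A)$ for $\forall g\in[\N]^{\N}\,\Phi_m(h\circ g,A)$, I would observe that $H_m$ is closed under composition on the right: if $H_m(h,A)$ holds and $g_0\in[\N]^{\N}$, then $H_m(h\circ g_0,A)$ holds, since $(h\circ g_0)\circ g=h\circ(g_0\circ g)$ and $g_0\circ g$ again ranges inside $[\N]^{\N}$ as $g$ does. This is the crucial bookkeeping step, since it lets me transfer the inductive conclusion not merely to $h$ but to every $h\circ g$ uniformly.

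Now suppose $H_{n+1}(h,A)$. Because the conjunction defining $\Phi_{n+1}$ contains that defining $\Phi_n$, we have $H_n(h,A)$, and hence $H_n(h\circ g,A)$ for every $g$ by the closure property. Applying the induction hypothesis to each $h\circ g$ (in particular to $h$ itself) yields $F^i(h\circ g,A)=\HJ^i(A)$ for all $i\le n$ and all $g$; in particular $F^n(h,A)=\HJ^n(A)$. It remains to compute $F^{n+1}(h,A)=F(h,F^n(h,A))=F(h,\HJ^n(A))$. The $i=n$ conjunct of $\Phi_{n+1}(h\circ g,A)$ is $\Phi_{\ast}(h\circ g,F^n(h\circ g,A))$, and since $F^n(h\circ g,A)=\HJ^n(A)$ for every $g$, the hypothesis $H_{n+1}(h,A)$ gives $\forall g\,\Phi_{\ast}(h\circ g,\HJ^n(A))$. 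Feeding this into Lemma \ref{lightface ram to hj} with the set $\HJ^n(A)$ in the role of $A$ produces $F(h,\HJ^n(A))=\HJ(\HJ^n(A))=\HJ^{n+1}(A)$, which is exactly $F^{n+1}(h,A)=\HJ^{n+1}(A)$, completing the induction.

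The only genuine obstacle is that the hypothesis supplies $\Phi_{\ast}$ at the \emph{moving} base set $F^n(h\circ g,A)$ rather than at the fixed set $\HJ^n(A)$, so I must know $F^n(h\circ g,A)=\HJ^n(A)$ for all $g$ at once; the composition-closure observation is precisely what secures this, after which everything reduces to unwinding the definitions of $F$, $F^i$ and $\Phi_n$ inside $\ACAo$.
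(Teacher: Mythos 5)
Your proof is correct and follows the same route as the paper's: iterate Lemma \ref{lightface ram to hj} once per level, using the $i$-th conjunct of $\Phi_n$ to supply the hypothesis at the base set $\HJ^i(A)$. The paper compresses this to ``iterating this argument''; your explicit external induction, together with the observation that the hypothesis is closed under right-composition (needed to get $F^i(h\circ g,A)=\HJ^i(A)$ for \emph{all} $g$, not just for $h$ itself), supplies exactly the bookkeeping the paper leaves implicit.
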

\begin{proof}
  Take $h$ and $A$ as above.
  Then, for the formula $\Phi_{\ast}$ in Definition \ref{def of phi-star}, $\forall g (\Phi_{\ast}(h \circ g,A))$ holds.
  Hence $F(h,A) = \HJ(A)$.
  Thus, $\forall g(\Phi_{\ast}(h \circ g,\HJ(A)))$ also holds, and hence $F(h,F(h,A)) = F(h,\HJ(A)) = \HJ^2(A)$.
  Iterating this argument, we have $F^i(h,A) = \HJ^i(A)$ for any $i \leq n$.
\end{proof}

\begin{theorem}\label{Ram and hyperjumps}
  Over $\ACA^+_0$, $\{\rel (\Sigma^0_n \Ram) : n \in \omega\}$ and $\{\beta^1_0\RFN(n) : n \in \omega\}$ prove the
  same sentences.
  Therefore, any $\Pi^1_2$ sentence provable from $\Pi^1_1\myhyphen\CAo$ is provable from
  $\ACA^+_0 + \rel (\Sigma^0_n \Ram)$ for some $n \in \omega$.
\end{theorem}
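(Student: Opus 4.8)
The plan is to prove the two theory families are deductively equivalent over $\ACA_0^+$ by establishing each direction of inclusion, and then to deduce the final clause from Theorem \ref{Pi12part and hyperjumps}. The forward inclusion is already in hand: by Theorem \ref{betarfn to ram}, $\beta^1_0\RFN(k)$ proves $\rel(\Sigma^0_k\Ram)$ for every $k$, so $\ACA_0^+ + \{\beta^1_0\RFN(n) : n \in \omega\}$ proves every axiom of $\ACA_0^+ + \{\rel(\Sigma^0_n\Ram) : n \in \omega\}$. It thus remains to show that, for each fixed $k$, the axiom $\beta^1_0\RFN(k)$ is provable from $\ACA_0^+ + \rel(\Sigma^0_{m}\Ram)$ for a suitable level $m = m(k)$.

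For this reverse inclusion I would first observe that $\Phi_k(f,A)$ is arithmetical, hence equivalent over $\ACAo$ to a $\Sigma^0_m$ formula for some $m$ depending on $k$; this $m$ is all that the argument consumes. Reasoning in $\ACA_0^+ + \rel(\Sigma^0_m\Ram)$ and using the $\omega$-model characterization of $\beta^1_0\RFN(k)$, I fix a set $X$ and apply the pseudo-Ramsey property to $\Phi_k(\cdot,X)$ to obtain $h \in [\N]^{\N}$ with $\pRP_{\Phi_k}(h,X)$. The decisive move is to pass to the smallest coded $\omega$-model $\M = \{Y : Y \leq_{\T}^{\a} h \oplus X\}$, which exists by $\ACA_0^+$ (Lemma \ref{arithmetical reduction and omega-model}). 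Since for $g \in [\N]^{\N}$ we have $g \in \M$ exactly when $g \leq_{\T}^{\a} h \oplus X$, and since $\Phi_k$ is arithmetical and therefore absolute between the ground model and $\M$, the $\leq_{\T}^{\a}$-bounded disjunction $\pRP_{\Phi_k}(h,X)$ becomes the genuine Ramsey dichotomy $\M \models \RP_{\Phi_k}(h,X)$.

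With the full Ramsey property available inside $\M$, the internalized lemmas finish the job: because $\M \models \ACAo$, Lemma \ref{oneside for n} applied in $\M$ rules out the negative side and gives $\M \models \forall g\, \Phi_k(h \circ g, X)$, and then Lemma \ref{lightface ram to n jump} in $\M$ yields $\M \models F^i(h,X) = \HJ^i(X)$ for all $i \leq k$. In particular $F^k(h,X)$, being arithmetical in $h \oplus X$, is an element of $\M$ equal to $\HJ^k(X)$ there, so $\M \models \ACAo + \exists Y(Y = \HJ^k(X))$ with $X \in \M$; this is precisely $\beta^1_0\RFN(k)$. Combining the two inclusions gives deductive equivalence of the families over $\ACA_0^+$, and the closing assertion follows since, by Theorem \ref{Pi12part and hyperjumps}, any $\Pi^1_2$ consequence $\sigma$ of $\Pi^1_1\myhyphen\CAo$ is provable from $\ACAo + \beta^1_0\RFN(n)$ for some $n$, hence from $\ACA_0^+ + \rel(\Sigma^0_{m(n)}\Ram)$.

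I expect the main obstacle to be the absoluteness/relativization step: verifying carefully that passing to the smallest $\omega$-model converts the $\leq_{\T}^{\a}$-bounded pseudo-Ramsey dichotomy into the unrestricted Ramsey property $\M \models \RP_{\Phi_k}(h,X)$ (so that Lemma \ref{oneside for n} may be invoked internally), and confirming that the $\omega$-model quantifier $\forall g \in \M$ matches the $\forall g$ of the lightface lemmas. Pinning down that $h \circ g \in \M$ for every $g \in \M$, and that $\Phi_k$ and the operators $F^i$ remain absolute between $\M$ and the ground model, are the routine but essential checks underlying this step.
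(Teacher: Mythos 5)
Your proposal is correct and follows essentially the same route as the paper: the forward inclusion via Theorem \ref{betarfn to ram}, and the reverse inclusion by applying pseudo-Ramsey to the arithmetical formula $\Phi_k(\cdot,X)$, passing to the smallest coded $\omega$-model $\M = \{Y : Y \leq_{\T}^{\a} h \oplus X\}$, and invoking Lemmas \ref{oneside for n} and \ref{lightface ram to n jump} inside $\M$ to obtain $\M \models \exists Y(Y = \HJ^k(X))$. Your explicit attention to the absoluteness step converting $\pRP_{\Phi_k}$ into $\RP_{\Phi_k}$ inside $\M$ is a check the paper leaves implicit, but the argument is the same.
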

\begin{proof}
  As we have proved in Theorem \ref{betarfn to ram}, $\beta^1_0\RFN(n)$ implies $\rel(\Sigma^0_n\Ram)$ for each $n$.

  We show that $\beta^1_0\RFN(n)$ is provable from $\{\rel (\Sigma^0_n\Ram) : n \in \omega\}$.
  Assume $\ACA^+_0 + \{\rel \Sigma^0_n \Ram : n \in \omega\}$. Take an arbitrary $n \in \omega$.
  Let $X$ be a set. We will show that there exists a coded $\omega$-model $\M$ such that
  $X \in \M$ and $\M \models \exists Y(Y = \HJ^n(X))$.

  Now $\Phi_n(f,X)$ is an arithmetical formula for $f \in [\N]^{\N}$.
  Therefore, there exists an $h \in [\N]^{\N}$ such that
  $\pRP_{\Phi_n}(h,X)$ holds.
  By $\ACA_0^+$, take  $\M = \{Y : Y \leq_{\T}^{\a} h \oplus X\}$. Then $\M \models \ACAo + (\forall g\Phi_n(h \circ g, X)) \lor (\forall g \lnot \Phi_n(h \circ g,X))$. Thus, by Lemma \ref{oneside for n},
  $\M \models \ACAo + \forall g \Phi_n(h \circ g, X)$.
  Therefore, by Lemma \ref{lightface ram to n jump}, $\M \models \exists Y(Y = \HJ^n(X))$.
\end{proof}

  In this section, we have proved that
  \begin{itemize}
    \item for any $n \in \omega$, $\beta^1_0\RFN(n)$ implies $\rel(\Sigma^0_n\Ram)$, and
    \item for any $n \in \omega$, there is some $m \in \omega$ such that $\rel(\Sigma^0_m\Ram)$ implies $\beta^1_0\RFN(n)$
  \end{itemize}
  over $\ACA_0^+$.
  Therefore, it is natural to ask
  \begin{question}
    Is $\beta^1_0\RFN(n)$ provable from $\rel(\Sigma^0_{n+1}\Ram)$ over $\ACA_0^+$ ?
\end{question}
A precise analysis of relationship between hyperjumps and Ramsey's theorem is studied in \cite{Marcone_Ramsey} in the context of Weihrauch degrees.
It is expected a natural formalization of this work would give a positive answer for this question.

\section{Pseudo determinacy and $\beta^1_0\RFN$}
In this section, we introduce a $\Pi^1_2$ variant of determinacy which we call pseudo-determinacy.
Then we consider the hierarchy of pseudo-determinacy for $(\Sigma^0_1)_k$ games.
First of all, we introduce the notion of difference hierarchy and the determinacy.

\begin{definition}[Difference hierarchy]
  Let $k \in \omega$.
  We define the class $(\Sigma^0_1)_k$ of formulas as follows:
  \begin{itemize}
    \item a formula is $(\Sigma^0_1)_1$ if it is $\Sigma^0_1$,
    \item a formula $\varphi$ is $(\Sigma^0_1)_{k+1}$ if there are a $\Sigma^0_1$ formula $\psi$ and a $(\Sigma^0_1)_{k}$ formula $\theta$ such that $\varphi \equiv \psi \land \lnot \theta$.
  \end{itemize}
  We call the hierarchy formed by $\{(\Sigma^0_1)_k : k \in \omega\}$ the difference hierarchy of $\Sigma^0_1$ formulas.
\end{definition}

\begin{definition}\label{definition of determinacy}
  We call a function $S : \N^{<\N} \to \N$ a strategy\footnote{Usually, a strategy is a function defined over
  $\bigcup_{n} \N^{2n}$ or $\bigcup_{n} \N^{2n+1}$.}.
  Let $S$ and $S'$ be strategies. We say a function $f \in \N^{\N}$ is the play along $(S,S')$ (write $f = S \otimes S'$) if
  \begin{equation*}
      \forall n (f(2n) = S(f[2n]) \land f(2n+1) = S'(f[2n+1])).
  \end{equation*}

  Let $\varphi(\vec{X},f)$ be a formula for $f \in \N^{\N}$.
  Let $\vec{X}$ be sets.
  We say $\varphi(\vec{X},\bullet)$ is determined if
  \begin{align*}
    \exists S_0 \forall S_1 \varphi(\vec{X},S_0 \otimes S_1) \lor
    \exists S_1 \forall S_0 \lnot\varphi(\vec{X},S_0 \otimes S_1).
  \end{align*}
  Especially, if a strategy $S_0$ satisfies
  \begin{align*}
    \forall S_1 \varphi(\vec{X},S_0 \otimes S_1),
  \end{align*}
  then we say $\varphi(\vec{X},\bullet)$ is determined via a strategy $S_0$ for player $0$.
  If a strategy $S_1$ satisfies
  \begin{align*}
    \forall S_0 \lnot \varphi(\vec{X},S_0 \otimes S_1),
  \end{align*}
  then we say $\varphi(\vec{X},\bullet)$ is determined via a strategy $S_1$ for player $1$.

  We also say $\varphi$ is determined if for any $\vec{X}$, $\varphi(\vec{X},\bullet)$ is determined.
  Let $\Gamma$ be a class of formulas for $f \in \N^{\N}$. We say $\Gamma$ is determined (write $\Gamma\myhyphen\Det$) if any $\varphi \in \Gamma$ is determined.
\end{definition}

It is well-known that over $\RCAo$,
\begin{itemize}
  \item $\ATRo$ is equivalent to $\Sigma^0_1\myhyphen\Det$, and
  \item $\Pi^1_1\myhyphen\CAo$ is equivalent to $(\Sigma^0_1)_k\myhyphen\Det$ for $k > 1$.
\end{itemize}

In this section, we consider  $\Pi^1_2$ variants of the second clause.
\begin{definition}[pseudo-determinacy]\label{definition of rel det}
  Let $\varphi(\vec{X},f)$ be a formula for $f \in \N^{\N}$.
  For given sets $\vec{X}$, we say $\varphi(\vec{X},\bullet)$ is pseudo-determined if
  \begin{align*}
    \exists S_0 \forall S_1 \leq_{\T}^a S_0 \oplus \vec{X} \varphi(\vec{X},S_0 \otimes S_1) \lor
    \exists S_1 \forall S_0 \leq_{\T}^a S_1 \oplus \vec{X} \lnot\varphi(\vec{X},S_0 \otimes S_1)).
  \end{align*}
  If $\varphi(\vec{X},\bullet)$ is pseudo-determined via $S$, then we say $S$ is a pseudo-winning strategy for the game defined by $\varphi$ and $\vec{X}$. If $\vec{X}$ is clear from the context, we just say $S$ is a pseudo-winning strategy for the game defined by $\varphi$.
  If $\varphi(\vec{X},\bullet)$ is pseudo-determined for any $\vec{X}$, then we say $\varphi$ is pseudo-determined.

  Let $\Gamma$ be a class of formulas for $f \in \N^{\N}$. We say $\Gamma$ is pseudo-determined if any $\varphi \in \Gamma$ is pseudo-determined.
  Let $\rel(\Gamma\myhyphen\Det)$ denote the assertion that every game defined by a formula in $\Gamma$ is pseudo-determined.
\end{definition}
  We note that the statement [$\varphi$ is pseudo-determined] is of the form $\rel(\psi)$ for some $\psi$ in the sense of Section 5.
  Moreover, each $\rel((\Sigma^0_1)_n\myhyphen\Det)$ is axiomatizable by a $\Pi^1_2$ sentence of the form $\rel(\psi)$.

\begin{remark}
  For any $(\Sigma^0_1)_k$ formula $\varphi(X_1,\ldots,X_n,f)$, there is a $(\Sigma^0_1)_k$ formula $\psi(X,f)$ such that $\RCAo \vdash \forall X_1,\ldots,X_n,f(\varphi(X_1,\ldots,X_n,f) \leftrightarrow \psi(X_1 \oplus \cdots \oplus X_n,f)$. Therefore, when considering $\rel (\Sigma^0_1)_k\myhyphen\Det$, we may assume that any $(\Sigma^0_1)_k$
  formula has exactly one variable $X$ and one function variable $f$.

  Let $\varphi(X,f)$ be a $(\Sigma^0_1)_k$ formula. We may assume that
  for any $X$, all strategies for the game $\varphi(X,\bullet)$ computes $X$.
  Indeed, if $S$ is a strategy for player $0$, then $S$ is enough to be defined at sequences with even length. Thus, we may assume that $X(n) = S(1^{2n+1})$. Similarly, if $S$ is a strategy for player $1$, then we may assume that $X(n) = S(1^{2n})$.
  Therefore, $\varphi(X,f)$ is pseudo-determined if
  \begin{align*}
    \forall X(
    \exists S_0 \forall S_1 \leq_{\T}^a S_0 \varphi(X,S_0 \otimes S_1) \lor
    \exists S_1 \forall S_0 \leq_{\T}^a S_1 \lnot\varphi(X,S_0 \otimes S_1)).
  \end{align*}
\end{remark}

We note that for any $k \in \omega, k > 1$, $\rel (\Sigma^0_1)_k\myhyphen\Det$ is a $\Pi^1_2$ statement provable from $\Pi^1_1\myhyphen\CAo$. Thus, each $\rel (\Sigma^0_1)_k\myhyphen\Det$ is provable from some $\beta^1_0\RFN(n)$ over $\ACAo$.
We  show that $\beta^1_0\RFN(k;\ATRo)$ is enough to prove $\rel (\Sigma^0_1)_k\myhyphen\Det$.

To prove $(\Sigma^0_1)_2$ determinacy from $\Pi^1_1\myhyphen\CAo$, Tanaka proved the following lemma.
\begin{lemma}
  Let $\varphi(X,f)$ be a $(\Sigma^0_1)_2$ formula with exactly displayed free set variables.
  Assume $\ATRo$.
  For any $X$, if $\HJ(X)$ exists, then $\varphi(X,\bullet)$ is determined.
\end{lemma}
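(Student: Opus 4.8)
The plan is to adapt Tanaka's argument for $(\Sigma^0_1)_2$ determinacy while tracking that only a single hyperjump is consumed. First I would write $\varphi(X,f) \equiv \psi(X,f) \land \lnot\theta(X,f)$ with $\psi,\theta \in \Sigma^0_1$, and fix $X$-recursive upward-closed sets $W_1,W_2 \subseteq \N^{<\N}$ so that $\psi(X,f) \leftrightarrow \exists n\,(f[n] \in W_1)$ and $\theta(X,f) \leftrightarrow \exists n\,(f[n] \in W_2)$; thus player $0$ wins a play $f$ exactly when $f$ enters $W_1$ and forever avoids $W_2$. I reason throughout in $\ATRo$ together with the hypothesis that $H := \HJ(X)$ exists.

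Next I would analyze the avoidance game. For each position $p$, let $\mathcal{C}_p$ be the game in which, from $p$, player $1$ tries to drive the play into $W_2$ while player $0$ tries to avoid it forever; this is an open game for player $1$, hence determined by $\Sigma^0_1\myhyphen\Det$, i.e. by $\ATRo$. Let $D$ be the set of positions from which player $1$ wins, i.e. can force the play into $W_2$. The set $D$ is the least fixed point of the arithmetical attractor operator ($D_0 = W_2$, and $p$ is added once the player to move can step into $D$), so it is $\Pi^1_1$ in $X$; by the quoted fact that every $\Pi^{1,X}_1$ set is computable from $\HJ(X)$, the set $D$ exists and $D \leq_{\T} H$. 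From the attractor structure I would read off two $H$-recursive uniform strategies: on $D^c$, the strategy ``always move to a child outside $D$'' keeps the play out of $D \supseteq W_2$ forever (so player $0$ avoids $W_2$), and on $D$ the strategy ``move so as to decrease attractor rank'' drives the play into $W_2$ (for player $1$).

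Then I would set up the auxiliary open game. Put $G = W_1 \setminus D$ and consider the game with payoff $\exists n\,(f[n] \in G \land \forall m < n\; f[m] \notin D)$, i.e. reaching $G$ before $D$; this is $\Sigma^0_1(H)$, so it is determined by $\ATRo$ with $H$ as a parameter. The crux is that its winner is the winner of $\varphi(X,\bullet)$. If player $0$ wins it, I would splice the reaching strategy with the $H$-recursive $D^c$-avoidance strategy at the instant $G$ is reached: the resulting play lies in $W_1$ and never meets $W_2$, so $\varphi$ holds. If player $1$ wins it, I would splice the auxiliary strategy with the $H$-recursive $D$-forcing strategy at the instant the play first enters $D$: then either $\theta$ holds, or the play avoids $D$ forever and hence never meets $W_1$ (any $W_1$-position off $D$ already lies in $G$), so $\psi$ fails; either way $\lnot\varphi$. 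In both cases this produces a genuine winning strategy, establishing determinacy of $\varphi(X,\bullet)$.

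The main obstacle, and the only essential use of strength beyond $\ATRo$, is the formation of $D$: recognizing the avoidance winning set as a $\Pi^1_1(X)$ inductive definition and invoking the single hyperjump $H$ to obtain it as a set is exactly the ``one hyperjump'' content of the lemma. Everything else — the determinacy of the component open games and the existence and $H$-recursiveness of the splice strategies — is delivered by $\Sigma^0_1\myhyphen\Det$. The one delicate verification is the equivalence of the auxiliary game with the original, which reduces to the observation that $W_1 \cap D^c = G$, so a play avoiding $D$ forever also avoids $W_1$.
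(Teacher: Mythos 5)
The paper itself gives no proof of this lemma beyond the citation to \cite{tanaka1990weak}, so there is nothing in the text to compare against line by line; your argument is essentially the standard (Tanaka-style) unravelling of a difference-of-two-open-sets game, and its skeleton is sound. The decomposition into the avoidance game, its attractor $D$, and the auxiliary open game ``reach $G = W_1 \setminus D$ before $D$,'' together with the two splicings, is the right argument, and your case analysis is correct: in particular the key bookkeeping facts $W_2 \subseteq D$, $G \cap D = \varnothing$, and $W_1 \subseteq G \cup D$ do exactly the work you assign them, and the identification of ``one hyperjump'' with the formation of the $\Pi^{1,X}_1$ set $D$ is the right way to see why $\HJ(X)$ rather than full $\Pi^1_1$-comprehension suffices.

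The one step you should not wave at is the claim that player $1$ has a \emph{single} $H$-recursive strategy forcing the play from an arbitrary position of $D$ into $W_2$. Knowing $D$ as a set (which you correctly obtain as a $\Pi^{1,X}_1$ least fixed point, hence computable from $H = \HJ(X)$) only tells player $1$ how to \emph{stay inside} $D$: the fixed-point equations give ``some move back into $D$'' at player $1$'s turns, but a play that stays in $D$ forever without entering $W_2$ is a \emph{loss} for player $1$ in the avoidance game, so this is not yet a forcing strategy. The rank-decreasing strategy you invoke requires the stage-comparison prewellordering of the attractor as a set; you must argue that this relation is itself $\Pi^{1,X}_1$ (Moschovakis stage comparison) and hence computable from $H$, and that it is well founded so that the descent terminates in $W_2$. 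Alternatively one can avoid ranks entirely: since $\HJ(X)$ exists there is a coded $\beta$-model $\M$ containing $X$; $\M \models \Sigma^0_1\myhyphen\Det$, the statement ``$S$ is a winning reaching (resp.\ avoiding) strategy from $p$'' is $\Pi^1_1$ and therefore absolute between $\M$ and the ground model, so the set $\{p : \M \models \text{player $1$ wins the reachability game from } p\}$ is arithmetical in the code of $\M$, coincides with $D$, and comes equipped with a uniform, arithmetically-in-$\M$ selected family of genuinely winning strategies $S_p \in \M$ that can be spliced. Either repair is routine, but without one of them the Case~2 splice is not justified; with it, your proof is complete.
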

\begin{proof}
  See \cite{tanaka1990weak}.
\end{proof}
We extend this lemma to $(\Sigma^0_1)_k$ formulas.
\begin{lemma}
  Let $k \in \omega$ and $\varphi(X,f)$ be a $(\Sigma^0_1)_{k+1}$ formula with exactly displayed set variables.
  Over $\ATRo$, for any set $X$, if $\HJ^k(X)$ exists, then $\varphi(X,\bullet)$ is determined.
\end{lemma}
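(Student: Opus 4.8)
The plan is to prove a slightly stronger statement by (meta-)induction on $k$, carrying along explicit information about winning regions. For $m\ge 1$ let $D_m$ be the assertion that, over $\ATRo$, every $(\Sigma^0_1)_m$ game with parameter $Y$ is determined provided $\HJ^{m-1}(Y)$ exists, and let $R_m$ be the assertion that, over $\ATRo$, whenever $\HJ^{m}(Y)$ exists the set $W_Y$ of positions from which player $0$ wins a given $(\Sigma^0_1)_m$ game exists (uniformly in the defining formula) and satisfies $W_Y\le_{\T}\HJ^{m}(Y)$; from $W_Y$ one reads off a winning move at each position. The Lemma is exactly $D_{k+1}$. The base case $D_1$ is $\Sigma^0_1$-determinacy, which $\ATRo$ proves (\cite{Simpson}), and $R_1$ is the classical fact that the winning region of an open game is $\Sigma^1_1$, hence computable from $\HJ(Y)$. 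Here one already sees the crucial distinction that a single winning strategy \emph{from the root} is available in $\ATRo$, whereas deciding the winner \emph{at every position} needs the hyperjump.

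For the inductive step, assume $D_m$ and $R_m$ and fix a $(\Sigma^0_1)_{m+1}$ formula, which by definition has the form $\psi\land\lnot\theta$ with $\psi\in\Sigma^0_1$ and $\theta\in(\Sigma^0_1)_m$; suppose $\HJ^{m}(X)$ exists. Player $0$'s subgoal $\lnot\theta$ is, from player $1$'s side, a $(\Sigma^0_1)_m$ payoff, so by $R_m$ (available since $\HJ^{m}(X)$ exists) the set $D$ of positions from which player $0$ can force $\lnot\theta$ exists and is $\le_{\T}\HJ^{m}(X)$. The key idea is to reduce the original game to the \emph{open} game in which player $0$ tries to reach a position $p$ that both witnesses $\psi$ and lies in $D$. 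This reduced game has payoff $\Sigma^0_1$ in the oracle $D$; since $D$ exists, $\ATRo$ proves its determinacy, and relativizing $R_1$ to $D$ computes its winning region from $\HJ(D)\le_{\T}\HJ^{m+1}(X)$.

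It then remains to glue strategies. If player $0$ wins the reduced game, he follows the reduced-game strategy until he reaches a witnessing position $p\in D$ and from $p$ switches to his $\theta$-avoidance strategy (read off from $D$); the resulting play witnesses $\psi$ and never satisfies $\theta$, so $\varphi$ holds. If instead player $1$ wins the reduced game, he follows that strategy; along any resulting play either $\psi$ is never witnessed, whence $\lnot\psi$ and so $\lnot\varphi$, or $\psi$ is first witnessed at some $p\notin D$, and then, as $p$ lies outside player $0$'s winning region, determinacy $D_m$ of the $\theta$-game (which needs only $\HJ^{m-1}(X)$) lets player $1$ switch to a strategy forcing $\theta$ from $p$, so again $\lnot\varphi$. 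This establishes $D_{m+1}$; tracking the oracles used, namely $D\le_{\T}\HJ^{m}(X)$ together with the reduced open game's winning region $\le_{\T}\HJ^{m+1}(X)$, yields $R_{m+1}$ and closes the induction.

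I expect the main obstacle to be the inductive step itself rather than the bookkeeping: one must arrange the reduced open game so that reaching $\psi\cap D$ genuinely captures the conjunction $\psi\land\lnot\theta$, and verify that the two strategy-switches produce bona fide strategies computable from the available oracles. The delicate point throughout is the gap between the hyperjump needed to obtain a single winning strategy (level $m-1$) and the one needed to know the whole winning region (level $m$); keeping these apart is what makes the numerology $(\Sigma^0_1)_{k+1}\leadsto\HJ^{k}$ come out correctly, and Tanaka's $(\Sigma^0_1)_2$ case (the preceding lemma) is recovered as the instance $m=1$ of the step.
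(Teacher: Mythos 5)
Your overall skeleton is the same as the paper's: write the payoff as $\psi\land\lnot\theta$, form the set of positions that witness $\psi$ and from which player $0$ can still force $\lnot\theta$, play the auxiliary open game targeting that set, and glue strategies in the two cases. The difference is in how that set and the accompanying sub-strategies are obtained and certified. The paper takes a coded $\beta$-model $\M$ with $X\in\M\models\ATRo+\exists Y(Y=\HJ^k(X))$, defines $W$ via the (arithmetic-in-the-code) satisfaction relation of $\M$, selects the families $\langle S_0^\sigma\rangle$ and $\langle S_1^\sigma\rangle$ \emph{inside} $\M$ using choice available there, and then uses $\beta$-absoluteness of $\Pi^1_1$/$\Sigma^1_1$ statements to conclude that these strategies genuinely win in the ground model. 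You instead try to carry through the induction an explicit clause $R_m$ asserting that the winning region is computable from $\HJ^m$ \emph{and} that ``from $W_Y$ one reads off a winning move at each position.''

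That last clause is where the genuine gap sits. For $m\geq 2$ the winning region of a $(\Sigma^0_1)_m$ game does not locally determine winning moves by any simple rule such as ``stay in the winning region'' (that rule is sound only for the closed player of an open game; even the open player already needs the rank function rather than the region). More importantly, both of your gluing steps require a \emph{uniform family} of winning sub-strategies indexed by the switch positions $p$: the statement ``player $0$ (resp.\ $1$) has a winning strategy from $p$'' is $\Sigma^1_2$, so you cannot assemble such a family by $\Sigma^1_1$-choice in $\ATRo$, and you have given no argument that one is computable from the available oracles. This uniformization is exactly the work the paper's $\beta$-model performs (choice inside $\M$ plus absoluteness out of $\M$), and your proposal defers it to ``verify that the two strategy-switches produce bona fide strategies'' without supplying the mechanism. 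Until $R_m$ is strengthened to provide a single uniform winning (quasi-)strategy computable from $\HJ^m$ for whichever player wins, and proved in that form through the inductive step, the argument is incomplete; the cleanest repair is in effect to reintroduce the $\beta$-model.
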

\begin{proof}
  We will show by induction on $k \in \omega$.
  The case $k=0$ is trivial because $\ATRo$ proves $\Sigma^0_1$ determinacy.

  Let $k \in \omega$. Assume that $\ATRo$ proves that for any $X$, if $\HJ^k(X)$ exists then any $(\Sigma^{0,X}_1)_{k+1}$ game is determined.

  We reason in $\ATRo$. Let $X$ be a set such that $\HJ^{k+1}(X)$ exists. Let $\varphi(A,f)$ be a $(\Sigma^0_1)_{k+2}$ formula.
  Then, there is a $\Sigma^0_1$ formula $\psi(A,f)$ and a $(\Sigma^0_1)_{k+1}$ formula $\theta(A,f)$ such that
  $\varphi(A,f) \equiv \psi(A,f) \land \lnot \theta(A,f)$. Moreover, $\psi(A,f)$ is written as $\exists n \eta(A,f[n])$ for a $\Sigma^0_0$ formula $\eta$.
  Take a coded $\beta$ model $\M$ such that
  $X \in \M \models \exists Y(Y = \HJ^k(X))$.
  Define $W$ by
  \begin{align*}
    W = \{ \sigma \in \N^{<\N} : \eta(X,\sigma) \land \M \models \text{player 0 wins for } \lnot \theta^{\sigma}(X,\bullet) \}.
  \end{align*}
  Here, $\lnot \theta^{\sigma}(X,f)$ is the formula $\lnot \theta(X,\sigma \ast f)$ and $\sigma \ast f$ denotes the  concatenation of $\sigma$ and $f$.
  Let $\varphi'(f) \equiv \exists n (f[n] \in W)$. Then, the game defined by $\varphi'$ is determined.

  Case 1: player 0 wins the game $\varphi'$ via $S_0$.
  Now, for any $\sigma \in W$, $\M \models \exists S_0^{\sigma} \forall S_1^{\sigma} \lnot \theta(X,\sigma \ast S_0^{\sigma} \otimes S_1^{\sigma})$. Take a sequence $\langle S_0^{\sigma}\rangle_{\sigma \in W}$.
  Then, each $S_0^{\sigma}$ satisfies $\forall S_1^{\sigma} \lnot \theta(X,\sigma \ast S_0^{\sigma} \otimes S_1^{\sigma})$ because $\M$ is a $\beta$-model.
  Consider a strategy for player 0 such that
  \begin{itemize}
    \item while the play is not in $W$, the strategy mimics $S_0$,
    \item once the play $\sigma$ is in $W$, the strategy mimics $S_0^\sigma$.
  \end{itemize}
  Then this strategy is a winning strategy for player 0.

Case 2: player 1 wins the game $\varphi'$ via $S_1$.
Now we have that for each play $\sigma$ consistent with $S_1$, if $\eta(X,\sigma)$ then
\begin{align*}
  \M \models \text{player 0 has no winning strategy for } \lnot \theta^{\sigma}(X,\bullet).
\end{align*}
By induction hypothesis, $\M \models \theta^{\sigma}(X,\bullet) \text{ is determined}$. Hence for each $\sigma$ consistent with $S_1$, if $\eta(X,\sigma)$ then
$\M \models \exists S_1^{\sigma} \forall S_0^{\sigma} \theta(X,\sigma \ast S_0^{\sigma} \otimes S_1^{\sigma})$.
Take a sequence $\langle S_1^{\sigma} : \sigma \text{ is consistent with } S_1 \land \eta(X,\sigma) \rangle$ such that
$\M \models \forall S_0^{\sigma} \theta(X,\sigma \ast S_0^{\sigma} \otimes S_1^{\sigma})$.
Then, since $\M$ is a coded $\beta$-model, for each $S_0^{\sigma}$, $\forall S_1^{\sigma} \theta(X,\sigma \ast S_0^{\sigma} \otimes S_1^{\sigma})$ holds.
Consider a strategy for player 1 such that
\begin{itemize}
  \item while the play $\sigma$ does not satisfy $\eta(X,\sigma)$, the strategy mimics $S_1$,
  \item once the play $\sigma$ satisfies $\eta(X,\sigma)$, the strategy mimics $S_1^\sigma$.
\end{itemize}
Then this strategy is a winning strategy for player 1.
\end{proof}

%
%

\begin{theorem}\label{beta-rfn-atr0 and det}
  For each $k \in \omega, k > 0$, $\beta^1_0\RFN(k,\ATRo)$ proves $\rel(\Sigma^0_1)_{k+1}\Det$.
\end{theorem}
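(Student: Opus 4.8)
The plan is to imitate the $\beta$-submodel arguments already used in this section (e.g. the proof that $\beta^1_0\RFN(2)$ reflects $\beta^1_0\RFN(1;\ATRo)$, and Theorem~\ref{betarfn to ram}): from $\beta^1_0\RFN(k;\ATRo)$ I will extract a single coded $\omega$-model $\M$ with $X\in\M$, $\M\models\ATRo$ and $\M\models\exists Y(Y=\HJ^k(X))$, apply the previous lemma \emph{inside} $\M$ to determine the game, and then pull the internal winning strategy back to the ground model. By the remark reducing the number of parameters, it suffices to fix a $(\Sigma^0_1)_{k+1}$ formula $\varphi(X,f)$ with a single set variable and prove that $\varphi(X,\bullet)$ is pseudo-determined for every $X$.

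Given $X$, I would apply $\beta^1_0\RFN(k;\ATRo)$ to obtain coded $\omega$-models $X \in \M_0 \in_\beta \M_1 \in_\beta \cdots \in_\beta \M_k$ with $\M_k \models \ACAo + \ATRo$. Since each inclusion $\M_i \in \M_{i+1}$ together with $\M_{i+1}\models\ACAo$ forces $\M_i \subseteq \M_{i+1}$, we get $X \in \M_k$ and each $\M_i\models\ACAo$. The crucial step is to see that inside $\M_k$ the lower part of the chain becomes a genuine chain of $\beta$-models: since $\M_{k-1}\in_\beta\M_k$ and $\M_k\models\ACAo$, $\M_k$ regards $\M_{k-1}$ as a coded $\beta$-model; since the statement ``$\M_{k-2}\in_\beta\M_{k-1}$'' is arithmetical, hence absolute to the $\omega$-model $\M_k$, and a $\beta$-submodel of a $\beta$-model is again a $\beta$-model (by transitivity of $\Sigma^1_1$-absoluteness), $\M_k$ regards $\M_{k-2}$ as a $\beta$-model as well, and so on down to $\M_0$. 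Thus $\M_k$ sees a chain of $k$ coded $\beta$-models $X\in\M_0\in\cdots\in\M_{k-1}$, and by the relevant direction of the Observation this yields $\M_k\models\exists Y(Y=\HJ^k(X))$.

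Now I would apply the previous lemma inside $\M_k$: since $\M_k\models\ATRo$ and $\M_k\models\exists Y(Y=\HJ^k(X))$, we obtain $\M_k\models[\varphi(X,\bullet)\text{ is determined}]$, so there is a strategy $S\in\M_k$ winning the game for one of the players in the sense of $\M_k$. Say $\M_k\models\forall S_1\,\varphi(X,S\otimes S_1)$ (the other case is symmetric and gives the second disjunct). To finish, let $S_1$ be any ground-model strategy with $S_1\leq_{\T}^{\a}S\oplus X$. As $S,X\in\M_k$ and a coded $\omega$-model of $\ACAo$ is closed under arithmetical reduction, $S_1\in\M_k$ and hence $S\otimes S_1\in\M_k$; because $\varphi$ is arithmetical and $\M_k$ is an $\omega$-model, $\M_k\models\varphi(X,S\otimes S_1)$ is equivalent to the genuine truth of $\varphi(X,S\otimes S_1)$. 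Therefore $\forall S_1\leq_{\T}^{\a}S\oplus X\,\varphi(X,S\otimes S_1)$, which is exactly the first disjunct of pseudo-determinacy for $\varphi(X,\bullet)$. Hence $\rel((\Sigma^0_1)_{k+1}\myhyphen\Det)$ holds.

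The main obstacle is the middle step: verifying that the externally given chain of $\beta$-\emph{sub}models internalizes, inside the top model $\M_k$, as an honest chain of $\beta$-models of the correct length, so that precisely $\HJ^k(X)$ (neither one jump more nor fewer) is produced. This is the same absoluteness bookkeeping that appears in the earlier reflection proofs, and it is exactly the matching of the count $k$ with the difference-hierarchy level $(\Sigma^0_1)_{k+1}$ demanded by the determinacy lemma that makes the hypothesis $\beta^1_0\RFN(k;\ATRo)$ precisely sufficient.
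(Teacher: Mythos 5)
Your proposal is correct and follows essentially the same route as the paper: obtain a single coded $\omega$-model $\M$ with $X\in\M$ and $\M\models\ATRo+\exists Y(Y=\HJ^k(X))$, apply the preceding determinacy lemma inside $\M$, and pull the internal winning strategy back using closure of $\M$ under arithmetical reducibility together with arithmetical absoluteness. The only difference is that you carry out by hand the internalization of the $\beta$-submodel chain into $\M_k$, whereas the paper invokes the already-established single-model reformulation of $\beta^1_0\RFN(k)$ (adapted to $\beta^1_0\RFN(k;\ATRo)$) to get that model in one step.
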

\begin{proof}
  Let $\varphi(A,f)$ be a $(\Sigma^0_1)_{k+1}$ formula.

  Assume $\beta^1_0\RFN(k,\ATRo)$. Let $X$ be a set.
  By $\beta^1_0\RFN(k,\ATRo)$ take a coded $\omega$-model $\M$ such that
  $X \in \M \models \ATRo + \exists Y(Y = \HJ^k(X))$.
  Then, by the previous lemma, $\M \models \varphi(X,\bullet) \text{ is determined}$.
  Without loss of generality, we may assume that
  $\M \models \text{player 0 wins for $\varphi(X,\bullet)$}$.
  Let $S_0 \in \M$ be a winning strategy of player $0$. Then, for any $S_1 \leq_{\T}^{\a} S_0$, $S_1 \in \M$ and hence $\varphi(X,S_0 \otimes S_1)$.
\end{proof}

We next see the implication from pseudo-determinacy to $\beta^1_0\RFN$.
First we see the implication from $\rel (\Sigma^0_1)_2\myhyphen\Det$.
\begin{lemma}\label{transfinite jump of hyperjump}
  There exists a $(\Sigma^0_1)_2$ formula $\varphi(f,X,Y)$ such that $\ACAo$ proves the following.
  \begin{equation*}
    \forall \alpha : \WO \forall X (\text{$\varphi(\bullet,X,\alpha)$ is determined} \to \exists Y(Y = (\HJ(X))^{(\alpha)}).
  \end{equation*}
\end{lemma}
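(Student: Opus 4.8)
The plan is to fuse two classical reversals into a single game: the reversal $\Sigma^0_1\myhyphen\Det \to \ATRo$, which already extracts transfinite Turing jumps $Z^{(\alpha)}$ from open determinacy uniformly in the well order $\alpha$, and Tanaka's reversal $(\Sigma^0_1)_2\myhyphen\Det \to \exists Y(Y=\HJ(X))$, which extracts a hyperjump from the determinacy of a single difference-of-open game. Recall from Section 6 the trees $T(n,X)$, for which $n \in \HJ(X)$ iff $T(n,X)$ is ill-founded. I would let a play $f$ code a run in which player $0$ builds, level by level along the field of the well order $W$ in the third argument, a jump hierarchy $H=\langle H_\beta\rangle_\beta$ over a set $H_0$ that is meant to be $\HJ(X)$, while player $1$ challenges the construction and, on any declaration $n\in H_0$, is allowed to try to refute it. A declaration $n\in H_0$ binds player $0$ to keep extending a branch of $T(n,X)$ (a closed, hence $\Pi^0_1$, obligation), whereas all jump-hierarchy relations, limit clauses and bookkeeping are arithmetical and can be broken only at a finite stage (an open, $\Sigma^0_1$, event). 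The payoff $\varphi(f,X,W)$ for player $0$ is then the difference $\psi(f)\wedge\lnot\theta(f)$, where $\theta\in\Sigma^0_1$ says that player $0$ is first caught breaking a rule or abandoning a committed branch and $\psi\in\Sigma^0_1$ says that player $1$ is first to default; this is visibly $(\Sigma^0_1)_2$, and, crucially, its complexity is independent of $\alpha$ because the level $\beta$ under discussion is recorded inside the moves rather than by nested negations.

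Assuming that $\varphi(\bullet,X,\alpha)$ is determined, the argument would proceed as in the open-determinacy reversal. First I would check, in $\ACAo$, that player $1$ has no winning strategy: against a fixed strategy $S_1$, player $0$ can avoid ever defaulting, since each membership declaration it is forced to honor concerns an ill-founded $T(n,X)$, along which a branch can be followed step by step (here K\H{o}nig's lemma, available in $\ACAo$, together with the $\Sigma^1_1$ form of the definition of $\HJ$, supplies the needed branch against the fixed opponent), and the arithmetical hierarchy relations can likewise be kept correct; hence no $S_1$ can force $\psi\wedge\lnot\theta$ to fail. By determinacy it follows that player $0$ has a winning strategy $S_0$, which is a set handed to us by the hypothesis. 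Since $S_0$ never loses, the answers it dictates to the membership and hierarchy queries are mutually consistent and satisfy every jump-hierarchy relation; therefore, by arithmetical comprehension, the set $H=\{(n,\beta): S_0 \text{ declares } n\in H_\beta\}$ exists and is a genuine jump hierarchy over $\HJ(X)$ along $\alpha$, so that $Y=(\HJ(X))^{(\alpha)}$ exists.

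The main obstacle is exactly this no-winning-strategy-for-player-$1$ step, carried out inside $\ACAo$ before the set $(\HJ(X))^{(\alpha)}$ is available. The delicacy is that player $0$'s honest declarations about $\HJ(X)$-membership are $\Sigma^1_1$ facts, so one cannot simply fix the set $\HJ(X)$ and copy from it; instead the difference-of-open structure must be arranged, in the style of Tanaka's and MedSalem--Tanaka's games, so that a failed refutation by player $1$ corresponds to well-foundedness and a completed branch of player $0$ to ill-foundedness, and one argues relative to the opponent's strategy together with finite approximations that a non-defaulting continuation always exists. A secondary point requiring care is to verify that the recovered $H$ is total along the whole field of $\alpha$ and respects the limit clauses; this is where well-orderedness of $\alpha$ enters, ruling out any undefined branch of the recursion and ensuring that the extracted $Y$ equals $(\HJ(X))^{(\alpha)}$ rather than a mere partial approximation.
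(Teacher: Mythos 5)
The paper offers no proof of this lemma at all --- it simply cites Tanaka's paper --- so your proposal has to be judged against the construction one would actually need. Your architecture (a single game in which player $0$ builds a jump hierarchy along the given well order over a base set intended to be $\HJ(X)$, with $\Pi^0_1$ branch-following obligations attached to membership declarations and $\Sigma^0_1$-detectable defaults for the arithmetical bookkeeping) is the right template, fusing Steel's game for $\ATRo$ with Tanaka's game for the hyperjump. The genuine gap is at exactly the step you yourself flag as the main obstacle, and your proposed resolution of it does not work. You claim player $1$ has no winning strategy because, against a fixed $S_1$, player $0$ can ``avoid ever defaulting'' by following a branch of each ill-founded $T(n,X)$ step by step, with K\H{o}nig's lemma supplying the branch. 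This fails twice over. First, the trees $T(n,X)$ are trees on $\N$, not finitely branching, so K\H{o}nig's lemma does not apply; and even where a path exists, choosing an extendible successor of a node is itself a $\Sigma^1_1$-complete query, which $\ACAo$ cannot answer whether or not the opponent is fixed. Second, player $0$ must at some finite stage commit to whether $n\in\HJ(X)$ (start a branch, or decline and expose himself to attack); a $\Sigma^1_1$-complete decision cannot be deferred within a single play, so ``honest play'' is simply not available in $\ACAo$. The actual reversal does not show that player $0$ defeats every fixed $S_1$ honestly; it assumes $S_1$ is winning and derives a contradiction by strategy-stealing, using $S_1$ itself as an oracle to generate, level by level along $\alpha$, arithmetical definitions of the objects player $0$ would need --- and this is also where the well-foundedness of $\alpha$ is genuinely consumed, via arithmetical transfinite induction relative to $S_1$. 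None of that machinery appears in your sketch, so the hard lemma is asserted rather than proved.

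A secondary but real problem is your payoff set. With $\psi$ read literally as ``player $1$ is first to default,'' the condition $\psi\wedge\lnot\theta$ makes player $0$ lose every play in which neither side defaults, so a completely passive player $1$ wins outright and the game encodes nothing. The standard fix is to make player $0$'s winning set (player $0$ never defaults) $\wedge$ (player $1$'s committed challenge is refuted at a finite stage), i.e.\ $\Pi^0_1\wedge\Sigma^0_1$, which is $(\Sigma^0_1)_2$ in the paper's sense, with the rules forcing player $1$ to commit to a definite challenge; handling the case where player $1$ never commits without leaving the class $(\Sigma^0_1)_2$ is part of the delicacy you would need to address explicitly.
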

\begin{proof}
  See \cite{tanaka1990weak}.
\end{proof}

\begin{theorem}\label{rel-det and TLPP}
  Over $\ACA_0^+$, $\rel (\Sigma^0_1)_2\myhyphen\Det$ proves $\TLPP$.
\end{theorem}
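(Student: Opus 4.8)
The plan is to deduce $\TLPP$ by producing, for a given ill-founded tree $T$ and well-order $\alpha$, a coded $\omega$-model $\M$ of $\ACAo$ which (i) contains $T$ and $\alpha$, (ii) contains a genuine path of $T$, so that $\M$ correctly regards $T$ as ill-founded, and (iii) satisfies $\exists Y(Y = (\HJ(T))^{(\alpha)})$. The leftmost$^{\M}$ path of $T$ will then be a $\Delta^0_{\alpha+1}$ leftmost path in the real world. Reasoning in $\ACA_0^+ + \rel((\Sigma^0_1)_2\myhyphen\Det)$, I would fix an ill-founded $T$, a well-order $\alpha$, and, using that $T$ is ill-founded, a real path $h \in [T]$. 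Let $\varphi(f,X,\alpha)$ be the $(\Sigma^0_1)_2$ formula supplied by Lemma \ref{transfinite jump of hyperjump}, and form the $(\Sigma^0_1)_2$ game with single parameter $X := T \oplus \alpha \oplus h$ obtained from $\varphi(\bullet,T,\alpha)$ by adjoining the vacuous parameter $h$ (cf. the remark following Definition \ref{definition of rel det}). Applying $\rel((\Sigma^0_1)_2\myhyphen\Det)$ yields a pseudo-winning strategy $S$, and by $\ACA_0^+$ (Lemma \ref{arithmetical reduction and omega-model}) I set $\M = \{Z : Z \leq_{\T}^{\a} S \oplus X\}$, the smallest coded $\omega$-model of $\ACAo$ containing $S \oplus X$.

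The first key step is to upgrade the pseudo-winning strategy to genuine determinacy inside $\M$. Every strategy $S' \in \M$ satisfies $S' \leq_{\T}^{\a} S \oplus X$, so the pseudo-winning clause applies to the play $S \otimes S'$; since $\varphi$ is arithmetical and $\M \models \ACAo$, satisfaction is absolute and one obtains $\M \models$ ``$\varphi(\bullet,T,\alpha)$ is determined''. As $\WO(\alpha)$ is $\Pi^1_1$ and hence downward absolute to the $\omega$-submodel $\M$, applying Lemma \ref{transfinite jump of hyperjump} inside $\M$ gives $Y \in \M$ with $\M \models Y = (\HJ(T))^{(\alpha)}$; in particular $\M \models \exists W(W = \HJ(T))$. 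Because $h \in \M \cap [T]$, the model $\M$ genuinely regards $T$ as ill-founded, so (exactly as in the proof of Theorem \ref{LPP and Pi11CA}) $\M$ forms the pruned subtree of $T$, which is $\leq_{\T} \HJ(T) \leq_{\T} Y$, and computes its leftmost$^{\M}$ path $g \in \M$. Since ``$g \in [T]$'' is arithmetical, $g$ is a genuine path of $T$.

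Finally I would check that $g$ witnesses $\TLPP$. Since $T \oplus g \leq_{\T} \HJ(T)$ in $\M$ and the $\alpha$-jump is absolute for $\omega$-models (the jump hierarchy witnessing $Y = B^{(\alpha)}$ is arithmetically checkable, in contrast to the hyperjump, which need not be absolute by the remark after Lemma \ref{Hyperjump in the ground model is also hyperjump in a coded model}), monotonicity of the transfinite jump gives $(T \oplus g)^{(\alpha)} \leq_{\T} (\HJ(T))^{(\alpha)} = Y \in \M$, so $(T\oplus g)^{(\alpha)}$ exists and lies in $\M$. Hence any real $f \leq_{\T} (T\oplus g)^{(\alpha)}$ is in $\M$; if moreover $f \in [T]$, then $\M \models f \in [T]$ and the leftmostness$^{\M}$ of $g$ yields $g \leq_l f$, which transfers back since the relation is arithmetical. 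Thus $\forall f \leq_{\T} (T\oplus g)^{(\alpha)}(f \in [T] \to g \leq_l f)$, i.e., $g$ is a $\Delta^0_{\alpha+1}$ leftmost path. The same construction with an arbitrary set in place of $T$ produces a real $Y = W^{(\alpha)}$ with $X \leq_{\T} W$, hence $X^{(\alpha)}$ exists for all $X$ and $\alpha$, so $\ATRo$ holds and $\TLPP$ is well-posed.

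I expect the main obstacle to be the interaction between the model's possibly incorrect hyperjump and the genuinely correct $\alpha$-jump. One must insert a real path of $T$ into the parameters so that $\M$ does not mistakenly deem $T$ well-founded (which would make the internal leftmost-path construction vacuous), while still relying on the absoluteness of transfinite Turing-jump hierarchies to carry the bound $(T \oplus g)^{(\alpha)} \leq_{\T} Y$ from inside $\M$ out to the real world. Getting these two absoluteness considerations to coexist — $\Sigma^1_1$ (hyperjump) facts only internal to $\M$, but arithmetical jump-hierarchy facts genuinely absolute — is the delicate point of the argument.
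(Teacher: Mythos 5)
Your proof is correct and follows essentially the same route as the paper's: both apply $\rel((\Sigma^0_1)_2\myhyphen\Det)$ to Tanaka's game from Lemma \ref{transfinite jump of hyperjump}, take a pseudo-winning strategy, and pass to a coded $\omega$-model of $\ACAo$ in which a transfinite jump of the hyperjump exists, so that the internal leftmost path is a $\Delta^0_{\alpha+1}$ leftmost path externally. The only difference is organizational — the paper plays the game for $\alpha\cdot\omega$ and routes through the reflection characterization of $\TLPP$ (Lemma \ref{characterization of TLPP by reflection}) to obtain a model closed under $\alpha$-jump, whereas you play the game for $\alpha$ itself and bound $(T\oplus g)^{(\alpha)}\leq_{\T}(\HJ(T))^{(\alpha)}$ by monotonicity — and you are, if anything, more careful than the paper in taking the \emph{smallest} $\omega$-model containing the strategy and in adjoining a genuine path of $T$ to the parameters so that the model does not deem $T$ well-founded.
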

\begin{proof}
  By \ref{characterization of TLPP by reflection}, it is enough to show that
  \begin{align*}
    \forall \alpha : \WO \forall X \exists \M
    (&\alpha,X \in \M, \\
    &\M \models \ACAo + \exists Y(Y= \HJ(X)), \\
    &\text{$\M$ is closed under $\alpha$-jump}).
  \end{align*}

  Let $\varphi(f,X,Y)$ be the formula in the Lemma \ref{transfinite jump of hyperjump}.
  Take a well-ordering $\alpha$ and a set $X$.
  By $\rel (\Sigma^0_1)_2\myhyphen\Det$ take a pseudo-winning strategy $S$ for the game $\varphi(\bullet,X,\alpha \cdot \omega)$.
  Let $\M$ be a coded $\omega$-model of $\ACAo$ containing $X,\alpha \cdot \omega,S$.
  Then $\M \models \exists Y(Y = (\HJ(X))^{(\alpha \cdot \omega)})$.

  We define a coded $\omega$-model $\M'$ by
  \begin{equation*}
    \M' = \{A \in \M : \M \models \exists n(A \leq_{\T} (\HJ(X))^{(\alpha \cdot n)})\}.
  \end{equation*}
  Then, $\M'$ is a coded $\omega$-model of $\ACAo$ closed under $\alpha$-jump.
  Moreover, since $\M' \in \M$ and $\M \models [\HJ(X) \in \M']$, $\M' \models \exists Y(Y = \HJ(X))$ by Lemma \ref{Hyperjump in the ground model is also hyperjump in a coded model}.
\end{proof}
\begin{corollary}
  $\beta^1_0\RFN(1)$ is not enough to prove $\rel (\Sigma^0_1)_2\myhyphen\Det$.
\end{corollary}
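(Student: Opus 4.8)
The plan is to argue by contradiction, combining Theorem \ref{rel-det and TLPP} with the consistency-strength separation $\beta^1_0\RFN(1) < \TLPP$ established above, and then to invoke G\"odel's second incompleteness theorem. The statement ``$\beta^1_0\RFN(1)$ is not enough to prove $\rel (\Sigma^0_1)_2\myhyphen\Det$'' should be read as: $\ACAo + \beta^1_0\RFN(1)$ does not prove $\rel (\Sigma^0_1)_2\myhyphen\Det$.

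First I would suppose, toward a contradiction, that $\ACAo + \beta^1_0\RFN(1) \vdash \rel (\Sigma^0_1)_2\myhyphen\Det$. Since $\beta^1_0\RFN(1)$ implies $\beta^1_0\RFN(0)$ over $\ACAo$ and $\beta^1_0\RFN(0)$ is equivalent to $\ACA_0^+$, the base theory $\ACA_0^+$ required by Theorem \ref{rel-det and TLPP} is available inside $\ACAo + \beta^1_0\RFN(1)$. Hence that theorem yields $\ACAo + \beta^1_0\RFN(1) \vdash \TLPP$.

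Next I would unwind the notation $\beta^1_0\RFN(1) < \TLPP$: by the definition of $<$, it says that $\ACAo + \TLPP$ proves $\Con(\ACAo + \beta^1_0\RFN(1))$. Chaining this with the previous step gives $\ACAo + \beta^1_0\RFN(1) \vdash \Con(\ACAo + \beta^1_0\RFN(1))$. But $\ACAo + \beta^1_0\RFN(1)$ is a recursively axiomatized extension of $\ACAo$ which is consistent, since $\beta^1_0\RFN(1)$ is provable from the (consistent) theory $\Pi^1_1\myhyphen\CAo$. This contradicts G\"odel's second incompleteness theorem, and the corollary follows.

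I expect no genuine obstacle here: the mathematical content is already carried by Theorem \ref{rel-det and TLPP} and by the separation $\beta^1_0\RFN(1) < \TLPP$, so the corollary reduces to a bookkeeping argument. The only points requiring care are to confirm that $\beta^1_0\RFN(1)$ indeed supplies the $\ACA_0^+$ under which Theorem \ref{rel-det and TLPP} was proved, so that the passage to $\TLPP$ is legitimate over $\ACAo + \beta^1_0\RFN(1)$, and that the consistency statement furnished by $\beta^1_0\RFN(1) < \TLPP$ is that of the full theory $\ACAo + \beta^1_0\RFN(1)$ rather than of the isolated sentence $\beta^1_0\RFN(1)$.
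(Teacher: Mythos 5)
Your proposal is correct and is essentially the paper's own argument: the paper proves the corollary in one line from the chain $\beta^1_0\RFN(1) < \TLPP \leq \rel (\Sigma^0_1)_2\myhyphen\Det$, which is exactly the contradiction-via-G\"odel bookkeeping you spell out. The two points you flag for care (that $\beta^1_0\RFN(1)$ supplies $\ACA_0^+$, and that the consistency statement is that of the full theory over $\ACAo$) are handled correctly and match the paper's conventions.
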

\begin{proof}
  It follows from $\beta^1_0\RFN(1) < \TLPP \leq \rel (\Sigma^0_1)_2\myhyphen\Det$.
\end{proof}
\begin{remark}
  In his paper \cite{tanaka1990weak}, Tanaka pointed that $\ACAo$ is not strong enough to prove ``$\Pi^1_1$ comprehension implies $(\Sigma^0_1)_2\myhyphen\Det$" by using the axiomatic system lightface $\Pi^1_1$ comprehension.
  The previous corollary is another expression of this fact.
\end{remark}

At last, we see the general case.
The point is the following lemma.

\begin{lemma}
  Let $n > 0$.
  Then there is a $(\Sigma^0_1)_{p(n)}$ formula $G_n(X,f)$ such that $\ACAo$ proves that
  \begin{itemize}
    \item $\forall X (G_n(X,\bullet) \text{ is determined} \to \HJ^n(X) \text{ exists.})$
    \item Any winning strategy for $G_n$ computes $X$.
  \end{itemize}
  Here, $p(n)$ is a certain primitive recursive function.
\end{lemma}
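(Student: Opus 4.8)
The plan is to construct $G_n$ by recursion on $n$, so that $p$ is itself defined by a recursion and is therefore primitive recursive. Throughout I will in fact establish the following sharper statement, from which both bullet points follow: \emph{$\ACAo$ proves that for every $X$ and every winning strategy $S$ for $G_n(X,\bullet)$ (for either player), both $X$ and $\HJ^n(X)$ are computable from $S$ (indeed from a fixed Turing jump of $S$).} Granting this, if $G_n(X,\bullet)$ is determined then a winning strategy $S$ exists, and $\HJ^n(X)\leq_{\T} S$ together with arithmetical comprehension yields the existence of $\HJ^n(X)$; the clause that winning strategies compute $X$ is the other half. As in the Remark preceding this section, I will arrange the coding of moves so that $X(m)$ can always be read off the moves dictated by any strategy, which secures the ``$X\leq_{\T} S$'' part uniformly at every level.

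For the base case $n=1$ I take $G_1$ to be the $(\Sigma^0_1)_2$ game furnished by Lemma \ref{transfinite jump of hyperjump} with the well-order parameter set to a fixed finite value, so that its determinacy produces $\HJ(X)$; by Tanaka's analysis \cite{tanaka1990weak} a winning strategy for either player computes $\HJ(X)$. Thus $p(1)=2$. For the inductive step, write $H(Z,\bullet)$ for the relativized single-hyperjump game of the base case, a $(\Sigma^0_1)_2$ game whose winning strategies compute $\HJ(Z)$ from $Z$. I will define $G_{n+1}(X,\bullet)$ as an amalgam of the inductive game $G_n(X,\bullet)$ with $H(W,\bullet)$, where $W$ is the set that the course of play \emph{declares} to be $\HJ^n(X)$: a single play is decoded into a play $f_0$ of $G_n(X,\bullet)$, the declared set $W$, and a play $f_1$ of $H(W,\bullet)$, and the payoff asserts that the $G_n$-part is won (which, by the induction hypothesis, forces the declared $W$ to be $\HJ^n(X)$) \emph{and} the $H(W,\bullet)$-part is won. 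Since the $G_n$-payoff is $(\Sigma^0_1)_{p(n)}$ and the $H$-payoff is $(\Sigma^0_1)_2$, this Boolean combination lies in $(\Sigma^0_1)_{p(n)+2}$, so I may set $p(n+1)=p(n)+2$ and obtain $p(n)=2n$ (a sharper $p(n)=n+1$, matching the bound of Theorem \ref{beta-rfn-atr0 and det}, is obtainable by the more careful amalgamation of MedSalem--Tanaka, but is not needed here).

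It then remains to extract $\HJ^{n+1}(X)$ arithmetically from a winning strategy $S'$ for $G_{n+1}(X,\bullet)$, working in $\ACAo$. First, the behaviour of $S'$ on the $G_n$-coordinate induces a strategy in $G_n(X,\bullet)$ that must itself be winning --- otherwise the opponent could defeat $S'$ by exploiting a loss on that coordinate --- so by the induction hypothesis $W=\HJ^n(X)$ is computable from $S'$. With $W$ in hand, $S'$ likewise induces a winning strategy for $H(W,\bullet)$, whence $\HJ(W)=\HJ(\HJ^n(X))=\HJ^{n+1}(X)$ is computable from $S'\oplus W$ and hence from $S'$. The symmetric case in which player $1$ wins is handled the same way, using that in the base game a winning strategy for either player computes the relevant hyperjump.

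I expect the main obstacle to be precisely the amalgamation in the inductive step: arranging the combined payoff so that a win in $G_{n+1}$ genuinely \emph{projects} to a win in the embedded $G_n$-subgame (so that the induction hypothesis can be applied to recover $\HJ^n(X)$), while simultaneously keeping the difference-hierarchy complexity pinned at $(\Sigma^0_1)_{p(n)+2}$ and keeping the declared $W$ forced to equal $\HJ^n(X)$. This is a matter of careful bookkeeping in the style of Tanaka's and MedSalem--Tanaka's determinacy reversals rather than of any new idea, but it is the delicate part, since a sloppy encoding would either break the projection property or inflate the difference-hierarchy level beyond what the recursion for $p$ allows.
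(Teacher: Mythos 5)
The paper does not give a self-contained proof here: it simply cites Proposition 6 of \cite{pacheco2022determinacy}, so your attempt to build $G_n$ by recursion is a genuinely different (and more ambitious) route. Unfortunately the inductive step has a real gap, in two places.

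First, your payoff for $G_{n+1}$ is supposed to assert that ``the $G_n$-part is won, which forces the declared $W$ to be $\HJ^n(X)$.'' This conflates a property of a single play with a property of a strategy. The payoff of a game must be a formula evaluated on one play $f$; a single play lying in the payoff set of $G_n(X,\bullet)$ certifies nothing about $\HJ^n(X)$ --- it is only a \emph{winning strategy} for $G_n$ that computes $\HJ^n(X)$, and even then only via a decoding procedure applied to the whole strategy, not to any one play. Consequently there is no condition expressible on a single play, let alone one in the finite difference hierarchy over $\Sigma^0_1$, that ``forces'' the declared $W$ to equal $\HJ^n(X)$: the statement $W=\HJ^n(X)$ is $\Pi^1_1$-hard already for $n=1$ and strictly worse for larger $n$. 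Second, the two cases are not symmetric. If player $0$ wins your conjunctive game, the strategy does project to winning strategies in each conjunct (fix dummy opponent moves in the other coordinate), but if player $1$ wins, she only guarantees that \emph{at least one} of the two subgames is lost by player $0$, which yields a winning strategy in neither conjunct; moreover in that case $W$ is declared by the losing player $0$ and bears no relation to $\HJ^n(X)$ at all, so there is nothing to hand to the induction hypothesis. (The complexity bookkeeping $D_{p(n)}\cap D_2\subseteq D_{p(n)+2}$ is fine --- in fact $p(n)+1$ suffices --- but that is the least of the issues.) The construction in \cite{pacheco2022determinacy} avoids both problems by not conjoining payoffs: player I asserts a level-by-level description of the iterated hyperjump and player II may challenge any assertion by launching a verification subgame (exhibiting or refuting paths through the relevant trees), so that the $W$'s are never ``declared and trusted'' but are certified move-by-move, and the difference-hierarchy level grows with the depth of nested challenges rather than by Boolean combination. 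If you want a self-contained proof you would need to reproduce that challenge--response architecture, as in \cite{tanaka1990weak} and its iterated refinements.
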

\begin{proof}
  See the proof of Proposition 6 in \cite{pacheco2022determinacy}.
\end{proof}

\begin{theorem}
  Let $n > 0$. Then, over $\ACA_0^+$,
  $\rel (\Sigma^0_1)_{p(n)}\myhyphen\Det$ implies $\beta^1_0\RFN(n)$.
\end{theorem}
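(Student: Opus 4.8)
The plan is to combine the preceding lemma, which produces the game $G_n(X,\bullet)$ whose determinacy forces the existence of $\HJ^n(X)$, with the characterization of $\beta^1_0\RFN(n)$ as the assertion that every $X$ lies in a coded $\omega$-model $\M \models \ACAo + \exists Y(Y = \HJ^n(X))$. The key mechanism is that passing to the smallest $\omega$-model generated by a pseudo-winning strategy converts pseudo-determinacy into genuine determinacy inside that model, so that the lemma can be applied internally.

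First I would fix a set $X$ and apply $\rel (\Sigma^0_1)_{p(n)}\myhyphen\Det$ to the formula $G_n(X,\bullet)$ supplied by the preceding lemma, obtaining a pseudo-winning strategy $S$; for definiteness say $S = S_0$ witnesses $\forall S_1 \leq_{\T}^{\a} S_0 \oplus X\, G_n(X, S_0 \otimes S_1)$, the player-$1$ case being entirely symmetric. Working in $\ACA_0^+$, by Lemma \ref{arithmetical reduction and omega-model} I would form the coded $\omega$-model $\M = \{Y : Y \leq_{\T}^{\a} S \oplus X\}$, which is the smallest coded $\omega$-model of $\ACAo$ containing $S \oplus X$; in particular $X \in \M$ and $\M \models \ACAo$.

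The heart of the argument is to verify that $\M \models [G_n(X,\bullet)$ is determined$]$. The sets of $\M$ are exactly the $Y \leq_{\T}^{\a} S \oplus X$, so the strategies belonging to $\M$ are precisely those quantified over in the pseudo-winning clause for $S$. Hence $\forall S_1 \leq_{\T}^{\a} S_0 \oplus X\, G_n(X, S_0 \otimes S_1)$ says exactly that $S_0$ defeats every opponent strategy lying in $\M$. Since $G_n$ is arithmetical and $\M$ is an $\omega$-model of $\ACAo$, truth of $G_n$ in $\M$ agrees with truth in the ground model, so this is literally $\M \models \forall S_1\, G_n(X, S_0 \otimes S_1)$, i.e.\ $\M \models [$player $0$ wins$]$. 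As the preceding lemma is proved in $\ACAo$ and $\M \models \ACAo$, relativizing it gives $\M \models \exists Y(Y = \HJ^n(X))$. Because $X \in \M$ and $\M \models \ACAo + \exists Y(Y = \HJ^n(X))$, the $\HJ^n$-characterization of $\beta^1_0\RFN(n)$ then yields $\beta^1_0\RFN(n)$ over $\ACAo$, completing the argument.

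The main obstacle is the absoluteness step linking the external quantifier $\forall S_1 \leq_{\T}^{\a} S_0 \oplus X$ with the internal quantifier $\M \models \forall S_1$. This relies on two facts already available: that $\M = \{Y : Y \leq_{\T}^{\a} S \oplus X\}$ is closed downward under arithmetical reducibility and is a model of $\ACAo$, so that arithmetical satisfaction in $\M$ coincides with truth; and that the play $S_0 \otimes S_1$ is uniformly computable from $S_0 \oplus S_1$ and hence stays inside $\M$ whenever $S_1 \in \M$. Once these are in place, handling the symmetric player-$1$ case and invoking the two lemmas are routine.
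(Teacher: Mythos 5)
Your proof is correct and follows essentially the same route as the paper's: obtain a pseudo-winning strategy $S$ for $G_n(X,\bullet)$, pass to a coded $\omega$-model of $\ACAo$ containing $S$ in which the game becomes genuinely determined, and apply the preceding lemma internally to get $\M \models \exists Y(Y = \HJ^n(X))$. You are in fact slightly more explicit than the paper in taking $\M$ to be the \emph{smallest} coded $\omega$-model of $\ACAo$ containing $S \oplus X$, which is exactly what makes the internal quantifier over strategies coincide with the external $\leq_{\T}^{\a}$-bounded quantifier in the pseudo-determinacy clause.
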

\begin{proof}
  We reason in $\ACA_0^+ + \rel (\Sigma^0_1)_{p(n)}\myhyphen\Det$.
  Let $X$ be a set.
  By $\rel (\Sigma^0_1)_{p(n)}$, take a pseudo-winning strategy $S$ for the game $G_n(X,\bullet)$.
  Let $\M$ be an $\omega$-model of $\ACAo$ containing $S$.
  Then $\M \models [G_n(X,\bullet) \text{ is determined via }S]$, and hence $\M \models \exists Y(Y = \HJ^n(X))$.
  This completes the proof.
\end{proof}

\begin{corollary} \label{Det and hyperjumps}
  Over $\ACA^+_0$, $\{\rel ((\Sigma^0_1)_n\myhyphen\Det) : n \in \omega\}$ and $\{\beta^1_0\RFN(n) : n \in \omega\}$ prove the
  same sentences.
  Therefore, any $\Pi^1_2$ sentence provable from $\Pi^1_1\myhyphen\CAo$ is provable from
  $\ACA^+_0 + \rel ((\Sigma^0_1)_n\myhyphen\Det))$ for some $n \in \omega$.
\end{corollary}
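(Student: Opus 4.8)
The plan is to obtain this corollary by assembling the two implications already proved, exactly mirroring the argument for Ramsey's theorem in Theorem \ref{Ram and hyperjumps}. It suffices to show that over $\ACA_0^+$ the two axiom families $\{\rel((\Sigma^0_1)_n\myhyphen\Det) : n \in \omega\}$ and $\{\beta^1_0\RFN(n) : n \in \omega\}$ are mutually derivable, in the sense that each member of one family is provable from finitely many members of the other over $\ACA_0^+$; the theories they axiomatize then prove the same sentences.

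For the direction from the $\beta^1_0\RFN$-hierarchy to pseudo-determinacy, I would first record that $\beta^1_0\RFN(n)$ proves $\beta^1_0\RFN(n-1;\ATRo)$ for $n>0$. Indeed, given a tower $X \in \M_0 \in_\beta \cdots \in_\beta \M_n$ with $\M_n \models \ACAo$ produced by $\beta^1_0\RFN(n)$, the model $\M_{n-1}$ is a coded $\beta$-submodel of $\M_n$, so by Lemma \ref{beta-model satisfies atro} we have $\M_n \models [\M_{n-1} \models \ATRo]$; since satisfaction of the fixed theory $\ATRo$ by a coded $\omega$-model is an arithmetical condition on the codes and hence absolute, $\M_{n-1} \models \ATRo$ actually holds, yielding the shorter tower required by $\beta^1_0\RFN(n-1;\ATRo)$. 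Theorem \ref{beta-rfn-atr0 and det} then gives $\rel((\Sigma^0_1)_n\myhyphen\Det)$ for $n \geq 2$; the base case $\rel(\Sigma^0_1\myhyphen\Det)$ is a $\Pi^1_2$ consequence of $\Pi^1_1\myhyphen\CAo$ and so is provable from some $\beta^1_0\RFN(m)$ by Theorem \ref{Pi12part and hyperjumps}. Hence $\{\beta^1_0\RFN(n)\}$ proves every $\rel((\Sigma^0_1)_n\myhyphen\Det)$.

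For the converse direction I would invoke the preceding theorem verbatim: for each $n>0$, over $\ACA_0^+$ the sentence $\rel((\Sigma^0_1)_{p(n)}\myhyphen\Det)$ proves $\beta^1_0\RFN(n)$. Thus $\{\rel((\Sigma^0_1)_n\myhyphen\Det)\}$ proves every $\beta^1_0\RFN(n)$, which completes the mutual derivability and establishes the first sentence of the corollary. For the concluding clause, I would appeal to the fact recorded after Theorem \ref{Pi12part and hyperjumps} that $\ACAo + \{\beta^1_0\RFN(n) : n \in \omega\}$ axiomatizes precisely the $\Pi^1_2$-consequences of $\Pi^1_1\myhyphen\CAo$; since the two theories prove the same sentences, the same holds for $\ACA_0^+ + \{\rel((\Sigma^0_1)_n\myhyphen\Det)\}$. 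Moreover, any fixed $\Pi^1_2$ consequence $\sigma$ of $\Pi^1_1\myhyphen\CAo$ is already provable from a single $\beta^1_0\RFN(m)$ by Theorem \ref{Pi12part and hyperjumps}, hence from the single sentence $\rel((\Sigma^0_1)_{p(m)}\myhyphen\Det)$ by the converse direction, giving the desired single-axiom bound.

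I expect no genuine obstacle, since the two cited theorems carry all of the weight; the only step demanding care is the absoluteness argument lowering $\beta^1_0\RFN(n)$ to $\beta^1_0\RFN(n-1;\ATRo)$, and this is routine once Lemma \ref{beta-model satisfies atro} is applied inside the topmost model of the tower.
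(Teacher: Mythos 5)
Your proposal is correct and follows the route the paper intends: the corollary is meant to be assembled from Theorem \ref{beta-rfn-atr0 and det} (together with the reduction $\beta^1_0\RFN(n) \to \beta^1_0\RFN(n-1;\ATRo)$, which the paper itself establishes via Lemma \ref{beta-model satisfies atro} and the absoluteness of coded-model satisfaction, cf.\ Corollary \ref{cor:summary}), the theorem immediately preceding the corollary for the converse, and Theorem \ref{Pi12part and hyperjumps} for the final clause. Your handling of the base case $\rel(\Sigma^0_1\myhyphen\Det)$ and of the single-axiom bound at the end is also sound.
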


\begin{remark}
  We have proved in Lemma \ref{beta(n+1) is strictly stronger than beta(n)}, Theorem \ref{beta-rfn-atr0 and det} and \ref{rel-det and TLPP} that
  \begin{itemize}
    \item $\TLPP \leq \rel((\Sigma^0_1)_2\myhyphen\Det) \leq \beta^1_0\RFN(1;\ATRo)$,
    \item $\TLPP < \beta^1_0\RFN(1;\ATRo)$.
  \end{itemize}
  Thus, at least one of $\TLPP$ or $\beta^1_0\RFN(1;\ATRo)$ is not equivalent to $\rel((\Sigma^0_1)_2\myhyphen\Det)$.
\end{remark}
\begin{question}
  Can we separate $\rel((\Sigma^0_1)_2\myhyphen\Det)$ and $\TLPP$ or $\beta^1_0\RFN(1;\ATRo)$?
\end{question}


\bibliographystyle{plain}
\bibliography{references}

\end{document}